\newcommand{\N}{\mathbb{N}}
\newcommand{\U}{\mathfrak{U}}
\newcommand{\B}{\mathfrak{B}}
\newtheorem{prop}{Proposition}[section]
\newtheorem{cor}[prop]{Corollary}
\newtheorem{teo}[prop]{Theorem}
\newtheorem{lema}[prop]{Lemma}
\theoremstyle{definition}
\newtheorem*{obs}{Remark}
\newtheorem{defi}[prop]{Definition}
\newtheorem*{acknowledgements}{Acknowledgements}
\title{Equivariant blowups of bounded parabolic points}
\author{Lucas H. R. de Souza}
\begin{document}

\maketitle

\def\eod{\hfill$\square$}

\begin{abstract}Let $G$ be a group acting by homeomorphisms on a Hausdorff compact space $Z$. We constructed a new space $X$ that blows up equivariantly the bounded parabolic points of $Z$. This means, roughly speaking, that $G$ acts by homeomorphisms on $X$ and there exists a continuous equivariant map $\pi: X \rightarrow Z$ such that for every non bounded parabolic point $z \in Z$, $\#\pi^{-1}(z) = 1$.

We use such construction to characterize topologically some spaces that $G$ acts with the convergence property and to construct new convergence actions of $G$ from old ones. As one of the applications, if $G$ is a group and $p$ is a bounded parabolic point of the space of ends of $G$, then the stabilizer of $p$ is one-ended.
\end{abstract}

\let\thefootnote\relax\footnote{Mathematics Subject Classification (2010). Primary: 20F65, 54D35; Secondary: 57M07, 54E45, 57S30.}
\let\thefootnote\relax\footnote{Keywords: Blowup, relatively hyperbolic action, convergence action, space of ends, perspective compactification.}

\tableofcontents

\section*{Introduction}

Let $G$ be a group with the discrete topology that acts properly discontinuously and cocompactly on a Hausdorff locally compact space $X$. A Hausdorff compact space $Y = X \cup C$, where the union is disjoint, $X$ is open and the action $G \curvearrowright X$ extends to an action by homeomorphisms on $Y$, has the perspectivity property if  for every $u$ entourage of the uniform structure compatible with $Y$ and every compact subset $K$ of $X$, the set $\{g \in G: g K \notin Small(u)\}$ is finite. In particular, we consider the case where $X = G$ with the left multiplication action. An example of it is Gerasimov's attractor-sum compactification \cite{Ge2} $G\cup C$ of $G$, if $G$ acts on $C$ with the convergence property (Proposition 7.5.4 of \cite{Ge2}). It generalizes Bowditch's compactification of relatively hyperbolic groups. This perspectivity property is also easily equivalent to one of the conditions of an $E\mathcal{Z}$-structure of a group (see \cite{Dr} for the definition).

Let $G$ be a group, $Z$ be a Hausdorff compact space, $\varphi: G \curvearrowright Z$ an action by homeomorphisms, $P \subseteq Z$ the subset of bounded parabolic points, $P' \subseteq P$ a subset of representatives of orbits of points in $P$ and for each $p\in P'$ a compact space $Y_{p} = Stab_{\varphi}p \cup C_{p}$ such that $Stab_{\varphi}p$ is open, the union is disjoint, the left multiplication action $G \curvearrowright G$ extends to an action by homeomorphisms on $Y_{p}$ and it has the perspectivity property. Then we construct a compact space $X$ and an action of $G$ on $X$ that blows up $Z$ through out the points of $P$ and the fibers are a copy of $C_{p}$ to each element of the orbit of the point $p \in P'$. Such construction is called parabolic blowup. It is characterized in the following sense:

\

$\!\!\!\!\!\!\!\!\!\!\!\!$ \textbf{Theorem \ref{surjectiveness}} Let $X$ be a Hausdorff compact space that $G$ acts by homeomorphisms, $\U$ the uniform structure compatible with the topology of $X$ and a continuous surjective equivariant map $\pi: X \rightarrow Z$ such that $\forall x \in Z-P$, $\#\pi^{-1}(x) = 1$. For $p \in P$, take $W_{p} = X - \pi^{-1}(p)$. Then $X$ is equivariantly homeomorphic to a parabolic blowup for some choice of $\{Y_{p}\}_{p\in P}$ if and only if the following conditions holds:

 \begin{enumerate}
    \item The map $\pi$ is topologically quasiconvex (i.e. $\forall u \in \U$ the set of inverse images of points of $Z$ that are not $u$-small is finite).

    \item $\forall p \in P$, $X$ has the perspectivity property with respect to $W_{p}$ and $Stab_{\varphi}p$ (i.e. $\forall u \in \U$, $\forall K$ compact subset of $W_{p}$, the set $\{g\in Stab_{\varphi} p: gK \notin Small(u) \}$ is finite.

\end{enumerate}

The map $\pi$ is called a blowup map, $Z$ is the base, $X$ the blowup space and $\{\pi^{-1}(p)\}_{p\in P}$ the fibers.

Dussaule, Gekhtman, Gerasimov and Potyagailo \cite{DGGP} described the Martin compactification of relatively hyperbolic groups with virtually abelian parabolic subgroups as a Dahmani boundary (in the sense of \cite{Da1}). To do this, they construct for those groups a boundary (called PBU-boundary, short for parabolic blow-up boundary), and proved that it is homeomorphic to both boundaries. An
essential part of \cite{DGGP} is devoted to prove the perspectivity property of the Martin compactification. 
Gerasimov and Potyagailo proposed to me a program of conversion of such construction of the PBU-boundary \cite{DGGP} to a general theory of parabolic blowups. The construction of the blowup and \textbf{Theorem \ref{surjectiveness}} of this preprint are the realization of this program. We decided to include some version of \textbf{Theorem \ref{surjectiveness}} and some other results of this preprint to a joint paper with Gerasimov and Potyagailo.

Similar constructions have already appeared in the literature. On the Combination Theorem \cite{Da2} Dahmani constructed the Bowditch boundary for fundamental groups of a class of graphs of groups such that the vertex groups are finitely generated and relatively hyperbolic. If all edges have finite groups, then his construction agrees with ours. Dahmani also constructed an $E\mathcal{Z}$-structure for relatively hyperbolic groups \cite{Da1} and Martin did a Combination Theorem for $E\mathcal{Z}$-structures of complexes of groups \cite{Ma}. $\acute{S}$wiatkowski worked on a special case of Martin's construction and characterized topologically a family of spaces that appears as boundaries of $E\mathcal{Z}$-structures of graphs of groups with finite edge groups \cite{Sw}. Such a family is called dense amalgams. We rewrote such characterization in a way that it is more compatible with our parabolic blowup construction (\textbf{Proposition \ref{denseamalgamequivalence}}). As an easy consequence, the Bowditch boundary of a fundamental group of a finite graph of groups such that the vertex groups are finitely generated and relatively hyperbolic and the edge groups are finite is a dense amalgam. Tshishiku and Walsh \cite{TW1} and \cite{TW2} constructed, from a relatively hyperbolic pair $(G,\mathcal{P})$ with Bowditch boundary homeomorphic to $S^{2}$ a Dahmani boundary (in the sense of the construction given in \cite{Da1}) homeomorphic to the Sierpi$\acute{n}$ski carpet. Our construction of the parabolic blowup as an inverse limit is analogous to their construction of the Sierpi$\acute{n}$ski carpet. 

There are two good properties of the blowup maps that worth mentioning:

\begin{enumerate}
    \item The parabolic blowup is functorial: Let $H$ be a group and $EPers(H)$ be the category whose objects are compact spaces, where $H$ is an open subset, together with an action by homeomorphisms from $H$ such that the action restricted to $H$ is the left multiplication action and the space is perspective with respect to $H$ and whose morphisms are continuous equivariant maps that are the identity when restricted to $H$. Let $Act(H)$ be the category whose objects are compact spaces together with an action of H by homeomorphisms and the morphisms are continuous equivariant maps. Then, fixed the map $\varphi: G \curvearrowright Z$ as described above, we have a functor $\prod_{p\in P'}EPers(Stab_{\varphi}p) \rightarrow Act(G)$ that send the family $\{Y_{p}\}_{p\in P}$ to the parabolic blowup with respect to that family (\textbf{Proposition \ref{fuctoriality}}).

    \item If $Z$ has a copy of $G$, then $X$ has also a copy of $G$ (\textbf{Proposition \ref{inclusaogrupo}}). Furthermore, if $Z$ has the perspectivity property with respect to $G$, then $X$ has the perspectivity property with respect to $G$ (\textbf{Proposition \ref{persppullback}}).
\end{enumerate}

If $G$ acts on two spaces $X$ and $Y$ with the convergence property and there is a continuous surjective equivariant map $\pi: X \rightarrow Y$ such that the inverse image of every non bounded parabolic point is one point, then $\pi$ is a blowup map (\textbf{Theorem \ref{semidirectforconv}}). If $G$ acts on two compact metrizable spaces $X$ and $Y$, where $Y$ comes from a geometric compactification of $G$, and $\pi: X \rightarrow Y$ is a continuous equivariant map, then the condition (1) of the \textbf{Theorem \ref{surjectiveness}} is not necessary to characterize blowup maps (\textbf{Theorem \ref{geoblowup}}).

If $G$ is a group acting on a CAT(0) space $X$ with isolated flats, Hruska and Kleiner \cite{HK} showed that $G$ is relatively hyperbolic with respect to the set $\mathcal{P}$ of stabilizers of maximal flats. Tran \cite{Tr} proved that there is a continuous equivariant map from the visual boundary $\partial_{v}(X)$ to the Bowditch boundary $\partial_{B}(G,\mathcal{P})$. Such map is an example of a blowup map (\textbf{Proposition \ref{catblowup}}) that does not come from a convergence action on the domain.

As applications of the parabolic blowup, we prove the following:

\begin{enumerate}
    \item Let $G$ be a group. If there is a parabolic blowup $\pi: X \rightarrow Z$ such that the attractor-sum (in the sense of \cite{Ge2}) $G\cup Z$ is geometric and the actions of the stabilizers of bounded parabolic points on the fibers have convergence property, then the action of $G$ on $X$ has the convergence property (\textbf{Theorem \ref{geoconv}}). In the special case where the actions on $Z$ and on all the fibers are relatively hyperbolic we also have that the action on $X$ is relatively hyperbolic (\textbf{Corollary \ref{relhiperb}}). We give a new proof of a Drutu and Sapir theorem \cite{DS} that says if $G$ is a finitely generated group that is relatively hyperbolic with respect to a set of parabolic subgroups $\mathcal{P}$ and each $P \in \mathcal{P}$ is relatively hyperbolic with respect to a set of parabolic subgroups $\mathcal{P}_{P}$, then $G$ is also relatively hyperbolic with respect to the set $\bigcup_{P \in \mathcal{P}}\mathcal{P}_{P}$. Furthermore, we construct its Bowditch boundary.
    \item If there is a parabolic blowup $\pi: X \rightarrow K$, where $X$ is metrizable and $K$ is the Cantor set, such that there are countably many bounded parabolic points on $K$ and each orbit of a bounded parabolic point of $K$ is dense, then $X$ is a dense amalgam (\textbf{Proposition \ref{caracterizacaotopologicapbucantor}}). This characterizes topologically metrizable spaces with more than two connected components that a finitely generated group acts with convergence property. On the opposite direction, given a finite graph of groups with  finitely generated vertex groups $\{G_{i}\}_{i\in V}$, finite edge groups and fundamental group $G$ and given a family of convergence actions of the vertex groups on spaces $C_{i}$, with $i \in V$, there is a convergence action of $G$ on a space $X$ such that $\forall i \in V$, $C_{i}$ is equivariantly homeomorphic to the limit set of $G_{i}$ on $X$ (\textbf{Proposition \ref{combinationconvergence}}). Such space $X$ is a dense amalgam of the spaces $C_{i}$. If the actions of the vertex groups are all relatively hyperbolic with $C_{i} = \partial_{B}(G_{i}, \mathcal{P}_{i})$, then the action on $X$ is also relatively hyperbolic and $X$ is the Bowditch boundary $\partial_{B}(G, \bigcup_{i}\mathcal{P}_{i})$ (we abuse the notation here saying that the vertex groups are subgroups of $G$).
    \item If $G$ is a group and $p$ is a bounded parabolic point of $Ends(G)$, then $Stab \ p$ has one end (\textbf{Proposition \ref{ends}}). We have two applications of the blowup construction that are also easy consequences of Bowditch's works about local cut points:  If $G$ is a finitely generated group that is relatively hyperbolic with respect to a set of subgroups $\mathcal{P}$, its Bowditch boundary $\partial_{B}(G,\mathcal{P})$ is connected and $G$ is not one-ended, then there exists $H \in \mathcal{P}$ that is not one-ended (\textbf{Corollary \ref{endsofsubgroups}}). And, if $G$ is a finitely generated group that acts minimally relatively hyperbolically on a non trivial space $X$ that is connected, locally connected and without local cut points, then $G$ is one-ended (\textbf{Corollary \ref{withoutlocalcut}}).
    \item If $G$ is a group and $p$ is a bounded parabolic point of the maximal perspective compactification of $G$, then the maximal perspective compactification of $Stab \ p$ is the one point compactification (\textbf{Theorem \ref{maximalpers}}).
\end{enumerate}

\begin{acknowledgements}This preprint contains a part of my Ph.D. thesis. The thesis was written under the advisorship of Victor Gerasimov, to whom I am grateful for our several discussions and lots of things that I've learned during this meantime.

I am also grateful to Leonid Potyagailo for his support and motivating discussions. I would like also to thank him and the Université de Lille for the invitation to a week-long travel, occasion that it was possible to have fruitful discussions (in special about part of the proof of \textbf{Theorem \ref{geoconv}}). 

I would like to thank Jason Manning for our useful discussions. In particular, he showed me how \textbf{Corollaries \ref{endsofsubgroups}} and  \textbf{\ref{withoutlocalcut}} comes easily from one of Bowditch`s theorems.

I would like to thank Christopher Hruska for showing me Dasgupta's thesis \cite{Das}.
\end{acknowledgements}

\section{Preliminaries}

\begin{prop}\label{uniaometrizavel}(Proposition 9, $\S4.4$, Chapter 1 of \cite{Bou}) Let $X$ be a Hausdorff compact space where there exists a family of subspaces $\{X_{n}\}_{n\in \N}$ such that each one has a countable basis and $X = \bigcup_{n\in \N}X_{n}$. Then, $X$ is metrizable.
\end{prop}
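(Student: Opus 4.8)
The plan is to reduce the statement to the construction of a countable family of continuous real-valued functions on $X$ that separates points, and then to embed $X$ into the Hilbert cube. Recall that if $X$ is compact Hausdorff and $\{f_{i} : X \to [0,1]\}_{i\in\N}$ separates points, then the diagonal map $F = (f_{i})_{i} : X \to [0,1]^{\N}$ is a continuous injection of a compact space into a Hausdorff space, hence a homeomorphism onto its image; since the Hilbert cube $[0,1]^{\N}$ is metrizable, so is $X$. Thus it suffices to produce such a family.

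First I would assemble a countable network for $X$ out of the given data. For each $n$ let $\mathcal{B}_{n}$ be a countable basis of the subspace $X_{n}$; its members are subsets of $X$ (relatively open in $X_{n}$, but not necessarily open in $X$). Set $\mathcal{B} = \bigcup_{n\in\N}\mathcal{B}_{n}$, a countable collection of subsets of $X$. I claim $\mathcal{B}$ is a network, i.e.\ for every $x\in X$ and every open $U\ni x$ there is $B\in\mathcal{B}$ with $x\in B\subseteq U$: indeed, pick $n$ with $x\in X_{n}$; then $U\cap X_{n}$ is a neighbourhood of $x$ in $X_{n}$, so some $B\in\mathcal{B}_{n}\subseteq\mathcal{B}$ satisfies $x\in B\subseteq U\cap X_{n}\subseteq U$.

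Next I would exploit compactness and the Hausdorff property. Since $X$ is compact Hausdorff it is normal, so any two distinct points $x\neq y$ admit open neighbourhoods with disjoint closures; combining this with the network property yields $N_{1}, N_{2}\in\mathcal{B}$ with $x\in N_{1}$, $y\in N_{2}$ and $\overline{N_{1}}\cap\overline{N_{2}}=\emptyset$. Let $\mathcal{P}$ be the (countable) set of pairs $(N_{1},N_{2})\in\mathcal{B}\times\mathcal{B}$ with $\overline{N_{1}}\cap\overline{N_{2}}=\emptyset$. For each such pair, Urysohn's lemma provides a continuous $f_{(N_{1},N_{2})} : X\to[0,1]$ equal to $0$ on $\overline{N_{1}}$ and to $1$ on $\overline{N_{2}}$. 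The resulting countable family separates points: for $x\neq y$ the pair found above gives a function taking the values $0$ and $1$ at $x$ and $y$ respectively. Feeding this family into the embedding of the first paragraph completes the argument.

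The conceptual heart — and the only genuine obstacle — is that $\mathcal{B}$ is merely a network and not a basis for $X$ (its members need not be open in $X$), so one cannot read off second countability of $X$ directly. Compactness together with normality is exactly what repairs this: it converts the countable network into a countable separating family (equivalently, it shows the diagonal $\Delta$ of $X\times X$ is $G_{\delta}$, since $X\times X\setminus\Delta = \bigcup_{(N_{1},N_{2})\in\mathcal{P}}\overline{N_{1}}\times\overline{N_{2}}$ is a countable union of closed rectangles), from which metrizability follows. An alternative, should one prefer to quote a black box, is to invoke the theorem that a compact Hausdorff space possessing a countable network is metrizable and apply it to $\mathcal{B}$.
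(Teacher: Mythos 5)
Your argument is correct. The network claim is verified properly (relative openness of the $B\in\mathcal{B}_{n}$ is harmless for the purpose of a network), the passage from Hausdorff plus normality to pairs with disjoint closures is the standard strengthening of normality (e.g.\ via Urysohn applied to $\{x\}$ and $\{y\}$), the resulting countable family of Urysohn functions separates points, and the diagonal map into $[0,1]^{\N}$ is a continuous injection from a compact space into a Hausdorff space, hence an embedding; metrizability follows. The paper itself offers no proof of this proposition --- it is quoted directly from Bourbaki --- so there is nothing internal to compare against; what you have written is a self-contained proof of the classical fact that a compact Hausdorff space with a countable network is metrizable, specialized to the network obtained by uniting countable bases of the $X_{n}$. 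The only caveat is cosmetic: your parenthetical identification of $X\times X\setminus\Delta$ with $\bigcup_{(N_{1},N_{2})\in\mathcal{P}}\overline{N_{1}}\times\overline{N_{2}}$ is an aside (it shows $\Delta$ is $G_{\delta}$, which by \v{S}ne\u{\i}der's theorem would give another route to metrizability), and is not needed for the main line of the argument, which already stands on its own.
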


\begin{prop}\label{small}Let $f:(X_{1},\U_{1}) \rightarrow (X_{2},\U_{2})$ be an uniformly continuous map, $u \in \U_{2}$ and $Y \subseteq X_{2}: \ Y \in Small(u)$. Then $f^{-1}(Y) \in Small(f^{-1}(u))$. \eod
\end{prop}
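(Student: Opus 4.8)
The plan is to unwind the two definitions in play and then verify a single set inclusion directly. Recall that for an entourage $u$ a subset $A$ is $u$-small precisely when $A \times A \subseteq u$, and that the notation $f^{-1}(u)$ is to be read as the preimage under $f \times f$, that is $f^{-1}(u) = \{(a,b) \in X_{1}\times X_{1} : (f(a),f(b)) \in u\}$. The uniform continuity of $f$ is exactly what guarantees that this preimage is again an entourage, i.e. $f^{-1}(u) \in \U_{1}$, so that the conclusion $f^{-1}(Y) \in Small(f^{-1}(u))$ is meaningful; the smallness assertion itself is purely set-theoretic and does not otherwise use uniform continuity.

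Concretely, I would argue as follows. Let $a, b \in f^{-1}(Y)$ be arbitrary, so that $f(a), f(b) \in Y$ by definition of the preimage. Since $Y \in Small(u)$ we have $Y \times Y \subseteq u$, and therefore $(f(a), f(b)) \in u$. By the definition of $f^{-1}(u)$ as the preimage under $f \times f$, this says precisely that $(a,b) \in f^{-1}(u)$. As $a$ and $b$ were arbitrary elements of $f^{-1}(Y)$, this shows $f^{-1}(Y) \times f^{-1}(Y) \subseteq f^{-1}(u)$, which is exactly the statement $f^{-1}(Y) \in Small(f^{-1}(u))$.

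There is no substantive obstacle here; the statement is essentially a definition-chase. The only points deserving attention are to interpret $f^{-1}(u)$ correctly as the preimage under $f\times f$ rather than under $f$ alone, and to observe that uniform continuity enters only to ensure that $f^{-1}(u)$ genuinely belongs to $\U_{1}$, so that speaking of $f^{-1}(u)$-smallness within the uniform space $(X_{1},\U_{1})$ is legitimate.
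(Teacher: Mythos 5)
Your argument is correct and is exactly the definition-chase the paper has in mind: the proposition is stated with the proof omitted as immediate, and your verification that $f^{-1}(Y)\times f^{-1}(Y)\subseteq f^{-1}(u)$ (with $f^{-1}(u)$ read as the preimage under $f\times f$, an entourage by uniform continuity) supplies precisely that omitted step. Nothing is missing.
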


\begin{prop}\label{liftingnet}Let $X$ and $Y$ be topological spaces and $f: X \rightarrow Y$ a continuous and closed map. If $\Phi = \{\{y_{\gamma,i}\}_{\gamma \in \Gamma}: i \in F\}$ is a set of nets in $Y$ that converges uniformly to a point $y\in Y$ such that $\#f^{-1}(y) = 1$, then  the set of liftings of $\Phi$ to $X$ (i.e. the nets $\{x_{\gamma}\}_{\gamma \in \Gamma}$ in $X$ such that $ \{f(x_{\gamma})\}_{\gamma \in \Gamma} \in \Phi$), converges uniformly to $f^{-1}(y)$.
\end{prop}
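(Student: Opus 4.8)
The plan is to reduce everything to a single statement about how $f$ pulls back neighborhoods of $y$, and then feed the uniform convergence of $\Phi$ through that statement. Since $\#f^{-1}(y)=1$, write $f^{-1}(y)=\{x^{*}\}$. Unwinding the definitions, to prove that the set of liftings converges uniformly to $f^{-1}(y)=\{x^{*}\}$ I must show: for every open neighborhood $U$ of $x^{*}$ there is an index $\gamma_{0}\in\Gamma$ such that for \emph{every} lifting $\{x_{\gamma}\}_{\gamma\in\Gamma}$ and every $\gamma\ge\gamma_{0}$ one has $x_{\gamma}\in U$. The crucial point is that this single $\gamma_{0}$ must work for all liftings simultaneously.

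The heart of the argument is the following observation, which is where closedness of $f$ enters. Fix an open neighborhood $U$ of $x^{*}$ and put $V:=Y\setminus f(X\setminus U)$. Since $U$ is open, $X\setminus U$ is closed, so $f(X\setminus U)$ is closed because $f$ is a closed map; hence $V$ is open. Moreover $y\in V$: as $f^{-1}(y)=\{x^{*}\}\subseteq U$, no preimage of $y$ lies in $X\setminus U$, so $y\notin f(X\setminus U)$. Finally $f^{-1}(V)\subseteq U$: if $f(x)\in V$ then $f(x)\notin f(X\setminus U)$, which forces $x\notin X\setminus U$, i.e. $x\in U$. Thus every open neighborhood of $x^{*}$ contains the $f$-preimage of some open neighborhood of $y$. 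This is exactly the place where the hypothesis $\#f^{-1}(y)=1$ is indispensable; with a larger fiber one could not separate $y$ from the image of the complement of a neighborhood of a single chosen point.

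With $V$ in hand I invoke the uniform convergence of $\Phi$ to $y$: there exists $\gamma_{0}\in\Gamma$ such that $y_{\gamma,i}\in V$ for all $\gamma\ge\gamma_{0}$ and all $i\in F$ at once. Now let $\{x_{\gamma}\}_{\gamma\in\Gamma}$ be any lifting, say with $f(x_{\gamma})=y_{\gamma,i}$ for a fixed $i\in F$. For $\gamma\ge\gamma_{0}$ we get $f(x_{\gamma})=y_{\gamma,i}\in V$, hence $x_{\gamma}\in f^{-1}(V)\subseteq U$. Since $\gamma_{0}$ was extracted from the uniform convergence of the whole family $\Phi$, it depends neither on the lifting nor on the index $i$ that the lifting uses, so the same $\gamma_{0}$ serves every lifting, which is precisely the required uniform convergence.

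The argument is short, and the only genuinely delicate point is bookkeeping of the quantifiers: I expect the main obstacle to be making precise the notion of uniform convergence for a family of nets and then checking that the single $\gamma_{0}$ coming from $\Phi$ is genuinely uniform over the (possibly large) set of all liftings. Once the neighborhood-pullback lemma of the second paragraph is isolated, the remainder is a direct substitution; no compactness, metrizability, or Hausdorffness of $X$ or $Y$ is needed, the essential ingredients being that $f$ is closed and that the fiber over $y$ is a single point.
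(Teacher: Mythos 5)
Your proof is correct and is essentially the paper's argument: the paper also forms $V = Y\setminus f(X\setminus U)$ using closedness of $f$ and feeds the uniform convergence of $\Phi$ through it, only phrased contrapositively rather than directly. (One small quibble: the hypothesis $\#f^{-1}(y)=1$ is not really what makes $y\in V$ — taking $U$ to be a neighbourhood of the whole fiber would do — but this does not affect the validity of your proof of the stated proposition.)
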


\begin{proof}Suppose that the set of liftings of $\Phi$ do not converge uniformly to $f^{-1}(y)$. So there exists an open neighbourhood $U$ of $f^{-1}(y)$ such that $\forall \gamma_{0}\in \Gamma$, $\exists \gamma_{1} > \gamma_{0}$, $\exists \{x_{\gamma,i}\}_{\gamma\in\Gamma}$ a lifting to the net $\{y_{\gamma,i}\}_{\gamma\in\Gamma}$ such that $x_{\gamma_{1},i} \notin U$. The set $X-U$ is closed and $f$ is a closed map, which implies that the set $f(X-U)$ is closed. Since $y \in V = Y-f(X-U)$, we have that $V$ is an open neighbourhood of $y$. So $y_{\gamma_{1},i} = f(x_{\gamma_{1},i}) \notin V$. We have that $\forall \gamma_{0}\in \Gamma$, $\exists \gamma_{1} > \gamma_{0}$, $\exists i \in F$ such that $y_{\gamma_{1},i} \notin V$. Thus $\Phi$ do not converge uniformly to $y$.
\end{proof}

\begin{defi}Let $X$ be a topological space, $\Phi = \{f_{\gamma}\}_{\gamma \in \Gamma}$  a net of maps from $X$ to itself and $A \subseteq X$. We denote by $\Phi|_{A} = \{f_{\gamma}|_{A}\}_{\gamma \in \Gamma}$. We say that $\Phi|_{A}$ converges uniformly to $x \in X$ if $\forall U$ neighbourhood of $x$, there exists $\gamma_{0} \in \Gamma$ such that $\forall \gamma > \gamma_{0}$, $f_{\gamma}(A) \subseteq U$.
\end{defi}

\begin{prop}\label{composicaouniforme}Let $X$ be a topological space, $\Phi = \{f_{\gamma}\}_{\gamma \in \Gamma}$ and $\Psi = \{g_{\gamma}\}_{\gamma \in \Gamma}$ be two families of maps from $X$ to itself and $A,B \subseteq X$. If $\Phi|_{A}$ converges uniformly to $a \in X$, $\Psi|_{B}$ converges uniformly to $b \in X$ and $B$ is a neighbourhood of $a$, then $(\Psi \circ \Phi)|_{A} = \{g_{\gamma} \circ f_{\gamma}|_{A}\}_{\gamma \in \Gamma}$ converges uniformly to $b$.
\end{prop}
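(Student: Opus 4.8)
The plan is to unwind the definition of uniform convergence of a restricted net directly and then chain the two hypotheses, using in an essential way that $B$ is a \emph{neighbourhood} of $a$. Fix an arbitrary neighbourhood $V$ of $b$; the goal is to produce an index of $\Gamma$ past which $(g_{\gamma} \circ f_{\gamma})(A) \subseteq V$, which is exactly the assertion that $(\Psi \circ \Phi)|_{A}$ converges uniformly to $b$.

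First I would feed $V$ into the hypothesis that $\Psi|_{B}$ converges uniformly to $b$, obtaining $\gamma_{1} \in \Gamma$ such that $g_{\gamma}(B) \subseteq V$ for every $\gamma > \gamma_{1}$. Next, because $B$ is itself a neighbourhood of $a$, it is an admissible test set in the definition of uniform convergence of $\Phi|_{A}$ to $a$; applying that hypothesis with the neighbourhood taken to be $B$ yields $\gamma_{0} \in \Gamma$ with $f_{\gamma}(A) \subseteq B$ for every $\gamma > \gamma_{0}$.

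The final step is to combine the two thresholds. Since $\Gamma$ is directed, I can choose $\gamma_{2} \in \Gamma$ dominating both $\gamma_{0}$ and $\gamma_{1}$. For $\gamma > \gamma_{2}$ one then has simultaneously $f_{\gamma}(A) \subseteq B$ and $g_{\gamma}(B) \subseteq V$, so, using that the image of a set under $g_{\gamma}$ respects inclusions, $(g_{\gamma} \circ f_{\gamma})(A) = g_{\gamma}(f_{\gamma}(A)) \subseteq g_{\gamma}(B) \subseteq V$. As $V$ was an arbitrary neighbourhood of $b$, this completes the argument.

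I do not anticipate a genuine obstacle here; the argument uses only monotonicity of images under inclusion together with directedness of the index set. The one point deserving care — and the reason the statement is correctly phrased — is that the definition quantifies over \emph{all} neighbourhoods rather than merely open ones, so $B$ may be used verbatim as the intermediate target without first passing to its interior. This is precisely why the load-bearing hypothesis is that $B$ be a neighbourhood of $a$ (not merely a set containing $a$): it is what licenses the substitution $U = B$ in the first hypothesis and lets the intermediate landing zone of $\Phi|_{A}$ coincide with the departure zone of $\Psi|_{B}$.
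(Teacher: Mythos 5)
Your argument is correct and is essentially identical to the paper's proof: both feed the neighbourhood $B$ of $a$ into the definition of uniform convergence of $\Phi|_{A}$, feed the target neighbourhood of $b$ into that of $\Psi|_{B}$, and combine the two thresholds by directedness to get $g_{\gamma}(f_{\gamma}(A)) \subseteq g_{\gamma}(B) \subseteq V$. Nothing further is needed.
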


\begin{proof}Let $U$ be a neighbourhood of $b$. Since $B$ is a neighbourhood of $a$, there exists $\gamma_{0} \in \Gamma$ such that $\forall \gamma > \gamma_{0}$, $f_{\gamma}(A) \subseteq B$. We have also that there exists $\gamma_{1} \in \Gamma$ such that $\forall \gamma > \gamma_{1}$, $g_{\gamma}(B) \subseteq U$. So if $\gamma_{2}$ is bigger than $\gamma_{0}$ and $\gamma_{1}$, then $\forall \gamma > \gamma_{2}$, $g_{\gamma}\circ f_{\gamma}(A) \subseteq g_{\gamma}(B) \subseteq U$. Thus, $(\Psi \circ \Phi)|_{A}$ converges uniformly to $b$.
\end{proof}

\subsection{Topological quasiconvexity}

\begin{defi}Let $X$ be a Hausdorff compact space and $\sim$ an equivalence relation on $X$. We say that $\sim$ is topologically quasiconvex if $\forall q \in X, \ [q]$ is closed and $\forall u \in \U$, $\#\{[x]\subseteq X: [x] \notin Small(u)\}< \aleph_{0}$, with $\U$ the only uniform structure compatible with the topology of $X$ and $[x]$ the equivalence class of $x$.

Let $X$, $Y$ be Hausdorff compact spaces. A quotient map $f: X \rightarrow Y$ is topologically quasiconvex if the relation $\sim = \Delta^{2}X \cup \bigcup_{y \in Y} f^{-1}(y)^{2}$ is topologically quasiconvex.
\end{defi}

\begin{prop}\label{quaseconvexidadetop}(Proposition 7.51 of \cite{So1}) Let $X$ be a Hausdorff compact space and $\sim$ a topologically quasiconvex equivalence relation on $X$. If $A \subseteq X/ \! \! \sim$, we define $\sim_{A} = \Delta^{2}X \cup\bigcup\limits_{[x] \in A} [x]^{2}$. Then, $\forall A \subseteq X/ \! \! \sim$, $X/ \! \! \sim_{A}$ is Hausdorff. \end{prop}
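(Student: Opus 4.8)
The plan is to reduce the statement to a classical fact about quotients of compact spaces: if $X$ is compact Hausdorff and $R \subseteq X \times X$ is an equivalence relation, then $X/R$ is Hausdorff whenever $R$ is a closed subset of $X \times X$. (In that situation the quotient map is automatically closed, so two distinct points of $X/R$ correspond to disjoint saturated closed fibres, which can be separated by saturated open sets using the normality of $X$.) Since $\sim_A \subseteq X \times X$ is by construction an equivalence relation, it suffices to prove that $\sim_A$ is closed in $X \times X$, and the Hausdorffness of $X/\!\!\sim_A$ then follows from this criterion. I would state this reduction first and spend the rest of the argument on closedness.

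The heart of the matter is thus the closedness of $\sim_A = \Delta^{2}X \cup \bigcup_{[x]\in A}[x]^{2}$, and this is exactly where topological quasiconvexity is used. I would argue with nets: let $(x_{\gamma},y_{\gamma})_{\gamma}$ be a net in $\sim_A$ with $(x_{\gamma},y_{\gamma}) \to (x,y)$, and show $(x,y)\in\sim_A$. If $x=y$ we are done, since $\Delta^{2}X \subseteq \sim_A$; so assume $x \neq y$. A cofinal subnet lying on the diagonal would force $x=y$, so we may assume each $(x_{\gamma},y_{\gamma})$ lies in some $[x_{\gamma}]^{2}$ with $[x_{\gamma}]\in A$. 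Using that $X$ is Hausdorff with unique compatible uniform structure $\U$, pick a symmetric $u \in \U$ with $(x,y)\notin u\circ u\circ u$. Eventually $(x,x_{\gamma})\in u$ and $(y_{\gamma},y)\in u$; if the class $[x_{\gamma}]$ were $u$-small we would have $(x_{\gamma},y_{\gamma})\in u$, and composing the three relations would give $(x,y)\in u\circ u\circ u$, a contradiction. Hence eventually $[x_{\gamma}]$ is not $u$-small.

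This is the step I expect to be the main obstacle, and quasiconvexity resolves it cleanly: only finitely many $\sim$-classes fail to be $u$-small, so the classes $[x_{\gamma}]$ take only finitely many values cofinally. Passing to a subnet, I may assume $[x_{\gamma}] = C$ is a single class with $C \in A$. As $C$ is closed and $x_{\gamma},y_{\gamma}\in C$ with $x_{\gamma}\to x$, $y_{\gamma}\to y$, I get $x,y\in C$; therefore $x\sim y$, $[x]=C\in A$, and $(x,y)\in[x]^{2}\subseteq\sim_A$. This shows $\sim_A$ is closed and finishes the proof. The one routine point to record along the way is that the singleton classes arising from $[x]\notin A$ are harmless: a singleton is $u$-small for every $u\in\U$, so it never enters the finiteness count, and the quasiconvexity of $\sim$ passes to $\sim_A$ without change.
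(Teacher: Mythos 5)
The paper does not actually prove this proposition; it is imported verbatim as Proposition 7.51 of \cite{So1}, so there is no in-paper argument to compare against. Your proof is correct and self-contained: the reduction to the classical criterion (a quotient of a compact Hausdorff space by a \emph{closed} equivalence relation is Hausdorff) is legitimate, and the net argument for closedness of $\sim_{A}$ uses exactly the two ingredients packaged in topological quasiconvexity --- the three-fold composition $u\circ u\circ u$ forces the classes $[x_{\gamma}]$ to be eventually non-$u$-small, finiteness lets you pass to a subnet in a single class $C\in A$, and closedness of $C$ delivers $(x,y)\in C^{2}$. The handling of the diagonal part of $\sim_{A}$ via a cofinal subnet is also correct. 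This is a clean and complete substitute for the external citation.
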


\begin{obs}In particular we have that $\sim_{X/\sim} = \sim$.
\end{obs}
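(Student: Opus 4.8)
The plan is to verify the set equality $\sim_{X/\sim} \;=\; \sim$ directly from the definition of $\sim_{A}$ specialized to the full quotient $A = X/\!\!\sim$. Unwinding that definition gives
$$\sim_{X/\sim} \;=\; \Delta^{2}X \cup \bigcup_{[x] \in X/\sim} [x]^{2},$$
so the two inclusions that remain to be checked are purely set-theoretic and involve no topology; in particular neither compactness nor topological quasiconvexity of $\sim$ is needed, and the earlier Proposition \ref{quaseconvexidadetop} is not invoked.

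First I would establish $\bigcup_{[x]\in X/\sim}[x]^{2} = \sim$ as subsets of $X \times X$. For the inclusion $\subseteq$, any pair $(a,b) \in [x]^{2}$ satisfies $a,b \in [x]$, so $a \sim b$ because an equivalence class is precisely a $\sim$-class; hence $(a,b) \in \mathord{\sim}$. For the reverse inclusion $\supseteq$, if $(a,b) \in \mathord{\sim}$ then $a$ and $b$ share the common class $[a] = [b]$, so $(a,b) \in [a]^{2}$, and since $[a] \in X/\!\!\sim$ this pair indeed appears in the union. The decisive point is that here the index set is the \emph{entire} quotient, so every class contributes and no $\sim$-related pair is omitted; the collapse to $\sim$ is therefore exact rather than a proper subrelation. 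Next I would note that $\Delta^{2}X \subseteq \mathord{\sim}$ by reflexivity of $\sim$, so adjoining the diagonal to the union does not enlarge it. Combining the two steps yields $\sim_{X/\sim} = \Delta^{2}X \cup \mathord{\sim} = \mathord{\sim}$, which is the claim.

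There is no genuine obstacle: the statement is a definitional consistency check, confirming that the family $\{\sim_{A}\}_{A \subseteq X/\sim}$ interpolates between the trivial relation $\sim_{\emptyset} = \Delta^{2}X$ and the full relation $\sim$, recovering $\sim$ itself at the top of the lattice of such subrelations. The only aspect requiring minimal care is precisely the ``every class contributes'' observation above, which is what makes the union over all classes reconstruct $\sim$ on the nose.
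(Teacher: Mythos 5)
Your proof is correct and is exactly the definitional unwinding the paper has in mind: the remark is stated without proof because, as you verify, taking $A = X/\!\!\sim$ in the definition of $\sim_{A}$ makes every class contribute, so the union $\Delta^{2}X \cup \bigcup_{[x]\in X/\sim}[x]^{2}$ reconstructs $\sim$ on the nose. Your observation that no topology, compactness, or quasiconvexity is involved is also accurate.
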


\begin{prop}\label{transitivityquasiconvexitytopology}Let $X$ be a Hausdorff compact space, $\sim$ a topologically quasiconvex equivalence relation on $X$ and $\sim'$ an equivalence relation on $X$ such that $\sim' \subseteq \sim$ and $\forall q \in X$, $\sim' \! \cap [q]_{\sim}^{2}$ is topologically quasiconvex (as a relation in $[q]_{\sim}$), with $[q]_{\sim}$ the class of $q$ with respect to $\sim$. Then $\sim'$ is topologically quasiconvex.
\end{prop}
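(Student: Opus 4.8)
The plan is to verify directly the two defining conditions of topological quasiconvexity for $\sim'$: that every class $[q]_{\sim'}$ is closed, and that for each $u \in \U$ only finitely many $\sim'$-classes fail to be $u$-small.

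For closedness, I would fix $q \in X$. Since $\sim' \subseteq \sim$, the class $[q]_{\sim'}$ is contained in $[q]_{\sim}$, and it coincides with the $(\sim' \cap [q]_{\sim}^{2})$-class of $q$ computed inside $[q]_{\sim}$. By topological quasiconvexity of $\sim$, the set $[q]_{\sim}$ is closed in $X$, hence a compact Hausdorff space whose unique compatible uniform structure is the one induced from $X$ as a subspace. By the hypothesis that $\sim' \cap [q]_{\sim}^{2}$ is topologically quasiconvex in $[q]_{\sim}$, the class of $q$ is closed in $[q]_{\sim}$; and a closed subset of the closed set $[q]_{\sim}$ is closed in $X$. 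Thus $[q]_{\sim'}$ is closed.

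For the finiteness condition I would fix $u \in \U$ and first observe that any $\sim'$-class $C$ that is not $u$-small must lie in a $\sim$-class that is not $u$-small: indeed $C$ is contained in a single $\sim$-class $D$ (because $\sim' \subseteq \sim$), and were $D$ to be $u$-small, its subset $C$ would be $u$-small too. By topological quasiconvexity of $\sim$ there are only finitely many non-$u$-small $\sim$-classes $D_{1}, \dots, D_{n}$, so every non-$u$-small $\sim'$-class is contained in one of them. It then suffices to bound, for each fixed $i$, the number of non-$u$-small $\sim'$-classes contained in $D_{i}$; these are exactly the $(\sim' \cap D_{i}^{2})$-classes of $D_{i}$. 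Writing $u_{i} = u \cap (D_{i} \times D_{i})$ for the restriction of $u$, the entourage $u_{i}$ belongs to the unique uniform structure of the compact Hausdorff space $D_{i}$, and a class inside $D_{i}$ is $u$-small precisely when it is $u_{i}$-small. By topological quasiconvexity of $\sim' \cap D_{i}^{2}$ only finitely many of these classes are not $u_{i}$-small, hence not $u$-small. Summing over the finitely many indices $i$ yields finitely many non-$u$-small $\sim'$-classes, completing the verification.

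The step I would treat most carefully, and the only genuinely delicate point, is the compatibility of the two notions of smallness: that for $A \subseteq D_{i}$ being $u$-small in $X$ coincides with being $u_{i}$-small in $D_{i}$, and that $u_{i}$ is genuinely an entourage of the unique uniform structure on $D_{i}$. Both follow from the fact that $A \times A \subseteq D_{i} \times D_{i}$ automatically when $A \subseteq D_{i}$, so that $A \times A \subseteq u$ if and only if $A \times A \subseteq u_{i}$, together with the fact that for a compact Hausdorff subspace of a compact Hausdorff space the unique compatible uniform structure is the subspace uniformity, whose entourages are exactly the restrictions of entourages of $X$. Once this bookkeeping is settled, the two conditions combine to show that $\sim'$ is topologically quasiconvex.
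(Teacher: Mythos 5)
Your proof is correct and takes essentially the same route as the paper's: both decompose the non-$u$-small $\sim'$-classes according to the finitely many non-$u$-small $\sim$-classes containing them, apply the hypothesis to the restricted relation on each such class using the restricted entourage $u \cap [q]_{\sim}^{2}$, and conclude by a finite union of finite sets. Your additional verification that each $[q]_{\sim'}$ is closed and that restriction of entourages is compatible with the unique uniformity on the compact subspace $[q]_{\sim}$ only makes explicit bookkeeping that the paper leaves implicit.
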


\begin{proof}Let $\U$ be the unique uniform structure compatible with the topology of $X$ and $u \in \U$. The set $\{[q]_{\sim}: [q]_{\sim} \notin Small(u)\}$ is finite. Let $[q]_{\sim} \notin Small(u)$ and consider $[j]_{\sim'}$ the equivalence class of a point $j$ with respect to $\sim'$. We have that $\{[j]_{\sim'}\subseteq [q]_{\sim}: [j]_{\sim'} \notin Small(u \cap [q]_{\sim}^{2})\}$ is finite, which implies that  $\{[j]_{\sim'}\subseteq [q]_{\sim}: [j]_{\sim'} \notin Small(u)\}$ is also finite. Then $\{[j]_{\sim'} \subseteq X: [j]_{\sim'} \notin Small(u)\}$ is finite, since it is a finite union of finite sets. Thus $\sim'$ is topologically quasiconvex.
\end{proof}

\begin{prop}\label{quaseconvexidadebase}Let $X$ be a Hausdorff compact space, $\U$ be the unique uniform structure compatible with the topology of $X$, $\B$ a base of $\U$ and $\sim$ an equivalence relation on $X$ such that $\forall u \in \B$, $\#\{[x]\subseteq X: [x] \notin Small(u)\}< \aleph_{0}$, with $[x]$ the class of $x$. Then $\sim$ is topologically quasiconvex.
\end{prop}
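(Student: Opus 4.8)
The statement to establish has two parts: that every $\sim$-class is closed, and that for each $u\in\U$ only finitely many classes fail to be $u$-small. The plan is to obtain the second, quantitative, part by transporting the hypothesis from the base $\B$ to all of $\U$ via the elementary monotonicity of smallness in the entourage variable; the closedness of classes I would treat separately, since it is of a different, qualitative, nature.

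For the finiteness condition, fix an arbitrary $u\in\U$. As $\B$ is a base of $\U$, choose $v\in\B$ with $v\subseteq u$. The key observation is monotonicity: if a class $[x]$ is $v$-small, i.e. $[x]^{2}\subseteq v$, then $[x]^{2}\subseteq v\subseteq u$, so $[x]$ is also $u$-small. Passing to contrapositives at the level of classes yields the inclusion
\[
\{[x]\subseteq X:\ [x]\notin Small(u)\}\ \subseteq\ \{[x]\subseteq X:\ [x]\notin Small(v)\}.
\]
Since $v\in\B$, the right-hand set is finite by hypothesis, hence so is the left-hand set. As $u\in\U$ was arbitrary, the finiteness condition holds for every entourage, which verifies the quantitative half of topological quasiconvexity.

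It remains to secure that each class $[q]$ is closed, and this is the step I expect to be the real obstacle, because it does not follow from the finiteness condition alone: one can arrange a single non-closed class that is nevertheless the unique non-$u$-small class for every fine $u$, so that the counting hypothesis is met while closedness fails. I would therefore deduce closedness from the ambient structure of $\sim$ rather than from the counting hypothesis. In the situations where this proposition is applied, $\sim$ is the fibre relation $\Delta^{2}X\cup\bigcup_{y}f^{-1}(y)^{2}$ of a continuous quotient map $f$ into a Hausdorff space; then $\sim$ is a closed subset of $X\times X$, and each class $[q]$, being a slice of a closed set, is automatically closed. In the abstract statement I would accordingly read closedness of classes as part of the standing data on $\sim$, the genuine content of the proposition being the reduction of the finiteness test from all of $\U$ to the base $\B$.
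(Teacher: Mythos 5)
Your finiteness argument is exactly the paper's proof: pick $v\in\B$ with $v\subseteq u$ and use monotonicity of smallness to get the inclusion of the two sets of non-small classes. Your side remark about closedness is also correct, and worth noting: the paper's own proof silently omits this half of the definition, and it genuinely does not follow from the counting hypothesis --- on $X=[0,1]$ the relation with classes $\{0\}$, $\{1\}$ and $(0,1)$ satisfies the hypothesis for every entourage (at most one class is ever non-small) yet has a non-closed class. So closedness must be carried as standing data or checked separately where the proposition is invoked; in the paper's applications the classes are fibers of continuous maps into Hausdorff spaces (e.g.\ Proposition \ref{quaseconvexidadefibrado}), hence closed for independent reasons, exactly as you say.
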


\begin{proof}Let $u \in \U$. We have that $\exists v\in \B:$ $v \subseteq u$. By hypothesis, we have that $\#\{[x]\subseteq X: [x] \notin Small(v)\}< \aleph_{0}$. If $[x] \in Small(v)$ and $v \subseteq u$ then $[x] \in Small(u)$, which implies that $\{[x]\subseteq X: [x] \notin Small(u)\} \subseteq \{[x]\subseteq X: [x] \notin Small(v)\}$. So $\#\{[x]\subseteq X: [x] \notin Small(u)\} < \aleph_{0}$. Thus $\sim$ is topologically quasiconvex.
\end{proof}

Consider $Z$ a Hausdorff compact space, $P \subseteq Z, \ \{X_{p}\}_{p \in P}$ a family of Hausdorff compact spaces, $\forall p \in P$, a continuous map $\pi_{p}: X_{p} \rightarrow Z$ that is injective at $X_{p} - \pi_{p}^{-1}(p)$ and surjective and $X = \lim\limits_{\longleftarrow}\{X_{p},\pi_{p}\}_{p\in P}$, together with the maps $\varpi_{p}: X \rightarrow X_{p}$, for $p \in P$.

We have that $\pi_{p}|_{X_{p} - \pi_{p}^{-1}(p)}: X_{p} - \pi_{p}^{-1}(p) \rightarrow Z - \{p\}$ is a homeomorphism. So there is a canonical copy of $Z-P$ on each $X_{p}$.

\begin{prop}\label{embedding}Let $\iota: Z - P \rightarrow X$ be the map inducted by the map inclusions of $Z-P$ on $X_{p}$, for each $p$, and on $Z$. Then, $\iota$ is an embedding.
\end{prop}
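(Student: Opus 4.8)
The plan is to realize the inverse limit concretely as the fibre product over $Z$,
$$X = \Bigl\{(x_p)_{p\in P} \in \prod_{p\in P} X_p : \pi_p(x_p) = \pi_q(x_q) \ \forall\, p,q \in P\Bigr\},$$
with the $\varpi_p$ being the coordinate projections, and to write $\iota$ down explicitly. For $z \in Z-P$ and each $p \in P$ we have $z \neq p$, so $z$ lies in $Z-\{p\}$; since $\pi_p|_{X_p-\pi_p^{-1}(p)}$ is a homeomorphism onto $Z-\{p\}$, there is a unique point $s_p(z) \in X_p - \pi_p^{-1}(p)$ with $\pi_p(s_p(z)) = z$. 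The inclusion of the canonical copy of $Z-P$ into $X_p$ is exactly $s_p$, so $\iota(z) = (s_p(z))_{p\in P}$, and this tuple does lie in $X$ because every coordinate projects to the common value $z$.

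First I would verify continuity and injectivity. Continuity is immediate from the universal property of the inverse limit: since $X$ carries the initial topology with respect to the maps $\varpi_p$, it is enough that each $\varpi_p \circ \iota = s_p$ be continuous, and $s_p$ is the composite of the inverse of the homeomorphism $\pi_p|_{X_p-\pi_p^{-1}(p)}$ with the inclusion $X_p - \pi_p^{-1}(p) \hookrightarrow X_p$. For injectivity, observe that for any fixed $p$ the composite $\pi_p \circ \varpi_p \circ \iota$ is the identity map of $Z-P$; in particular $\iota$ has a left inverse, hence is injective.

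It then remains to show that $\iota^{-1}$ is continuous, i.e. that $\iota$ is a homeomorphism onto its image. The main obstacle is that $Z-P$ need not be compact, as $P$ is an arbitrary subset, so the convenient principle ``a continuous bijection from a compact space to a Hausdorff space is a homeomorphism'' is not available. The observation that circumvents this is that $\iota$ admits a continuous global retraction: fix any $p \in P$ and consider the continuous map $\pi_p \circ \varpi_p : X \to Z$, which by the compatibility condition defining the fibre product is independent of $p$ and is simply the canonical projection of $X$ onto the base. Its restriction to $\iota(Z-P)$ is valued in $Z-P$ and coincides with $\iota^{-1}$, because $\pi_p(\varpi_p(\iota(z))) = \pi_p(s_p(z)) = z$; hence $\iota^{-1}$ is continuous and $\iota$ is an embedding. (If $P = \emptyset$ the diagram reduces to $\{Z\}$, whence $X = Z$, $\iota$ is the identity of $Z = Z-P$, and the statement is trivial.)
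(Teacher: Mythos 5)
Your proof is correct and follows essentially the same route as the paper's: injectivity comes from the left inverse $\pi_{p}\circ\varpi_{p}\circ\iota = \mathrm{id}_{Z-P}$, and the embedding property comes from the fact that $\iota$ post-composed with a continuous projection is already an embedding of $Z-P$. The only cosmetic difference is that you retract through $\pi: X \rightarrow Z$ directly, while the paper retracts through $\varpi_{p}: X \rightarrow X_{p}$ (using that $\iota_{p}$ is an embedding) and phrases the conclusion via the initial-map characterization; both are instances of the same observation that a continuous map admitting a continuous left inverse is an embedding.
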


\begin{proof}The map $\iota$ is the unique continuous map that commutes the diagram (for each $p \in P$):

$$ \xymatrix{ Z - P   \ar[rd]_{\iota_{p}} \ar[r]^{\iota} & X \ar[d]^{\varpi_{p}} \ar[rd]^{\pi} & \\
            & X_{p} \ar[r]^{\pi_{p}} & Z} $$

Where $\iota_{p}$ is the inclusion map. Since $\pi_{p}\circ \iota_{p} = id_{Z-P}$ is injective, we have that $\iota$ is injective too. Let $A$ be a topological space and $g: A \rightarrow Z-P$ a map. If $g$ is continuous, then $\iota \circ g$ is continuous. If $\iota \circ g$ is continuous, then $\varpi_{p} \circ \iota \circ g$ is continuous (for every $p \in P$). But $\varpi_{p} \circ \iota \circ g = \iota_{p} \circ g$, which implies that $g$ is continuous, because $\iota_{p}$ is an embedding. So $g$ is continuous if and only if $\iota \circ g$ is continuous. Thus, $\iota$ is an embedding.
\end{proof}

\begin{prop}The projection map $\pi: X \rightarrow Z$ is injective on the set $\pi^{-1}(Z-P)$.
\end{prop}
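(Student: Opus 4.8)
The plan is to set up the inverse limit structure explicitly and then reduce the claim to a statement about the individual factors $X_p$, where the hypotheses on $\pi_p$ give injectivity for free. Recall that $X = \varprojlim\{X_p,\pi_p\}_{p\in P}$ sits inside the product $\prod_{p\in P}X_p$ and that the projections $\varpi_p : X \to X_p$ all satisfy $\pi_p \circ \varpi_p = \pi$ (this is exactly the compatibility encoded in the diagram of Proposition \ref{embedding}). Since every $\pi_p$ is continuous and every $X_p$ is compact Hausdorff, $\pi$ itself is continuous; the real content is injectivity on $\pi^{-1}(Z-P)$.

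First I would take two points $x,x' \in \pi^{-1}(Z-P)$ with $\pi(x)=\pi(x') = z$, where $z \in Z-P$, and show $x = x'$. Because $x$ and $x'$ are elements of the inverse limit, they are determined by their families of coordinates $\{\varpi_p(x)\}_{p\in P}$ and $\{\varpi_p(x')\}_{p\in P}$; so it suffices to prove $\varpi_p(x) = \varpi_p(x')$ for every $p \in P$. Fix such a $p$. Then $\pi_p(\varpi_p(x)) = \pi(x) = z$ and likewise $\pi_p(\varpi_p(x')) = z$, so both $\varpi_p(x)$ and $\varpi_p(x')$ lie in $\pi_p^{-1}(z)$. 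The key observation is that $z \neq p$ (since $z \in Z-P$ and $p \in P$), so $\varpi_p(x),\varpi_p(x') \in X_p - \pi_p^{-1}(p)$, the region on which $\pi_p$ is assumed injective. Hence $\varpi_p(x) = \varpi_p(x')$.

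Since this holds coordinatewise for every $p \in P$, the two points agree in all coordinates of the product $\prod_{p\in P}X_p$, and therefore $x = x'$ as elements of $X$. This establishes injectivity of $\pi$ on $\pi^{-1}(Z-P)$.

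I do not expect any genuine obstacle here: the statement is essentially a bookkeeping consequence of the definition of the inverse limit together with the injectivity built into each $\pi_p$ away from $\pi_p^{-1}(p)$. The only point requiring a little care is the identification of a point of the inverse limit with its tuple of coordinates and the resulting reduction to verifying equality in each factor; once that is made explicit, the argument is immediate. One could alternatively phrase the whole thing through the embedding $\iota$ of Proposition \ref{embedding}, noting that $\pi^{-1}(Z-P) = \iota(Z-P)$ and that $\pi \circ \iota = \mathrm{id}_{Z-P}$, which is manifestly injective, but the coordinatewise argument is cleaner and does not presuppose that the fibre over a point of $Z-P$ is a single point.
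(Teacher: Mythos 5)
Your proof is correct. The paper argues differently in form: it introduces the equivalence relation $\sim\; = \Delta^{2}(X)\cup\bigcup_{x\in Z-P}\pi^{-1}(x)^{2}$, observes that the maps $\varpi_{p}$ and $\pi$ descend to continuous maps $\varpi'_{p}$ and $\pi'$ on the quotient $X/\!\sim$, and then invokes the uniqueness of the inverse limit to conclude that the quotient map $\varrho$ is a homeomorphism, hence $\sim\; = \Delta^{2}(X)$. Your coordinatewise argument is more elementary and, in fact, supplies explicitly the step the paper leaves implicit: the existence of the descended maps $\varpi'_{p}$ is precisely the assertion that $\varpi_{p}(x)=\varpi_{p}(x')$ whenever $\pi(x)=\pi(x')\in Z-P$, and that assertion is exactly what you prove by noting that $\pi_{p}\circ\varpi_{p}=\pi$ sends both points to some $z\neq p$, so both coordinates lie in the region $X_{p}-\pi_{p}^{-1}(p)$ where $\pi_{p}$ is injective. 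What the paper's categorical phrasing buys is a uniform template reused elsewhere (e.g.\ for the analogous statement about $\varpi_{p}|_{\pi^{-1}(p)}$, proved ``analogously''); what your version buys is transparency and independence from the uniqueness-of-limits argument, since once the coordinates agree for every $p$ the points of $X\subseteq\prod_{p}X_{p}$ already coincide. Both are complete proofs of the statement.
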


\begin{proof}Consider the equivalence relation $\sim = \Delta^{2}(X) \cup \bigcup_{x\in  Z-P} \pi^{-1}(x)^{2}$. By the definition of this equivalence relation there exists maps $\varpi'_{p}$ and $\pi'$ that commutes the diagram (for each $p \in P$):

$$ \xymatrix{ & X \ar[ldd]_{\varpi_{p}} \ar[rdd]^{\pi} \ar[d]^{\varrho} & \\
            & X/ \! \! \sim \ar[ld]^{\varpi'_{p}} \ar[rd]_{\pi'} & \\
            X_{p}\ar[rr]^{\pi_{p}} & & Z} $$

Where $\varrho$ is the quotient map. Since $\varpi_{p}$ and $\pi$ are continuous and $\varrho$ is a quotient map, we have that $\varpi'_{p}$ and $\pi'$ are continuous. By the uniqueness of the limit, we have that $\varrho$ is a homeomorphism, which implies that $\sim = \Delta^{2}(X)$. Thus, $\forall x \in Z-P, \ \#\pi^{-1}(x) = 1$, which implies that $\pi|_{\pi^{-1}(Z-P)}$ is injective.

\end{proof}

\begin{prop}The map $\varpi_{p}|_{\pi^{-1}(p)}$ is a homeomorphism between $\pi^{-1}(p)$ and $\pi_{p}^{-1}(p)$.
\end{prop}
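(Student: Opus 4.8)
The plan is to use the concrete description of the inverse limit as the subspace
$$X = \Big\{(x_{q})_{q\in P} \in \prod_{q\in P} X_{q} : \pi_{q}(x_{q}) = \pi_{q'}(x_{q'}) \ \forall q,q' \in P\Big\},$$
with $\varpi_{p}$ the projection onto the $p$-th coordinate and $\pi((x_{q})_{q\in P})$ the common value $\pi_{q}(x_{q})$. Under this description, $\pi^{-1}(p)$ consists of exactly the compatible families $(x_{q})_{q\in P}$ with $\pi_{q}(x_{q}) = p$ for every $q$, i.e. $x_{q}\in \pi_{q}^{-1}(p)$ for all $q\in P$. The goal is then to read off the fiber $\pi^{-1}(p)$ coordinatewise and compare it with $\pi_{p}^{-1}(p)$ via $\varpi_{p}$.

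First I would observe that for each index $q\neq p$ the fiber $\pi_{q}^{-1}(p)$ is a single point. Indeed, since $p\neq q$, any $x\in \pi_{q}^{-1}(p)$ satisfies $x\notin \pi_{q}^{-1}(q)$, so $\pi_{q}^{-1}(p)\subseteq X_{q} - \pi_{q}^{-1}(q)$; surjectivity of $\pi_{q}$ gives at least one such point, and the injectivity of $\pi_{q}$ on $X_{q} - \pi_{q}^{-1}(q)$ gives at most one. Call this unique point $z_{q}$. Consequently a point $(x_{q})_{q\in P}$ of $\pi^{-1}(p)$ has $x_{q} = z_{q}$ forced for every $q\neq p$, while its $p$-th coordinate $x_{p}$ may be any element of $\pi_{p}^{-1}(p)$. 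This already yields that $\varpi_{p}|_{\pi^{-1}(p)}$ is injective, since two families agreeing in the $p$-th coordinate agree in every coordinate, and surjective onto $\pi_{p}^{-1}(p)$: given $x_{p}\in \pi_{p}^{-1}(p)$, the family with $p$-th coordinate $x_{p}$ and $q$-th coordinate $z_{q}$ for $q\neq p$ is compatible, as all of its coordinates map to $p$ under their respective $\pi_{q}$, and hence lies in $\pi^{-1}(p)$.

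It then remains to upgrade this bijection to a homeomorphism, which is the only genuinely topological, as opposed to set-theoretic, step. The map $\varpi_{p}|_{\pi^{-1}(p)}$ is continuous as a restriction of the continuous projection $\varpi_{p}$. Its domain $\pi^{-1}(p)$ is closed in $X$, being the preimage of a point under the continuous map $\pi$ into the Hausdorff space $Z$, and hence compact, while its codomain $\pi_{p}^{-1}(p)$ is a subspace of the Hausdorff space $X_{p}$ and so is Hausdorff. Since a continuous bijection from a compact space onto a Hausdorff space is automatically a homeomorphism, this finishes the argument. The step I expect to carry all the content is the singleton-fiber observation for $q\neq p$; once that is in hand the bijection is forced, and the homeomorphism follows purely formally from compactness.
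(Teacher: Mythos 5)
Your proof is correct, and it is more explicit than what the paper actually records: the paper disposes of this proposition with the single line ``analogous to the proposition above,'' pointing back to an argument that forms a quotient of $X$ by the relation collapsing the relevant fibers and then invokes the uniqueness of the inverse limit to conclude that the quotient map is a homeomorphism (whence the relation is trivial and the restricted projection is injective). You instead work with the concrete model of the limit as the set of compatible families in $\prod_{q}X_{q}$, and your key observation --- that for $q\neq p$ the set $\pi_{q}^{-1}(p)$ is a singleton because $\pi_{q}$ is surjective and injective off $\pi_{q}^{-1}(q)$ --- pins down every coordinate of a point of $\pi^{-1}(p)$ except the $p$-th one. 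This buys you injectivity and surjectivity of $\varpi_{p}|_{\pi^{-1}(p)}$ in one stroke, whereas the paper's categorical route most naturally yields injectivity and would still need an argument (essentially your singleton-fiber computation) to see that every $x_{p}\in\pi_{p}^{-1}(p)$ extends to a compatible family. Your final step, compactness of $\pi^{-1}(p)$ (closed in the compact space $X$, as a point-preimage under a continuous map to a Hausdorff space) plus the Hausdorffness of $X_{p}$, is the standard and correct way to upgrade the continuous bijection to a homeomorphism. In short: different method, same conclusion, and arguably a more self-contained account than the one-line proof in the text.
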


\begin{proof}Analogous to the proposition above.
\end{proof}

\begin{prop}\label{quaseconvexidadefibrado}Consider the relation $\sim = \Delta^{2} X^{2}\cup \bigcup\limits_{p\in P}\pi^{-1}(p)^{2}$ on the space $X$. Then $\sim$ is topologically quasiconvex.
\end{prop}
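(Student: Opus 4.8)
The plan is to verify the two defining conditions of topological quasiconvexity directly for the relation $\sim$, and to leverage the inverse limit structure so that the verification reduces to a statement about a single factor $X_p$.

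First I would check that each equivalence class is closed. The nontrivial classes are the fibers $\pi^{-1}(p)$ for $p \in P$; since $\pi: X \to Z$ is continuous and $\{p\}$ is closed in the Hausdorff space $Z$, each $\pi^{-1}(p)$ is closed. The diagonal classes (singletons) are closed because $X$ is Hausdorff. So the first condition holds immediately.

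The main work is the finiteness condition: for every $u \in \U$, only finitely many classes fail to be $u$-small. Here I would exploit \textbf{Proposition \ref{quaseconvexidadebase}}, which lets me verify the condition only for entourages in a base of $\U$. Since $X = \varprojlim\{X_p, \pi_p\}$ sits inside $\prod_{p\in P} X_p$, a convenient base of entourages is given by the preimages $\varpi_p^{-1}(v)$ for $v$ an entourage of $X_p$: indeed, the uniform structure on the product (restricted to $X$) is generated by finite intersections of such pullbacks, and for the finiteness count it suffices to control one factor at a time. Fix $p$ and an entourage $v$ of $X_p$. By \textbf{Proposition \ref{small}}, applied to the uniformly continuous map $\varpi_p$, any subset of $X_p$ that is $v$-small has $\varpi_p$-preimage that is $\varpi_p^{-1}(v)$-small. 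Now the key geometric input is that $\pi_p: X_p \to Z$ is injective off $\pi_p^{-1}(p)$, so the only nontrivial fiber of $\pi_p$ is $\pi_p^{-1}(p)$, and correspondingly $\varpi_p$ identifies the fibers $\pi^{-1}(q)$ for $q \neq p$ with single points of $X_p$. Thus under the entourage $\varpi_p^{-1}(v)$, a class $\pi^{-1}(q)$ with $q \neq p$ collapses (via $\varpi_p$) to a point of $X_p$, hence is automatically $\varpi_p^{-1}(v)$-small, while the single class $\pi^{-1}(p)$ may or may not be small. Consequently, at most one class ($\pi^{-1}(p)$ itself) fails to be $\varpi_p^{-1}(v)$-small with respect to a pullback from the $p$-th factor.

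The hard part will be handling a general basic entourage, which involves finitely many factors $p_1, \dots, p_k$ simultaneously, and confirming that the pullbacks $\varpi_p^{-1}(v)$ genuinely form a base for the unique uniform structure $\U$ on $X$ (this uses compactness of $X$ and the construction of the inverse limit topology). Once that is settled, a class $\pi^{-1}(q)$ fails to be small for such a finite intersection only if it fails in one of the finitely many factors $p_i$, and by the previous paragraph each factor contributes at most the single exceptional class $\pi^{-1}(p_i)$. Hence at most $k$ classes are non-small, which is finite. Applying \textbf{Proposition \ref{quaseconvexidadebase}} then upgrades this to all of $\U$, and we conclude that $\sim$ is topologically quasiconvex.
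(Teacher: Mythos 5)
Your proposal is correct and follows essentially the same route as the paper's proof: both work with the base of entourages of $X$ consisting of finite intersections $\varpi_{p_{1}}^{-1}(u_{p_{1}})\cap\dots\cap\varpi_{p_{n}}^{-1}(u_{p_{n}})$, observe via \textbf{Proposition \ref{small}} that a fiber $\pi^{-1}(q)$ can fail to be small for such an entourage only if some $\varpi_{p_{i}}(\pi^{-1}(q))$ is not small, note that this image is a singleton unless $q=p_{i}$, and conclude with \textbf{Proposition \ref{quaseconvexidadebase}}. The step you flag as the remaining "hard part" (that these pullbacks generate the unique uniform structure of the compact space $X\subseteq\prod_{p}X_{p}$) is exactly what the paper also takes as a standard fact without further argument.
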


\begin{proof}Let $\U$ and $\U_{p}$  be the uniform structures compatible with the topologies of $X$ and $X_{p}$, respectively, and $\B = \{\varpi_{p_{1}}^{-1}(u_{p_{1}})\cap...\cap \varpi_{p_{n}}^{-1}(u_{p_{n}}): u_{p_{i}}\in \U_{p_{i}}\}$. The set $\B$ is a base for $\U$. Let $u = \varpi_{p_{1}}^{-1}(u_{p_{1}})\cap...\cap \varpi_{p_{n}}^{-1}(u_{p_{n}}) \in \B$ and $p\in P$ such that $\pi^{-1}(p) \notin Small(u)$. Then there exists $i \in \{1,...,n\}$ such that $\pi^{-1}(p)\notin Small(\varpi_{p_{i}}^{-1}(u_{p_{i}}))$,  which implies that $\varpi_{p_{i}}(\pi^{-1}(p)) \notin Small(u_{p_{i}})$ (\textbf{Proposition \ref{small}}). But $\varpi_{p_{i}}(\pi^{-1}(p))$ is a singleton for every $p_{i} \neq p$ and points are arbitrarily small. So $p \in \{p_{1},...,p_{n}\}$, which implies that the set $\{\pi^{-1}(p): \pi^{-1}(p)\notin Small(u)\}$ is finite. Since $u$ is an arbitrary element of $\B$, it follows that $\sim$ is topologically quasiconvex (\textbf{Proposition \ref{quaseconvexidadebase}}).
\end{proof}

\begin{prop}\label{invarianceofnonparabolics}Let's take a family of spaces $\{Y_{p}\}_{p\in P'}$, $\forall p \in P$, a continuous map $\pi'_{p}: Y_{p} \rightarrow Z$ that is injective at $Y_{p} - \pi'^{-1}_{p}(p)$ and surjective and $Y = \lim\limits_{\longleftarrow}\{Y_{p},\pi'_{p}\}_{p\in P}$. Take, for each $p \in P$, a continuous map $\phi_{p}: X_{p} \rightarrow Y_{p}$ such that the diagram commutes:

$$ \xymatrix{ X_{p}   \ar[rd]_{\pi_{p}} \ar[r]^{\phi_{p}} & Y_{p} \ar[d]^{\pi'_{p}} \\
            & Z} $$

Let $\phi: X \rightarrow Y$ be the induced map. Then the diagram commutes:

$$ \xymatrix{ Z - P   \ar[rd]_{\iota'} \ar[r]^{\iota} & X \ar[d]^{\phi} \\
            & Y} $$

Where $\iota$ and $\iota'$ are the embedding maps.
\end{prop}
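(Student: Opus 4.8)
The plan is to verify the two commuting triangles $\phi \circ \iota = \iota'$ directly by exploiting the universal property of the inverse limit and the compatibility of the vertical maps $\phi_p$ with the projections. First I would recall that the embedding $\iota: Z - P \rightarrow X$ is characterized (as in the proof of \textbf{Proposition \ref{embedding}}) as the unique continuous map satisfying $\varpi_p \circ \iota = \iota_p$ for every $p \in P$, where $\iota_p: Z - P \rightarrow X_p$ is the canonical inclusion; likewise $\iota': Z - P \rightarrow Y$ is the unique continuous map with $\varpi'_p \circ \iota' = \iota'_p$, where $\varpi'_p: Y \rightarrow Y_p$ are the projections of the second inverse limit and $\iota'_p: Z - P \rightarrow Y_p$ is the inclusion. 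The induced map $\phi: X \rightarrow Y$ is by construction the unique continuous map satisfying $\varpi'_p \circ \phi = \phi_p \circ \varpi_p$ for every $p$.

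Since $Y = \lim_{\longleftarrow}\{Y_p, \pi'_p\}$, to prove two maps into $Y$ agree it suffices to prove their composites with every projection $\varpi'_p$ agree. So I would compute, for each $p \in P$,
\[
\varpi'_p \circ \phi \circ \iota = \phi_p \circ \varpi_p \circ \iota = \phi_p \circ \iota_p.
\]
On the other side, $\varpi'_p \circ \iota' = \iota'_p$. Thus the desired equality $\phi \circ \iota = \iota'$ reduces, projection by projection, to the claim that $\phi_p \circ \iota_p = \iota'_p$ as maps $Z - P \rightarrow Y_p$. This is exactly the statement that $\phi_p$ carries the canonical copy of $Z - P$ inside $X_p$ onto the canonical copy of $Z - P$ inside $Y_p$.

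To establish $\phi_p \circ \iota_p = \iota'_p$ I would use the hypothesis that $\pi'_p \circ \phi_p = \pi_p$. Both $\pi_p$ and $\pi'_p$ restrict to homeomorphisms onto $Z - \{p\}$ away from the fibers over $p$, and the inclusions are defined precisely by $\pi_p \circ \iota_p = id_{Z-P}$ and $\pi'_p \circ \iota'_p = id_{Z-P}$. For $z \in Z - P$, the point $\iota_p(z) \in X_p$ lies in $X_p - \pi_p^{-1}(p)$, so $\pi'_p(\phi_p(\iota_p(z))) = \pi_p(\iota_p(z)) = z$; since $z \neq p$, the point $\phi_p(\iota_p(z))$ lies in $Y_p - \pi'^{-1}_p(p)$, where $\pi'_p$ is injective, and $\iota'_p(z)$ is the unique point there mapping to $z$. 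Hence $\phi_p(\iota_p(z)) = \iota'_p(z)$, giving the identity on $Z - P$.

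The main obstacle, such as it is, is bookkeeping: one must be careful that $\phi_p$ actually sends $\iota_p(z)$ into the region $Y_p - \pi'^{-1}_p(p)$ where $\pi'_p$ is injective, rather than into the fiber over $p$. This is guaranteed by $z \neq p$ together with $\pi'_p \circ \phi_p = \pi_p$, so no genuine difficulty arises; the result then follows from the universal property once the fiberwise identity is in hand. Since $Z - P$ is dense in its closure and all maps in sight are continuous, one may alternatively note that the equality on the canonical copy of $Z - P$ is all that is asserted, so continuity extension is not even required here.
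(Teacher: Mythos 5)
Your proof is correct and follows essentially the same route as the paper: both arguments reduce the claim to composing with a projection onto $Z$ and invoking injectivity away from the blown-up fibers. The only cosmetic difference is that the paper applies this once with the global projection $\pi'\colon Y \rightarrow Z$ (using the previously established injectivity of $\pi'$ on $\pi'^{-1}(Z-P)$), whereas you work factor-by-factor with each $\pi'_{p}$ via the universal property of the inverse limit.
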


\begin{proof}Consider the diagram:

$$ \xymatrix{ Z - P   \ar[rd]_{\iota'} \ar[r]^{\iota} & X \ar[d]^{\phi} \ar[r]^{\pi} & Z \ar[d]^{id}\\
            & Y \ar[r]^{\pi'} & Z} $$

Where $\pi$ and $\pi'$ are the projection maps. We have, by the definition of $\phi$, that the square commutes. Let $x \in Z-P$. So $\pi \circ \iota(x) = \pi' \circ \phi \circ \iota(x)$. But $\pi\circ \iota(x) = x = \pi' \circ \iota'(x)$, which implies that $\pi' \circ \iota'(x) = \pi' \circ \phi \circ \iota(x)$. Since $\pi'|_{\pi'^{-1}(Z-P)}$ is injective, we have that $\iota'(x) = \phi \circ \iota(x)$. Thus, the diagram commutes.
\end{proof}

Now we show that topologically quasiconvex relations and inverse limits as described above are related.

\begin{prop}Let $W$ be a Hausdorff compact space, $\sim$ an equivalence relation on $W$, $Z = W/ \! \! \sim$ and $X_{p} = W / \! \! \sim_{p}$, where $p\in Z$ and $\sim_{p} = \sim_{W-\{p\}}$. Let $\pi_{p}: X_{p} \rightarrow Z$ be the quotient map and $X = \lim\limits_{\longleftarrow}\{X_{p},\pi_{p}\}_{p\in Z}$. If $\forall p \in Z, X_{p}$ is Hausdorff, then $X \cong W$.
\end{prop}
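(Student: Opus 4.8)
The plan is to exhibit a homeomorphism $W \cong X$ by using the universal property of the inverse limit. First I would observe that for each $p \in Z$ the quotient map $W \to X_p = W/\!\!\sim_p$ together with the quotient map $W \to Z$ and the quotient $X_p \to Z$ fit into a commuting diagram, because $\sim_p = \sim_{W - \{p\}} \subseteq \sim$ forces the $\sim_p$-classes to refine the $\sim$-classes, so the map $\pi_p \colon X_p \to Z$ is well defined. Hence the family of quotient maps $\{q_p \colon W \to X_p\}_{p \in Z}$ is compatible with the bonding maps $\pi_p$, and by the universal property of $X = \varprojlim\{X_p, \pi_p\}$ there is a unique continuous map $\Theta \colon W \to X$ with $\varpi_p \circ \Theta = q_p$ for every $p$.

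Next I would check that $\Theta$ is injective. Suppose $w, w' \in W$ are not $\sim$-equivalent; since $\sim$ is an equivalence relation I can pick $p = [w]_\sim \in Z$, and then $w$ and $w'$ lie in distinct $\sim_p$-classes (the class of $w$ got split off when we removed $p$), so $q_p(w) \neq q_p(w')$ and therefore $\Theta(w) \neq \Theta(w')$. Conversely, if $w \sim w'$ then they have the same image under every $q_p$, so $\Theta(w) = \Theta(w')$; this shows $\Theta$ descends to an injection on $Z = W/\!\!\sim$, but more to the point, the argument above shows that if $w \not\sim w'$ they are separated by \emph{some} coordinate, which is exactly injectivity of $\Theta$ provided we first quotient $W$ by $\sim$. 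The cleanest route is therefore to run this argument on the induced map $\bar\Theta \colon Z \to X$ and simultaneously produce the map going the other way.

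For surjectivity I would use compactness: $W$ is compact and $X$ is Hausdorff, so $\Theta(W)$ is a closed subset of $X$; it then suffices to show $\Theta(W)$ is dense, or directly that any thread $(x_p)_{p} \in X \subseteq \prod_p X_p$ is hit. Given such a thread, each $x_p$ is a $\sim_p$-class in $W$ and the compatibility conditions $\pi_p(x_p) = \pi_q(x_q)$ say these classes all map to the same point of $Z$; using that the $X_p$ are Hausdorff (so the $\sim_p$-classes are closed) together with the finite-intersection property, I would show the intersection $\bigcap_p x_p$ of the corresponding closed subsets of $W$ is nonempty, and any $w$ in this intersection satisfies $\Theta(w) = (x_p)_p$. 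Finally, a continuous bijection from a compact space to a Hausdorff space is a homeomorphism, giving $W \cong X$.

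The main obstacle I anticipate is the surjectivity step, specifically showing that the closed sets $\{x_p\}_p$ (viewed as subsets of $W$) have nonempty total intersection rather than merely pairwise nonempty intersections. This is where the Hausdorffness hypothesis on each $X_p$ is doing real work: it guarantees the $\sim_p$-classes are closed in the compact space $W$, so one can invoke the finite-intersection characterization of compactness, and the thread condition guarantees that any finite subfamily intersects (their images agree in $Z$ and the classes are nested appropriately). Verifying that the thread compatibility genuinely yields the finite-intersection property, and that a common point reconstructs the correct class in every coordinate, is the delicate part; the injectivity and the universal-property construction are comparatively routine.
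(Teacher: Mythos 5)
There is a genuine error in your injectivity step, and it is not a small one: you conclude that $\Theta$ is injective only ``provided we first quotient $W$ by $\sim$'', i.e.\ that the real injection is the induced map $\bar\Theta\colon Z\to X$. If that were the right statement, the argument would be heading toward $X\cong Z$, not $X\cong W$. The source of the error is the claim that ``if $w\sim w'$ then they have the same image under every $q_p$.'' This is false, and the whole point of the construction is that it is false: $\sim_p$ collapses every $\sim$-class \emph{except} the class of $p$ (this is why $\pi_p\colon X_p\to Z$ is injective away from $\pi_p^{-1}(p)$), so if $w\sim w'$ with $w\neq w'$ and $p=[w]_\sim$, then $q_p(w)\neq q_p(w')$. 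With the correct reading, $\Theta\colon W\to X$ is injective on the nose: points in different $\sim$-classes are separated already by the composition to $Z$, and distinct points in the same class $p$ are separated by the coordinate $q_p$. This is exactly how the paper argues, phrased fiberwise: $f$ restricted to a fiber $\pi^{-1}(p)\subseteq W$ is the composite of the bijection $\varpi_{p}|_{\pi^{-1}(p)}\colon \pi^{-1}(p)\to\pi_p^{-1}(p)$ with the inverse of the corresponding bijection $\varpi'_{p}|_{\pi'^{-1}(p)}$ from the fiber of $X$, both supplied by the two propositions immediately preceding this one.

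Your surjectivity step, by contrast, is the part you should worry about least. For a thread $(x_p)_p$ lying over $z\in Z$, one has $x_p=\pi^{-1}(z)$ as a subset of $W$ for every $p\neq z$ (the fiber of $\pi_p$ over $z\neq p$ is a single point of $X_p$), while $x_z$ is a singleton $\{w\}$ with $w\in\pi^{-1}(z)$; the total intersection is just $\{w\}$, one checks directly that $\Theta(w)=(x_p)_p$, and no finite-intersection-property argument is needed. The remaining ingredients (the universal property producing $\Theta$, and the compact-to-Hausdorff upgrade of a continuous bijection to a homeomorphism, which is where the hypothesis that each $X_p$ is Hausdorff actually earns its keep) are fine and match the paper. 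But as written the injectivity paragraph proves the wrong statement and needs to be redone.
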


\begin{proof}Let $\pi: W \rightarrow Z$ and $\forall p \in Z$, $\varpi_{p}: W \rightarrow X_{p}$ be the quotient maps and $\pi': X \rightarrow Z$ and  $\varpi_{p}': X \rightarrow X_{p}$ be the projection maps. We have that $\pi$ and the maps $\varpi_{p}$ induct a continuous map $f: W \rightarrow X$. Let $w,w' \in W$ such that $f(w) = f(w')$. We have that $\varpi_{\pi(w)}|_{\pi^{-1}(\pi(w))}$ and $\varpi'_{\pi'(f(w))}|_{\pi'^{-1}(\pi'(f(w)))}$ are homeomorphisms under their images, which implies that $f|_{\pi^{-1}(\pi(w))} = (\varpi'_{\pi'(f(w))}|_{\pi'^{-1}(\pi'(f(w)))})^{-1} \circ  \varpi_{\pi(w)}|_{\pi^{-1}(\pi(w))}$ is a bijection. Since $w,w' \in \pi^{-1}(\pi(w))$, it follows that $w = w'$. So $f$ is injective. Let $x \in X$. We have that  $f|_{\pi^{-1}(\pi'(x))}: \pi^{-1}(\pi'(x)) \rightarrow \pi'^{-1}(\pi'(x))$ is a bijection. Then there exists $w \in \pi^{-1}(\pi'(x))$ such that $f(w) = x$. Thus $f$ is a bijection and then a homeomorphism, since $W$ is compact and $X$ is Hausdorff.
\end{proof}

\begin{cor}\label{equivalencetopconvexity}Let $W$ be a Hausdorff compact space, $\sim$ an equivalence relation on $W$, $Z = W/ \! \! \sim$ and $X_{p} = W / \! \! \sim_{p}$, where $p\in Z$ and $\sim_{p} = \sim_{W-\{p\}}$. Then $\sim$ is topologically quasiconvex if and only if $\forall p \in Z$, $X_{p}$ is Hausdorff. \eod
\end{cor}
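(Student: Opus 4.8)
The plan is to read both implications off results already established, with the proposition immediately preceding this corollary serving as the bridge: it identifies $W$ with the inverse limit $X=\lim_{\longleftarrow}\{X_p,\pi_p\}_{p\in Z}$ as soon as each $X_p$ is Hausdorff.

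For the implication ``$\sim$ topologically quasiconvex $\Rightarrow$ each $X_p$ Hausdorff'', I would simply invoke Proposition~\ref{quaseconvexidadetop} with ambient space $W$, so that $W/\!\!\sim\,=Z$. Taking $A=Z-\{p\}$ there gives $\sim_A\,=\,\sim_p$ and hence $W/\!\!\sim_A\,=X_p$, and the conclusion of that proposition is exactly that $W/\!\!\sim_A$ is Hausdorff. No further argument is needed: this holds for every $p\in Z$.

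For the converse ``each $X_p$ Hausdorff $\Rightarrow$ $\sim$ topologically quasiconvex'', I would first apply the preceding proposition to obtain a homeomorphism $f\colon W\to X$, which by the universal property of the inverse limit satisfies $\pi'\circ f=\pi$, where $\pi\colon W\to Z$ is the quotient map and $\pi'\colon X\to Z$ is the projection. Regarding $\sim$ as the subset $\bigcup_{p\in Z}\pi^{-1}(p)^2$ of $W^2$ (its classes are precisely the fibers of $\pi$), the relation $\pi'\circ f=\pi$ shows that $f$ carries $\sim$ onto the fiber relation $\Delta^2X\cup\bigcup_{p\in Z}(\pi')^{-1}(p)^2$ on $X$. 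By Proposition~\ref{quaseconvexidadefibrado}, applied with $P=Z$, this fiber relation is topologically quasiconvex. Because the uniform structure of a compact Hausdorff space is unique and thus determined by its topology, topological quasiconvexity is a homeomorphism invariant; transporting back along $f$ then yields that $\sim$ is topologically quasiconvex.

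I do not expect a real obstacle, since both directions are essentially immediate once the dictionary is set up; the only points demanding care are bookkeeping. In the forward direction one must note the identification $\sim_p=\sim_{Z-\{p\}}$ under the notation of Proposition~\ref{quaseconvexidadetop}. In the converse one must check that $f$ genuinely matches the classes of $\sim$ in $W$ with the fibers $(\pi')^{-1}(p)$ in $X$ (immediate from $\pi'\circ f=\pi$), and that choosing $P=Z$ in Proposition~\ref{quaseconvexidadefibrado} makes its fiber relation coincide with the full relation $\sim$, the diagonal being absorbed since every point of $X$ lies in some fiber.
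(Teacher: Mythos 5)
Your proof is correct and is essentially the argument the paper intends (the corollary is left as immediate from the surrounding results): the forward implication is Proposition~\ref{quaseconvexidadetop} applied with $A=Z-\{p\}$, and the converse combines the preceding proposition's identification $W\cong\lim_{\longleftarrow}\{X_p,\pi_p\}$ with Proposition~\ref{quaseconvexidadefibrado} for $P=Z$, transporting the fiber relation back along the homeomorphism. Your bookkeeping (matching $\sim_p$ with $\sim_{Z-\{p\}}$, and the classes of $\sim$ with the fibers of $\pi$) is exactly what is needed.
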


\subsection{Dense amalgam}

In \cite{Sw} $\acute{S}$wiatkowski proved the following:

\begin{prop}Let $C_{1},...,C_{n}$ be compact metrizable spaces. Then there is a unique, up to homeomorphisms, metrizable space $X$ and $\mathcal{Y} = \mathcal{Y}_{1}\dot{\cup}...\dot{\cup}\mathcal{Y}_{n}$ a family of pairwise disjoint subsets of $X$ satisfying the following:

\begin{enumerate}
    \item $\forall i \in \{1,...,n\}$, $\mathcal{Y}_{i}$ is infinite countable and $\forall C \in \mathcal{Y}_{i}$, $C$ is homeomorphic to $C_{i}$.
    \item If $\U$ is the uniform structure compatible with the topology of $X$ and $u \in \U$, then the set $\{C\in \mathcal{Y}: C \notin Small(u)\}$ is finite.
    \item $\forall C \in \mathcal{Y}$, $X-C$ is dense in $X$.
    \item $\forall i \in \{1,...,n\}$, $\bigcup\mathcal{Y}_{i}$ is dense in $X$.
    \item Let $x,y \in X$ such that there is no $C \in \mathcal{Y}$ such that  $x,y \in C$. Then, there is a clopen set $F \subseteq X$ such that $x \in F$, $y \notin F$ and $\forall C \in \mathcal{Y}$, $C\subseteq F$ or $C\cap F = \emptyset$.
\end{enumerate}

\end{prop}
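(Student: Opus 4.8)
The plan is to treat existence and uniqueness separately, in both cases exploiting the fact that the ``base'' obtained from $X$ by collapsing each member of $\mathcal{Y}$ to a point is a Cantor set.

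For existence, I would realize $X$ as an inverse-limit blowup of the Cantor set $K$. Partition a countable dense subset of $K$ into pairwise disjoint dense sets $D_{1},\dots,D_{n}$ (possible since $K$ has no isolated points). For $p\in D_{i}$ let $X_{p}$ be the one-point blowup replacing $p$ by a copy of $C_{i}$, i.e. a compact metrizable space with a surjection $\pi_{p}\colon X_{p}\to K$ that is injective off $\pi_{p}^{-1}(p)\cong C_{i}$, and set $X=\lim\limits_{\longleftarrow}\{X_{p},\pi_{p}\}_{p\in P}$ with $P=\bigsqcup_{i}D_{i}$ and $\mathcal{Y}_{i}=\{\pi^{-1}(p):p\in D_{i}\}$. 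Then $X$ is compact Hausdorff as an inverse limit, and metrizable by \textbf{Proposition \ref{uniaometrizavel}}, since $X=\pi^{-1}(K-P)\cup\bigcup_{p\in P}\pi^{-1}(p)$ is a countable union of second countable subspaces ($\pi^{-1}(K-P)\cong K-P$ by \textbf{Proposition \ref{embedding}}, and each fiber is a copy of some $C_{i}$). Condition (1) is immediate; condition (2) is exactly \textbf{Proposition \ref{quaseconvexidadefibrado}}; conditions (3) and (4) follow from the density of $K-\{p\}$ and of each $D_{i}$ in $K$ together with the single-point-fiber lifting of nets (\textbf{Proposition \ref{liftingnet}}); and condition (5) follows from the total disconnectedness of $K$: if $\pi(x)\ne\pi(y)$ one pulls back a clopen subset of $K$ separating $\pi(x)$ from $\pi(y)$, and the only way to have $\pi(x)=\pi(y)$ with $x\ne y$ is for both to lie in a common fiber $C\in\mathcal{Y}$, which the hypothesis excludes.

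For uniqueness, given two such pairs $(X,\mathcal{Y})$ and $(X',\mathcal{Y}')$, I would first check that collapsing the members of $\mathcal{Y}$ to points produces a Cantor set. Condition (2) makes this collapsing relation topologically quasiconvex (singletons are arbitrarily small, so only the finitely many non-small classes, all lying in $\mathcal{Y}$, can fail smallness), so the quotient is Hausdorff by \textbf{Proposition \ref{quaseconvexidadetop}}, hence compact and metrizable; condition (5) makes it totally disconnected and conditions (3), (4) make it perfect, so it is the Cantor set $K$, and the images of the $\mathcal{Y}_{i}$ give $n$ pairwise disjoint dense countable subsets of $K$. I would then build the homeomorphism $X\to X'$ by a back-and-forth argument, producing compatible exhausting sequences of clopen partitions of $X$ and $X'$ (which exist, and refine arbitrarily finely, by condition (5)) together with a coherent bijection between their pieces: at each stage the finitely many copies that are not yet $u$-small (condition (2)) are isolated into their own pieces and matched, type by type and by a homeomorphism $C_{i}\to C_{i}$, to copies of the same type in $X'$, while the density conditions (1), (3), (4) guarantee that the remaining pieces always contain enough unmatched copies of every type to continue. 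As the mesh of the partitions tends to $0$, these matchings assemble into a bijection that is continuous in both directions, hence a homeomorphism carrying $\mathcal{Y}$ to $\mathcal{Y}'$.

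I expect the main obstacle to be the bookkeeping in the back-and-forth: one must ensure \emph{simultaneously} that (i) every point is eventually separated so the mesh tends to $0$, (ii) every member of $\mathcal{Y}$ is eventually isolated and matched to a member of $\mathcal{Y}'$ of the same homeomorphism type, and (iii) the matching is coherent under refinement. The finiteness in condition (2) is what lets one match the ``large'' copies at each finite stage, and the density in (1), (3), (4) is what keeps the recursion unobstructed; arranging these so that the limit map is a genuine homeomorphism, and not merely a continuous surjection, is the delicate point.
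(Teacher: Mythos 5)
First, a point of comparison: the paper does not prove this proposition at all — it is quoted from \cite{Sw} as a known result, so there is no in-paper proof to measure your attempt against. That said, your existence construction is exactly in the spirit of the paper's machinery: realizing $X$ as $\lim\limits_{\longleftarrow}\{X_{p},\pi_{p}\}_{p\in P}$ over one-point blowups of the Cantor set and reading off conditions (1)--(5) from \textbf{Propositions \ref{embedding}, \ref{quaseconvexidadefibrado}} and \textbf{\ref{liftingnet}} is precisely the translation that the paper's \textbf{Proposition \ref{denseamalgamequivalence}} encodes, and the Cantor-set identification of the collapsed space via topological quasiconvexity and \textbf{Proposition \ref{quaseconvexidadetop}} is also how the paper passes between the two pictures.

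There are, however, two genuine gaps. In the existence half you never construct the one-point blowup $X_{p}$: the existence of a compact metrizable space $(K-\{p\})\,\dot{\cup}\,C_{i}$ in which $K-\{p\}$ is open \emph{and dense} (density must be required, or condition (3) fails) with remainder homeomorphic to $C_{i}$ is not automatic. The naive candidate — the quotient of $K\times C_{i}$ collapsing the fibers over all $q\neq p$ — is not Hausdorff once $\#C_{i}>1$, since any two saturated open sets meeting $\{p\}\times C_{i}$ both contain $(V-\{p\})\times C_{i}$ for some neighbourhood $V$ of $p$. One must exhibit such a compactification explicitly (e.g.\ as the closure in $K\times[0,1]^{\N}$ of the graph of a locally constant map on $K-\{p\}$ whose cluster set at $p$ is a copy of $C_{i}$), and since many non-isomorphic compactifications with the same remainder exist a priori, either a specific one must be fixed or the as-yet-unproved uniqueness must be invoked to see the choice is immaterial. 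The second and more serious gap is in uniqueness: the back-and-forth on clopen partitions can only separate points transversally to the copies $C\in\mathcal{Y}$ (a copy need not be totally disconnected, so no clopen partition refines inside it), so the limit map is assembled from the nested clopen matching \emph{together with} a chosen homeomorphism $h_{C}:C\rightarrow C'$ for each matched pair; continuity at a point $c\in C$ then requires that the small clopen pieces accumulating at $c$ be matched to pieces accumulating at $h_{C}(c)$. This coherence between the individual $h_{C}$ and the surrounding partition matching is the actual content of the uniqueness proof in \cite{Sw}; you correctly identify it as ``the delicate point'' but do not resolve it, so as written the uniqueness half is a strategy rather than a proof.
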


He calls such space the dense amalgam of the spaces $C_{1},...,C_{n}$.

On this subsection, consider $K$ the Cantor set. Let $X$ be the dense amalgam of $C_{1},...,C_{n}$ and consider the equivalence relation $\sim = \Delta X \cup \bigcup_{C \in \mathcal{Y}} C^{2}$. Let $\pi: X \rightarrow X/\sim$ be the quotient map. From the second condition we have that $\pi$ is topologically quasiconvex. From Lemma 2.A.1 of \cite{Sw}, $X/ \! \! \sim$ is homeomorphic to $K$.

\begin{lema}$\forall i \in \{1,...,n\}$, $\pi(\bigcup\mathcal{Y}_{i})$ is dense on $X/ \! \! \sim$.
\end{lema}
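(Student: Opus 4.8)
The plan is to transfer the known density of $\bigcup\mathcal{Y}_{i}$ in $X$ (condition (4) of the dense amalgam characterization) across the quotient map $\pi$. The key observation is that $\pi$ is a continuous surjection between compact Hausdorff spaces, and continuous surjections always carry dense sets to dense sets: if $D$ is dense in $X$, then $\pi(D)$ is dense in $\pi(X) = X/\!\!\sim$. So the entire statement reduces to invoking this elementary topological fact together with condition (4).

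Concretely, first I would fix $i \in \{1,\dots,n\}$ and recall that by condition (4) the set $\bigcup\mathcal{Y}_{i}$ is dense in $X$. Next I would take an arbitrary nonempty open set $V \subseteq X/\!\!\sim$; since $\pi$ is continuous and surjective, $\pi^{-1}(V)$ is a nonempty open subset of $X$. By density of $\bigcup\mathcal{Y}_{i}$, there is a point $x \in \pi^{-1}(V) \cap \bigcup\mathcal{Y}_{i}$. Then $\pi(x) \in V \cap \pi(\bigcup\mathcal{Y}_{i})$, so this intersection is nonempty. As $V$ was an arbitrary nonempty open set, this shows $\pi(\bigcup\mathcal{Y}_{i})$ meets every nonempty open set of $X/\!\!\sim$, i.e.\ it is dense.

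I do not expect any genuine obstacle here; the lemma is essentially a one-line consequence of the fact that continuous images of dense sets are dense under surjections, combined with a property already established for the dense amalgam. The only thing to be careful about is that one really uses surjectivity of $\pi$ (guaranteed, as $X/\!\!\sim$ is by definition the image of $X$ under the quotient map) so that $\pi^{-1}(V)$ is genuinely nonempty for nonempty $V$. No appeal to the homeomorphism $X/\!\!\sim \,\cong K$, to topological quasiconvexity, or to the metrizability of $X$ is needed for this particular statement; those facts are relevant to neighbouring results rather than to this density claim.
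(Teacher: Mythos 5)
Your proof is correct and follows essentially the same route as the paper: both arguments combine condition (4) (density of $\bigcup\mathcal{Y}_{i}$ in $X$) with the elementary fact that a continuous surjection sends dense sets to dense sets. The paper phrases this via the inclusion $\pi(Cl_{X}(\bigcup\mathcal{Y}_{i})) \subseteq Cl_{X/\sim}(\pi(\bigcup\mathcal{Y}_{i}))$ rather than via preimages of open sets, but the content is identical.
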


\begin{proof}Since $\pi$ is continuous and surjective, $X/ \! \! \sim = \pi(Cl_{X}(\bigcup\mathcal{Y}_{i}))  \subseteq \\ Cl_{X/\sim}(\pi(\bigcup\mathcal{Y}_{i}))$, which implies that $\pi(\bigcup\mathcal{Y}_{i})$ is dense on $X/ \! \! \sim$ .
\end{proof}

Let now $X$ be a metrizable compact space and $\pi: X \rightarrow K$ a topologically quasiconvex map such that there is $P = P_{1}\dot{\cup}...\dot{\cup} P_{n}$, a subset of $K$, satisfying:

\begin{enumerate}
    \item $\forall i \in \{1,...,n\}$, $P_{i}$ is countable and dense on $K$.
    \item If $p,q \in P_{i}$, then $\pi^{-1}(p)$ and $\pi^{-1}(q)$ are homeomorphic.
    \item If $p \notin P$, then $\#\pi^{-1}(p) = 1$.
    \item $\forall p \in P$, $X-\pi^{-1}(p)$ is dense on $X$.
\end{enumerate}

\begin{lema}$X - \pi^{-1}(P)$ is dense on $X$.
\end{lema}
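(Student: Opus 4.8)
The plan is to realize $X-\pi^{-1}(P)$ as a countable intersection of dense open sets and then invoke the Baire category theorem. First I would observe that since $P = P_{1}\dot{\cup}\dots\dot{\cup}P_{n}$ is a finite union of the countable sets $P_{i}$, the set $P$ is itself countable; hence I can write
$$ X - \pi^{-1}(P) = \bigcap_{p\in P}\bigl(X - \pi^{-1}(p)\bigr) $$
as a countable intersection, which is exactly the shape Baire's theorem handles.

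Next I would check that each set $X - \pi^{-1}(p)$ in this intersection is open and dense. For openness, note that $\pi$ is continuous and $\{p\}$ is closed in the Hausdorff space $K$, so $\pi^{-1}(p)$ is closed and its complement $X-\pi^{-1}(p)$ is open. For density, this is precisely condition (4) of the hypotheses: $\forall p \in P$, $X - \pi^{-1}(p)$ is dense in $X$.

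Finally I would apply the Baire category theorem. Since $X$ is a compact metrizable space, it is a complete metric space and therefore a Baire space, so every countable intersection of dense open subsets of $X$ is dense in $X$. Applying this to the intersection above yields that $X - \pi^{-1}(P)$ is dense in $X$, as required.

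I do not expect a genuine obstacle here: the argument is a direct Baire-category computation, and the only points requiring care are the bookkeeping facts that $P$ is countable and that each $\pi^{-1}(p)$ is closed (so its complement is open). The substantive input — that each individual complement $X-\pi^{-1}(p)$ is dense — is supplied verbatim by hypothesis (4), so no additional work with the topologically quasiconvex structure of $\pi$ is needed for this particular statement.
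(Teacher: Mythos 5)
Your proposal is correct and follows essentially the same route as the paper: the paper also writes $X-\pi^{-1}(P)=\bigcap_{p\in P}\bigl(X-\pi^{-1}(p)\bigr)$, notes each term is open and dense, and invokes the Baire property of $X$ together with countability of $P$. Your write-up merely fills in the routine justifications (closedness of $\pi^{-1}(p)$, countability of $P$ as a finite union of countable sets, and metrizable compactness implying Baire) that the paper leaves implicit.
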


\begin{proof}Let $p \in P$. We have that $X- \pi^{-1}(p)$ is open and dense on $X$. Since $X$ is a Baire space and $P$ is countable, we have that $X - \pi^{-1}(P) = \bigcap_{p\in P} X- \pi^{-1}(p)$ is dense on $X$.
\end{proof}

\begin{lema}$\forall i \in\{1,..,n\}$, $\pi^{-1}(P_{i})$ is dense on $X$.
\end{lema}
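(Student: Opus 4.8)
The plan is to show that every nonempty open subset $U$ of $X$ meets $\pi^{-1}(P_{i})$. The strategy is to locate inside $U$ a point whose fibre is a single point, pull back an open neighbourhood of its image in $K$, and then exploit the density of $P_{i}$ in $K$ to find a point of $P_{i}$ whose fibre lands inside $U$.

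First I would invoke the previous lemma, which gives that $X - \pi^{-1}(P)$ is dense in $X$. Since $U$ is nonempty and open, there is a point $x \in U$ with $\pi(x) \notin P$. By hypothesis (3), $\pi^{-1}(\pi(x)) = \{x\}$, so the entire fibre over $\pi(x)$ is contained in $U$.

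Next I would pull back a neighbourhood of $\pi(x)$. As $X$ is compact and $K$ is Hausdorff, the continuous map $\pi$ is closed; moreover, being topologically quasiconvex it is a quotient map, hence surjective, so all fibres are nonempty. Since $\pi^{-1}(\pi(x)) = \{x\} \subseteq U$ with $U$ open, the set $\pi(X-U)$ is closed and does not contain $\pi(x)$, whence $V = K - \pi(X-U)$ is an open neighbourhood of $\pi(x)$ in $K$ satisfying $\pi^{-1}(V) \subseteq U$. This is the standard tube-lemma argument for a closed map with singleton fibre, identical in spirit to the proof of Proposition \ref{liftingnet}.

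Finally, since $V$ is a nonempty open subset of $K$ and $P_{i}$ is dense in $K$ by hypothesis (1), there exists $p \in P_{i} \cap V$. The fibre $\pi^{-1}(p)$ is then nonempty and satisfies $\pi^{-1}(p) \subseteq \pi^{-1}(V) \subseteq U$, so $U \cap \pi^{-1}(P_{i}) \neq \emptyset$. As $U$ was an arbitrary nonempty open set, $\pi^{-1}(P_{i})$ is dense in $X$. I expect the only delicate point to be the pull-back of $V$: it is precisely here that the singleton-fibre hypothesis (3) and the previous density lemma combine, through the closedness of $\pi$, to transfer the density of $P_{i}$ in the base $K$ to density of $\pi^{-1}(P_{i})$ in the total space $X$.
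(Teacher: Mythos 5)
Your proof is correct and follows essentially the same route as the paper: both start from a point of $U$ outside $\pi^{-1}(P)$ supplied by the previous lemma, and both exploit the closedness of $\pi$ together with the singleton-fibre hypothesis to transfer the density of $P_{i}$ in $K$ up to $X$. The only difference is presentational: the paper cites Proposition \ref{liftingnet} and lifts a sequence of $P_{i}$ converging to $\pi(x)$, whereas you inline the same $V = K - \pi(X-U)$ argument that underlies that proposition and work directly with the saturated open neighbourhood.
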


\begin{proof}Let $U$ be an open set of $X$. Since $X-\pi^{-1}(P)$ is dense on $X$, there exists $x \in U\cap (X-\pi^{-1}(P))$. Since $P_{i}$ is dense on $K$, there exists a sequence $\{y_{n}\}_{n\in \N} \subseteq P_{i}$ that converges to $\pi(x)$. Since $\pi^{-1}(x) = \{x\}$, we have, by \textbf{Proposition \ref{liftingnet}}, that if $\{x_{n}\}_{n \in \N}$ is a sequence on $X$ such that $\forall n \in \N$, $x_{n} \in \pi^{-1}(y_{n})$, then $\{x_{n}\}_{n \in \N}$ converges to $x$. But $\{x_{n}\}_{n \in \N}$ is a sequence contained in $\pi^{-1}(P_{i})$. Thus $\pi^{-1}(P_{i})$ is dense on $X$.
\end{proof}

Summarising we get:

\begin{prop}\label{denseamalgamequivalence}Let $X$ be a metrizable compact space. The space $X$ is a dense amalgam if and only if there is a map $\pi: X \rightarrow K$ that is topologically quasiconvex and $P = P_{1}\dot{\cup}...\dot{\cup}P_{n}$ a countable subset of $K$ such that $\forall i \in \{1,...,n\}$, $P_{i}$ is dense on $K$, $\forall p \in K-P$, $\#\pi^{-1}(p) = 1$, $\forall p \in P$, $X-\pi^{-1}(p)$ is dense on $X$ and $\forall p,q \in P_{i}$, $\pi^{-1}(p)$ and $\pi^{-1}(q)$ are homeomorphic. Moreover, the family $\mathcal{Y}$ is given by $\mathcal{Y}_{i} = \{\pi^{-1}(p): p \in P_{i}\}$. \eod
\end{prop}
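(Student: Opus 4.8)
The plan is to prove the two implications by matching each clause of the statement against the five axioms of $\acute{S}$wiatkowski's characterization, using the lemmas already established in this subsection as the bulk of the work; the final \emph{moreover} clause then follows for free from the uniqueness part of that characterization. Throughout, recall that a topologically quasiconvex map is by definition a quotient map, hence surjective, so all fibres are nonempty.

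For the forward implication I would assume $X$ is a dense amalgam of $C_{1},\dots,C_{n}$ with family $\mathcal{Y} = \mathcal{Y}_{1}\dot{\cup}\cdots\dot{\cup}\mathcal{Y}_{n}$, and keep the setup $\sim\ = \Delta X \cup \bigcup_{C\in\mathcal{Y}}C^{2}$, $\pi\colon X \to X/\!\!\sim$, with $X/\!\!\sim\ \cong K$ by Lemma 2.A.1 of \cite{Sw}. I would then set $P_{i}$ to be the image under $\pi$ of the classes in $\mathcal{Y}_{i}$ (equivalently, the points of $K$ whose $\pi$-fibre is a member of $\mathcal{Y}_{i}$) and $P = P_{1}\dot{\cup}\cdots\dot{\cup}P_{n}$. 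Each $P_{i}$ is countable because $\mathcal{Y}_{i}$ is, it is dense by the lemma that $\pi(\bigcup\mathcal{Y}_{i})$ is dense in $X/\!\!\sim$, and the fibres over $P_{i}$ are precisely the members of $\mathcal{Y}_{i}$, hence mutually homeomorphic (all copies of $C_{i}$); a point $p\notin P$ has singleton fibre since its $\sim$-class is not collapsed, and for $p\in P$ we have $X-\pi^{-1}(p) = X-C$, dense by axiom (3) of the dense amalgam. Topological quasiconvexity of $\pi$ is exactly axiom (2). This produces all the data required by the statement.

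For the converse I would take the data $\pi\colon X\to K$ and $P = P_{1}\dot{\cup}\cdots\dot{\cup}P_{n}$ as given, put $\mathcal{Y}_{i} = \{\pi^{-1}(p):p\in P_{i}\}$ and $\mathcal{Y} = \mathcal{Y}_{1}\dot{\cup}\cdots\dot{\cup}\mathcal{Y}_{n}$, and verify the five axioms. The map $p\mapsto\pi^{-1}(p)$ is injective on $P_{i}$ (distinct points have disjoint nonempty fibres), and a dense subset of the perfect space $K$ is infinite, so each $\mathcal{Y}_{i}$ is countably infinite with mutually homeomorphic members; this is axiom (1). Axiom (2) is immediate from topological quasiconvexity, since each member of $\mathcal{Y}$ is a $\pi$-fibre, hence a class of $\sim\ = \Delta^{2}X\cup\bigcup_{y\in K}\pi^{-1}(y)^{2}$, and only finitely many such classes fail to be $u$-small for any $u\in\U$. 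Axiom (3) is the hypothesis that $X-\pi^{-1}(p)$ is dense, and axiom (4) is exactly the lemma that $\pi^{-1}(P_{i})$ is dense in $X$.

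The one clause requiring an actual argument, and the only real obstacle I foresee, is axiom (5). Given $x\neq y$ lying in no common member of $\mathcal{Y}$, I would first observe $\pi(x)\neq\pi(y)$: if $\pi(x)=\pi(y)=p$ then $x,y\in\pi^{-1}(p)$, and since $x\neq y$ the fibre is non-singleton, forcing $p\in P$ and $\pi^{-1}(p)\in\mathcal{Y}$, a common member, a contradiction. Because $K$ is the Cantor set, hence totally disconnected, I can pick a clopen $U\subseteq K$ with $\pi(x)\in U$ and $\pi(y)\notin U$; then $F = \pi^{-1}(U)$ is clopen in $X$, contains $x$ but not $y$, and automatically respects fibres, for if $C=\pi^{-1}(p)\in\mathcal{Y}$ then $C\subseteq F$ when $p\in U$ and $C\cap F=\emptyset$ when $p\notin U$. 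Having checked all five axioms, the uniqueness in $\acute{S}$wiatkowski's characterization identifies $X$ as the dense amalgam of the spaces $C_{i}$ and pins down the family as $\mathcal{Y}_{i}=\{\pi^{-1}(p):p\in P_{i}\}$, which is the \emph{moreover} clause.
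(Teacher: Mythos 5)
Your proof is correct and follows essentially the same route as the paper: both directions reduce to Świątkowski's five-axiom characterization via the quotient $X/\!\!\sim\ \cong K$ (Lemma 2.A.1 of \cite{Sw}) and the density lemmas of this subsection. You are in fact somewhat more thorough than the paper, which after establishing the density lemmas simply says ``summarising we get''; your explicit verification of axiom (5) via a clopen subset of the totally disconnected base $K$, and of the injectivity of $p\mapsto\pi^{-1}(p)$ for axiom (1), fills in details the paper leaves implicit.
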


\subsection{Group actions}

\begin{defi}Let $\varphi: G \curvearrowright X$ be an action by homeomorphisms. A point $p \in X$ is bounded parabolic if the action $\varphi|_{Stab_{\varphi}p\times X-\{p\}}: Stab_{\varphi}p \curvearrowright X-\{p\}$ is properly discontinuous and cocompact.
\end{defi}

\begin{prop}(Lemma 1.9 of \cite{Bo2}) Let $G$ be a group that acts by homeomorphisms on two spaces $Y$ and $Z$ and $f: Y \rightarrow Z$ a proper continuous surjective $G$-equivariant map. Then, $G$ acts properly discontinuously on $Y$ if and only if $G$ acts properly discontinuously on $Z$. Also, $G$ acts cocompactly on $Y$ if and only if $G$ acts cocompactly on $Z$.
\end{prop}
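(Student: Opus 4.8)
The plan is to prove each of the four implications directly from the definitions, invoking at each point only the property of $f$ that is actually needed (continuity, properness, surjectivity, or equivariance). The two structural identities I will use repeatedly both come from $G$-equivariance: for every $g \in G$ and every $A \subseteq Z$ one has $f^{-1}(gA) = g f^{-1}(A)$, and for every $B \subseteq Y$ one has $f(gB) = g f(B)$. Throughout I use the compact-set formulation of proper discontinuity, namely that $\{g \in G : gK \cap K \neq \emptyset\}$ is finite for every compact $K$, and the formulation of cocompactness that some compact set has $G$-translates covering the whole space.

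For proper discontinuity I would first treat the direction ``$Y$ implies $Z$''. Given a compact $K \subseteq Z$, properness makes $f^{-1}(K)$ compact, so $\{g : g f^{-1}(K) \cap f^{-1}(K) \neq \emptyset\}$ is finite by proper discontinuity on $Y$. If $gK \cap K \neq \emptyset$, then since $f$ is surjective the nonempty set $gK \cap K$ has nonempty preimage, and $f^{-1}(gK \cap K) = f^{-1}(gK) \cap f^{-1}(K) = g f^{-1}(K) \cap f^{-1}(K)$; hence the set $\{g : gK \cap K \neq \emptyset\}$ embeds in the finite set above. For the reverse direction ``$Z$ implies $Y$'', given a compact $L \subseteq Y$, continuity makes $f(L)$ compact, and if $gL \cap L \neq \emptyset$ then applying $f$ and using equivariance gives $g f(L) \cap f(L) \neq \emptyset$, so again $\{g : gL \cap L \neq \emptyset\}$ embeds in a finite set. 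Note that this direction uses neither properness nor surjectivity.

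For cocompactness the arguments are symmetric and shorter. If $GK = Y$ for some compact $K \subseteq Y$, then $f(K)$ is compact and $G f(K) = f(GK) = f(Y) = Z$ by equivariance and surjectivity, so $Z$ is cocompact. Conversely, if $GK = Z$ for some compact $K \subseteq Z$, then properness makes $f^{-1}(K)$ compact and $G f^{-1}(K) = f^{-1}(GK) = f^{-1}(Z) = Y$ (the first equality again being equivariance), so $Y$ is cocompact.

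There is no serious obstacle here: the entire content is the bookkeeping of the equivariance identities together with the observation that properness is exactly what lets compactness be pulled back along $f$ (used in ``$Y$ implies $Z$'' for proper discontinuity and in ``$Z$ implies $Y$'' for cocompactness), while continuity suffices to push compactness forward. The only step that requires a moment's care is the use of surjectivity in the first proper-discontinuity direction, to ensure that $gK \cap K \neq \emptyset$ forces its preimage to be nonempty.
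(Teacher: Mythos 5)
Your four implications are all correct, and the bookkeeping with the equivariance identities $f^{-1}(gA)=gf^{-1}(A)$ and $f(gB)=gf(B)$ is exactly what is needed; in particular you correctly isolate where properness is used (to pull compactness back) versus continuity (to push it forward), and the one genuinely delicate point --- that surjectivity is what guarantees $f^{-1}(gK\cap K)\neq\emptyset$ in the direction from $Y$ to $Z$ for proper discontinuity --- is handled. The paper itself gives no proof of this statement: it is quoted verbatim as Lemma 1.9 of Bowditch's \emph{Convergence groups and configuration spaces} and used as a black box, so there is no argument in the text to compare yours against. Your write-up is the standard self-contained verification and would serve as a proof of the cited lemma.
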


\begin{cor}\label{liftingparabolic}Let $\alpha: G \curvearrowright Y$ and $\varphi: G \curvearrowright Z$ be actions by homeomorphisms and $f: Y \rightarrow Z$ a proper continuous and $G$-equivariant map. If $p \in Z$ is a bounded parabolic point, then the action of $\alpha|_{Stab_{\varphi} p\times (Y - f^{-1}(p))}$ is properly discontinuous and cocompact. \eod
\end{cor}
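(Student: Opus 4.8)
The plan is to reduce the statement to the preceding proposition (Lemma 1.9 of \cite{Bo2}), applied to the group $H = Stab_{\varphi}p$ and to a suitable restriction of $f$. First I would record the two elementary facts that make this reduction legitimate. Since every $h \in H$ fixes $p$, the subspace $Z - \{p\}$ is $H$-invariant. Since $f$ is $G$-equivariant, for $h \in H$ and $y \in Y$ we have $f(hy) = h\,f(y)$, so $f(hy) = p$ if and only if $f(y) = p$; hence $Y - f^{-1}(p)$ is $H$-invariant as well. Thus $\alpha$ and $\varphi$ restrict to actions of $H$ on $Y - f^{-1}(p)$ and on $Z - \{p\}$, and $f$ restricts to a continuous $H$-equivariant map $f' \colon Y - f^{-1}(p) \to Z - \{p\}$.

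Next I would check that $f'$ is proper. For a compact set $K \subseteq Z - \{p\}$ one has $(f')^{-1}(K) = f^{-1}(K)$, because $K$ avoiding $p$ forces $f^{-1}(K) \subseteq Y - f^{-1}(p)$; and $f^{-1}(K)$ is compact because $f$ is proper. So $f'$ is proper. By the definition of bounded parabolic point, $H = Stab_{\varphi}p$ acts properly discontinuously and cocompactly on $Z - \{p\}$. Applying the preceding proposition to $f'$ then transfers both properties to the action of $H$ on $Y - f^{-1}(p)$, which is exactly the claim.

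The one hypothesis of the proposition that requires care is surjectivity of $f'$, since the corollary does not assume that $f$ itself is onto; I expect this to be the main (minor) obstacle. If $f$ is surjective, as in the intended applications, there is nothing further to do. In general I would replace the target by the image $Z' = f(Y - f^{-1}(p)) = f(Y) - \{p\}$, onto which $f'$ is tautologically surjective. Properness of $f$, together with the Hausdorff/local compactness of the ambient spaces, forces $f(Y)$ to be closed, so $Z'$ is a closed $H$-invariant subspace of $Z - \{p\}$.

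It then remains to see that $H$ still acts properly discontinuously and cocompactly on $Z'$. Proper discontinuity passes to any $H$-invariant subspace automatically. For cocompactness I would take a compact set $L$ with $HL = Z - \{p\}$; then $L \cap Z'$ is compact (closed in $L$, as $Z'$ is closed), and for $z \in Z'$, writing $z = h\ell$ with $\ell \in L$ gives $\ell = h^{-1}z \in Z'$ by invariance, so $\ell \in L \cap Z'$ and $z \in H(L \cap Z')$; hence $H(L \cap Z') = Z'$. With $H$ acting properly discontinuously and cocompactly on $Z'$, the preceding proposition applies to $f' \colon Y - f^{-1}(p) \to Z'$ and yields the conclusion.
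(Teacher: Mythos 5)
Your proposal is correct and follows exactly the route the paper intends: the corollary is stated without proof precisely because it is meant to be the preceding proposition applied to the restriction $f\colon Y - f^{-1}(p) \to Z - \{p\}$ with the group $Stab_{\varphi}p$, after the (easy) checks of invariance and properness that you carry out. Your treatment of the surjectivity hypothesis is a genuine point of care that the paper glosses over (in its applications $f$ is always surjective), and your fix via the closed image is sound.
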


\begin{cor}\label{liftingparabolic2}Let $G$ be a group that acts by homeomorphisms on two spaces $Y$ and $Z$ and $f: Y \rightarrow Z$ a proper continuous surjective $G$-equivariant map. Let $p \in Z$ such that $\#f^{-1}(p) = 1$. Then, $p$ is bounded parabolic if and only if $f^{-1}(p)$ is bounded parabolic. \eod
\end{cor}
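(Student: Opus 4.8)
The plan is to reduce the statement to the transfer principle for proper equivariant maps, namely the Proposition (Lemma 1.9 of \cite{Bo2}) stated just before Corollary \ref{liftingparabolic}, applied not to the whole group $G$ but to the common stabilizer of $p$ and of its preimage. Write $f^{-1}(p) = \{q\}$. The first step is to observe that $Stab_{\varphi} p = Stab_{\alpha} q$. Indeed, if $g \in Stab_{\alpha} q$ then equivariance gives $gp = gf(q) = f(gq) = f(q) = p$; conversely, if $g \in Stab_{\varphi} p$ then $f(gq) = gf(q) = gp = p$, so $gq \in f^{-1}(p) = \{q\}$, forcing $gq = q$. Denote this common subgroup by $H$.

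The second step is to pass to the punctured spaces. Since $q$ is the only point mapping to $p$, the map $f$ restricts to a map $f' \colon Y - \{q\} \to Z - \{p\}$; it is continuous as a restriction and surjective, since every $z \neq p$ has a preimage, none of which is $q$ (because $f(q) = p \neq z$). It is $H$-equivariant because $H$ preserves both $\{q\}$ and $\{p\}$, hence acts on the punctured spaces, and $f$ is already $G$-equivariant. The key verification is properness: for $K \subseteq Z - \{p\}$ compact, $K$ is compact in $Z$ and $p \notin K$, so $f'^{-1}(K) = f^{-1}(K)$, the point $q$ being automatically excluded since $f(q) = p \notin K$; as $f$ is proper, $f^{-1}(K)$ is compact, and compactness of a subset is intrinsic, so $f'^{-1}(K)$ is compact in $Y - \{q\}$. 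Thus $f'$ is a proper, continuous, surjective, $H$-equivariant map, and note that $Y - \{q\} = Y - f^{-1}(p)$.

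The final step is to apply the Proposition to the actions $H \curvearrowright Y - \{q\}$ and $H \curvearrowright Z - \{p\}$ together with $f'$. It yields that $H$ acts properly discontinuously on $Y - \{q\}$ if and only if it does so on $Z - \{p\}$, and likewise for cocompactness. By definition, $q = f^{-1}(p)$ is bounded parabolic exactly when $H$ acts properly discontinuously and cocompactly on $Y - \{q\}$, and $p$ is bounded parabolic exactly when $H$ does so on $Z - \{p\}$; the two conditions are therefore equivalent, which is the claim. For the direction ``$p$ bounded parabolic $\Rightarrow$ $q$ bounded parabolic'' one could alternatively invoke Corollary \ref{liftingparabolic} directly, combined with the equality $Stab_{\varphi} p = Stab_{\alpha} q$. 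I expect the main subtlety to be the properness of $f'$ on the non-compact punctured spaces, together with the observation that the $G$-equivariance of $f$ descends to $Y - \{q\}$ and $Z - \{p\}$ only for the stabilizer $H$, so that the transfer principle must be applied to $H$ rather than to $G$.
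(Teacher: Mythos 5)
Your proof is correct and fills in exactly the argument the paper leaves implicit (the corollary is stated with no written proof, being regarded as immediate from Lemma 1.9 of \cite{Bo2} and Corollary \ref{liftingparabolic}): identify $Stab_{\varphi}p$ with the stabilizer of the unique preimage, restrict $f$ to the punctured spaces, check properness and surjectivity there, and transfer proper discontinuity and cocompactness in both directions. The details you single out as subtle (properness of the restriction and the fact that equivariance descends only for the stabilizer) are indeed the only points needing verification, and you handle them correctly.
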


\begin{defi}Let $G$ be a group, $X$ a Hausdorff compact space and $\varphi: G \curvearrowright X$ an action by homeomorphisms. We say that $\varphi$ has the convergence property if for every wandering net $\Phi$ (i.e. a net such that two elements are always distinct) has a subnet $\Phi'$ such that there exists $a,b \in X$ such that $\Phi'|_{X-\{b\}}$ converges uniformly to $a$.
We say that $\Phi'$ is a collapsing net with attracting point $a$ and repelling point $b$.

\end{defi}

\begin{defi}Let $G$ be a group acting on a Hausdorff compact space $X$ with convergence property. The attractor-sum compactification of $G$ is the unique compactification of $G$ with $X$ is its remainder such that the action of $G$ on it (that is the left multiplication on $G$ and the previous action on $X$) has the convergence property.
\end{defi}

\begin{obs}The existence and uniqueness of the attractor-sum compactification is due to Gerasimov (Proposition 8.3.1 of \cite{Ge2}).

\end{obs}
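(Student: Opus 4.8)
The plan is to prove existence by exhibiting an explicit topology on $Y = G \sqcup X$, and to prove uniqueness by showing that the convergence property forces the collapsing dynamics of $\varphi$ to determine that topology. For existence I would declare every point of $G$ to be isolated (so that $G$ is open and discrete) and give each $\xi \in X$ the neighbourhood base consisting of the sets
$$ N(U,v) = U \cup \{\, g \in G : g(X \setminus S) \subseteq U \text{ for some } S \in Small(v) \,\}, $$
where $U$ runs over the open neighbourhoods of $\xi$ in $X$ and $v$ over the entourages of $X$. The point of this definition is that a wandering net $(g_\gamma)$ which is collapsing with attracting point $a$ and repelling point $b$ enters every $N(U,v) \ni a$ eventually: choosing $S$ to be a small neighbourhood of $b$, its complement is compact inside $X - \{b\}$, so $g_\gamma(X \setminus S) \subseteq U$ for large $\gamma$, while a net with a different attracting point is eventually excluded. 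Thus convergence of $G$-nets to points of $X$ reproduces exactly the collapsing dynamics of $\varphi$.

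I would then check that $Y$ is Hausdorff (distinct isolated points separate trivially, an isolated $g$ separates from $\xi \in X$ by shrinking $U$, and $\xi \ne \eta$ in $X$ separate by disjoint $U,U'$ together with an entourage fine enough that no single $g$ carries the complement of a small set into both), that left multiplication on $G$ and $\varphi$ on $X$ assemble into an action by homeomorphisms (each $h$ sends a basic neighbourhood of $\xi$ to a basic neighbourhood of $h\xi$, by uniform continuity of $h$), and that $G$ is open with remainder $X$. The crux is \emph{compactness}, and this is precisely where the convergence property of $\varphi$ is used: a net in $Y$ cofinal in $X$ has a convergent subnet by compactness of $X$; one cofinal in $G$ but not wandering has a constant subnet; and a wandering net in $G$ has, by the convergence property of $\varphi$, a collapsing subnet, which by the computation above converges to its attracting point $a \in X$. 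The same extraction shows the action on $Y$ again enjoys the convergence property: a subnet collapsing for $\varphi$ with attracting point $a$ and repelling point $b$ stays collapsing on $Y$, because any compact subset of $Y - \{b\}$ meets the discrete set $G$ in a finite set $F$, and both $F$ and the compact part lying in $X - \{b\}$ are driven into every neighbourhood of $a$.

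For uniqueness let $Y_1, Y_2$ be two such compactifications; since both are compact Hausdorff it suffices that the identity of $G \sqcup X$ be continuous, i.e. that net convergence agree. Nets eventually in $X$ converge identically (the remainder carries the given topology of $X$), nets converging to a point of $G$ are eventually constant in both (as $G$ is open and discrete), and no net in $X$ converges to a point of $G$; so everything reduces to a wandering net $(g_\gamma)$ in $G$ converging to some $p \in X$. Here I would prove the intrinsic \textbf{Lemma}: in any convergence-property compactification $Y$ with remainder $X$, a net collapsing for $\varphi$ with attracting point $a$ satisfies $g_\gamma \to a$ in $Y$. Indeed, applying the convergence property of $Y$ to an arbitrary subnet yields a further subnet collapsing on $Y$, whose attracting and repelling points, being limits of distinct group elements, lie in $X$; restricting to $X$ and invoking uniqueness of the attracting/repelling pair on $X$ identifies the attracting point with $a$, and evaluating the collapsing convergence at $e \in G \subseteq Y - \{b\}$ gives $g \to a$ along that subnet. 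As every subnet has a further subnet tending to $a$, the net itself tends to $a$. Granting the Lemma, a wandering $G$-net tends to $p$ in $Y_i$ if and only if every subnet has a further subnet collapsing for $\varphi$ with attracting point $p$; this condition involves only $\varphi$, so the two spaces have the same convergent nets and the identity is a homeomorphism.

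I expect the main obstacle to lie in the existence half, namely in the compactness and Hausdorffness of $Y$: one must separate infinitely many isolated points of $G$ from the remainder in a way compatible with the collapsing dynamics, and compactness rests entirely on extracting collapsing subnets from the convergence property. The conceptual hinge of uniqueness, by contrast, is the modest but essential fact that the attracting and repelling points of a wandering net always lie in the remainder $X$; this is what allows the fixed dynamics of $\varphi$ to dictate the convergence of $G$-nets identically in every such compactification.
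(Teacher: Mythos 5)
This statement is a pure attribution: the paper offers no proof of the existence and uniqueness of the attractor-sum compactification, it simply cites Gerasimov (Proposition 8.3.1 of the reference in question), so there is no internal argument to compare yours against. Judged on its own terms, your sketch is a credible reconstruction of that result, and the uniqueness half is essentially complete: the decisive points — that a wandering net in $G$ cannot converge to a point of the open discrete subset $G$, hence the attracting and repelling points of any collapsing subnet lie in $X$, and that uniqueness of the attracting/repelling pair on a compactum with at least three points then reduces convergence of $G$-nets to data intrinsic to $\varphi$ — are exactly the right hinge, and your reduction of continuity of the identity to agreement of convergent nets is sound.

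The existence half is thinner in two places. First, Hausdorffness of the separation of $\xi\neq\eta$ in $X$: unwinding your definition, $g\in N(U,v)\cap N(U',v')$ with $U\cap U'=\emptyset$ forces $X=S\cup S'$ with $S$ being $v$-small and $S'$ being $v'$-small, so what you actually need is an entourage $v$ for which $X$ is not the union of two $v$-small sets. This holds whenever $\#X\geq 3$ (choose $v$ so that no $v$-small set contains two of three fixed points), but your phrasing ``an entourage fine enough that no single $g$ carries the complement of a small set into both'' hides this reduction, and the degenerate cases $\#X\leq 2$ — where the convergence property is vacuous — require separate treatment, as does separating a group element from a boundary point. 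Second, compactness: your trichotomy needs the standard but nontrivial extraction of either an eventually constant subnet or a genuinely wandering (injective) subnet from an arbitrary net in $G$, since the convergence property as defined in the paper applies only to wandering nets; an ultranet argument handles this. Finally, if ``compactification of $G$'' is read as requiring $G$ dense in $Y$, your space only has $G$ accumulating on the limit set; Gerasimov's proposition concerns the attractor sum, where density is not demanded, and that is the sense in which the paper uses it. None of these is fatal, but they are precisely where the content of the cited proposition lies, so they should be carried out rather than asserted.
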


\begin{defi}Let $\varphi: G \curvearrowright X$ be a convergence action and $p \in X$. We say that $p$ is a conical point if there is an infinite set $K \subseteq G$ such that $\forall q \neq p$, $Cl_{X}(\{(\varphi(g,p),\varphi(g,q)): g \in K\})\cap \Delta X = \emptyset$.
\end{defi}

\begin{defi}Let $G$ be a finitely generated group, $X$ a Hausdorff compact space and $\varphi: G \curvearrowright X$ an action by homeomorphisms. We say that $\varphi$ is hyperbolic if it has the convergence property and the induced action on the space of distinct triples of $X$ is cocompact.
\end{defi}

\begin{prop}(Bowditch, Lemma 1.4 of \cite{Bo2}) If $X$ is metrizable and $\varphi$ is a minimal convergence action, then $\varphi$ is hyperbolic if and only if every point of $X$ is conical.
\end{prop}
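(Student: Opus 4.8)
The plan is to prove the two implications separately, working throughout with sequences rather than general nets (legitimate since $X$ is metrizable, hence first countable, and the convergence property can be tested on sequences of distinct elements). I will also use the dynamical reformulation of conicality that follows from the convergence property: a point $p$ is conical if and only if there is a sequence of distinct $g_{n}\in G$ and points $a\neq a'$ in $X$ with $g_{n}|_{X-\{p\}}$ converging uniformly on compacta to $a$ (so that $p$ is the repelling point) and $g_{n}p\to a'$. Indeed, by the convergence property any wandering sequence has a collapsing subsequence, and the requirement $Cl_{X}(\{(\varphi(g,p),\varphi(g,q)):g\in K\})\cap\Delta X=\emptyset$ for every $q\neq p$ forces the common repelling point to be $p$ and the limit of $g_{n}p$ to differ from the attracting point. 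Fix a metric $d$ inducing the topology of $X$ and set $\delta(x,y,z)=\min\{d(x,y),d(y,z),d(x,z)\}$ on $\Theta^{3}X$, the space of distinct triples; a subset of $\Theta^{3}X$ is relatively compact exactly when $\delta$ is bounded away from $0$ on it, so cocompactness of $G\curvearrowright\Theta^{3}X$ is equivalent to the existence of $\epsilon_{0}>0$ with $\sup_{g\in G}\delta(g\cdot t)\geq\epsilon_{0}$ for every $t\in\Theta^{3}X$.

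For the forward implication, assume $G\curvearrowright\Theta^{3}X$ is cocompact and fix $p\in X$. Since a minimal convergence action on a space with more than two points is on a perfect set, $X$ has no isolated points, so I may choose distinct $q_{n}\to p$ with $q_{n}\neq p$ and a fixed $r\neq p$. For large $n$ the triples $(q_{n},p,r)$ lie in $\Theta^{3}X$, so by cocompactness there are $g_{n}\in G$ with $g_{n}(q_{n},p,r)$ in a fixed compact $C\subseteq\Theta^{3}X$; in particular the three image coordinates stay $\epsilon$-separated for some fixed $\epsilon>0$. The $g_{n}$ are eventually distinct (a constant value would force $gq_{n}\to gp$, violating the separation), so I pass to a collapsing subsequence with attracting point $a$ and repelling point $b$. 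If $b\neq p$, then $p$ and the points $q_{n}$ (which eventually lie in a compact neighbourhood of $p$ missing $b$) would both map to $a$, contradicting the separation of $g_{n}p$ and $g_{n}q_{n}$; hence $b=p$. Thus $g_{n}|_{X-\{p\}}\to a$ uniformly on compacta, so $g_{n}r\to a$, while $g_{n}p$ (after a further subsequence) converges to some $a'$ with $d(a',a)\geq\epsilon$, i.e. $a'\neq a$. Taking $K=\{g_{n}\}$, for every $q\neq p$ we obtain $g_{n}q\to a$ and $g_{n}p\to a'\neq a$, so $Cl_{X}(\{(g_{n}p,g_{n}q)\})$ avoids $\Delta X$; hence $p$ is conical.

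For the converse, assume every point is conical and suppose, for contradiction, that the action on $\Theta^{3}X$ is not cocompact. Then there are triples $t_{n}=(x_{n},y_{n},z_{n})$ with $\epsilon_{n}:=\sup_{g}\delta(g\cdot t_{n})\to 0$. Passing to a subsequence, $t_{n}$ converges in $X^{3}$, and evaluating at $g=\mathrm{id}$ shows two limit coordinates coincide, say $x_{n},y_{n}\to\xi$; after a further subsequence $z_{n}\to\zeta$ with either $\zeta\neq\xi$ or $\zeta=\xi$. The goal is to contradict $\epsilon_{n}\to 0$ by exhibiting, for large $n$, a single element $g$ with $\delta(g\cdot t_{n})$ bounded below. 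This is where conicality of $\xi$ enters: it provides distinct $a\neq b$ and a sequence $h_{k}$ with $h_{k}|_{X-\{\xi\}}\to b$ uniformly on compacta and $h_{k}\xi\to a$, and dually $h_{k}^{-1}|_{X-\{b\}}\to\xi$, so that the maps $h_{k}$ magnify arbitrarily small neighbourhoods of $\xi$ onto fixed-size regions of $X$. The pair $(x_{n},y_{n})$ collapses into $\xi$ at some rate, and the point is to choose $k=k(n)\to\infty$ matching this rate so that $h_{k(n)}$ separates $x_{n}$ from $y_{n}$ by a definite amount while keeping $h_{k(n)}z_{n}$ away from both; then $\delta(h_{k(n)}t_{n})\geq\epsilon_{0}>0$ contradicts $\epsilon_{n}\to 0$. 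The case $\zeta=\xi$ is handled the same way, separating all three colliding coordinates.

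The main obstacle is precisely this scale-matching step in the converse. Because a convergence action carries no $G$-invariant or conformal metric, I cannot control the magnification of $h_{k}$ near $\xi$ quantitatively: choosing $k(n)$ too small leaves $x_{n},y_{n}$ near the attracting point $a$, and choosing it too large pushes them near $b$, in both cases collapsed, so the separation occurs only at an intermediate scale that must be extracted by a careful diagonal and compactness argument (one must also rule out that $x_{n},y_{n}$ approach $\xi$ in a way that no single $h_{k}$ resolves, which is where the full convergence dynamics, rather than one conical sequence, is used). This is the technical heart of Bowditch's lemma; the reformulations of the first paragraph and the forward implication are routine by comparison.
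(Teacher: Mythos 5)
The paper offers no proof of this statement at all: it is quoted as Bowditch's Lemma 1.4 (and the hard direction is essentially Tukia's theorem on conical limit points from \cite{Tu}), so there is nothing internal to compare against. Judged on its own terms, your proposal proves only half of the equivalence. The preliminary reformulation of conicality (via a collapsing sequence with repelling point $p$ and $\lim g_{n}p$ distinct from the attracting point) is correct for metrizable $X$, and the forward implication (cocompact on $\Theta^{3}X$ $\Rightarrow$ every point conical) is complete: normalizing the triples $(q_{n},p,r)$, forcing the repelling point to be $p$, and reading off $a'\neq a$ from the $\epsilon$-separation is exactly the standard argument and the details you give all check out.

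The converse, however, is not a proof but a description of a strategy together with an explicit admission that its central step cannot be carried out, and that admission is accurate: the gap is genuine and is not a technicality. Conicality of the single limit point $\xi$ gives you a sequence $h_{k}$ controlling how $\xi$ separates from compact subsets of $X-\{\xi\}$, but the colliding coordinates $x_{n},y_{n}$ both lie in $X-\{\xi\}$, so every $h_{k}$ may collapse the pair $(x_{n},y_{n})$ just as badly as the identity does; no choice of $k(n)$ is forced to resolve them. The actual proofs (Tukia \cite{Tu}, and Bowditch's account in \cite{Bo2}) do not proceed by scale-matching a single conical sequence at $\xi$: they run a compactness argument on the space of degenerating triples, renormalize, and use conicality at points \emph{produced by the limiting process} (together with minimality and the classification of limits of collapsing sequences) to reach a contradiction. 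That global use of the hypothesis at all points is the missing idea, and without it the implication ``every point conical $\Rightarrow$ cocompact on distinct triples'' remains unproved in your write-up. Since the paper itself simply cites Bowditch for this, the honest course is to do the same rather than to present the converse as proved.
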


\begin{defi}Let $G$ be a finitely generated group, $X$ a Hausdorff compact space and $\varphi: G \curvearrowright X$ an action by homeomorphisms. We say that $\varphi$ is relatively hyperbolic if it has the convergence property, its limit set is bigger than one point and the induced action on the space of distinct pairs of $X$ is cocompact. If $\mathcal{P}$ is a representative set of conjugation classes of stabilizers of bounded parabolic points of $X$, then we say that $\varphi$ is relatively hyperbolic with respect to $\mathcal{P}$.
\end{defi}

\begin{prop}(Tukia, 1C of \cite{Tu} - Gerasimov, Main Theorem of \cite{Ge1}) If $X$ is metrizable and the action $\varphi$ is a minimal convergence action, then $\varphi$ is relatively hyperbolic if and only if every point of $X$ is conical or bounded parabolic.
\end{prop}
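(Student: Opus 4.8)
The statement is the dynamical characterisation of geometrical finiteness for convergence groups, and as the attribution indicates it combines a theorem of Tukia with one of Gerasimov; my plan is to treat the two implications separately, working throughout with the space $\Theta^{2}X := X^{2}-\Delta X$ of distinct pairs, on which the convergence property forces $G$ to act properly discontinuously. Relative hyperbolicity is by definition the assertion that $G$ acts cocompactly on $\Theta^{2}X$ (together with the limit set being non-trivial, which is assumed), so the whole problem is to match this single cocompactness condition against the pointwise dichotomy \emph{conical}/\emph{bounded parabolic}.

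For the forward implication (relatively hyperbolic $\Rightarrow$ every point conical or bounded parabolic) I would follow Tukia. The first observation is that properness is automatic: for any $p$, the slice $\{p\}\times(X-\{p\})$ is a closed $Stab_{\varphi}p$-invariant subset of $\Theta^{2}X$, and since $G$ acts properly discontinuously on $\Theta^{2}X$, so does $Stab_{\varphi}p$ on $X-\{p\}$. Hence it only remains to prove the dichotomy, i.e. that a \emph{non}-conical limit point $p$ has $Stab_{\varphi}p$ acting \emph{cocompactly} on $X-\{p\}$. Fix a compact $D\subseteq\Theta^{2}X$ whose $G$-translates cover $\Theta^{2}X$, and for $(p,y)$ in the slice write $(p,y)=g_{\gamma}^{-1}d_{\gamma}$ with $d_{\gamma}\in D$; since $D$ is bounded away from $\Delta X$, passing to a collapsing subnet (convergence property) shows that the $g_{\gamma}$ cannot move $p$ off itself infinitely often without exhibiting precisely the configuration in the definition of a conical point, so only finitely many cosets of $Stab_{\varphi}p$ are used and $Stab_{\varphi}p$ translates a compact subset of $X-\{p\}$ onto $X-\{p\}$. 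This makes $p$ bounded parabolic.

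For the converse (every point conical or bounded parabolic $\Rightarrow$ cocompact on pairs), which is Gerasimov's direction and the genuinely hard part, I would reduce the compactness of $\Theta^{2}X/G$ to controlling nets of pairs $(x_{\gamma},y_{\gamma})$ that \emph{approach the diagonal}: after extracting a convergent subnet with $x_{\gamma},y_{\gamma}\to z$, the only obstruction to cocompactness is the need to find $g_{\gamma}\in G$ pulling the two coordinates uniformly apart and into a fixed compact subset of $\Theta^{2}X$. If $z$ is conical, the expanding dynamics attached to $z$ (a net with distinct attracting and repelling points) spreads a punctured neighbourhood of $z$ across $X$ and hence pulls the approaching pair apart, the uniform-convergence bookkeeping of Propositions \ref{liftingnet} and \ref{composicaouniforme} guaranteeing that the pulled-back pairs land in one fixed compact set. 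If instead $z$ is bounded parabolic, the hypothesis gives directly that $Stab_{\varphi}z$ acts properly discontinuously and cocompactly on $X-\{z\}$, which is exactly what is needed to translate the pair into a compact fundamental region of $X-\{z\}$, away from $z$.

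The main obstacle is precisely this converse. One must show that pairs cannot escape to the diagonal except along parabolic cusps, and that at a cusp the bounded-parabolicity hypothesis supplies the cocompactness that the conical dynamics alone cannot; the delicate point is \emph{uniformity in the transition regions}, namely that after moving an approaching pair out of one cusp it does not immediately re-approach the diagonal at a new point, forcing an uncontrolled iteration. Managing this in the metrizable setting requires a simultaneous control of collapsing nets attracting and repelling at the conical and parabolic points, and this is where Gerasimov's (and, under finite generation, Yaman's) argument does the real work and where I would expect essentially all of the effort to go. For the purposes of the present paper it is therefore cleanest to invoke the statement as the cited theorems of Tukia and Gerasimov rather than to reproduce this analysis.
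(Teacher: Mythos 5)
The paper offers no proof of this proposition at all: it is quoted as background, with the parenthetical citation to Tukia and Gerasimov standing in for the argument, so your closing recommendation to invoke the cited theorems is exactly what the paper does. The substantive problem with your sketch is that you have the two attributions, and with them the assessment of where the difficulty lies, reversed. Gerasimov's Main Theorem in \cite{Ge1} is the implication ``cocompact on distinct pairs $\Rightarrow$ every point is conical or bounded parabolic'' (your ``forward'' direction), and this is the genuinely hard half; Tukia's Theorem 1C in \cite{Tu} is the converse ``conical or bounded parabolic at every point $\Rightarrow$ cocompact on distinct pairs''. Your sketch presents Gerasimov's direction as a short fundamental-domain argument, and that argument does not close as stated: given a non-conical point $p$ and elements $g_{\gamma}$ with $g_{\gamma}(p,y_{\gamma})$ in a fixed compact $D\subseteq\Theta^{2}X$, the collapsing subnet supplied by the convergence property has some attracting point $a$ and repelling point $b$, and unless $b=p$ the pairs $(g_{\gamma}p,g_{\gamma}q)$ converge \emph{to} the diagonal rather than staying away from it, so no configuration certifying conicality of $p$ is produced. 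Arranging the dynamics so that non-conicality of $p$ forces $Stab_{\varphi}p$ to act cocompactly on $X-\{p\}$ is precisely the content of \cite{Ge1} and cannot be compressed to a few lines.

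Your outline of the other direction (the dichotomy implies $2$-cocompactness, which is Tukia's) has the right shape --- pairs degenerating to the diagonal at a conical point are separated by the conical dynamics, and at a bounded parabolic point by the cocompactness of the stabilizer on the complement --- but it is likewise only a strategy, and the ``transition region'' difficulty you flag belongs to this direction, not to Gerasimov's. Since the paper uses this proposition only as a quoted tool, the correct fix is simply to cite it with the directions attributed the right way around, rather than to attempt a proof.
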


\begin{defi}Let $G$ be a group, $S$ a finite set of generators of $G$ and a compact metrizable space $X = G\dot{\cup}Y$ where $G$ is discrete and open. An action by homeomorphisms $\varphi: G \curvearrowright X$ is geometric with respect to $S$ if $\varphi|_{G\times G}$ is the left multiplication action and $\forall u \in \U$, $\exists K \subseteq G$ finite such that every geodesic $\gamma$ (segment, ray or line) on $G$ relative to $S$ (i.e. $\gamma$ is the set of points in $G$ of a geodesic in $Cay(G,S)$) that do not intersect $K$ is $u$-small, where $\U$ is the unique uniform structure compatible with the topology of $X$.
\end{defi}

We have that relatively hyperbolic actions of finitely generated groups are geometric \cite{Ge2} and geometric actions have the convergence property (Proposition 4.3.1 of \cite{Ge2}).

\subsection{Artin-Wraith glueings}

\begin{defi}Let $X$ and $Y$ be topological spaces and an application $f: Closed(X) \rightarrow Closed (Y)$ such that $\forall A,B \in Closed(X), \ f(A\cup B) = f(A)\cup f(B)$ and $f(\emptyset) = \emptyset$ (we will say that such map is admissible). We will give a topology for $X\dot{\cup} Y$. Let's declare as a closed set $A \subseteq X\dot{\cup} Y$ if $A \cap X \in Closed(X), \ A \cap Y \in Closed(Y)$ and $f(A\cap X) \subseteq A$. Therefore, let's denote by $\tau_{f}$ the set of the complements of this closed sets and $X+_{f}Y = (X\dot{\cup} Y,\tau_{f})$.
\end{defi}

\begin{defi}Let $X+_{f} \! Y$ and $Z+_{h} \! W$ be topological spaces and continuous maps $\psi: X \rightarrow Z$ and $\phi: Y \rightarrow W$. We define $\psi + \phi: X+_{f}Y \rightarrow Z+_{h}W$ by $(\psi + \phi)(x) = \psi(x)$ if $x \in X$ and $\phi(x)$ if $x \in Y$. If $G$ is a group, $\psi: G \curvearrowright X$ and $\phi: G \curvearrowright Y$, then we define $\psi+\phi: G \curvearrowright X+_{f}Y$ by $(\psi+\phi)(g,x) = \psi(g,x)$ if $x \in X$ and $\phi(g,x)$ if $x \in Y$.
\end{defi}

\begin{prop}(Proposition 4.2 of \cite{So1}) Let $X+_{f}Y$ and $Z+_{h}W$ be topological spaces and $\psi: X \rightarrow Z$ and $\phi: Y \rightarrow W$ continuous maps. Then, $\psi + \phi: X+_{f}Y \rightarrow Z+_{h}W$ is continuous if and only if $\forall A \in Closed(Z), \ f(\psi^{-1}(A)) \subseteq \phi^{-1}(h(A))$. In another words, we have the diagram:

$$ \xymatrix{ Closed(X) \ar[r]^{f} & Closed(Y) \\
            Closed(Z) \ar[r]^{h} \ar[u]^{\psi^{-1}} \ar@{}[ur]|{\subseteq} & Closed(W) \ar[u]^{\phi^{-1}} } $$

\end{prop}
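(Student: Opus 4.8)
The plan is to reduce everything to the standard characterization of continuity by preimages of closed sets, and then to read off what "closed" means in each glueing from its defining conditions. So $\psi + \phi$ is continuous if and only if for every $B \in Closed(Z +_h W)$ the set $(\psi+\phi)^{-1}(B)$ is closed in $X +_f Y$.

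First I would compute this preimage. Since $\psi$ carries $X$ into $Z$ and $\phi$ carries $Y$ into $W$, the preimage splits as $(\psi+\phi)^{-1}(B) \cap X = \psi^{-1}(B \cap Z)$ and $(\psi+\phi)^{-1}(B)\cap Y = \phi^{-1}(B\cap W)$. Because $\psi$ and $\phi$ are continuous and $B \cap Z$, $B\cap W$ are closed in $Z$, $W$ respectively, both traces are automatically closed in $X$ and $Y$. Hence the only nontrivial part of the definition of $\tau_{f}$-closedness that remains to check is the glueing condition $f(\psi^{-1}(B\cap Z)) \subseteq (\psi+\phi)^{-1}(B)$; and since $f$ takes values in $Closed(Y)$, so that $f(\psi^{-1}(B\cap Z)) \subseteq Y$, this is equivalent to $f(\psi^{-1}(B\cap Z)) \subseteq \phi^{-1}(B\cap W)$. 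Thus continuity is exactly the family of inclusions $f(\psi^{-1}(B\cap Z)) \subseteq \phi^{-1}(B\cap W)$ indexed by $B \in Closed(Z+_h W)$.

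For the ``if'' direction I would assume $f(\psi^{-1}(A)) \subseteq \phi^{-1}(h(A))$ for every $A \in Closed(Z)$ and verify the inclusion above for an arbitrary closed $B$. Taking $A = B \cap Z$, the defining property of $Z+_h W$ gives $h(A) \subseteq B$, hence $h(A) \subseteq B \cap W$ and so $\phi^{-1}(h(A)) \subseteq \phi^{-1}(B\cap W)$; combining this with the hypothesis yields $f(\psi^{-1}(A)) \subseteq \phi^{-1}(B\cap W)$, which is precisely what is needed.

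The step that I expect to need the most care is the converse, because there one must manufacture, for a \emph{given} $A \in Closed(Z)$, a single closed subset of $Z+_h W$ tight enough to recover the desired inclusion. The right choice is $B := A \cup h(A)$: its trace on $Z$ is $A$, its trace on $W$ is $h(A)$, and the glueing condition $h(A) \subseteq B$ holds by construction, so $B$ is $\tau_{h}$-closed. Continuity then forces $(\psi+\phi)^{-1}(B)$ to be $\tau_{f}$-closed, and feeding this particular $B$ into the equivalence established in the second paragraph produces exactly $f(\psi^{-1}(A)) \subseteq \phi^{-1}(h(A))$. The admissibility of $f$ and $h$ (preservation of finite unions and of $\emptyset$) is what legitimizes the closed-set descriptions of both glueings, but beyond that it plays no role in the argument.
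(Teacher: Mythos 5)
Your proof is correct: the reduction of $\tau_f$-closedness of $(\psi+\phi)^{-1}(B)$ to the single inclusion $f(\psi^{-1}(B\cap Z))\subseteq \phi^{-1}(B\cap W)$, and the test set $B=A\cup h(A)$ for the converse, are exactly what is needed. The paper itself gives no proof here (it cites Proposition 4.2 of the author's earlier preprint), and your argument is the standard one that the cited source follows.
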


\begin{defi}Let $X+_{f}W$, $Y$ and $Z$ be topological spaces and $\Pi: Closed(Y) \rightarrow Closed (X)$ and $\Sigma: Closed(W) \rightarrow Closed(Z)$  admissible maps. We define the admissible map $f_{\Sigma\Pi}: Closed(Y) \rightarrow Closed(Z)$ as $f_{\Sigma\Pi} = \Sigma \circ f \circ \Pi$.
\end{defi}

\begin{prop}(Cube Lemma - Proposition 5.3 of \cite{So1}) Let $X_{i}+_{f_{i}}W_{i}$, $Y_{i}$ and $Z_{i}$ be topological spaces, $\Pi_{i}: Closed(Y_{i}) \rightarrow Closed(X_{i})$ and $\Sigma_{i}: Closed(W_{i}) \rightarrow Closed(Z_{i})$ admissible maps. Take the respective induced maps $f_{i\Sigma_{i}\Pi_{i}}: Closed(Y_{i}) \rightarrow Closed(Z_{i})$. If $\mu+\nu: X_{1}+_{f_{1}}W_{1} \rightarrow X_{2}+_{f_{2}}W_{2}$, $\psi: Y_{1} \rightarrow Y_{2}$ and $\phi: Z_{1} \rightarrow Z_{2}$ are continuous maps that form the diagrams:

$$ \xymatrix{   Closed(X_{2}) \ar[r]^{\mu^{-1}} \ar@{}[dr]|{\supseteq} & Closed(X_{1}) & & Closed(W_{2}) \ar[r]^{\nu^{-1}} \ar[d]^{\Sigma_{2}} \ar@{}[dr]|{\supseteq} & Closed(W_{2}) \ar[d]_{\Sigma_{1}} \\
                Closed(Y_{2}) \ar[r]^{\psi^{-1}} \ar[u]^{\Pi_{2}} & Closed(Y_{1}) \ar[u]_{\Pi_{1}} & & Closed(Z_{2}) \ar[r]^{\phi^{-1}} & Closed(Z_{1}) } $$

Then, $\psi+\phi: Y_{1}+_{f_{1\Sigma_{1}\Pi_{1}}}Z_{1}\rightarrow Y_{2}+_{f_{2\Sigma_{2}\Pi_{2}}}Z_{2}$ is continuous.
\end{prop}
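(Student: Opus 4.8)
The plan is to reduce the whole statement to the continuity criterion of Proposition 4.2 and then carry out a short diagram chase, the only genuine ingredient being the monotonicity of admissible maps. First I would record that every admissible map is monotone with respect to inclusion: if $A \subseteq B$ then $B = A \cup B$, so $f(B) = f(A\cup B) = f(A)\cup f(B)$, whence $f(A)\subseteq f(B)$; the same is trivially true for the preimage maps $\mu^{-1},\nu^{-1},\psi^{-1},\phi^{-1}$. I would also read off the two side squares as the inclusions $\Pi_{1}\circ\psi^{-1}\subseteq\mu^{-1}\circ\Pi_{2}$ and $\Sigma_{1}\circ\nu^{-1}\subseteq\phi^{-1}\circ\Sigma_{2}$, and translate the continuity of $\mu+\nu$ through Proposition 4.2 into $f_{1}(\mu^{-1}(C))\subseteq\nu^{-1}(f_{2}(C))$ for every $C\in Closed(X_{2})$.

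By Proposition 4.2 applied to $\psi+\phi$, the desired continuity is equivalent to the single condition $f_{1\Sigma_{1}\Pi_{1}}(\psi^{-1}(A))\subseteq\phi^{-1}(f_{2\Sigma_{2}\Pi_{2}}(A))$ for every $A\in Closed(Y_{2})$. Expanding both induced maps by their definition $f_{i\Sigma_{i}\Pi_{i}}=\Sigma_{i}\circ f_{i}\circ\Pi_{i}$, I would then establish the chain
$$f_{1\Sigma_{1}\Pi_{1}}(\psi^{-1}(A)) = \Sigma_{1}f_{1}\Pi_{1}\psi^{-1}(A)\subseteq \Sigma_{1}f_{1}\mu^{-1}\Pi_{2}(A)\subseteq \Sigma_{1}\nu^{-1}f_{2}\Pi_{2}(A)\subseteq \phi^{-1}\Sigma_{2}f_{2}\Pi_{2}(A) = \phi^{-1}(f_{2\Sigma_{2}\Pi_{2}}(A)).$$
Here the first inclusion is obtained by applying the monotone map $\Sigma_{1}\circ f_{1}$ to the left-square inclusion $\Pi_{1}\psi^{-1}(A)\subseteq\mu^{-1}\Pi_{2}(A)$; the second by applying the monotone $\Sigma_{1}$ to the continuity of $\mu+\nu$ taken at $C=\Pi_{2}(A)$; and the third is the right-square inclusion taken at $D=f_{2}\Pi_{2}(A)\in Closed(W_{2})$. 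Since the two ends of the chain are exactly the two sides of the Proposition 4.2 criterion, this proves continuity of $\psi+\phi$.

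I expect no serious obstacle: the content is a formal consequence of Proposition 4.2 together with the monotonicity of admissible maps. The one point demanding care is the bookkeeping of directions—all three inclusions in the chain must point the same way for the composition to go through, and this is precisely what fixes the orientation of the two hypothesis squares. In particular, reversing either side square would break the argument already at the first step, where monotonicity of $f_{1}$ is applied; so the careful reading of the $\supseteq$ labels (and the corresponding direction in the continuity of $\mu+\nu$) is the only thing that needs to be checked rather than merely asserted.
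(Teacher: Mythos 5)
Your argument is correct and complete: reducing to the criterion of Proposition 4.2 and chaining the two square-inclusions with the monotonicity of admissible maps (which you rightly isolate as the only real ingredient, via $f(B)=f(A\cup B)=f(A)\cup f(B)$) is exactly the intended proof. The paper itself gives no proof here, since the Cube Lemma is imported from the companion preprint, so there is nothing to contrast your route with; your reading of the $\supseteq$ orientations is the one that makes the chain compose, and all the intermediate sets stay in the correct $Closed(\cdot)$ lattices, so the application of Proposition 4.2 at the end is legitimate.
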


\begin{defi}Let $X+_{f}W$, $Y$ and $Z$ be topological spaces, $\pi: Y \rightarrow X$ and $\varpi: Z \rightarrow W$ be two continuous maps. We define the pullback of $f$ with respect to $\pi$ and $\varpi$ by $f^{\ast}(A) = \varpi^{-1}(f(Cl_{X}\pi(A)))$.
\end{defi}

\begin{prop}\label{pullbackaction}(Corollary 2.5 of \cite{So}) Let $G_{1},G_{2}$ be groups, $X+_{f}W, Y$ and $Z$ topological spaces, $\alpha: G_{1}\rightarrow G_{2}$ a homomorphism, $\pi: Y \rightarrow X$ and $\varpi: Z \rightarrow W$ continuous maps and $f^{\ast}: Closed(Y) \rightarrow Closed(Z)$ the pullback of $f$. Let $\mu+\nu: G_{2} \curvearrowright X+_{f}W$, $\psi: G_{1} \curvearrowright Y$ and $\phi: G_{1} \curvearrowright Z$ be actions by homeomorphisms such that $\pi$ and $\varpi$ are $\alpha$ - equivariant. Then, the action $\psi+\phi: G_{1} \curvearrowright Y+_{f^{\ast}}Z$ is by homeomorphisms.
\end{prop}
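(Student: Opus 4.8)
The plan is to fix a single group element $g \in G_{1}$, put $\beta = \alpha(g) \in G_{2}$, and prove that $(\psi + \phi)(g,-)$ is a homeomorphism of $Y +_{f^{\ast}} Z$. Since $(\psi+\phi)(-,-)$ is automatically a group action on the underlying disjoint union $Y \dot{\cup} Z$ (the action axioms hold separately on $Y$ through $\psi$ and on $Z$ through $\phi$), and since the set-theoretic inverse of $(\psi+\phi)(g,-)$ is $(\psi+\phi)(g^{-1},-)$, it suffices to prove continuity of $(\psi+\phi)(g,-)$ for every $g$. Write $\psi_{g} = \psi(g,-)\colon Y \to Y$, $\phi_{g} = \phi(g,-)\colon Z \to Z$, $\mu_{\beta} = \mu(\beta,-)\colon X \to X$ and $\nu_{\beta} = \nu(\beta,-)\colon W \to W$; all four are homeomorphisms, and $\mu_{\beta}+\nu_{\beta}$ is a homeomorphism of $X+_{f}W$ because $\mu+\nu$ is an action by homeomorphisms.

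The key observation is that the pullback $f^{\ast}$ is exactly an induced map in the sense of the Cube Lemma (Proposition 5.3 of \cite{So1}). Indeed, define admissible maps $\Pi\colon Closed(Y) \to Closed(X)$ by $\Pi(A) = Cl_{X}\pi(A)$ and $\Sigma\colon Closed(W) \to Closed(Z)$ by $\Sigma(B) = \varpi^{-1}(B)$; these are admissible because $\pi,\varpi$ are continuous and because image, closure, and preimage all commute with finite unions and send $\emptyset$ to $\emptyset$. Then $f_{\Sigma\Pi} = \Sigma \circ f \circ \Pi = f^{\ast}$ by the very definition of the pullback. I would therefore apply the Cube Lemma to the instance with $X_{1}=X_{2}=X$, $W_{1}=W_{2}=W$, $Y_{1}=Y_{2}=Y$, $Z_{1}=Z_{2}=Z$, $f_{1}=f_{2}=f$, $\Pi_{i}=\Pi$, $\Sigma_{i}=\Sigma$, and with the three continuous maps $\mu_{\beta}+\nu_{\beta}$, $\psi_{g}$ and $\phi_{g}$. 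Its conclusion is precisely continuity of $\psi_{g}+\phi_{g}\colon Y+_{f^{\ast}}Z \to Y+_{f^{\ast}}Z$, which is what I want.

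It remains to verify the two commuting-diagram hypotheses of the Cube Lemma, and this is where the $\alpha$-equivariance enters. For the $\Pi$-square I must check $\Pi(\psi_{g}^{-1}(A)) \subseteq \mu_{\beta}^{-1}(\Pi(A))$ for every $A \in Closed(Y)$. Since $\psi_{g}$ and $\mu_{\beta}$ are bijective and $\pi \circ \psi_{g} = \mu_{\beta}\circ\pi$ ($\alpha$-equivariance of $\pi$), one gets $\pi\circ\psi_{g}^{-1} = \mu_{\beta}^{-1}\circ\pi$, hence the set equality $\pi(\psi_{g}^{-1}(A)) = \mu_{\beta}^{-1}(\pi(A))$; taking closures and using that $\mu_{\beta}$ is continuous (so $Cl_{X}(\mu_{\beta}^{-1}(S)) \subseteq \mu_{\beta}^{-1}(Cl_{X}S)$) yields $Cl_{X}\pi(\psi_{g}^{-1}(A)) \subseteq \mu_{\beta}^{-1}(Cl_{X}\pi(A))$, the required inclusion (in fact an equality, as $\mu_{\beta}$ is a homeomorphism). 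For the $\Sigma$-square I must check $\phi_{g}^{-1}(\Sigma(B)) \subseteq \Sigma(\nu_{\beta}^{-1}(B))$ for every $B \in Closed(W)$; here $\alpha$-equivariance of $\varpi$, namely $\varpi\circ\phi_{g} = \nu_{\beta}\circ\varpi$, gives $\phi_{g}^{-1}(\varpi^{-1}(B)) = (\nu_{\beta}\circ\varpi)^{-1}(B) = \varpi^{-1}(\nu_{\beta}^{-1}(B))$, again an equality. With both diagrams verified, the Cube Lemma delivers continuity of $\psi_{g}+\phi_{g}$, and running the argument for $g$ and $g^{-1}$ shows each $(\psi+\phi)(g,-)$ is a homeomorphism, so $\psi+\phi$ is an action by homeomorphisms.

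The only place I expect to need genuine care is the $\Pi$-square, since it is where the topology (the closure operator $Cl_{X}$) must be commuted past the group translation $\mu_{\beta}$: the inclusion there rests on combining the purely algebraic identity $\pi(\psi_{g}^{-1}(A)) = \mu_{\beta}^{-1}(\pi(A))$ with the topological fact that preimages under a continuous map send closed sets to closed sets. Everything else — the admissibility of $\Pi$ and $\Sigma$, the identity $f^{\ast} = f_{\Sigma\Pi}$, and the assembly of the pointwise homeomorphisms into a genuine $G_{1}$-action — is routine and formal.
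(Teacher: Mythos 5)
Your argument is correct. A caveat on the comparison: the paper does not actually prove this proposition — it imports it verbatim as Corollary 2.5 of \cite{So} — so there is no in-text proof to measure you against. What you have written is a legitimate independent derivation from ingredients the paper does quote, namely the Cube Lemma (Proposition 5.3 of \cite{So1}) together with the criterion $f(\psi^{-1}(A)) \subseteq \phi^{-1}(h(A))$ for continuity of $\psi+\phi$. The key identification $f^{\ast} = \Sigma \circ f \circ \Pi$ with $\Pi(A) = Cl_{X}\pi(A)$ and $\Sigma(B) = \varpi^{-1}(B)$ is exactly right, both maps are admissible for the reasons you give, and the two square hypotheses follow from $\alpha$-equivariance as you compute. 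One point worth being explicit about: the directions of the inclusions in the Cube Lemma's two squares are easy to misread from the diagrams, but your verification establishes both squares as set-theoretic \emph{equalities} (for the $\Pi$-square because $\mu_{\beta}$ is a homeomorphism, so $Cl_{X}(\mu_{\beta}^{-1}(S)) = \mu_{\beta}^{-1}(Cl_{X}(S))$; for the $\Sigma$-square because preimages compose), so whichever orientation the lemma demands is satisfied. The reduction to continuity of each $(\psi+\phi)(g,\_)$, with inverse $(\psi+\phi)(g^{-1},\_)$, is standard and fine.
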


\subsection{Attractor-sum functors}

\begin{defi}\label{pielambda}Let $G$ be a group, $X$ and $Y$ Hausdorff topological spaces with $X$ locally compact and $Y$ compact, $L: G \curvearrowright G$ the left multiplication action, $\varphi: G \curvearrowright X$ a  properly discontinuous cocompact action, $\psi: G \curvearrowright Y$ an action by homeomorphisms and $K \subseteq X$ a compact such that $\varphi(G,K) = X$. Define $\Pi_{K}: Closed(X) \rightarrow  Closed(G)$ as $\Pi_{K}(S) = \{g\in G: \varphi(g,K)\cap S \neq \emptyset\}$ and $\Lambda_{K}: Closed(G) \rightarrow Closed(X)$ as $\Lambda_{K}(F) = \varphi(F,K)$.
\end{defi}

\begin{defi}Let $G$ be a group, $X,Y$ Hausdorff spaces with $X$ locally compact and $Y$ compact, $\psi: G \curvearrowright Y$ by homeomorphisms and $\varphi: G \curvearrowright X$ properly discontinuous and cocompact. We say that a compact space of the form $X+_{f}Y$ is perspective if it is Hausdorff, $\varphi+\phi: G \curvearrowright X+_{f}Y$ is continuous and $\forall u \in \mathcal{U}_{f}, \ \forall K \subseteq X$ compact, $\#\{g \in G: \varphi(g,K)\notin Small(u)\} < \aleph_{0}$, where $\mathcal{U}_{f}$ is the only uniform structure of $X+_{f}Y$.

We denote by $T_{2}EComp(\varphi)$ the category whose objects are Hausdorff compact spaces of the form $X+_{f}Y$, for some Hausdorff compact $Y$, with an action by homeomorphisms of the form $\varphi+\psi: G \curvearrowright X+_{f}Y$, and morphisms are continuous maps of the form $id+\phi: X+_{f_{1}}Y_{1}\rightarrow X+_{f_{2}}Y_{2}$, such that $\phi$ is equivariant with respect to the actions of $G$ on $Y_{1}$ and $Y_{2}$. We denote by $EPers(\varphi)$ the full subcategory of $EComp(\varphi)$ whose objects are spaces with the perspective property and $EMPers(\varphi)$ the full subcategory of $EPers(\varphi)$ whose objects are metrizable spaces.
\end{defi}

\begin{prop}\label{functor}(Proposition 3.10 of \cite{So}) Let $G_{1}$, $G_{2}$ be groups, $X_{1}$, $X_{2}$ Hausdorff locally compact spaces, $Y_{1}$, $Y_{2}$ Hausdorff compact spaces, $\varphi_{i}: G_{i} \curvearrowright X_{i}$ properly discontinuous cocompact actions, $\psi_{i}: G_{i} \curvearrowright Y_{i}$ actions by homeomorphisms, $\alpha: G_{1} \rightarrow G_{2}$ a homomorphism, $\mu: X_{1} \rightarrow X_{2}$, $\nu: Y_{1} \rightarrow Y_{2}$ continuous maps with $\mu$ $\alpha$-equivariant, $K_{i} \subseteq X_{i}$ compact subspaces such that $\varphi_{i}(G,K_{i}) = X_{i}$ and $\mu(K_{1}) \subseteq K_{2}$ and $\Pi_{K_{i}}: Closed(X_{i}) \rightarrow Closed(G_{i})$. If the application $\alpha+\nu: G_{1}+_{\partial_{1}}Y_{1} \rightarrow G_{2}+_{\partial_{2}}Y_{2}$ is continuous, then the map $\mu+\nu: X_{1}+_{\partial_{1\Pi_{K_{1}}}} Y_{1} \rightarrow X_{2}+_{\partial_{2\Pi_{K_{2}}}} Y_{2}$ is continuous.
\end{prop}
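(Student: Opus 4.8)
The plan is to deduce this from the Cube Lemma (Proposition 5.3 of \cite{So1}) by choosing its data so that its conclusion is exactly the asserted continuity of $\mu+\nu$. Concretely, I would apply the Cube Lemma with its glueings $X_{i}+_{f_{i}}W_{i}$ taken to be $G_{i}+_{\partial_{i}}Y_{i}$ (so $f_{i}=\partial_{i}$, the lemma's $X_{i}$ are our $G_{i}$, and the lemma's $W_{i}$ are our $Y_{i}$), with the lemma's $Y_{i}$ taken to be $X_{i}$, its $Z_{i}$ taken to be $Y_{i}$, with $\Pi_{i}=\Pi_{K_{i}}$, and, crucially, with $\Sigma_{i}=id_{Closed(Y_{i})}$. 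Then the induced map $f_{i\Sigma_{i}\Pi_{i}}=\Sigma_{i}\circ\partial_{i}\circ\Pi_{K_{i}}=\partial_{i}\circ\Pi_{K_{i}}$ is precisely $\partial_{i\Pi_{K_{i}}}$, so the lemma's conclusion becomes continuity of $\mu+\nu\colon X_{1}+_{\partial_{1\Pi_{K_{1}}}}Y_{1}\rightarrow X_{2}+_{\partial_{2\Pi_{K_{2}}}}Y_{2}$. The lemma's $\mu+\nu$ is played by $\alpha+\nu\colon G_{1}+_{\partial_{1}}Y_{1}\rightarrow G_{2}+_{\partial_{2}}Y_{2}$, which is continuous by hypothesis, while the lemma's $\psi$ and $\phi$ are our $\mu$ and $\nu$. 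Since a composition of admissible maps is admissible, $\partial_{i\Pi_{K_{i}}}$ is admissible and the target glueings are genuine spaces.

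It then remains to verify the two compatibility diagrams required by the lemma. The substantive one is the left diagram, which in the present notation asks that for every $A\in Closed(X_{2})$ one has $\alpha^{-1}(\Pi_{K_{2}}(A))\supseteq\Pi_{K_{1}}(\mu^{-1}(A))$. I would check this straight from the definition $\Pi_{K}(S)=\{g:\varphi(g,K)\cap S\neq\emptyset\}$. If $g\in\Pi_{K_{1}}(\mu^{-1}(A))$, choose $k\in K_{1}$ with $\varphi_{1}(g,k)\in\mu^{-1}(A)$, so $\mu(\varphi_{1}(g,k))\in A$. By $\alpha$-equivariance of $\mu$ we have $\mu(\varphi_{1}(g,k))=\varphi_{2}(\alpha(g),\mu(k))$, and since $\mu(K_{1})\subseteq K_{2}$ it follows that $\mu(k)\in K_{2}$; hence $\varphi_{2}(\alpha(g),K_{2})\cap A\neq\emptyset$, i.e. $\alpha(g)\in\Pi_{K_{2}}(A)$ and $g\in\alpha^{-1}(\Pi_{K_{2}}(A))$. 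This is exactly the point where both standing hypotheses, $\alpha$-equivariance of $\mu$ and $\mu(K_{1})\subseteq K_{2}$, are used, and it is the correct direction: the reverse inclusion already fails when $\mu$ is not surjective.

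The right diagram is immediate. Because $\Sigma_{1}=\Sigma_{2}=id$ and the lemma's $\phi$ is $\nu$, both composites around that square reduce to $\nu^{-1}$, so the required inclusion is in fact an equality. With both diagrams verified the Cube Lemma applies and yields continuity of $\mu+\nu$. I expect no genuine difficulty beyond bookkeeping: the only real content is the one set-theoretic inclusion of the left square, and the main risk is getting the identifications in the Cube Lemma exactly right, in particular recognizing which factor plays $G_{i}$ and which plays $X_{i}$ and that the choice $\Sigma_{i}=id$ is what makes the induced glueing map come out as $\partial_{i\Pi_{K_{i}}}$ rather than something larger.
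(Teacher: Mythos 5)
The paper does not prove this statement itself --- it is imported verbatim as Proposition 3.10 of the author's preprint \cite{So} --- but your derivation is correct and is evidently the intended route, since the Cube Lemma is quoted in the preliminaries with exactly the data ($\Sigma_i = id$, $\Pi_i = \Pi_{K_i}$, $f_i = \partial_i$) needed to make its conclusion read $\mu+\nu\colon X_1+_{\partial_{1\Pi_{K_1}}}Y_1 \to X_2+_{\partial_{2\Pi_{K_2}}}Y_2$ continuous. Your verification of the substantive square, $\Pi_{K_1}(\mu^{-1}(A)) \subseteq \alpha^{-1}(\Pi_{K_2}(A))$ via $\alpha$-equivariance of $\mu$ and $\mu(K_1)\subseteq K_2$, is exactly right, and you correctly identify the direction of the inclusion (the only one that can hold in general).
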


\begin{prop}(Theorem 3.2 of \cite{So}) Let $K \subseteq X$ be a fundamental domain of $\varphi$. The functor $\Pi: EPers(G) \rightarrow EPers(\varphi)$ that sends $G+_{\partial}Y$ to  $X+_{\partial_{\Pi_{K}}}Y, \ L+\psi:G \curvearrowright G+_{\partial}Y$ to $\varphi+\psi: G \curvearrowright X+_{\partial_{\Pi_{K}}}Y$ and $id+\phi: G+_{\partial_{1}}Y_{1}\rightarrow G+_{\partial_{2}}Y_{2}$ to $id+\phi: X+_{(\partial_{1})_{ \Pi_{K}}} Y_{1}\rightarrow X+_{(\partial_{2})_{\Pi_{K}}}Y_{2}$, is a isomorphism of categories.

Furthermore, its inverse is the functor $\Lambda: EPers(\varphi) \rightarrow EPers(G)$ that sends $X+_{f}Y$ to $G+_{f_{\Lambda_{K}}}Y, \ \varphi+\psi:G \curvearrowright X+_{f}Y$ to $id+\psi: G \curvearrowright G+_{f_{\Lambda_{K}}}Y$ and $id+\phi: X+_{f_{1}}Y_{1}\rightarrow X+_{f_{2}}Y_{2}$ to $id+\phi: G+_{(f_{1})_{\Lambda_{K}}}Y_{1}\rightarrow G+_{(f_{2})_{\Lambda_{K}}}Y_{2}$.
\end{prop}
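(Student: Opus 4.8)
The plan is to split the claim into the four standard verifications for an isomorphism of categories: that the two object assignments land in the correct categories, that the morphism assignments are well defined, that the object assignments are mutually inverse, and that functoriality together with the inverse relation on morphisms then follows formally. Throughout I would use the identification of an Artin--Wraith glueing map with the ``accumulation in $Y$'' map. From the description of the closed sets of $G+_{\partial}Y$ one has $Cl_{G+_{\partial}Y}(F)=F\cup\partial(F)$ for closed $F\subseteq G$, hence $\partial(F)=Cl_{G+_{\partial}Y}(F)\cap Y$; symmetrically $f(S)=Cl_{X+_{f}Y}(S)\cap Y$ for closed $S\subseteq X$. In particular both $\partial$ and $f$ are monotone, since $A\subseteq B$ gives $\partial(B)=\partial(A\cup B)=\partial(A)\cup\partial(B)\supseteq\partial(A)$. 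The crux, and the only place where perspectivity is really needed, is showing the object maps $\partial\mapsto\partial\circ\Pi_{K}$ and $f\mapsto f\circ\Lambda_{K}$ are inverse to one another, that is $\partial\circ\Pi_{K}\circ\Lambda_{K}=\partial$ and $f\circ\Lambda_{K}\circ\Pi_{K}=f$.

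First I would identify the two composites $\Pi_{K}\circ\Lambda_{K}$ and $\Lambda_{K}\circ\Pi_{K}$ as bounded thickenings. Setting $N=\{a\in G:\varphi(a,K)\cap K\neq\emptyset\}$, which is finite because $\varphi$ is properly discontinuous and $K$ is compact, a direct computation gives $\Pi_{K}(\Lambda_{K}(F))=FN=\bigcup_{a\in N}Fa$ for every closed $F\subseteq G$, using that $\varphi(h,K)\cap\varphi(g,K)\neq\emptyset$ iff $g^{-1}h\in N$. Dually, for closed $S\subseteq X$ one has $S\subseteq\Lambda_{K}(\Pi_{K}(S))=\bigcup\{\varphi(g,K):\varphi(g,K)\cap S\neq\emptyset\}$, the inclusion holding because $K$ is a fundamental domain, so every point of $S$ lies in some translate $\varphi(g,K)$ that then meets $S$. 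In both cases the composite enlarges the given set only by a uniformly bounded amount, and the task reduces to showing that the boundary maps $\partial$ and $f$ are blind to such an enlargement.

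The key step is then to absorb the thickening using perspectivity, and this is the step I expect to be the main obstacle. For $\partial\circ\Pi_{K}\circ\Lambda_{K}=\partial$, admissibility gives $\partial(FN)=\bigcup_{a\in N}\partial(Fa)$, so it suffices to prove $\partial(Fa)=\partial(F)$ for a fixed $a$. A point $y\in Y$ lies in $\partial(F)$ exactly when some net in $F$ converges to $y$; since $G$ is discrete such a net escapes every finite subset of $G$, and by perspectivity of $G+_{\partial}Y$ applied to the finite set $\{e,a\}$, for each entourage $u$ all but finitely many $g$ satisfy $\{g,ga\}\in Small(u)$, whence the right-translated net in $Fa$ converges to the same $y$; applying this to $a^{-1}$ as well gives $\partial(Fa)=\partial(F)$, and therefore $\partial(FN)=\partial(F)$. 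For $f\circ\Lambda_{K}\circ\Pi_{K}=f$, monotonicity gives $f(S)\subseteq f(\Lambda_{K}\Pi_{K}(S))$. Conversely let $y\in f(\Lambda_{K}\Pi_{K}(S))$, realized by a net $x_{i}\in\varphi(g_{i},K)$ with $\varphi(g_{i},K)\cap S\neq\emptyset$ and $x_{i}\to y\in Y$. Because $X+_{f}Y$ is Hausdorff, $x_{i}\to y\in Y$ forces the $x_{i}$ to leave every compact subset of $X$, hence the $g_{i}$ to leave every finite subset of $G$; so by perspectivity of $X+_{f}Y$ with respect to the compact $K$ the translates $\varphi(g_{i},K)$ are eventually $u$-small for every $u$. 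Choosing $s_{i}\in\varphi(g_{i},K)\cap S$ we then get $s_{i}\to y$ with $s_{i}\in S$, so $y\in Cl_{X+_{f}Y}(S)\cap Y=f(S)$. This establishes both identities, so the object maps are mutually inverse.

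It remains to check well-definedness and functoriality on morphisms, which I expect to be routine given the machinery already recorded. That $X+_{\partial\Pi_{K}}Y$ is again Hausdorff, compact, carries a continuous action $\varphi+\psi$ and is perspective follows from the continuity criterion (Proposition 4.2 of \cite{So1}) and \textbf{Proposition \ref{pullbackaction}}, together with the fundamental-domain property: any compact $K'\subseteq X$ is covered by finitely many translates of $K$, so perspectivity transfers from finite subsets of $G$ to compact subsets of $X$; the assignment $f\mapsto f\Lambda_{K}$ is handled symmetrically. On morphisms both functors keep the same map $id+\phi$, and the continuity of $\Pi(id+\phi)$ is exactly \textbf{Proposition \ref{functor}} with $\alpha=id$, $\mu=id_{X}$ and $K_{1}=K_{2}=K$, while the continuity of $\Lambda(id+\phi)$ follows the same way from the Cube Lemma (Proposition 5.3 of \cite{So1}), now with the role of the base change played by $\Lambda_{K}$. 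Since both functors act as the identity on the component maps $\phi$, they visibly preserve identities and composition, and the two identities of the previous paragraph give $\Lambda\circ\Pi=id$ and $\Pi\circ\Lambda=id$ on objects and trivially on morphisms. Hence $\Pi$ and $\Lambda$ are mutually inverse isomorphisms of categories. What makes the argument work is precisely that, although $\Pi_{K}\Lambda_{K}$ and $\Lambda_{K}\Pi_{K}$ genuinely differ from the identity, the uniform smallness of almost all translates of the fundamental domain forces the two glueings to define the same compactification.
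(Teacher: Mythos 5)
This statement is quoted from Theorem~3.2 of \cite{So} and the present paper gives no proof of it, so there is nothing in-paper to compare your argument against; I can only assess it on its own terms. The part you identify as the crux is handled correctly: the computations $\Pi_{K}\Lambda_{K}(F)=FN$ with $N=\{a:\varphi(a,K)\cap K\neq\emptyset\}$ finite and $S\subseteq\Lambda_{K}\Pi_{K}(S)$, the identification $\partial(F)=Cl_{G+_{\partial}Y}(F)\cap Y$ (valid because $F\cup\partial(F)$ is closed and any closed set containing $F$ contains $\partial(F)$ by admissibility), and the two absorption arguments --- right translation by a fixed $a$ using perspectivity of $G+_{\partial}Y$ applied to the finite set $\{e,a\}$, and replacement of $x_{i}$ by $s_{i}\in\varphi(g_{i},K)\cap S$ using perspectivity of $X+_{f}Y$ applied to $K$ --- are all sound. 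They give the exact equalities $\partial\circ\Pi_{K}\circ\Lambda_{K}=\partial$ and $f\circ\Lambda_{K}\circ\Pi_{K}=f$, which by Proposition~4.2 of \cite{So1} is precisely what is needed for the two composites to be the identity, and the morphism-level statements are indeed formal.

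The gap is in the sentence declaring well-definedness of the object assignments to be routine. For $X+_{\partial_{\Pi_{K}}}Y$ to be an object of $EPers(\varphi)$ it must first of all be a \emph{compact Hausdorff} space, and neither of the results you invoke for this (Proposition~4.2 of \cite{So1} and \textbf{Proposition \ref{pullbackaction}}) addresses either property: both only control continuity of maps between glueings whose topologies are already understood. Compactness can be recovered along lines close to what you write (a closed subset of $X+_{\partial_{\Pi_{K}}}Y$ contained in $X$ has $\partial(\Pi_{K}(A))=\emptyset$, which by compactness of $G+_{\partial}Y$ forces $\Pi_{K}(A)$ finite and hence $A$ compact, after which the usual finite-subcover argument applies), but Hausdorffness --- producing disjoint $\tau_{\partial_{\Pi_{K}}}$-open sets around two distinct points of $Y$ --- requires an actual construction of saturated neighbourhoods and is a substantive part of what Theorem~3.2 of \cite{So} asserts; it cannot be absorbed into ``the machinery already recorded''. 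The same verification is owed for $G+_{f_{\Lambda_{K}}}Y$ in the other direction. So your proposal establishes the heart of the theorem (the mutual inverseness) but leaves unproved that the two functors actually land in the claimed categories.
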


Since $\Pi$ and $\Lambda$ do not depend of the choice of the fundamental domain (Propositions 3.18 and 3.20 of \cite{So}), we denote $\partial_{\Pi_{K}}$ by $\partial_{\Pi}$ and $f_{\Lambda_{K}}$ by $f_{\Lambda}$.

\begin{cor}(Corollary 3.5 of \cite{So}) Let $G$ be a countable group, $X$ a locally compact Hausdorff space with countable basis and $\varphi: G \curvearrowright X$ properly discontinuous. Then, the functor $\Pi$ maps $EMPers(G)$ to $EMPers(\varphi)$ isomorphically.
\end{cor}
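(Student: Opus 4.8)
The plan is to leverage the preceding proposition (Theorem 3.2 of \cite{So}), which already gives that $\Pi: EPers(G) \to EPers(\varphi)$ is an isomorphism of categories with inverse $\Lambda$. Since $EMPers(G)$ and $EMPers(\varphi)$ are by definition the full subcategories of metrizable objects, an isomorphism of categories restricts to an isomorphism of full subcategories as soon as it carries the objects of one onto the objects of the other. Thus it suffices to prove that, for every object $G+_{\partial}Y$ of $EPers(G)$, the space $G+_{\partial}Y$ is metrizable if and only if its image $X+_{\partial_{\Pi}}Y$ under $\Pi$ is metrizable; the morphisms then come along for free by fullness.

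First I would record two topological facts about the Artin-Wraith glueings. In $X+_{f}Y$ the subspace $X$ is open and the subspace $Y$ is closed, and a direct check from the definition of $\tau_{f}$ shows that the subspace topologies on $X$ and on $Y$ coincide with their original topologies (for $C \in Closed(X)$ the set $C \cup f(C)$ is closed in the glueing and meets $X$ in $C$; for $B \in Closed(Y)$ the set $B$ is itself closed in the glueing). The same holds for $G+_{\partial}Y$. Consequently, in both glueings $Y$ appears as a compact subspace carrying its own topology, $X$ appears with its own (second countable, by hypothesis) topology, and $G$ appears as a countable discrete space, hence second countable. I would also recall that a Hausdorff compact space is metrizable if and only if it has a countable basis, and that a closed subspace of a compact metrizable space is again compact metrizable.

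With these in hand I would argue symmetrically. If $X+_{\partial_{\Pi}}Y$ is metrizable, then its closed subspace $Y$ is metrizable, hence second countable; since $G$ is also second countable and $G+_{\partial}Y$ is a Hausdorff compact space equal to the union of the two subspaces $G$ and $Y$, each with countable basis, Proposition \ref{uniaometrizavel} gives that $G+_{\partial}Y$ is metrizable. Conversely, if $G+_{\partial}Y$ is metrizable then $Y$ is metrizable, and since $X$ is second countable by hypothesis, the same application of Proposition \ref{uniaometrizavel} to $X+_{\partial_{\Pi}}Y = X \cup Y$ shows that $X+_{\partial_{\Pi}}Y$ is metrizable. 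Hence $\Pi$ sends metrizable objects to metrizable objects and $\Lambda$ does likewise, so $\Pi$ maps $EMPers(G)$ isomorphically onto $EMPers(\varphi)$.

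I expect the only genuine content to be the reduction above: once metrizability of each glueing is seen to be equivalent to metrizability of the common remainder $Y$ (because both ``base'' pieces $G$ and $X$ are second countable), the statement follows from the isomorphism $\Pi$ together with Proposition \ref{uniaometrizavel}. The main point requiring care is the verification that the subspaces $X$, $G$ and $Y$ retain their original topologies inside the glueings, so that ``second countable subspace'' genuinely applies; this is a routine unwinding of the Artin-Wraith closed sets, and it is the only step where a miscomputation could creep in.
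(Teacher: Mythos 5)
Your argument is correct. Note that the paper itself offers no proof of this statement: it is imported verbatim as Corollary 3.5 of \cite{So}, so there is no internal argument to compare yours against. Your reduction is the natural one and it is complete as written: since Theorem 3.2 of \cite{So} already gives that $\Pi$ is an isomorphism of categories with inverse $\Lambda$, and $EMPers(G)$, $EMPers(\varphi)$ are \emph{full} subcategories, the whole content is the object-level claim that $G+_{\partial}Y$ is metrizable if and only if $X+_{\partial_{\Pi}}Y$ is. Your verification that $X$, $G$ and $Y$ sit inside the glueings with their original topologies (via the closed sets $B$ and $C\cup f(C)$) is the one point that genuinely needs checking, and it is right; after that, both directions follow from Proposition \ref{uniaometrizavel} applied to the two-element cover by second countable subspaces, using that the compact remainder $Y$ is second countable exactly when it is metrizable. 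The hypotheses that $G$ is countable and $X$ has a countable basis enter exactly where you use them, so the proof is sound.
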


\begin{prop}\label{quocienteperspectivo}(Proposition 4.10 of \cite{So}) Let $\varphi_{i}+\psi_{i}: G \curvearrowright X_{i}+_{f_{i}}Y_{i}$, for $i =1,2$, be actions by homeomorphisms on Hausdorff spaces and let $m+n: X_{1}+_{f_{1}}Y_{1} \rightarrow X_{2}+_{f_{2}}Y_{2}$ be a continuous and surjective map with $m$ equivariant. If $X_{1}+_{f_{1}}Y_{1}$ is perspective, then $X_{2}+_{f_{2}}Y_{2}$ is perspective.
\end{prop}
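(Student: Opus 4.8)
The plan is to verify the only nontrivial clause in the definition of perspectivity for $X_{2}+_{f_{2}}Y_{2}$, namely the counting condition, since the space is Hausdorff and carries an action by homeomorphisms by hypothesis. Fix $u \in \mathcal{U}_{f_{2}}$ and a compact $K_{2} \subseteq X_{2}$; the goal is to show that $\{g \in G : \varphi_{2}(g,K_{2}) \notin Small(u)\}$ is finite, and I will do so by pulling the whole problem back along $m+n$ to $X_{1}+_{f_{1}}Y_{1}$, where the analogous set is finite by hypothesis.

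First I would use that $m+n$ is a continuous map between compact Hausdorff spaces, hence uniformly continuous with respect to the unique compatible uniform structures. Thus $v := ((m+n)\times(m+n))^{-1}(u)$ is an entourage of $\mathcal{U}_{f_{1}}$. The step I initially expect to be the obstacle is producing a compact subset of $X_{1}$ that maps onto $K_{2}$, since a general continuous surjection need not be compact-covering. However, this dissolves because the total space $X_{1}+_{f_{1}}Y_{1}$ is compact: $K_{2}$ is compact, hence closed, in the Hausdorff space $X_{2}+_{f_{2}}Y_{2}$, so $K_{1} := (m+n)^{-1}(K_{2})$ is closed in the compact space $X_{1}+_{f_{1}}Y_{1}$ and therefore compact. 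Moreover, since $K_{2} \subseteq X_{2}$ and $n$ maps $Y_{1}$ into $Y_{2}$, we have $K_{1} = m^{-1}(K_{2}) \subseteq X_{1}$, so $K_{1}$ is a genuine compact subset of $X_{1}$; and $m(K_{1}) = K_{2}$, because surjectivity of $m+n$ forces surjectivity of $m$ (points of $X_{2}$ can only be images of points of $X_{1}$).

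Finally I would combine equivariance with the smallness estimate. By equivariance of $m$ we get $\varphi_{2}(g,K_{2}) = \varphi_{2}(g,m(K_{1})) = m(\varphi_{1}(g,K_{1}))$. If $\varphi_{1}(g,K_{1}) \in Small(v)$, then unwinding the definition of $v$ (the image-side counterpart of \textbf{Proposition \ref{small}}) shows that $(m+n)(\varphi_{1}(g,K_{1})) = m(\varphi_{1}(g,K_{1})) = \varphi_{2}(g,K_{2})$ lies in $Small(u)$: indeed $\varphi_{1}(g,K_{1})\times\varphi_{1}(g,K_{1}) \subseteq v$ entails, after applying $(m+n)\times(m+n)$, that $\varphi_{2}(g,K_{2})\times\varphi_{2}(g,K_{2}) \subseteq u$. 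Taking contrapositives yields the inclusion
$$\{g : \varphi_{2}(g,K_{2}) \notin Small(u)\} \subseteq \{g : \varphi_{1}(g,K_{1}) \notin Small(v)\},$$
and the right-hand set is finite because $X_{1}+_{f_{1}}Y_{1}$ is perspective, $v \in \mathcal{U}_{f_{1}}$ and $K_{1} \subseteq X_{1}$ is compact. Hence the left-hand set is finite, which proves that $X_{2}+_{f_{2}}Y_{2}$ is perspective. I note that the argument never uses cocompactness of $\varphi_{2}$: the compactness needed to run the pullback is supplied for free by the ambient compact space.
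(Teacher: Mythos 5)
Your argument is correct. Note that the paper does not actually prove this proposition; it is imported verbatim as Proposition 4.10 of \cite{So}, so there is no in-text proof to compare against. Your route is the natural direct one and every step checks out: uniform continuity of $m+n$ between compact Hausdorff spaces gives the entourage $v=((m+n)\times(m+n))^{-1}(u)$; the preimage $K_{1}=(m+n)^{-1}(K_{2})$ is compact because the total space is compact and $K_{2}$ is closed, and it lies in $X_{1}$ and surjects onto $K_{2}$ because $n$ maps $Y_{1}$ into $Y_{2}$; and the contrapositive smallness estimate correctly reduces the counting condition for $(u,K_{2})$ to the one for $(v,K_{1})$, which is finite by hypothesis. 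Your closing observation is also apt: the only place compactness enters is in guaranteeing that $K_{1}$ is compact, which the ambient space supplies for free. The one point worth flagging is cosmetic rather than logical: the paper's definition of perspectivity is only formulated for $\varphi_{2}$ properly discontinuous and cocompact on $X_{2}$, so for the conclusion to be well-posed one should either take that as implicit in the hypotheses (as \cite{So} does) or observe that the counting condition you verify is all that is ever used downstream.
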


\section{Parabolic blowup}

\subsection{Construction}

Let $G$ be a group, $Z$ a compact Hausdorff space, $\varphi: G \curvearrowright Z$ an action by homeomorphisms, $P \subseteq Z$ the set of bounded parabolic points of $\varphi$, $P' \subseteq P$ a representative set of orbits, $\mathcal{C} = \{C_{p}\}_{p\in P'}$ a family of compact Hausdorff spaces, $H = \{H_{p}\}_{p\in P'}$, with $H_{p} \subseteq G$ minimal sets such that $1\in H_{p}$ and $Orb_{\varphi|_{H_{p}\times Z}} p = Orb_{\varphi} p$ and $\{Stab_{\varphi} p+_{\partial_{p}}C_{p}\}_{p\in P'}$ a family of spaces with the equivariant perspective property with actions $\eta = \{\eta_{p}\}_{p\in P'}$, such that the action $L_{p}+\eta_{p}: Stab_{\varphi} p \curvearrowright Stab_{\varphi} p +_{\partial_{p}}C_{p}$  is by homeomorphisms (where $L_{p}$ is the left multiplication action).

We define for $p \in P'$ and $q \in Orb_{\varphi} p, \ C_{q} = C_{p}$ and define $h_{p,q}$, for $q \in Orb_{\varphi} p$, the only element of $H_{p}$ such that $\varphi(h_{p,q},p) = q$. Let $p \in P'$ and $q \in Orb_{\varphi} p$. We have that $Stab_{\varphi} q = h_{p,q} (Stab_{\varphi} p) h_{p,q}^{-1}$. In this case, let's take  $\eta_{q}: Stab_{\varphi} q \curvearrowright C_{q}$ defined by $\eta_{q}(h,\_) = \eta_{p}(h_{p,q}^{-1}h h_{p,q}(q),\_)$.

\begin{obs}Note that if  $q = p$, the new definition of $\eta_{p}$ coincides with the original one, since $h_{p,p} = 1$.
\end{obs}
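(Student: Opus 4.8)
The plan is to show that the two prescriptions for $\eta_p$ — the one handed to us in the hypotheses as part of the family $\eta = \{\eta_{p}\}_{p\in P'}$, and the one produced by the conjugation formula $\eta_{q}(h,\_) = \eta_{p}(h_{p,q}^{-1}h\,h_{p,q},\_)$ specialized to $q = p$ — are literally the same action of $Stab_{\varphi}p$ on $C_{p}$. Everything reduces to identifying the single element $h_{p,p}$, so the whole argument is driven by the structure of $H_p$ already fixed in the construction.

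First I would verify that $h_{p,p} = 1$. Recall $h_{p,q}$ is declared to be the \emph{only} element of $H_p$ with $\varphi(h_{p,q},p) = q$; this is well-posed precisely because $H_p$ is a minimal set (among those containing $1$) whose restricted action realizes the full orbit $Orb_{\varphi}p$. Indeed, if two distinct elements of $H_p$ both sent $p$ to the same point, one of them could be deleted while still keeping $1\in H_p$ and still realizing $Orb_{\varphi}p$, contradicting minimality; hence distinct elements of $H_p$ carry $p$ to distinct points and $h_{p,q}$ is unique. Now $1\in H_p$ by hypothesis and $\varphi(1,p) = p$ because $\varphi(1,\cdot)$ is the identity homeomorphism, so $1$ is an element of $H_p$ taking $p$ to $p$; uniqueness then forces $h_{p,p} = 1$.

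With $h_{p,p} = 1$ in hand the coincidence is immediate: for every $h\in Stab_{\varphi}p$ the conjugate $h_{p,p}^{-1}h\,h_{p,p} = 1^{-1}h\,1 = h$, so the formula returns $\eta_{p}(h,\_) = \eta_{p}(h_{p,p}^{-1}h\,h_{p,p},\_) = \eta_{p}(h,\_)$, acting on the unchanged space $C_{p}$. Thus the extended assignment restricts at the representative $p$ to the originally prescribed $\eta_p$, and the symbol $\eta_p$ is unambiguous. There is no real obstacle here; the only point deserving a line of care is the uniqueness of $h_{p,p}$, which is exactly what makes $h_{p,q}$ a legitimate notation and which rests on the minimality of $H_p$ together with $1\in H_p$. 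The role of the remark is organizational — it certifies that extending $\{\eta_{p}\}_{p\in P'}$ from orbit representatives to entire orbits by conjugation does not clash with the data at the representatives, so the blowup construction may proceed with a single well-defined family $\{\eta_{q}\}_{q\in P}$.
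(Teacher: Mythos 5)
Your proposal is correct and follows the same route as the paper, which justifies the remark in one line by noting $h_{p,p}=1$ (forced by $1\in H_p$ and the uniqueness of $h_{p,q}$ coming from minimality of $H_p$) so that the conjugation $h\mapsto h_{p,p}^{-1}h\,h_{p,p}$ is the identity. Your added verification that distinct elements of $H_p$ must carry $p$ to distinct points is exactly the implicit content of the paper's definition of $h_{p,q}$, so there is nothing to add.
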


\begin{prop}$\eta_{q}$ is an action by homeomorphisms.
\end{prop}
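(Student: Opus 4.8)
The plan is to recognize $\eta_{q}$ as nothing more than the action $\eta_{p}$ transported along the conjugation isomorphism between the two stabilizers, so that the three axioms of an action by homeomorphisms are inherited from $\eta_{p}$. Concretely, I would first introduce the map $\theta: Stab_{\varphi} q \rightarrow Stab_{\varphi} p$ given by $\theta(h) = h_{p,q}^{-1} h h_{p,q}$, where $h_{p,q}$ is the element of $H_{p}$ with $\varphi(h_{p,q},p) = q$, and show that $\theta$ is a well-defined group isomorphism. The only point needing an argument is well-definedness, i.e. that $\theta(h) \in Stab_{\varphi} p$: for $h \in Stab_{\varphi} q$ one has $\varphi(h,q) = q$, and since $q = \varphi(h_{p,q},p)$ this reads $\varphi(h\,h_{p,q},p) = \varphi(h_{p,q},p)$, whence $\varphi(h_{p,q}^{-1} h h_{p,q},p) = p$, that is $\theta(h) \in Stab_{\varphi} p$. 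This is exactly a restatement of the already recorded identity $Stab_{\varphi} q = h_{p,q}(Stab_{\varphi} p)h_{p,q}^{-1}$. That $\theta$ is a homomorphism is immediate from $\theta(h_{1}h_{2}) = h_{p,q}^{-1} h_{1} h_{2} h_{p,q} = \theta(h_{1})\theta(h_{2})$, and its inverse is $g \mapsto h_{p,q}\, g\, h_{p,q}^{-1}$, so $\theta$ is an isomorphism.

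Next I would observe that, with $C_{q} = C_{p}$, the definition of $\eta_{q}$ is precisely $\eta_{q}(h,\_) = \eta_{p}(\theta(h),\_)$, i.e. $\eta_{q} = \eta_{p} \circ (\theta \times id_{C_{q}})$. The three requirements then follow mechanically. Identity: $\theta(1) = 1$, so $\eta_{q}(1,\_) = \eta_{p}(1,\_) = id_{C_{q}}$. Compatibility with the group law: for $h_{1},h_{2} \in Stab_{\varphi} q$ and $c \in C_{q}$,
$$\eta_{q}(h_{1}h_{2},c) = \eta_{p}(\theta(h_{1})\theta(h_{2}),c) = \eta_{p}\bigl(\theta(h_{1}),\eta_{p}(\theta(h_{2}),c)\bigr) = \eta_{q}\bigl(h_{1},\eta_{q}(h_{2},c)\bigr),$$
using that $\eta_{p}$ is an action and that $\theta$ is a homomorphism. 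Finally, for each fixed $h$ the map $\eta_{q}(h,\_) = \eta_{p}(\theta(h),\_)$ is a homeomorphism of $C_{q} = C_{p}$, because $\eta_{p}(\theta(h),\_)$ is one ($\eta_{p}$ being an action by homeomorphisms); its inverse is $\eta_{q}(h^{-1},\_)$, consistently with the computation above.

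I do not expect any genuine obstacle here: the statement is exactly the standard fact that precomposing an action by homeomorphisms with a group isomorphism again yields an action by homeomorphisms, and the isomorphism in play is conjugation by $h_{p,q}$. The one step meriting care is the verification that $\theta$ lands in $Stab_{\varphi} p$, which is where the choice $q = \varphi(h_{p,q},p)$ and the identity $Stab_{\varphi} q = h_{p,q}(Stab_{\varphi} p)h_{p,q}^{-1}$ are used; everything else is a direct substitution into the axioms already satisfied by $\eta_{p}$.
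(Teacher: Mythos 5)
Your proposal is correct and follows essentially the same route as the paper: the paper's proof is exactly the computation $\eta_{q}(h_{1}h_{2},\_) = \eta_{p}(h_{p,q}^{-1}h_{1}h_{p,q}\,h_{p,q}^{-1}h_{2}h_{p,q},\_) = \eta_{q}(h_{1},\_)\circ\eta_{q}(h_{2},\_)$, i.e.\ transporting $\eta_{p}$ along conjugation by $h_{p,q}$, which is what you package as precomposition with the isomorphism $\theta$. Your explicit check that $\theta$ lands in $Stab_{\varphi}p$ is a welcome bit of extra care that the paper leaves to the previously recorded identity $Stab_{\varphi}q = h_{p,q}(Stab_{\varphi}p)h_{p,q}^{-1}$.
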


\begin{proof}We have that $\eta_{q}(h_{1}h_{2},\_) =  \eta_{p}(h_{p,q}^{-1}h_{1}h_{2}h_{p,q},\_) = \\ \eta_{p}(h_{p,q}^{-1}h_{1}h_{p,q}h_{p,q}^{-1}h_{2}h_{p,q},\_) = \eta_{p}(h_{p,q}^{-1}h_{1}h_{p,q},\_) \circ \eta_{p}(h_{p,q}^{-1}h_{2}h_{p,q},\_) = \eta_{q}(h_{1},\_) \circ  \eta_{q}(h_{2},\_)$, which implies that $\eta_{q}$ is a group action, which is by homeomorphisms because, by construction, it is equivalent to $\eta_{p}$.
\end{proof}

Let's take, for $p \in P'$ and $q \in Orb_{\varphi} p$, the compact $Stab_{\varphi} p +_{\partial_{q}}C_{q}$, where $\partial_{q} = (\partial_{p})_{\ast}$, is the pullback for  $\tau_{h_{p,q}^{-1}}: Stab_{\varphi}q \rightarrow Stab_{\varphi}p$, the conjugation map $\tau_{h_{p,q}^{-1}}(x) = h_{p,q}^{-1}xh_{p,q}$, and $id_{C_{p}}$. Denote by $L_{q},R_{q}: Stab_{\varphi} q \curvearrowright Stab_{\varphi} q$, respectively, the left and right multiplication actions.

\begin{prop}$L_{q}+\eta_{q}: Stab_{\varphi} q \curvearrowright Stab_{\varphi} q +_{\partial_{q}}C_{q}$ is an action by homeomorphisms.
\end{prop}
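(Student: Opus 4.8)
The plan is to recognize this statement as a direct instance of the pullback-action machinery, namely Proposition \ref{pullbackaction}. By construction, $Stab_{\varphi}q +_{\partial_{q}}C_{q}$ is the Artin--Wraith glueing built from the pullback $\partial_{q} = (\partial_{p})_{\ast}$ of $\partial_{p}$ along the conjugation map $\tau_{h_{p,q}^{-1}}: Stab_{\varphi}q \rightarrow Stab_{\varphi}p$ and the map $id_{C_{p}}: C_{q}\rightarrow C_{p}$, so the hypotheses of that proposition are already staged. I would apply it with $G_{1} = Stab_{\varphi}q$, $G_{2} = Stab_{\varphi}p$, the homomorphism $\alpha = \tau_{h_{p,q}^{-1}}$ given by $h \mapsto h_{p,q}^{-1}h h_{p,q}$, the glued space $X+_{f}W = Stab_{\varphi}p+_{\partial_{p}}C_{p}$ with its given action $\mu+\nu = L_{p}+\eta_{p}$, and with $\psi = L_{q}$, $\phi = \eta_{q}$, $\pi = \tau_{h_{p,q}^{-1}}$, $\varpi = id_{C_{p}}$.

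First I would record the algebraic and topological preliminaries. The map $\alpha$ is a group isomorphism, since $Stab_{\varphi}q = h_{p,q}(Stab_{\varphi}p)h_{p,q}^{-1}$ and conjugation is multiplicative; as both stabilizers carry the discrete topology, $\pi$ and $\varpi$ are automatically continuous. Moreover $L_{p}+\eta_{p}$ is an action by homeomorphisms by hypothesis, $L_{q}$ is left multiplication on a discrete group, and $\eta_{q}$ is an action by homeomorphisms by the preceding proposition. Thus every action feeding into Proposition \ref{pullbackaction} is by homeomorphisms.

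The only step with genuine content is verifying that $\pi$ and $\varpi$ are $\alpha$-equivariant. For $\pi$ this is the identity $\tau_{h_{p,q}^{-1}}(hx) = \alpha(h)\,\tau_{h_{p,q}^{-1}}(x)$, which follows by inserting $h_{p,q}h_{p,q}^{-1}$ between $h$ and $x$ inside the conjugation, matching the left-multiplication action $L_{p}$ on the target. For $\varpi = id_{C_{p}}$ the condition reads $\eta_{q}(h,c) = \eta_{p}(\alpha(h),c)$, which is exactly the defining formula $\eta_{q}(h,\_) = \eta_{p}(h_{p,q}^{-1}h h_{p,q},\_)$. With both equivariances in hand, Proposition \ref{pullbackaction} immediately gives that $L_{q}+\eta_{q}$ is an action by homeomorphisms on $Stab_{\varphi}q +_{\partial_{q}}C_{q}$.

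I expect the step requiring the most care to be bookkeeping rather than genuine difficulty: correctly matching each symbol of Proposition \ref{pullbackaction} to the present data, in particular identifying the pullback $f^{\ast}$ with $\partial_{q}$ and keeping track of the direction in which $\tau_{h_{p,q}^{-1}}$ points, after which the two equivariance verifications are one-line computations.
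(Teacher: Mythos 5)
Your proposal is correct and follows essentially the same route as the paper: both verify that $\tau_{h_{p,q}^{-1}}$ and $id_{C_{p}}$ are $\alpha$-equivariant for $\alpha=\tau_{h_{p,q}^{-1}}$ (the paper phrases this as two commuting squares, computing that both compositions in the group square give $h_{p,q}^{-1}hgh_{p,q}$, while the $C_{q}$ square is the defining formula for $\eta_{q}$) and then invoke Proposition \ref{pullbackaction}. Nothing is missing.
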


\begin{proof}We have that both diagrams commutes ($\forall h \in Stab_{\varphi} q$):

$$ \xymatrix{ Stab_{\varphi} q \ar[rr]_{L_{q}(h,\_)} \ar[d]_{\tau_{h_{p,q}^{-1}}} & & Stab_{\varphi} q \ar[d]^{\tau_{h_{p,q}^{-1}}} & C_{q} \ar[d]_{id} \ar[rr]_{\eta_{q}(h,\_)} & & C_{q} \ar[d]^{id} \\
            Stab_{\varphi} p \ar[rr]^{L_{p}(\tau_{h_{p,q}^{-1}}(h),\_)} & & Stab_{\varphi} p & C_{p} \ar[rr]^{\eta_{p}(\tau_{h_{p,q}^{-1}}(h),\_)} & & C_{p}} $$

In fact, on the first diagram both terms give us $h_{p,q}^{-1}hgh_{p,q}, \ \forall g \in Stab_{\varphi} q$. So, by \textrm{Proposition \ref{pullbackaction}}, we have that $L_{q}+\eta_{q}$ is an action by homeomorphisms.
\end{proof}

\begin{prop}$R_{q}+id: Stab_{\varphi} q \curvearrowright Stab_{\varphi} q +_{\partial_{q}}C_{q}$ is an action by homeomorphisms.
\end{prop}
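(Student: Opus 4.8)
The plan is to mirror the pullback argument of the previous proposition, the only difference being that the action on the base is now $R_{p}+id$ instead of $L_{p}+\eta_{p}$. Since $L_{p}+\eta_{p}$ being by homeomorphisms is part of the hypotheses but the analogous statement for $R_{p}+id$ is not, the substance of the argument is to first check that $R_{p}+id: Stab_{\varphi} p \curvearrowright Stab_{\varphi} p +_{\partial_{p}}C_{p}$ is by homeomorphisms for $p\in P'$, and then transport this to an arbitrary $q\in Orb_{\varphi} p$ by conjugation exactly as before.

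\textbf{Base case.} Fix $p\in P'$ and $h\in Stab_{\varphi} p$. The maps $R_{p}(h,\_)+id$ and $R_{p}(h^{-1},\_)+id$ are mutually inverse as set maps, so it suffices to prove continuity. By the continuity criterion (Proposition 4.2 of \cite{So1}) applied with $\psi=R_{p}(h,\_)$, $\phi=id$ and $f=\partial_{p}$, continuity is equivalent to the inclusion $\partial_{p}(Ah)\subseteq \partial_{p}(A)$ for every closed $A\subseteq Stab_{\varphi} p$, where I have used $R_{p}(h,\_)^{-1}(A)=Ah$. To verify it, take $c\in\partial_{p}(Ah)$, witnessed by a net $\{a_{\gamma}h\}$ with $a_{\gamma}\in A$ and $a_{\gamma}h\to c\in C_{p}$. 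Since $Stab_{\varphi} p$ is open and discrete and $c$ lies in the remainder, the net $\{a_{\gamma}h\}$ eventually leaves every finite subset of $Stab_{\varphi} p$. Applying the perspective property to the finite set $K=\{1,h^{-1}\}$ and a symmetric entourage $v$, all but finitely many indices satisfy $(a_{\gamma}h)K=\{a_{\gamma}h,a_{\gamma}\}\in Small(v)$, so $a_{\gamma}$ and $a_{\gamma}h$ are eventually $v$-close; together with $a_{\gamma}h\to c$ this forces $a_{\gamma}\to c$, i.e. $c\in\partial_{p}(A)$. Hence each $R_{p}(h,\_)+id$ is a homeomorphism and $R_{p}+id$ is an action by homeomorphisms on the base.

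\textbf{Pullback step.} For $q\in Orb_{\varphi} p$ recall that $\partial_{q}=(\partial_{p})_{\ast}$ is the pullback of $\partial_{p}$ along $\tau_{h_{p,q}^{-1}}: Stab_{\varphi} q \rightarrow Stab_{\varphi} p$ and $id_{C_{p}}$, and put $\alpha=\tau_{h_{p,q}^{-1}}$. As in the previous proposition the two squares

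$$ \xymatrix{ Stab_{\varphi} q \ar[rr]_{R_{q}(h,\_)} \ar[d]_{\tau_{h_{p,q}^{-1}}} & & Stab_{\varphi} q \ar[d]^{\tau_{h_{p,q}^{-1}}} & C_{q} \ar[d]_{id} \ar[rr]_{id} & & C_{q} \ar[d]^{id} \\
            Stab_{\varphi} p \ar[rr]^{R_{p}(\tau_{h_{p,q}^{-1}}(h),\_)} & & Stab_{\varphi} p & C_{p} \ar[rr]^{id} & & C_{p}} $$

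commute --- the left one because both composites send $g$ to $h_{p,q}^{-1}gh^{-1}h_{p,q}$, the right one trivially --- so $\tau_{h_{p,q}^{-1}}$ and $id_{C_{p}}$ are $\alpha$-equivariant. Applying \textbf{Proposition \ref{pullbackaction}} with base action $R_{p}+id$ (by homeomorphisms by the base case) then shows that $R_{q}+id$ is an action by homeomorphisms.

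The main obstacle is the base case, and within it the inclusion $\partial_{p}(Ah)\subseteq\partial_{p}(A)$: this is the only place where the perspective property is genuinely used, and it encodes the fact that right multiplication is a bounded perturbation and therefore cannot enlarge the set of boundary limit points. Everything else is the same diagram chase already carried out for $L_{q}+\eta_{q}$; in particular, if an earlier result records that right multiplication extends to the identity on the remainder of any perspective compactification, the base case may be quoted directly in its place.
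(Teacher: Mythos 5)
Your proof is correct and follows the same route as the paper: the paper's own proof consists exactly of your pullback step, checking that the two squares commute (both composites giving $h_{p,q}^{-1}ghh_{p,q}$ up to your convention for $R$) and invoking Proposition \ref{pullbackaction}. The only difference is that the paper takes your base case --- that $R_{p}+id$ is an action by homeomorphisms on $Stab_{\varphi}p+_{\partial_{p}}C_{p}$ --- for granted as a consequence of the perspectivity property (a fact imported from \cite{So} and used again without proof in Proposition \ref{persppullback}), whereas you supply a correct direct verification of it via the continuity criterion $\partial_{p}(Ah)\subseteq\partial_{p}(A)$ and the entourage estimate with $K=\{1,h^{-1}\}$.
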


\begin{proof}We have that both diagrams commutes ($\forall h \in Stab_{\varphi} q$):

$$ \xymatrix{ Stab_{\varphi} q \ar[rr]_{R_{q}(\_,h)} \ar[d]_{\tau_{h_{p,q}^{-1}}} & & Stab_{\varphi} q \ar[d]^{\tau_{h_{p,q}^{-1}}} & C_{q} \ar[d]_{id} \ar[r]_{id} & C_{q} \ar[d]^{id} \\
            Stab_{\varphi} p \ar[rr]^{R_{p}(\_,\tau_{h_{p,q}^{-1}}(h))} & & Stab_{\varphi} p & C_{p} \ar[r]^{id} & C_{p}} $$

In a fact, on the first diagram both terms gives us $h_{p,q}^{-1}ghh_{p,q}, \ \forall g \in Stab_{\varphi} q$. So, by \textrm{Proposition \ref{pullbackaction}}, we have that $R_{q}+id$ is an action by homeomorphisms.
\end{proof}

\begin{prop}\label{changeofcoordinates}The map $\tau_{h_{p,q}^{-1}}+id: Stab_{\varphi} q +_{\partial_{q}} C_{q} \rightarrow Stab_{\varphi} p +_{\partial_{p}} C_{p}$ is a homeomorphism.
\end{prop}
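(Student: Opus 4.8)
The plan is to exhibit an explicit two-sided inverse and to check that both the map and its inverse are continuous via the Artin--Wraith continuity criterion (Proposition 4.2 of \cite{So1}), which lets me avoid any separate appeal to compactness or Hausdorffness. The candidate inverse is $\tau_{h_{p,q}}+id$, where $\tau_{h_{p,q}}(x)=h_{p,q}xh_{p,q}^{-1}$ is the opposite conjugation. On underlying sets the maps $\tau_{h_{p,q}^{-1}}+id$ and $\tau_{h_{p,q}}+id$ are visibly mutually inverse bijections, since $\tau_{h_{p,q}^{-1}}\circ\tau_{h_{p,q}}=id_{Stab_{\varphi}p}$ (and dually) while $id\circ id=id_{C_p}$; so the whole statement reduces to continuity in each direction.

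The key preliminary step is to unwind the pullback so that $\partial_q$ becomes a clean conjugate of $\partial_p$. By definition $\partial_q(A)=(\partial_p)_{\ast}(A)=id^{-1}\bigl(\partial_p(Cl_{Stab_{\varphi}p}\,\tau_{h_{p,q}^{-1}}(A))\bigr)$. Here the decisive observation is that $Stab_{\varphi}p\subseteq G$ carries the discrete topology, so every subset is already closed and the closure operation is superfluous. This collapses the formula to $\partial_q(A)=\partial_p(\tau_{h_{p,q}^{-1}}(A))$ for every $A\in Closed(Stab_{\varphi}q)$, which is the identity I will use throughout.

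With this in hand, the continuity criterion becomes a one-line verification in each direction. For $\tau_{h_{p,q}^{-1}}+id$ I take $A\in Closed(Stab_{\varphi}p)$ and compute $\partial_q(\tau_{h_{p,q}^{-1}}^{-1}(A))=\partial_q(\tau_{h_{p,q}}(A))=\partial_p(\tau_{h_{p,q}^{-1}}\tau_{h_{p,q}}(A))=\partial_p(A)=id^{-1}(\partial_p(A))$, so Proposition 4.2 of \cite{So1} applies, in fact with equality rather than mere inclusion. The computation for the inverse $\tau_{h_{p,q}}+id$ is symmetric: for $A\in Closed(Stab_{\varphi}q)$ one gets $\partial_p(\tau_{h_{p,q}}^{-1}(A))=\partial_p(\tau_{h_{p,q}^{-1}}(A))=\partial_q(A)=id^{-1}(\partial_q(A))$. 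Hence both maps are continuous, and being mutually inverse they are homeomorphisms.

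I expect the only real subtlety to be bookkeeping rather than a genuine obstacle: keeping straight which conjugation plays the role of $\psi$ versus $\psi^{-1}$ in the criterion, and handling the preimage $\tau_{h_{p,q}^{-1}}^{-1}=\tau_{h_{p,q}}$ in the correct direction. The one conceptual point that must be stated explicitly is the triviality of the closure $Cl_{Stab_{\varphi}p}$, which rests entirely on the discreteness of the subgroup; without it the formula for $\partial_q$ would not simplify and the clean equality in the criterion could fail.
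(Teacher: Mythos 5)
Your proof is correct and is essentially the paper's argument written out in full: the paper's entire proof is ``this comes immediately from the pullback,'' and what you have done is unwind the pullback definition (correctly noting that discreteness of $Stab_{\varphi}p$ kills the closure), exhibit $\tau_{h_{p,q}}+id$ as the two-sided inverse, and verify the Artin--Wraith continuity criterion in both directions. No gaps; you have simply made explicit the verification the paper leaves to the reader.
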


\begin{proof}This comes immediately from the pullback.
\end{proof}

\begin{prop}Let $g \in G$ such that $\varphi(g,p) = q$. Then, the application $\tau_{g} + \eta_{p}(h_{p,q}^{-1}g,\_): Stab_{\varphi} p +_{\partial_{p}} C_{p} \rightarrow Stab_{\varphi} q +_{\partial_{q}} C_{q}$ is a homeomorphism.
\end{prop}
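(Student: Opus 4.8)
The plan is to reduce to the already–settled case $q=p$ by composing with the change–of–coordinates homeomorphism of \textbf{Proposition \ref{changeofcoordinates}}, and then to realize the remaining map as a composition of the left and right multiplication homeomorphisms.

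First I would set $s = h_{p,q}^{-1}g$. Since $\varphi(g,p)=q=\varphi(h_{p,q},p)$, we have $\varphi(s,p)=p$, so $s \in Stab_{\varphi} p$ and $g = h_{p,q}s$; consequently $\tau_{g} = \tau_{h_{p,q}}\circ\tau_{s}$ as maps $Stab_{\varphi} p \to Stab_{\varphi} q$. Postcomposing the map in question with the homeomorphism $\tau_{h_{p,q}^{-1}}+id$ of \textbf{Proposition \ref{changeofcoordinates}}, one computes on the group coordinate $x \mapsto h_{p,q}^{-1}(gxg^{-1})h_{p,q} = sxs^{-1} = \tau_{s}(x)$, and on the $C$ coordinate $\eta_{p}(s,\_)$ followed by the identity. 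Thus
$$(\tau_{h_{p,q}^{-1}}+id)\circ\bigl(\tau_{g}+\eta_{p}(h_{p,q}^{-1}g,\_)\bigr) = \tau_{s} + \eta_{p}(s,\_),$$
and since the left factor is a homeomorphism, it suffices to prove that $\tau_{s}+\eta_{p}(s,\_): Stab_{\varphi} p +_{\partial_{p}}C_{p} \to Stab_{\varphi} p+_{\partial_{p}}C_{p}$ is a homeomorphism.

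For this last step I would exhibit $\tau_{s}+\eta_{p}(s,\_)$ as a composition of maps already known to be homeomorphisms. The inner automorphism $\tau_{s}(x)=sxs^{-1}$ factors as left multiplication by $s$ followed by right multiplication by $s^{-1}$, so I would write
$$\tau_{s}+\eta_{p}(s,\_) = (L_{p}+\eta_{p})(s,\_)\circ (R_{p}+id)(\_,s^{-1}),$$
checking on the $C$ coordinate that $R_{p}+id$ contributes the identity while $L_{p}+\eta_{p}$ contributes $\eta_{p}(s,\_)$, and on the group coordinate that the two multiplications compose to $\tau_{s}$. Each factor is the action of a single element of $Stab_{\varphi} p$ on $Stab_{\varphi} p+_{\partial_{p}}C_{p}$, hence a homeomorphism: the first because $L_{p}+\eta_{p}$ is an action by homeomorphisms (part of the standing hypotheses), the second because $R_{p}+id$ is an action by homeomorphisms (the case $q=p$ of the proposition proved just above). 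Composing, $\tau_{s}+\eta_{p}(s,\_)$ is a homeomorphism, and therefore so is $\tau_{g}+\eta_{p}(h_{p,q}^{-1}g,\_)$.

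I expect the only delicate point to be the bookkeeping in the factorization of $\tau_{s}$: one must pin down the sign conventions for $L_{p}$ and $R_{p}$ so that their composite is genuinely conjugation by $s$ (rather than by $s^{-1}$) and so that the boundary coordinate comes out exactly as $\eta_{p}(s,\_)$. Once the conventions are fixed, everything is a routine composition of previously established homeomorphisms, with the intermediate spaces all carrying the gluing $\partial_{p}$ until the final change of coordinates switches to $\partial_{q}$.
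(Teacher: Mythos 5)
Your proof is correct and is essentially the paper's argument: the paper likewise factors $\tau_{g}+\eta_{p}(h_{p,q}^{-1}g,\_)$ as $(R_{q}(\_,h_{p,q}g^{-1})+id)\circ(L_{q}(gh_{p,q}^{-1},\_)+\eta_{q}(gh_{p,q}^{-1},\_))\circ(\tau_{h_{p,q}}+id)$, using exactly the same three previously established homeomorphisms. The only (cosmetic) difference is that you perform the left and right multiplications on the $Stab_{\varphi}p$ side before changing coordinates, whereas the paper changes coordinates first and multiplies by $gh_{p,q}^{-1},\,h_{p,q}g^{-1}\in Stab_{\varphi}q$ on the $q$ side.
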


\begin{proof}Let $h \in Stab_{\varphi} p$. We have that $ghg^{-1} = g h_{p,q}^{-1}h_{p,q}h h_{p,q}^{-1}h_{p,q}g^{-1}$. Since $g h_{p,q}^{-1}, h_{p,q}g^{-1}\in Stab_{\varphi}q$, we have that $\tau_{g} = R_{q}(\_,h_{p,q}g^{-1}) \circ L_{q} (g h_{p,q}^{-1},\_)\circ \tau_{h_{p,q}}$. Since $\eta_{q}(gh_{p,q}^{-1},\_) = \eta_{p}(h_{p,q}^{-1}gh_{p,q}^{-1}h_{p,q},\_) = \eta_{p}(h_{p,q}^{-1}g,\_)$, we have that $\tau_{g} + \eta_{p}(h_{p,q}^{-1}g,\_) = (R_{q}(\_,h_{p,q}g^{-1}) + id) \circ  (L_{q} (g h_{p,q}^{-1},\_) + \eta_{q}(gh_{p,q}^{-1},\_)) \circ (\tau_{h_{p,q}} + id)$, which is a homeomorphism because each term is a homeomorphism.
\end{proof}

\begin{cor}Let $q \in Orb_{\varphi} p$ and $g \in G$. Then $\tau_{g}+\eta_{p}(h_{p,\varphi(g,q)}^{-1}gh_{p,q},\_): Stab_{\varphi}q+_{\partial_{q}}C_{q} \rightarrow Stab_{\varphi}\varphi(g,q)+_{\partial_{\varphi(g,q)}}C_{\varphi(g,q)}$ is a homeomorphism.
\end{cor}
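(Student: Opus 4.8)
The plan is to reduce the corollary to the proposition just proved by factoring the conjugation $\tau_{g}$ through the representative point $p$, and then composing with the change-of-coordinates homeomorphism of \textbf{Proposition \ref{changeofcoordinates}}. In other words, a map starting at an arbitrary orbit point $q$ should be rewritten as a map starting at $p$ precomposed with the canonical identification $Stab_{\varphi}q +_{\partial_{q}}C_{q} \cong Stab_{\varphi}p +_{\partial_{p}}C_{p}$.

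First I would set $r = \varphi(g,q)$, so that $r \in Orb_{\varphi}p$ as well. Since $\varphi(h_{p,q},p) = q$, we have $\varphi(gh_{p,q},p) = \varphi(g,q) = r$, so $gh_{p,q}$ is an element of $G$ carrying the representative $p$ to $r$. Applying the preceding proposition with the group element $gh_{p,q}$ (and with the $q$ there replaced by $r$) then yields that
$$\tau_{gh_{p,q}} + \eta_{p}(h_{p,r}^{-1}gh_{p,q},\_): Stab_{\varphi}p +_{\partial_{p}}C_{p} \rightarrow Stab_{\varphi}r +_{\partial_{r}}C_{r}$$
is a homeomorphism. On the other hand, \textbf{Proposition \ref{changeofcoordinates}} gives that
$$\tau_{h_{p,q}^{-1}} + id: Stab_{\varphi}q +_{\partial_{q}}C_{q} \rightarrow Stab_{\varphi}p +_{\partial_{p}}C_{p}$$
is a homeomorphism.

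The desired map is then obtained as the composite of these two homeomorphisms, which is composable precisely because both share the middle space $Stab_{\varphi}p +_{\partial_{p}}C_{p}$. Since Artin--Wraith glueing maps compose coordinatewise, the composite equals $(\tau_{gh_{p,q}} \circ \tau_{h_{p,q}^{-1}}) + (\eta_{p}(h_{p,r}^{-1}gh_{p,q},\_) \circ id)$. Using $\tau_{ab} = \tau_{a}\circ\tau_{b}$, the first coordinate simplifies to $\tau_{gh_{p,q}h_{p,q}^{-1}} = \tau_{g}$; since $C_{q} = C_{p} = C_{r}$, the $id$ in the second coordinate is absorbed, leaving $\eta_{p}(h_{p,r}^{-1}gh_{p,q},\_)$. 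Recalling $r = \varphi(g,q)$, the composite is therefore exactly $\tau_{g} + \eta_{p}(h_{p,\varphi(g,q)}^{-1}gh_{p,q},\_)$, and being a composite of homeomorphisms it is a homeomorphism.

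The argument is essentially formal, so the only real work is the index bookkeeping: one must track that the conjugating elements collapse correctly ($gh_{p,q}h_{p,q}^{-1} = g$) and that the glueing-datum subscripts $\partial_{q},\partial_{p},\partial_{r}$ line up so that the two factors are genuinely composable. I do not anticipate any substantive obstacle beyond this verification.
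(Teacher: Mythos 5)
Your proposal is correct and is essentially identical to the paper's own proof: the paper likewise writes $\tau_{g} = \tau_{gh_{p,q}} \circ \tau_{h_{p,q}^{-1}}$, applies the preceding proposition to the element $gh_{p,q}$ (which sends $p$ to $\varphi(g,q)$), and composes with the change-of-coordinates homeomorphism $\tau_{h_{p,q}^{-1}}+id$. Your extra index bookkeeping is just a more explicit spelling-out of the same factorization.
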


\begin{proof}We have that $\tau_{g} = \tau_{gh_{p,q}} \circ \tau_{h_{p,q}^{-1}}$ with $\varphi(gh_{p,q},p) = \varphi(g,q)$. We have also that  $\tau_{g}+\eta_{p}(h_{p,\varphi(g,q)}^{-1}gh_{p,q},\_) = (\tau_{gh_{p,q}} + \eta_{p}(h_{p,\varphi(g,q)}^{-1}gh_{p,q},\_) ) \circ (\tau_{h_{p,q}^{-1}} + id)$ which is a homeomorphism because each term is a homeomorphism.
\end{proof}

Since $P$ is the set of bounded parabolic points, we have that $\forall p \in P$,  $\varphi|_{Stab_{\varphi} p \times (Z - \{p\})}$ is proper and cocompact, so we are able to take the space $X_{p} = (Z - \{p\}) +_{(\partial_{p})_{\Pi}} C_{p}$ and we have that $\psi_{p} = \varphi|_{Stab_{\varphi} p\times (Z - \{p\})} + \eta_{p}: Stab_{\varphi}p \curvearrowright X_{p}$ is an action by homeomorphisms. We have also that the quotient map $\pi_{p}: X_{p} \rightarrow Z$, such that $\pi_{p}|_{Z - \{p\}}$ is the identity map and $\pi_{p}(C_{p}) = p$, is continuous and $Stab_{\varphi} p$-equivariant with respect to $\psi_{p}$ and $\varphi$, respectively.

Let's take, for $p\in P', q \in Orb_{\varphi} p$ and $g \in G$, $\psi_{q}(g,\_): X_{q} \rightarrow X_{\varphi(g,q)}$ given by $\psi_{q}(g,\_) = \varphi(g,\_)|_{Z - \{q\}} + \eta_{p}(h_{p,\varphi(g,q)}^{-1}g h_{p,q},\_)$, (since $h_{p,\varphi(g,q)}^{-1}g h_{p,q} \in Stab_{\varphi}p$).

\begin{obs}Observe that, for $g \in Stab_{\varphi}q$, this definition agrees with that one in the paragraph above.
\end{obs}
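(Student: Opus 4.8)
The plan is to unwind both descriptions of $\psi_q(g,\_)$ for $g \in Stab_\varphi q$ and compare them componentwise across the Artin-Wraith glueing $X_q = (Z-\{q\}) +_{(\partial_q)_\Pi} C_q$. Both candidate maps have the shape (a map on $Z-\{q\}$) $+$ (a map on $C_q$), so by the way glued maps of the form $\mu+\nu$ are defined it suffices to check that their restrictions to $Z-\{q\}$ agree and that their restrictions to $C_q$ agree. The starting observation, which also reconciles the codomains, is that $g \in Stab_\varphi q$ forces $\varphi(g,q) = q$, whence $X_{\varphi(g,q)} = X_q$ and $h_{p,\varphi(g,q)} = h_{p,q}$.

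The $Z-\{q\}$ component is immediate and requires no computation: in the earlier action $\psi_q = \varphi|_{Stab_\varphi q \times (Z-\{q\})} + \eta_q$ this component is $\varphi(g,\_)|_{Z-\{q\}}$, and in the later formula $\psi_q(g,\_) = \varphi(g,\_)|_{Z-\{q\}} + \eta_p(h_{p,\varphi(g,q)}^{-1} g\, h_{p,q}, \_)$ it is literally the same map. The only point carrying any content is the $C_q$ component. In the later formula this component is $\eta_p(h_{p,\varphi(g,q)}^{-1} g\, h_{p,q}, \_)$, which by the substitution $h_{p,\varphi(g,q)} = h_{p,q}$ equals $\eta_p(h_{p,q}^{-1} g\, h_{p,q}, \_)$; on the other hand, the very definition $\eta_q(h,\_) = \eta_p(h_{p,q}^{-1} h\, h_{p,q}, \_)$ shows that the $C_q$ component of the earlier action, namely $\eta_q(g,\_)$, is exactly $\eta_p(h_{p,q}^{-1} g\, h_{p,q}, \_)$. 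The two expressions coincide, which finishes the verification.

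There is essentially no genuine obstacle here: the whole content is the identity $\varphi(g,q) = q$ for $g \in Stab_\varphi q$, combined with the definition of $\eta_q$. The only thing one must be careful not to conflate is the intrinsic $Stab_\varphi q$-action $\psi_q$ on the single space $X_q$ with the family of transition maps $\psi_q(g,\_)\colon X_q \to X_{\varphi(g,q)}$ that are defined for every $g \in G$; the remark records precisely the compatibility that the latter family restricts to the former action on the stabilizer, which is exactly the coherence one needs in order to later assemble these transition maps into a single $G$-action on the inverse limit.
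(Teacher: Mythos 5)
Your verification is correct and is exactly the (implicit) content of the paper's remark, which is stated without proof: for $g \in Stab_{\varphi}q$ one has $\varphi(g,q)=q$, hence $h_{p,\varphi(g,q)}=h_{p,q}$, and the $C_{q}$-component $\eta_{p}(h_{p,q}^{-1}gh_{p,q},\_)$ is by definition $\eta_{q}(g,\_)$, while the $Z-\{q\}$-components are literally identical. Your closing caveat about distinguishing the $Stab_{\varphi}q$-action on $X_{q}$ from the transition maps $X_{q}\rightarrow X_{\varphi(g,q)}$ is also the right way to read the remark.
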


\begin{prop}$\psi_{q}(g,\_)$ is a homeomorphism.
\end{prop}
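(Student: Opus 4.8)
The plan is to recognize $\psi_{q}(g,\_)$ as a map of Artin--Wraith glueings that is transported, via the attractor-sum functoriality of Proposition \ref{functor}, from the already-established group-level homeomorphism given in the preceding Corollary. Write $q' = \varphi(g,q)$ and decompose $\psi_{q}(g,\_) = \mu + \nu$, where $\mu = \varphi(g,\_)|_{Z-\{q\}}: Z - \{q\} \rightarrow Z - \{q'\}$ and $\nu = \eta_{p}(h_{p,q'}^{-1} g h_{p,q},\_): C_{q} \rightarrow C_{q'}$. Both pieces are individually homeomorphisms: $\mu$ because $\varphi(g,\_)$ is a homeomorphism of $Z$ carrying $q$ to $q'$, and $\nu$ because $h_{p,q'}^{-1} g h_{p,q} \in Stab_{\varphi} p$ and $\eta_{p}$ is an action. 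The content of the statement is thus purely that the glueings $(\partial_{q})_{\Pi}$ and $(\partial_{q'})_{\Pi}$ are respected.

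To obtain continuity, first I would note that $\mu$ is $\tau_{g}$-equivariant, where $\tau_{g}: Stab_{\varphi} q \rightarrow Stab_{\varphi} q'$ is conjugation by $g$: indeed for $h \in Stab_{\varphi} q$ and $z \in Z - \{q\}$ we have $\varphi(g,\varphi(h,z)) = \varphi(gh,z) = \varphi(\tau_{g}(h),\varphi(g,z))$. I would then apply Proposition \ref{functor} with $G_{1} = Stab_{\varphi} q$, $G_{2} = Stab_{\varphi} q'$, $\alpha = \tau_{g}$, the spaces $X_{1} = Z - \{q\}$, $X_{2} = Z - \{q'\}$ (with their restricted $\varphi$-actions, which are proper and cocompact since $q,q'$ are bounded parabolic), $Y_{1} = C_{q}$, $Y_{2} = C_{q'}$, the glueings $\partial_{1} = \partial_{q}$, $\partial_{2} = \partial_{q'}$, and $\nu$ as above. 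Since $\mu$ is an equivariant homeomorphism, choosing a fundamental domain $K_{1}$ for $G_{1}$ and setting $K_{2} = \mu(K_{1})$ gives a fundamental domain for $G_{2}$ with $\mu(K_{1}) \subseteq K_{2}$, so the hypotheses on fundamental domains are met; moreover $(\partial_{q})_{\Pi_{K_{1}}}$ and $(\partial_{q'})_{\Pi_{K_{2}}}$ are exactly the glueings $(\partial_{q})_{\Pi}$, $(\partial_{q'})_{\Pi}$ defining $X_{q}$ and $X_{q'}$, the functor $\Pi$ being independent of the chosen domain. The preceding Corollary states precisely that $\tau_{g} + \nu = \alpha + \nu$ is a homeomorphism, in particular continuous; hence Proposition \ref{functor} yields that $\psi_{q}(g,\_) = \mu + \nu$ is continuous.

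For the inverse, I would run the same argument with $g^{-1}$ in place of $g$ and $q'$ in place of $q$, producing a continuous map $\psi_{q'}(g^{-1},\_): X_{q'} \rightarrow X_{\varphi(g^{-1},q')} = X_{q}$, and then verify that it is a two-sided inverse of $\psi_{q}(g,\_)$ by composing components. On the $Z - \{q\}$ part the composition is $\varphi(g^{-1},\_) \circ \varphi(g,\_) = id$; on the $C_{q}$ part, using $\varphi(g^{-1},q') = q$ and that $\eta_{p}$ is an action, the composition is $\eta_{p}(h_{p,q}^{-1} g^{-1} h_{p,q'},\_) \circ \eta_{p}(h_{p,q'}^{-1} g h_{p,q},\_) = \eta_{p}(h_{p,q}^{-1} g^{-1} g h_{p,q},\_) = \eta_{p}(1,\_) = id$, with the symmetric computation giving the other composition. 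Thus $\psi_{q}(g,\_)$ is a homeomorphism. I expect the main obstacle to be not any single hard estimate but the bookkeeping of matching the hypotheses of Proposition \ref{functor}: confirming the conjugation equivariance of $\mu$, checking that $h_{p,q'}^{-1} g h_{p,q}$ really lands in $Stab_{\varphi} p$ so that $\nu$ is well defined, and ensuring that the $\Pi$-transported glueings coincide with those defining $X_{q}$ and $X_{q'}$; once these are in place the argument reduces to a direct citation of the preceding Corollary together with Proposition \ref{functor}.
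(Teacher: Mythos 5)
Your proposal is correct and follows essentially the same route as the paper: establish the $\tau_{g}$-equivariance of $\varphi(g,\_)$, invoke the preceding Corollary for the continuity of the group-level map $\tau_{g}+\eta_{p}(h_{p,\varphi(g,q)}^{-1}gh_{p,q},\_)$, transport it with Proposition \ref{functor}, and obtain the inverse from $\psi_{\varphi(g,q)}(g^{-1},\_)$. Your extra bookkeeping (the fundamental-domain hypothesis and the explicit component-wise verification that the two maps are mutually inverse) only fills in details the paper leaves implicit.
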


\begin{proof}We have that $\tau_{g}: Stab_{\varphi}q \rightarrow Stab_{\varphi}\varphi(g,q)$ such that $\tau_{g}(x) = gxg^{-1}$ is an isomorphism and the diagrams commutes ($\forall h \in Stab_{\varphi}q$):

$$ \xymatrix{ Z - \{q\} \ar[r]_{\varphi(h,\_)} \ar[d]_{\varphi(g,\_)} & Z -\{q\} \ar[d]_{\varphi(g,\_)} \\
            Z-\{\varphi(g,q)\} \ar[r]_{\varphi(\tau_{g}(h),\_)}  & Z-\{\varphi(g,q)\} } $$

So, the map $\varphi(g,\_)$ is $\tau_{g}$-equivariant. Since $\tau_{g}+ \eta_{p}(h_{p,\varphi(g,q)}^{-1}g h_{p,q},\_):  Stab_{\varphi} q \rightarrow Stab_{\varphi} \varphi(g,q)$ is continuous, we have, by \textbf{Proposition \ref{functor}}, that $\psi_{q}(g,\_)$ is continuous. Since $\psi_{q}(g,\_)^{-1} = \psi_{\varphi(g,q)}(g^{-1},\_)$, we have that it is continuous as well. Thus, $\psi_{q}(g,\_)$ is a homeomorphism.
\end{proof}

\begin{prop}$\forall p \in P', q \in Orb_{\varphi} p$ and $g,h \in G, \psi_{q}(gh,\_) = \psi_{\varphi(h,q)}(g,\_) \circ \psi_{q}(h,\_)$.
\end{prop}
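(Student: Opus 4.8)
The plan is to recognize both sides as Artin--Wraith sum maps $X_{q} \to X_{\varphi(gh,q)}$ and to check that they agree on the two pieces $Z-\{q\}$ and $C_{p}$ separately. Write $q' = \varphi(h,q)$ and $q'' = \varphi(g,q') = \varphi(gh,q)$. First I would observe that, by the very definition of the sum of two maps, the composite of two sum maps is the sum of the composites of their components. Since $\varphi(h,\_)$ carries $Z-\{q\}$ onto $Z-\{q'\}$, the base components really do compose, and one gets
$$\psi_{q'}(g,\_) \circ \psi_{q}(h,\_) = \bigl(\varphi(g,\_)|_{Z-\{q'\}} \circ \varphi(h,\_)|_{Z-\{q\}}\bigr) + \bigl(\eta_{p}(h_{p,q''}^{-1} g h_{p,q'},\_) \circ \eta_{p}(h_{p,q'}^{-1} h h_{p,q},\_)\bigr).$$

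On the base $Z-\{q\}$, the desired equality follows at once from the fact that $\varphi$ is a group action: $\varphi(g,\_) \circ \varphi(h,\_) = \varphi(gh,\_)$, whose restriction to $Z-\{q\}$ is exactly the base component of $\psi_{q}(gh,\_)$. On the fiber $C_{p}$, I would use that $\eta_{p}$ is an action (established earlier) to combine the two applications into one via $\eta_{p}(a,\_) \circ \eta_{p}(b,\_) = \eta_{p}(ab,\_)$, with $a = h_{p,q''}^{-1} g h_{p,q'}$ and $b = h_{p,q'}^{-1} h h_{p,q}$. The product telescopes, $ab = h_{p,q''}^{-1} g (h_{p,q'} h_{p,q'}^{-1}) h h_{p,q} = h_{p,q''}^{-1} (gh) h_{p,q}$, which is precisely the group element defining the fiber component of $\psi_{q}(gh,\_)$.

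The only genuine bookkeeping obstacle is verifying that all three elements $h_{p,q'}^{-1} h h_{p,q}$, $h_{p,q''}^{-1} g h_{p,q'}$ and $h_{p,q''}^{-1} (gh) h_{p,q}$ lie in $Stab_{\varphi}p$, so that $\eta_{p}$ is defined on them; this is immediate from the defining relation $\varphi(h_{p,r},p)=r$, which makes each of these elements send $p$ to itself (for instance $h_{p,q''}^{-1} g h_{p,q'}$ sends $p \mapsto q' \mapsto q'' \mapsto p$). Once the cancellation $h_{p,q'} h_{p,q'}^{-1} = 1$ is recorded, the two sides coincide on both pieces, hence as sum maps $X_{q} \to X_{q''}$, which completes the verification of the cocycle identity.
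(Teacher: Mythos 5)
Your proof is correct and follows essentially the same route as the paper's: check the identity separately on the base $Z-\{q\}$ (where it is just that $\varphi$ is an action) and on the fiber (where one uses that $\eta_{p}$ is an action and the telescoping product $h_{p,q''}^{-1} g h_{p,q'}\cdot h_{p,q'}^{-1} h h_{p,q} = h_{p,q''}^{-1}(gh)h_{p,q}$). The paper runs the fiber computation in the opposite direction, inserting $h_{p,q'}h_{p,q'}^{-1}=1$ into $\psi_{q}(gh,\_)|_{C_{q}}$ rather than cancelling it from the composite, but this is the same argument; your extra remark that the conjugated elements lie in $Stab_{\varphi}p$ matches the parenthetical observation the paper makes when defining $\psi_{q}(g,\_)$.
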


\begin{proof}
We have that
$$\psi_{q}(gh,\_)|_{Z - \{q\}} = \varphi(gh,\_) = \varphi(g,\_) \circ \varphi(h,\_) = \psi_{\varphi(h,q)}(g,\_)|_{Z - \{\varphi(h,q)\}} \circ \psi_{q}(h,\_)|_{Z - \{q\}}$$

And
$$\psi_{q}(gh,\_)|_{C_{q}} = \eta_{p}(h_{p,\varphi(gh,q)}^{-1} g h h_{p,q},\_) = \eta_{p}(h_{p,\varphi(gh,q)}^{-1} g h_{p,\varphi(h,p)} h_{p,\varphi(h,p)}^{-1} h h_{p,q},\_) =$$
$$= \eta_{p}(h_{p,\varphi(gh,q)}^{-1} g h_{p,\varphi(h,p)},\_)\circ \eta_{p}(h_{p,\varphi(h,p)}^{-1} h h_{p,q},\_) = \psi_{\varphi(h,q)}(g,\_)|_{C_{\varphi(h,q)}} \circ \psi_{q}(h,\_)|_{C_{q}}$$

So $\psi_{p}(gh,\_) = \psi_{\varphi(h,q)}(g,\_) \circ \psi_{q}(h,\_)$. \end{proof}

And we have also that the diagram commutes:

$$ \xymatrix{ X_{p} \ar[d]_{\pi_{p}} \ar[r]_{\psi_{p}(g,\_)} & X_{\varphi(g,p)} \ar[d]^{\pi_{\varphi(g,p)}}  & \\
            Z \ar[r]^{\varphi(g,\_)}  & Z} $$

The induced map $\psi(g,\_): \prod\limits_{p \in P} X_{p} \rightarrow \prod\limits_{p \in P} X_{p}$ is continuous and, since $\psi_{p}(gh,\_) = \psi_{\varphi(h,p)}(g,\_) \circ \psi_{p}(h,\_)$, it follows that $\psi(gh,\_) = \psi(g,\_)\circ \psi(h,\_)$. Thus, $\psi$ is a group action of the group $G$. Since every map $\psi_{p}$ commutes the diagram above, we have that the subspace $X = \lim\limits_{\longleftarrow}(\{X_{p}\}_{p \in P},\{\pi_{p}\}_{p \in P})$ is $G$-invariant.

Let's take $\varpi_{p}: X \rightarrow X_{p}$ the projection map and $\pi = \pi_{p} \circ \varpi_{p}$ (it doesn't matter for which choice of the point $p$, because of the fiber product's definition). We call $\psi$ restrict to $X$ by $\varphi \ltimes \eta$ and, agreeing with the notation, we call $X$ by $Z \ltimes \mathcal{C}$. The pair $(Z \ltimes \mathcal{C}, \varphi\ltimes \eta)$ is called parabolic blowup of $(Z,\varphi)$ by  $(\mathcal{C},\eta)$. This construction depends on the minimal family $\{H_{p}\}_{p\in P'}$ but this dependence will be omitted through the text.

\subsection{Functoriality}

\begin{prop}\label{pullbackfunctor}Let $\{C'_{p}\}_{p\in P'}$ be another family of Hausdorff compact topological spaces, $\eta' = \{\eta'_{p}\}_{p\in P'}$ a family of actions by homeomorphisms $\eta'_{p}: Stab_{\varphi}p \curvearrowright C'_{p}$ and $\mathcal{C}' = \{Stab_{\varphi} p+_{\partial'_{p}}C'_{p}\}_{p\in P'}$ a family of equivariant perspective compactifications of the groups $Stab_{\varphi} p$ such that the actions $L_{p}+\eta'_{p}$ are by homeomorphisms. Consider also $\phi = \{\phi_{p}\}_{p\in P'}$ a family of continuous maps $\phi_{p}: C_{p} \rightarrow C'_{p}$ that are equivariant with respect to $\eta_{p}$ and $\eta'_{p}$, respectively, such that $id+\phi_{p}: Stab_{\varphi} p+_{\partial_{p}}C_{p} \rightarrow Stab_{\varphi} p+_{\partial'_{p}}C'_{p}$ are continuous. Then the maps $id+\phi_{p}:(Z - \{p\}) +_{(\partial_{p})_{\Pi}} C_{p} \rightarrow (Z - \{p\})+_{(\partial'_{p})_{\Pi}} C'_{p}$ induce the unique map $\varphi \ltimes \phi: Z \ltimes \mathcal{C} \rightarrow Z \ltimes \mathcal{C}'$ that is continuous and equivariant with respect to $\varphi\ltimes \eta$ and $\varphi \ltimes \eta'$, respectively, such that the following diagram commutes ($\forall p \in P'$):

$$ \xymatrix{ Z \ltimes \mathcal{C}  \ar[d]_{\varpi_{p}} \ar[r]_{\varphi \ltimes \phi} & Z \ltimes \mathcal{C}'  \ar[d]^{\varpi'_{p}}  & \\
            X_{p} \ar[r]_{id+\phi_{p}}  & X'_{p}} $$

\end{prop}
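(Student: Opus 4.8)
The plan is to construct $\varphi \ltimes \phi$ one coordinate at a time and then invoke the universal property of the inverse limit $Z \ltimes \mathcal{C} = \lim\limits_{\longleftarrow}\{X_{p},\pi_{p}\}_{p\in P}$. Since this limit is taken over all of $P$ while $\phi$ is only given on the representatives $P'$, the first task is to extend $\phi$ to every point of $P$: for $q \in Orb_{\varphi}p$ with $p\in P'$ I set $\phi_{q} = \phi_{p}$, which is meaningful because $C_{q}=C_{p}$ and $C'_{q}=C'_{p}$. I then need $id+\phi_{q}:X_{q}\rightarrow X'_{q}$ to be continuous for every $q\in P$. For $q\in P'$ this follows by applying \textbf{Proposition \ref{functor}} to the hypothesis that $id+\phi_{q}:Stab_{\varphi}q+_{\partial_{q}}C_{q}\rightarrow Stab_{\varphi}q+_{\partial'_{q}}C'_{q}$ is continuous. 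For a general $q\in Orb_{\varphi}p$ I factor the group-level map through the change-of-coordinates homeomorphisms of \textbf{Proposition \ref{changeofcoordinates}},
$$ id+\phi_{q} = (\tau_{h_{p,q}^{-1}}+id)^{-1}\circ(id+\phi_{p})\circ(\tau_{h_{p,q}^{-1}}+id), $$
which is continuous, and then \textbf{Proposition \ref{functor}} upgrades this to continuity of $id+\phi_{q}:X_{q}\rightarrow X'_{q}$.

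Next I check compatibility with the bonding maps, namely $\pi'_{q}\circ(id+\phi_{q}) = \pi_{q}$ for every $q\in P$: on $Z-\{q\}$ both sides are the identity, and on $C_{q}$ both collapse everything to $q$. Consequently the maps $(id+\phi_{q})\circ\varpi_{q}:Z\ltimes\mathcal{C}\rightarrow X'_{q}$ satisfy $\pi'_{q}\circ(id+\phi_{q})\circ\varpi_{q} = \pi_{q}\circ\varpi_{q} = \pi$, independently of $q$. The universal property of the inverse limit over $Z$ then produces a unique continuous map $\varphi\ltimes\phi:Z\ltimes\mathcal{C}\rightarrow Z\ltimes\mathcal{C}'$ with $\varpi'_{q}\circ(\varphi\ltimes\phi) = (id+\phi_{q})\circ\varpi_{q}$ for all $q$, which in particular yields the displayed diagram for $p\in P'$.

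For equivariance, since $\varphi\ltimes\phi$ lands in an inverse limit it suffices, by uniqueness in the universal property, to compare $\varpi'_{\varphi(g,p)}\circ(\varphi\ltimes\phi)\circ(\varphi\ltimes\eta)(g,\_)$ with $\varpi'_{\varphi(g,p)}\circ(\varphi\ltimes\eta')(g,\_)\circ(\varphi\ltimes\phi)$ for each $p$ and $g$. Using the intertwining relations $\varpi_{\varphi(g,p)}\circ(\varphi\ltimes\eta)(g,\_) = \psi_{p}(g,\_)\circ\varpi_{p}$ and $\varpi'_{\varphi(g,p)}\circ(\varphi\ltimes\eta')(g,\_) = \psi'_{p}(g,\_)\circ\varpi'_{p}$ coming from the definition of the product action, this reduces to the single coordinate identity $(id+\phi_{\varphi(g,p)})\circ\psi_{p}(g,\_) = \psi'_{p}(g,\_)\circ(id+\phi_{p})$. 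Both sides restrict to $\varphi(g,\_)$ on $Z-\{p\}$; on $C_{p}$ they become $\phi_{p_{0}}\circ\eta_{p_{0}}(w,\_)$ and $\eta'_{p_{0}}(w,\_)\circ\phi_{p_{0}}$, with $p_{0}\in P'$ the orbit representative and $w = h_{p_{0},\varphi(g,p)}^{-1}gh_{p_{0},p}$, and these coincide precisely because $\phi_{p_{0}}$ is equivariant with respect to $\eta_{p_{0}}$ and $\eta'_{p_{0}}$. Uniqueness then follows in the same spirit: if $\Theta$ is another continuous equivariant map fitting the $P'$-diagram, equivariance propagates the equality $\varpi'_{p}\circ\Theta = \varpi'_{p}\circ(\varphi\ltimes\phi)$ from $p\in P'$ to every $q\in Orb_{\varphi}p$ through the same intertwining relations, and the universal property forces $\Theta = \varphi\ltimes\phi$.

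The genuinely delicate part is not the inverse-limit formalism but the passage from the representatives $P'$ to the full index set $P$: defining $\phi_{q}$, establishing continuity of $id+\phi_{q}$ for non-representative $q$ through the conjugation homeomorphisms, and verifying that the $h_{p,q}$-conjugations appearing in $\psi_{p}$ and $\psi'_{p}$ collapse exactly so that the coordinatewise equivariance identity reduces to the equivariance of the representative maps $\phi_{p_{0}}$. The remaining verifications are routine diagram chases.
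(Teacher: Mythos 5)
Your proposal is correct and follows essentially the same route as the paper: extend $\phi$ to all of $P$ by $\phi_{q}=\phi_{p}$, get continuity of $id+\phi_{q}$ by conjugating through the change-of-coordinates homeomorphisms and applying \textbf{Proposition \ref{functor}}, induce the map on the inverse limit via compatibility with the bonding maps, reduce equivariance to the coordinatewise identity $(id+\phi_{\varphi(g,q)})\circ\psi_{q}(g,\_)=\psi'_{q}(g,\_)\circ(id+\phi_{q})$ checked separately on $Z-\{q\}$ and on $C_{q}$ using equivariance of the representative $\phi_{p}$, and prove uniqueness by propagating the $P'$-diagrams to all of $P$ through equivariance. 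The only cosmetic difference is that you write the conjugating factor as $(\tau_{h_{p,q}^{-1}}+id)^{-1}$ where the paper writes $\tau_{h_{p,q}}+id$, which is the same map.
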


\begin{proof}Let $p\in P'$ and $q \in Orb_{\varphi} p$. Let $X'_{q} = (Z -  \{q\})\!+_{(\partial'_{q})_{\Pi}} C'_{q}$ and $\phi_{q}: C_{q} \rightarrow C'_{q}$ given by $\phi_{q} = \phi_{p}$. The map $id \! + \! \phi_{q}: Stab_{\varphi} q +_{\partial_{q}} C_{q} \rightarrow Stab_{\varphi} q +_{\partial'_{q}} C'_{q}$ is continuous, since $id \! + \! \phi_{q} = (\tau_{h_{p,q}} \! \! + \! id) \circ (id \! + \! \phi_{p}) \circ (\tau_{h_{p,q}^{-1}} \! + \! id)$ and all of the terms are continuous. We have that $id_{Stab_{\varphi} q}$ is $L_{q}$-equivariant and $id \! + \! \phi_{q}$ is continuous, which implies that $id \! + \! \phi_{q}:(Z \! - \! \{q\}) \! +_{(\partial_{q})_{\Pi}} C_{q} \rightarrow (Z \! - \! \{q\})+_{(\partial'_{q})_{\Pi}} C'_{q}$ is continuous as well. By the definition of the maps, the diagram always commutes:

$$ \xymatrix{ X_{q} \ar[d]_{\pi_{q}} \ar[r]_{id+\phi_{q}} & X'_{q} \ar[d]^{\pi'_{q}}  & \\
            Z \ar[r]^{id}  & Z} $$

Where $\pi_{q}$ and $\pi'_{q}$ are the quotient maps. So they induce a continuous map $\varphi \ltimes \phi: Z \ltimes \mathcal{C} \rightarrow Z \ltimes \mathcal{C}'$ that commutes the diagrams:

$$ \xymatrix{ Z \ltimes \mathcal{C}  \ar[d]_{\varpi_{q}} \ar[r]_{\varphi \ltimes \phi} & Z \ltimes \mathcal{C}'  \ar[d]^{\varpi'_{q}}  & \\
            X_{q} \ar[r]_{id+\phi_{q}}  & X'_{q}} $$

Where $\varpi_{q}$ and $\varpi'_{q}$ are the projection maps. Let's consider the following diagram:

$$ \xymatrix{ Z \ltimes \mathcal{C}  \ar[dd]_{\varpi_{q}} \ar[rr]_{\varphi \ltimes \phi} \ar[rd]_{\varphi\ltimes\eta(g,\_)} \ar@{}[rrrd]|{\textbf{1}} & & Z \ltimes \mathcal{C}'  \ar[dd]_<<{\varpi'_{q}} \ar[rd]^{\varphi\ltimes\eta'(g,\_)} & \\
            & Z \ltimes \mathcal{C}  \ar[dd]_<<{\varpi_{\varphi(g,q)}} \ar[rr]_<<{ \ \ \ \ \ \ \ \varphi \ltimes \phi} & & Z \ltimes \mathcal{C}'  \ar[dd]^{\varpi'_{\varphi(g,q)}} \\
            X_{q} \ar[dd]_{\pi_{q}} \ar[rr]_{ \ \ \ \ \ \ \ \ \ \ id+\phi_{q}} \ar[rd]_{\psi_{q}(g,\_)} \ar@{}[rrrd]|{\textbf{2}} & & X'_{q} \ar[dd]_<<{\pi'_{q}} \ar[rd]^{\psi'_{q}(g,\_)} & \\
            & X_{\varphi(g,q)} \ar[dd]_<<{\pi_{\varphi(g,q)}} \ar[rr]_<<{ \ \ \ \ \ \ \ \ \ \ id+\phi_{\varphi(g,q)}} & & X'_{\varphi(g,q)} \ar[dd]^{\pi'_{\varphi(g,q)}} & \\
            Z \ar[rr]^{ \ \ \ \ \ id} \ar[rd]_{\varphi(g,\_)} \ar@{}[rrrd]|{\textbf{3}} & & Z \ar[rd]^{\varphi(g,\_)} & \\
            & Z \ar[rr]^{id} & & Z } $$

Where $\psi'_{q}(g,\_) = \varphi(g,\_)|_{Z-\{q\}}+\eta'_{p}(h_{p,\varphi(g,q)}^{-1}gh_{p,q})$. All squares commute, except, a priori, the squares $\textbf{1}$ and $\textbf{2}$. We have that $(id+\phi_{\varphi(g,q)}) \circ \psi_{q}(g,\_)|_{Z - \{q\}} = \varphi(g,\_)|_{Z - \{q\}} = \psi'_{q}(g,\_) \circ (id+\phi_{q})|_{Z - \{q\}}$ and $(id+\phi_{\varphi(g,q)}) \circ \psi_{q}(g,\_)|_{C_{q}} = \phi_{\varphi(g,q)} \circ \eta_{p}(h_{p,\varphi(g,q)}^{-1}gh_{p,q},\_) = \eta'_{p}(h_{p,\varphi(g,q)}^{-1}gh_{p,q},\_) \circ \phi_{\varphi(g,q)} = \eta'_{p}(h_{p,\varphi(g,q)}^{-1}gh_{p,q},\_) \circ \phi_{q} = \psi'_{q}(g,\_) \circ (id+\phi_{q})|_{C_{q}}$, since $\phi_{p}$ is equivariant with respect to $\eta_{p}$ and $\eta'_{p}$ and $\phi_{p} = \phi_{q} = \phi_{\varphi(g,q)}$. So square number $\textbf{2}$ commutes. We have that $\varphi\ltimes\eta(g,\_)$ is induced by the maps $\psi_{q}(g,\_)$ and $\varphi(g,\_)$, $\forall q \in P$ and $\varphi\ltimes \phi$ is induced by the maps $id_{Z-\{q\}}+\phi_{q}$ and $id_{Z}, \ \forall q \in P$. Because the square number $\textbf{2}$ commutes and by the functoriality of the induced maps of the inverse limit, it follows that the square number $\textbf{1}$ commutes.

Thus, $\varphi\ltimes \phi$ is equivariant with respect to $\varphi\ltimes \eta$ and $\varphi\ltimes \eta'$, respectively.

Let's suppose that there exists another equivariant and continuous map $\chi: Z \ltimes \mathcal{C} \rightarrow Z \ltimes \mathcal{C}'$ that is continuous and equivariant with respect to $\varphi\ltimes \eta$ and $\varphi \ltimes \eta'$, respectively, such that commutes the following diagram ($\forall p \in P'$):

$$ \xymatrix{ Z \ltimes \mathcal{C} \ar[d]_{\varpi_{p}} \ar[r]_{\chi} & Z \ltimes \mathcal{C}'  \ar[d]^{\varpi'_{p}}  \\
            X_{p} \ar[r]_{id+\phi_{p}}  & X'_{p}} $$

Let $q \in Orb_{\varphi} p$ and $g \in G$ such that $\varphi(g,p) = q$. We have that the following diagram commutes:

$$ \xymatrix{  Z \ltimes \mathcal{C}  \ar[d]_{\varpi_{q}} \ar[r]_{\varphi \ltimes \eta(g^{-1},\_)}  & Z \ltimes \mathcal{C}  \ar[d]_{\varpi_{p}} \ar[r]_{\chi} & Z \ltimes \mathcal{C}'  \ar[d]^{\varpi'_{p}} \ar[r]_{\varphi \ltimes \eta'(g,\_)} &  Z \ltimes \mathcal{C}' \ar[d]^{\varpi'_{q}}   \\
            X_{q} \ar[r]_{\psi_{q}(g^{-1},\_)}  & X_{p} \ar[r]_{id+\phi_{p}}   & X'_{p} \ar[r]_{\psi'_{q}(g,\_)} & X'_{q}} $$

Since $\chi$ is equivariant, we have that $\varphi \ltimes \eta'(g,\_) \circ \chi \circ \varphi \ltimes \eta(g^{-1},\_) = \chi$. We have also that $\psi'_{q}(g,\_) \circ (id+\phi_{p}) \circ \psi_{q}(g^{-1},\_)|_{Z-\{q\}} = \varphi(g,\_)\circ \varphi(g^{-1},\_)|_{Z-\{q\}} = id_{Z-\{q\}}$ and $\psi'_{q}(g,\_) \circ (id+\phi_{p}) \circ \psi_{q}(g^{-1},\_)|_{C_{q}} = \eta'_{p}(h_{p,q}^{-1}g,\_) \circ \phi_{p} \circ \eta_{p}(g^{-1}h_{p,q},\_) = \phi_{p} = \phi_{q}$, since $\phi_{p}$ is equivariant and $\phi_{p} = \phi_{q}$. So $\psi'_{q}(g,\_) \circ (id+\phi_{p}) \circ \psi_{q}(g^{-1},\_) = id+\phi_{q}$. Then the diagram commutes $\forall q \in P$:

$$ \xymatrix{ Z \ltimes \mathcal{C} \ar[d]_{\varpi_{q}} \ar[r]_{\chi} & Z \ltimes \mathcal{C}'  \ar[d]^{\varpi'_{q}}  \\
            X_{q} \ar[r]_{id+\phi_{q}}  & X'_{q}} $$

By the universal property of the pullback, we have that $\chi = \varphi\ltimes \phi$.
\end{proof}

\begin{defi}Let $G$ be a group. Let's denote by $Act(G)$ the category where the objects are  actions by homeomorphisms of $G$ on Hausdorff compact topological spaces and the morphisms are equivariant continuous maps. Let $MAct(G)$ be the full subcategory of $Act(G)$ such that the objects are actions on metrizable spaces.
\end{defi}

\begin{prop}\label{fuctoriality}Let $G$ be a group, $Z$ a Hausdorff compact space, $\varphi: G \curvearrowright Z$ an action by homeomorphisms, $P$ the set of bounded parabolic points of $\varphi$, $P' \subseteq P$ a representative set of orbits and $H = \{H_{p}\}_{p \in P'}$ a family of minimal subsets of $G$ such that $Orb_{\varphi|_{H_{p}\times Z}} p = Orb_{\varphi} p$. Then the map $\varphi\ltimes: \prod_{p\in P'}EPers(Stab_{\varphi}p) \rightarrow Act(G)$ such that $\varphi\ltimes(\breve{\eta}) = \varphi\ltimes\eta$ and $\varphi\ltimes(\phi) = \varphi\ltimes \phi$, where $\breve{\eta} = \{Stab_{\varphi}p+_{\partial_{p}}C_{p}\}_{p\in P'}$ is a family of equivariant perspective compactifications with respective actions $\eta = \{\eta_{p}\}_{p\in P'}$ and $\phi$ is a family of equivariant and continuous maps, is a functor. \eod
\end{prop}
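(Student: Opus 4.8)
The plan is to verify the two defining properties of a functor --- preservation of identities and of composition --- after noting that the assignment is well defined on objects and on morphisms, with essentially all of the analytic content already discharged in \textbf{Proposition \ref{pullbackfunctor}}.

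First I would check that objects go to objects. An object of $\prod_{p\in P'}EPers(Stab_{\varphi}p)$ is a family $\breve{\eta} = \{Stab_{\varphi}p+_{\partial_{p}}C_{p}\}_{p\in P'}$ of equivariant perspective compactifications whose actions $L_{p}+\eta_{p}$ are by homeomorphisms; this is exactly the input for the blowup construction of Section 2.1. The resulting $Z\ltimes\mathcal{C} = \lim\limits_{\longleftarrow}(\{X_{p}\},\{\pi_{p}\})$ is a closed subspace of the product of the compact Hausdorff spaces $X_{p} = (Z-\{p\})+_{(\partial_{p})_{\Pi}}C_{p}$, hence compact Hausdorff, and $\varphi\ltimes\eta$ acts on it by homeomorphisms by construction, so $\varphi\ltimes(\breve{\eta})$ is an object of $Act(G)$. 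A morphism of the product category is a family $\phi = \{\phi_{p}\}_{p\in P'}$ of equivariant continuous maps with $id+\phi_{p}$ continuous, and \textbf{Proposition \ref{pullbackfunctor}} produces from it a continuous equivariant $\varphi\ltimes\phi$, i.e.\ a morphism of $Act(G)$.

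The key point is that preservation of identities and composition is \emph{forced} by the uniqueness clause of \textbf{Proposition \ref{pullbackfunctor}}: $\varphi\ltimes\phi$ is the unique continuous equivariant $\chi$ with $\varpi'_{p}\circ\chi = (id+\phi_{p})\circ\varpi_{p}$ for all $p\in P'$. The identity of $\breve{\eta}$ is the family $\{id_{C_{p}}\}_{p\in P'}$, and $id_{Z\ltimes\mathcal{C}}$ trivially satisfies $\varpi_{p}\circ id = (id+id_{C_{p}})\circ\varpi_{p} = \varpi_{p}$, so uniqueness gives $\varphi\ltimes(id) = id_{Z\ltimes\mathcal{C}}$. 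For composition, given $\phi:\breve{\eta}\to\breve{\eta}'$ and $\phi':\breve{\eta}'\to\breve{\eta}''$ with componentwise composite $\{\phi'_{p}\circ\phi_{p}\}$, the map $(\varphi\ltimes\phi')\circ(\varphi\ltimes\phi)$ is continuous and equivariant, and chaining the two defining diagrams yields
$$ \varpi''_{p}\circ(\varphi\ltimes\phi')\circ(\varphi\ltimes\phi) = (id+\phi'_{p})\circ(id+\phi_{p})\circ\varpi_{p} = (id+(\phi'_{p}\circ\phi_{p}))\circ\varpi_{p}, $$
where the last equality is the fact that the Artin--Wraith sum respects composition on the $C_{p}$-components, $(id+\phi'_{p})\circ(id+\phi_{p}) = id+(\phi'_{p}\circ\phi_{p})$. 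Uniqueness then forces $(\varphi\ltimes\phi')\circ(\varphi\ltimes\phi) = \varphi\ltimes(\phi'\circ\phi)$.

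I do not expect a genuine obstacle: continuity, equivariance, and uniqueness of the induced map are all already established in \textbf{Proposition \ref{pullbackfunctor}}, so functoriality is a formal consequence of the universal property of the inverse limit. The only points requiring care are bookkeeping ones --- recognizing that $\prod_{p\in P'}EPers(Stab_{\varphi}p)$ carries componentwise identities and composition, and checking the elementary identity $(id+\phi'_{p})\circ(id+\phi_{p}) = id+(\phi'_{p}\circ\phi_{p})$ on the glued maps.
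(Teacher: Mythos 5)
Your proposal is correct and is exactly the argument the paper intends: the statement is marked with a $\square$ and no written proof, since functoriality is regarded as an immediate consequence of the existence-and-uniqueness clause of \textbf{Proposition \ref{pullbackfunctor}}. Your elaboration (identities and composites both satisfy the characterizing diagrams, so uniqueness forces $\varphi\ltimes(id)=id$ and $\varphi\ltimes(\phi'\circ\phi)=(\varphi\ltimes\phi')\circ(\varphi\ltimes\phi)$) fills in precisely the omitted routine verification.
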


Let's denote $\varphi\ltimes$ the parabolic blowup functor of $\varphi$.

\begin{prop}If $P$ is countable and $Z$ is metrizable, then the functor $\varphi\ltimes$  restricts itself to the functor (which we are maintaining the same name) $\varphi\ltimes: \prod_{p\in P'}EMPers(Stab_{\varphi}p) \rightarrow MAct(G)$.
\end{prop}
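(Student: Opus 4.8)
The plan is to reduce the statement to its object part. A morphism in $\prod_{p\in P'}EMPers(Stab_{\varphi}p)$ is a family $\phi=\{\phi_{p}\}$ of equivariant continuous maps, and Proposition \ref{pullbackfunctor} already shows that $\varphi\ltimes\phi$ is continuous and equivariant. Hence, once we know that $\varphi\ltimes$ carries metrizable equivariant perspective compactifications to metrizable $G$-spaces, the map $\varphi\ltimes\phi$ is automatically a morphism of $MAct(G)$, and functoriality (composition, identities) is inherited from the functor $\varphi\ltimes:\prod_{p\in P'}EPers(Stab_{\varphi}p)\rightarrow Act(G)$. So it suffices to prove: if $Z$ is metrizable, $P$ is countable, and each $Stab_{\varphi}p+_{\partial_{p}}C_{p}$ is metrizable, then $X=Z\ltimes\mathcal{C}$ is metrizable.

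First I would record that $X$ is Hausdorff and compact: it is the fiber product over $Z$ of the compact Hausdorff spaces $\{X_{p}\}_{p\in P}$, i.e. a closed subspace of $\prod_{p\in P}X_{p}$, which is compact Hausdorff by Tychonoff. The strategy is then to invoke Proposition \ref{uniaometrizavel}, that is, to exhibit $X$ as a countable union of subspaces each carrying a countable basis.

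The decomposition I would use is $X=\iota(Z-P)\cup\bigcup_{p\in P}\pi^{-1}(p)$. This follows from the proposition stating that $\pi$ is injective on $\pi^{-1}(Z-P)$, which identifies $\pi^{-1}(Z-P)$ with the embedded copy $\iota(Z-P)$ of Proposition \ref{embedding}, together with $\pi^{-1}(P)=\bigcup_{p\in P}\pi^{-1}(p)$. For the base piece: since $Z$ is compact and metrizable it is second countable, so its subspace $Z-P$ is second countable, and as $\iota$ is an embedding $\iota(Z-P)\cong Z-P$ has a countable basis. For the fibers: the proposition asserting that $\varpi_{p}|_{\pi^{-1}(p)}$ is a homeomorphism onto $\pi_{p}^{-1}(p)=C_{p}$ shows each fiber is homeomorphic to $C_{p}$; and $C_{p}$ is a closed subspace of the compact metrizable space $Stab_{\varphi}p+_{\partial_{p}}C_{p}$, hence compact metrizable and thus second countable. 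As $P$ is countable, the family $\{\pi^{-1}(p)\}_{p\in P}$ together with $\iota(Z-P)$ is a countable family of second countable subspaces covering $X$, and Proposition \ref{uniaometrizavel} gives the metrizability of $X$.

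The only point requiring care is the bookkeeping with the representative set $P'$: for $q\in Orb_{\varphi}p$ one has set $C_{q}=C_{p}$, so the metrizability of the $C_{p}$ with $p\in P'$ propagates to every fiber, and it is the countability of $P$ (not merely of $P'$) that secures a countable cover. I would state this explicitly. I expect this mild bookkeeping to be the only subtlety; the remainder is a direct assembly of the earlier propositions, so no genuine obstacle arises.
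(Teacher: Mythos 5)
Your proof is correct, but it applies Proposition \ref{uniaometrizavel} in a different place than the paper does. The paper's proof never decomposes $X$ itself: it applies Proposition \ref{uniaometrizavel} to each $X_{p}=(Z-\{p\})+_{(\partial_{p})_{\Pi}}C_{p}$, which is a two-piece union of the second countable subspaces $Z-\{p\}$ and $C_{p}$, concludes that each $X_{p}$ is metrizable, and then finishes with the stability of metrizability under countable products and subspaces ($X\subseteq\prod_{p\in P}X_{p}$, with $P$ countable). You instead apply the proposition once, directly to $X$, via the decomposition $X=\iota(Z-P)\cup\bigcup_{p\in P}\pi^{-1}(p)$. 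Your route needs a bit more structural input --- the surjectivity of $\pi$ (so that the decomposition really covers $X$; this follows from surjectivity of the $\pi_{p}$ and compactness, and is worth a sentence), the identification $\pi^{-1}(Z-P)=\iota(Z-P)\cong Z-P$, and the fiber homeomorphisms $\varpi_{p}|_{\pi^{-1}(p)}:\pi^{-1}(p)\to C_{p}$ --- all of which are supplied by the earlier propositions you cite; in exchange it bypasses the product argument entirely and makes visible exactly which pieces of $X$ contribute to its metrizability. The paper's route is shorter because it only ever touches the $X_{p}$, whose two-piece structure is given by construction. Your remarks on the morphism part and on propagating $C_{q}=C_{p}$ along orbits are correct and match what the paper leaves implicit; both arguments use the countability of $P$ (not merely $P'$) in the same essential way.
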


\begin{proof}We have that $X_{p}$ is Hausdorff and, because $Z-\{p\}$ and $C_{p}$ have both countable basis, $X_{p}$ is metrizable by \textbf{Proposition \ref{uniaometrizavel}}. This implies that $\prod_{p\in P} X_{p}$ is metrizable, since $P$ is countable. Since $X$ is a subspace of $\prod_{p\in P} X_{p}$, it follows that $X$ is metrizable.
\end{proof}

Now we are going to show that the parabolic blowup do not depend of the choice of the family $H$:

\begin{prop}Let $\varphi\ltimes'$ be induced by the map $\varphi$, by $P'$ and by the minimal family $H' = \{H'_{p}\}_{p\in P'}$. So, there exists a natural isomorphism between $\varphi\ltimes$ and $\varphi\ltimes'$.
\end{prop}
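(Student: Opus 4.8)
If $\varphi\ltimes'$ is the parabolic blowup functor induced by the same data $\varphi$, $P'$ but by a different minimal family $H' = \{H'_p\}_{p\in P'}$, then there is a natural isomorphism between $\varphi\ltimes$ and $\varphi\ltimes'$.

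<br>

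The plan is to build, for each object $\breve{\eta} = \{Stab_\varphi p +_{\partial_p} C_p\}_{p\in P'}$ of $\prod_{p\in P'}EPers(Stab_\varphi p)$, an equivariant homeomorphism $\Theta_{\breve\eta}: \varphi\ltimes(\breve\eta) \to \varphi\ltimes'(\breve\eta)$, and then to verify naturality in $\breve\eta$.

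<br>

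First I would compare the two constructions orbit by orbit. Fix $p\in P'$ and $q\in Orb_\varphi p$. The two families produce \emph{a priori} different distinguished elements $h_{p,q}\in H_p$ and $h'_{p,q}\in H'_p$, both satisfying $\varphi(h_{p,q},p)=q=\varphi(h'_{p,q},p)$; consequently $h_{p,q}^{-1}h'_{p,q}\in Stab_\varphi p$. This group element is exactly the ``change of normalization'' that relates the two fibers over $q$. Concretely, the $X_q$ built from $H$ is $(Z-\{q\})+_{(\partial_q)_\Pi}C_q$ with $\partial_q$ pulled back along $\tau_{h_{p,q}^{-1}}$, while the $X'_q$ built from $H'$ uses $\tau_{(h'_{p,q})^{-1}}$. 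I would define the fiberwise comparison map
$$\theta_q \;=\; id_{Z-\{q\}} \;+\; \eta_p\!\bigl((h_{p,q}^{-1}h'_{p,q})^{-1},\_\bigr)\;:\; X'_q \longrightarrow X_q,$$
and check it is a homeomorphism using the already-established change-of-coordinates machinery: by \textbf{Proposition \ref{changeofcoordinates}} both $X_q$ and $X'_q$ are homeomorphic to $Stab_\varphi p +_{\partial_p}C_p$ via the respective conjugation maps, and $\theta_q$ is just the composite of these two homeomorphisms, the middle twist being the homeomorphism $L_p+\eta_p$ applied to the element $h_{p,q}^{-1}h'_{p,q}$. Over $q=p$ one has $h_{p,p}=h'_{p,p}=1$, so $\theta_p=id$.

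<br>

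Next I would assemble these fiberwise maps into a map of inverse limits and verify equivariance. Each $\theta_q$ commutes with the quotient maps $\pi_q,\pi'_q$ to $Z$ (both restrict to the identity on $Z-\{q\}$), so the family $\{\theta_q\}_{q\in P}$ induces a continuous map $\Theta_{\breve\eta}:\varphi\ltimes'(\breve\eta)\to\varphi\ltimes(\breve\eta)$ between the inverse limits, with continuous inverse induced by $\{\theta_q^{-1}\}$; hence $\Theta_{\breve\eta}$ is a homeomorphism. The main work is equivariance. I expect this to be the chief obstacle, because the $G$-action twists the fibers by the cocycle $g\mapsto \eta_p(h_{p,\varphi(g,q)}^{-1}gh_{p,q},\_)$ for $H$ and by the analogous cocycle with primes for $H'$, and one must show that conjugating by $\theta$ carries one cocycle to the other. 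This reduces to the cocycle identity
$$(h'_{p,\varphi(g,q)})^{-1} g\, h'_{p,q} \;=\; \bigl((h_{p,\varphi(g,q)}^{-1}h'_{p,\varphi(g,q)})^{-1}\bigr)\,\bigl(h_{p,\varphi(g,q)}^{-1}g\,h_{p,q}\bigr)\,\bigl(h_{p,q}^{-1}h'_{p,q}\bigr),$$
which is a purely formal rearrangement in $Stab_\varphi p$, after which the needed commuting square follows since $\eta_p$ is a homomorphism. I would check this identity on the $C_q$-part of each fiber and note the $Z-\{q\}$-part commutes trivially (both maps restrict to $\varphi(g,\_)$ there), so that $\theta_{\varphi(g,q)}\circ\psi'_q(g,\_) = \psi_q(g,\_)\circ\theta_q$ for all $g,q$; passing to the limit gives $\Theta_{\breve\eta}\circ(\varphi\ltimes'(\breve\eta))(g,\_) = (\varphi\ltimes(\breve\eta))(g,\_)\circ\Theta_{\breve\eta}$.

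<br>

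Finally, naturality: given a morphism $\phi=\{\phi_p\}$ in $\prod_{p\in P'}EPers(Stab_\varphi p)$, I must show $\Theta_{\breve\eta'}\circ(\varphi\ltimes'\phi) = (\varphi\ltimes\phi)\circ\Theta_{\breve\eta}$. Since $\phi_p$ is $\eta_p$-equivariant, it commutes with the twist $\eta_p(h_{p,q}^{-1}h'_{p,q},\_)$ defining $\theta_q$, so the corresponding square of fiberwise maps over each $q$ commutes; by the universal property of the pullback (invoked exactly as in the uniqueness part of \textbf{Proposition \ref{pullbackfunctor}}), the induced squares on the inverse limits commute, yielding naturality. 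This establishes $\Theta$ as a natural isomorphism $\varphi\ltimes' \Rightarrow \varphi\ltimes$.
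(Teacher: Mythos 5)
Your proposal is correct and follows essentially the same route as the paper: the comparison map is the fiberwise twist by $\eta_p(h_{p,q}'^{-1}h_{p,q},\_)$ over each $q\in Orb_\varphi p$ (the paper's $T_{\eta q}$, yours being its inverse direction), equivariance reduces to exactly the cocycle identity you state, and naturality follows from the $\eta_p$-equivariance of the $\phi_p$. One small imprecision: \textbf{Proposition \ref{changeofcoordinates}} identifies $Stab_\varphi q+_{\partial_q}C_q$ with $Stab_\varphi p+_{\partial_p}C_p$, not $X_q=(Z-\{q\})+_{(\partial_q)_\Pi}C_q$ with anything, so to get continuity of $\theta_q$ on $X_q$ you must push the group-level composite through \textbf{Proposition \ref{functor}} (the paper instead factors $T_{\eta q}$ through $X_p$ via the already-continuous maps $\psi_q(g,\_)$); this is a routine fix, not a gap.
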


\begin{proof}Let $X = Z \ltimes \mathcal{C}$ and $X' = Z\ltimes' \mathcal{C}$ for the same family of compactifications $\breve{\eta}$ and respective actions $\eta$ but minimal families $H$ and $H'$, respectively. We have, for $p \in P'$ and $q \in Orb_{\varphi} p$, that $\varphi(g,\_)+\eta_{p}(gh_{p,q},\_): X_{q} \rightarrow X_{p}$, $id: X_{p} \rightarrow X'_{p}$ and $\varphi(g^{-1},\_)+\eta_{p}(h_{p,q}'^{-1}g^{-1},\_): X'_{p} \rightarrow X'_{q}$ are continuous, where $g \in G$ is such that $\varphi(g,q) = p$ and $X'_{q}$ is constructed from the family $H'$. Let's take $T_{\eta q}: X_{q} \rightarrow X_{q}'$ defined by $id_{Z-\{q\}} + \eta_{p}(h_{p,q}'^{-1}h_{p,q},\_)$ (observe that $h_{p,q}'^{-1}h_{p,q} \in Stab_{\varphi} p$). We have that $T_{\eta q} = (\varphi(g^{-1},\_)+\eta_{p}(h_{p,q}'^{-1}g^{-1},\_)) \circ (\varphi(g,\_)+\eta_{p}(gh_{p,q},\_))$, which implies that it is continuous and therefore a homeomorphism.

We have that, $\forall q \in P$, the diagram commutes:

$$ \xymatrix{ X_{q} \ar[d]_{\pi_{q}} \ar[r]_{T_{\eta q}} & X_{q}' \ar[d]^{\pi_{q}'} \\
            Z \ar[r]_{id}  & Z} $$

This implies that the family of maps $\{T_{\eta q}\}_{q\in P}$ induces a homeomorphism $T_{\eta}: X \rightarrow X'$. And it follows that $T_{\eta}$ is $G$ - equivariant, because $\forall g \in G$, the diagrams commute:

$$ \xymatrix{ Z - \{q\} \ar[d]_{id} \ar[r]_{\varphi(g,q)} & Z -\{\varphi(g,q)\} \ar[d]^{id}  & & C_{q} \ar[d]^{\eta_{p}(h_{p,q}'^{-1} h_{p,q},\_)} \ar[rrr]^{\eta_{p}(h_{p,\varphi(g,q)}^{-1}gh_{p,q},\_)} & & & C_{\varphi(g,q)} \ar[d]^{\eta_{p}(h_{p,\varphi(g,q)}'^{-1}h_{p,\varphi(g,q)},\_)} \\
            Z-\{q\} \ar[r]_{\varphi(g,q)}  & Z-\{\varphi(g,q)\} & & C_{q} \ar[rrr]_{\eta_{p}(h_{p,\varphi(g,q)}'^{-1}gh_{p,q}',\_)} & & & C_{\varphi(g,q)}} $$

So $T_{\eta}$ is an isomorphism between $\varphi\ltimes \eta$ and $\varphi\ltimes'\eta$. Take the family of maps $T = \{T_{\eta}\}_{\eta}: \varphi\ltimes \Rightarrow \varphi\ltimes'$. Let's take $Y = Z \times \mathcal{D}$ and $Y' = Z \times' \mathcal{D}$, for a family of spaces $ \{D_{p}\}_{p\in P'}$ and a family of actions $\mu = \{\mu_{p}\}_{p\in P'}$ with a equivariant perspective family of compact spaces $\mathcal{D} = \{Stab_{\varphi}p+_{\delta_{p}}D_{p}\}_{p\in P'}$ and constructed from minimal families $H$ and $H'$, respectively. Let $\phi = \{\phi_{p}\}_{p\in P'}: \eta \rightarrow \mu$ be a morphism. We have that, $\forall p \in P'$ and $q \in Orb_{\varphi} p,$ the diagrams commute (because $\phi_{q}$ is equivariant with respect to $\eta_{q}$ and $\mu_{q}$):

$$ \xymatrix{ Z - \{q\} \ar[d]_{id} \ar[r]_{id} & Z -\{q\} \ar[d]^{id}  & & C_{q} \ar[d]^{T_{\eta q}} \ar[r]^{\phi_{q}} & D_{q} \ar[d]^{T_{\mu q}} \\
            Z-\{q\} \ar[r]_{id}  & Z-\{q\} & & C_{q} \ar[r]_{\phi_{q}} & D_{q}} $$

Remember that $\phi_{q}$ is defined equal to $\phi_{p}$ in \textbf{Proposition \ref{pullbackfunctor}}. This implies that the diagram commutes:

$$ \xymatrix{ X \ar[d]_{T_{\eta}} \ar[r]_{\varphi\ltimes \phi} & Y \ar[d]^{T_{\mu}} \\
            X' \ar[r]_{\varphi\ltimes' \phi}  & Y'} $$

Thus, $T$ is a natural transformation. Since, $\forall \eta, \ T_{\eta}$ is an isomorphism, it follows that $T$ is a natural  isomorphism.
\end{proof}

\subsection{A perspectivity for the parabolic blowup}

Our goal here is, given $Z = G+_{\partial}Y \in EPers(G)$, to figure that the parabolic blowup is also perspective.

Let $G$ be a group, $G+_{\partial}Y \in EPers(G)$ a compact Hausdorff space, $L+\varphi: G \curvearrowright G+_{\partial}Y$ an action by homeomorphisms, $P \subseteq Y$ the set of bounded parabolic points of $L+\varphi$, $P' \subseteq P$ a representative set of orbits, $\mathcal{C} = \{C_{p}\}_{p\in P'}$ a family of compact Hausdorff spaces, $H = \{H_{p}\}_{p\in P'}$, with $H_{p} \subseteq G$ minimal sets such that $1\in H_{p}$ and $Orb_{\varphi|_{H_{p}\times Z}} p = Orb_{\varphi} p$ and $\{Stab_{\varphi} p+_{\partial_{p}}C_{p}\}_{p\in P'}$ a family of spaces with the equivariant perspective property with actions $\eta = \{\eta_{p}\}_{p\in P'}$, such that $L_{p}+\eta_{p}: Stab_{\varphi} p \curvearrowright Stab_{\varphi} p+_{\partial_{p}}C_{p}$ is by homeomorphisms.

\begin{obs}The set of bounded parabolic points of $L+\varphi$ is a subset of the set of bounded parabolic points of $\varphi$. We show that in some cases those sets coincide (\textbf{Proposition \ref{parabolicpoints}}).
\end{obs}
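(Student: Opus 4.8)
The plan is to show that every bounded parabolic point $p$ of $L+\varphi$ already lies in $Y$, and then to witness $p$ as a bounded parabolic point of $\varphi$ by restricting the proper cocompact action of $Stab_{L+\varphi}p$ from $(G+_{\partial}Y)-\{p\}$ to the closed invariant subspace $Y-\{p\}$.

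First I would dispose of the points of $G$. Since $G$ is open and discrete in $G+_{\partial}Y$ and the left multiplication action $L$ is free, every $g\in G$ is isolated and satisfies $Stab_{L+\varphi}g=\{1\}$; as a bounded parabolic point is a limit point with infinite stabilizer in the convergence-theoretic sense used here, the bounded parabolic points of $L+\varphi$ must lie in $Y$, consistently with the standing hypothesis $P\subseteq Y$. On $Y$ the action $L+\varphi$ is exactly $\varphi$, so for such a $p$ we have $Stab_{L+\varphi}p=Stab_{\varphi}p=:H$.

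Next, fix a bounded parabolic point $p\in Y$ of $L+\varphi$, so that $H$ acts properly discontinuously and cocompactly on $(G+_{\partial}Y)-\{p\}$. Because $Y$ is closed in $G+_{\partial}Y$ and invariant under $L+\varphi$ (the action preserves the remainder $Y$), and because $H$ fixes $p$, the set $Y-\{p\}$ is a closed $H$-invariant subspace of $(G+_{\partial}Y)-\{p\}$. Proper discontinuity passes to subspaces at once: any compact $K'\subseteq Y-\{p\}$ is compact in $(G+_{\partial}Y)-\{p\}$, so $\{h\in H: hK'\cap K'\neq\emptyset\}$ is finite. For cocompactness, choose a compact $K\subseteq (G+_{\partial}Y)-\{p\}$ with $HK=(G+_{\partial}Y)-\{p\}$ and set $K'=K\cap(Y-\{p\})$, which is compact as a closed subset of $K$. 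Given $y\in Y-\{p\}$, write $y=hk$ with $h\in H$, $k\in K$; then $k=h^{-1}y\in Y-\{p\}$ by invariance, hence $k\in K'$, so $HK'=Y-\{p\}$. Thus $H$ acts properly discontinuously and cocompactly on $Y-\{p\}$, i.e. $p$ is a bounded parabolic point of $\varphi$.

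The hard part is conceptual rather than computational: it is the reduction to $Y$, where one must invoke that bounded parabolic points are limit points with infinite stabilizer in order to discard the isolated points of $G$, which would otherwise be vacuously parabolic for $L+\varphi$ while having no counterpart for $\varphi$. Once that is granted, the statement is just the elementary observation that both proper discontinuity and cocompactness are inherited by closed invariant subspaces, the latter through the fundamental-domain-intersection argument above.
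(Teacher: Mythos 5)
Your argument is correct, and it supplies a proof that the paper itself omits: the remark is asserted without justification, evidently because the author regards it as immediate. Your core step is exactly the natural one -- $Y-\{p\}$ is a closed $(L+\varphi)$-invariant subspace of $(G+_{\partial}Y)-\{p\}$ on which $L+\varphi$ restricts to $\varphi$ and $Stab_{L+\varphi}p=Stab_{\varphi}p$, proper discontinuity restricts to any invariant subspace, and cocompactness restricts to a closed invariant subspace via $K'=K\cap(Y-\{p\})$ -- and that is surely the intended reasoning. One small caveat: your disposal of the points of $G$ appeals to ``bounded parabolic points are limit points with infinite stabilizer,'' but that is not the definition the paper actually uses; its definition only requires $Stab_{\varphi}p$ to act properly discontinuously and cocompactly on the complement of $p$, and under that literal reading every $g\in G$ is vacuously bounded parabolic for $L+\varphi$ (trivial stabilizer acting on the compact set $(G+_{\partial}Y)-\{g\}$, which is compact because $G$ is discrete and open). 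The paper sidesteps this degenerate case not through the definition but by fiat, declaring $P\subseteq Y$ in its standing hypotheses, so your conclusion agrees with the paper's convention even though your stated justification imports a stronger definition than the one given. This does not affect the validity of the main inclusion argument.
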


\begin{prop}\label{inclusaogrupo}The map $\iota: G \rightarrow (G+_{\partial}Y)\ltimes \mathcal{C}$, induced by the inclusion maps of $\iota': G \rightarrow G+_{\partial}Y$ and $\iota_{p}: G \rightarrow X_{p}$, is an embedding, $Im \ \iota$ is open and $\iota$ is equivariant with respect to $L$ and $(L+\varphi)\ltimes \eta$.
\end{prop}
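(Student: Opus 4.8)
The plan is to construct $\iota$ from the universal property of the inverse limit and then verify the three assertions in turn, reusing the template of \textbf{Proposition \ref{embedding}} for the embedding part. Since $X = \lim\limits_{\longleftarrow}\{X_{p},\pi_{p}\}_{p\in P}$ is the fibre product of the $X_{p}$ over $Z$ and $\pi_{p}\circ \iota_{p} = \iota'$ for every $p$ (because $\pi_{p}$ restricts to the identity on $Z-\{p\}\supseteq G$), the family $\{\iota_{p}\}_{p\in P}$ is compatible; hence it induces a unique continuous $\iota: G \to X$ with $\varpi_{p}\circ \iota = \iota_{p}$ for all $p$ and $\pi\circ \iota = \iota'$. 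The embedding claim I would then settle exactly as in \textbf{Proposition \ref{embedding}}: injectivity of $\iota$ follows from injectivity of the $\iota_{p}$ together with $\varpi_{p}\circ \iota = \iota_{p}$; and for initiality, given any space $A$ and map $g: A \to G$, continuity of $\iota\circ g$ forces continuity of $\varpi_{p}\circ \iota \circ g = \iota_{p}\circ g$, whence $g$ is continuous because $\iota_{p}$ is an embedding (the copy of $G$ is open in $Z-\{p\}$, which is open in $X_{p}$). An injective initial map is an embedding.

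For openness I would identify $Im\ \iota$ with $\pi^{-1}(G)$. One inclusion is immediate from $\pi\circ \iota = \iota'$. For the reverse, note that $P\subseteq Y$ forces $G\subseteq Z-P$, so every $g\in G$ satisfies $\#\pi^{-1}(g)=1$ by the projection proposition; since $\iota(g)\in \pi^{-1}(g)$, this gives $\pi^{-1}(g)=\{\iota(g)\}$ and hence $\pi^{-1}(G)\subseteq Im\ \iota$. Thus $Im\ \iota = \pi^{-1}(G)$, which is open because $G$ is open in $Z$ and $\pi$ is continuous.

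Finally, for equivariance I would check $\iota\circ L(g,\_) = ((L+\varphi)\ltimes\eta)(g,\_)\circ \iota$ coordinatewise, i.e. after composing with each $\varpi_{p}$, which suffices since the $\varpi_{p}$ separate points of $X$. On the left, $\varpi_{p}\circ \iota \circ L(g,\_) = \iota_{p}\circ L(g,\_)$ sends $h\mapsto gh\in G\subseteq X_{p}$. On the right, the blow-up action is the restriction of $\psi$, which by construction satisfies $\varpi_{p}\circ \psi(g,\_) = \psi_{\varphi(g^{-1},p)}(g,\_)\circ \varpi_{\varphi(g^{-1},p)}$; composing with $\iota$ yields $\psi_{\varphi(g^{-1},p)}(g,\_)\circ \iota_{\varphi(g^{-1},p)}$. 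Since $\psi_{q}(g,\_)$ acts as $\varphi(g,\_)=L(g,\_)$ on the copy of $G\subseteq Z-\{q\}$, and $gh\neq p$ as $gh\in G$ while $p\in Y$, this again sends $h\mapsto gh\in G\subseteq X_{p}$; the two coordinates coincide for every $p$. The main obstacle I expect is precisely this equivariance bookkeeping: one must keep straight the index shift $p\mapsto \varphi(g,p)$ built into $\psi$, the notational overload of $\varphi$ (the action on $Z=G+_{\partial}Y$ versus its restriction to $Y$), and the fact that on the open copy of $G$ all the maps $\psi_{q}(g,\_)$ collapse to ordinary left multiplication, so the $C_{p}$-parts never interfere.
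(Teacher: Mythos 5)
Your proposal is correct and follows essentially the same route as the paper: the embedding part is delegated to Proposition \ref{embedding}, openness comes from identifying $Im\,\iota$ with $\pi^{-1}(G)$, and equivariance is a diagram chase. The only cosmetic difference is in the last step, where the paper verifies equivariance by projecting once to $G+_{\partial}Y$ and using injectivity of $\pi|_{\pi^{-1}(G)}$, whereas you check all coordinates $\varpi_{p}$ with the explicit index shift $p\mapsto\varphi(g,p)$; both are valid.
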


\begin{proof}It follows by \textbf{Proposition \ref{embedding}} that $\iota$ is an embedding.

We have that $G$ is open in $G+_{\partial}Y$, which implies that $\pi^{-1}(G)$ is open in $(G+_{\partial}Y)\ltimes \mathcal{C}$ (where $\pi$ is the projection). Since $\pi^{-1}(G) = \iota(G)$, because $\pi \circ\iota = \iota'$ and $\pi|_{\pi^{-1}(G)}$ is injective, we have that $\iota(G)$ is open in $(G+_{\partial}Y)\ltimes \mathcal{C}$.

Let's consider the diagram (for $g \in G$):

$$ \xymatrix{ G \ar[rd]_{\iota'} \ar[rr]^{\iota} \ar[dd]_{L(g,\_)} & & (G+_{\partial}Y)\ltimes \mathcal{C} \ar[ld]^{\pi} \ar[dd]^{(L+\varphi)\ltimes \eta(g,\_)} \\
            & G+_{\partial}Y \ar[dd]^<<{L+\varphi(g,\_)} & \\
            G \ar[rr]^{\iota} \ar[rd]^{\iota'}& & (G+_{\partial}Y)\ltimes \mathcal{C} \ar[ld]^{\pi} \\
            & G+_{\partial}Y &} $$

Since $\iota'$ and $\pi$ are equivariant, we have that both parallelograms commute. Since $\pi|_{\pi^{-1}(G)} = \pi|_{\pi^{-1}(\iota'(G))}$ is injective, we have that the rectangle commutes. Thus, $\iota$ is equivariant.
\end{proof}

So $(G+_{\partial}Y)\ltimes \mathcal{C}$ can be identified with an object in $T_{2}EComp(G)$.

\begin{prop}Let's take a family of spaces $\mathcal{D} = \{D_{p}\}_{p\in P'}$, a family of actions $\mu = \{\mu_{p}\}_{p\in P'}$ with a family of compact spaces $\check{\mu} = \{Stab_{\varphi}p+_{\delta_{p}}D_{p}\}_{p\in P'}$ with the equivariant perspective property and $Y = (G+_{\partial}Y)\ltimes \mathcal{D}$. Let $\phi = \{\phi_{p}\}_{p\in P'}: \check{\eta} \rightarrow \check{\mu}$ be a morphism. Then the diagram commutes:

$$ \xymatrix{ G \ar[rd]_{\iota_{2}} \ar[r]^<<{\ \ \ \ \ \iota_{1}} & (G+_{\partial}Y) \ltimes \mathcal{C} \ar[d]^{(L+\varphi) \ltimes \phi} \\
            & (G+_{\partial}Y) \ltimes \mathcal{D}} $$

Where $\iota_{1}$ and $\iota_{2}$ are the inclusion maps.

\end{prop}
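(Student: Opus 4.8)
The plan is to recognize that this diagram is nothing but the restriction to $G$ of the commuting triangle furnished by \textbf{Proposition \ref{invarianceofnonparabolics}}, combined with the identification of the inclusion maps $\iota_{1},\iota_{2}$ with the embeddings of the non-parabolic locus. First I would set $Z = G+_{\partial}Y$ and note that, since $P \subseteq Y$, we have $G \subseteq Z-P$. By \textbf{Proposition \ref{embedding}} there are embeddings $\iota_{\mathcal{C}}: Z-P \rightarrow (G+_{\partial}Y)\ltimes \mathcal{C}$ and $\iota_{\mathcal{D}}: Z-P \rightarrow (G+_{\partial}Y)\ltimes \mathcal{D}$, induced by the inclusions of $Z-P$ into the factors $X_{q}$, resp. $X'_{q}$, and into $Z$. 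I would then check that $\iota_{1} = \iota_{\mathcal{C}}|_{G}$ and $\iota_{2} = \iota_{\mathcal{D}}|_{G}$: by the universal property of the inverse limit, both $\iota_{1}$ and $\iota_{\mathcal{C}}|_{G}$ are the unique map induced by the compatible family of inclusions $G \hookrightarrow X_{q}$ (each being the restriction to $G$ of the canonical copy $Z-\{q\} \cong X_{q}-\pi_{q}^{-1}(q)$), so they coincide by uniqueness, and likewise for $\iota_{2}$.

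Next I would observe that the maps $id+\phi_{q}: X_{q} \rightarrow X'_{q}$ (with $\phi_{q} = \phi_{p}$ for $q\in Orb_{\varphi} p$) commute with the projections $\pi_{q},\pi'_{q}$ down to $Z$, exactly as verified in the proof of \textbf{Proposition \ref{pullbackfunctor}}. Hence this family meets the hypotheses of \textbf{Proposition \ref{invarianceofnonparabolics}}, and the induced map on inverse limits it produces is precisely $(L+\varphi)\ltimes \phi$: this follows by uniqueness, since $(L+\varphi)\ltimes \phi$ is characterised in \textbf{Proposition \ref{pullbackfunctor}} by $\varpi'_{q}\circ ((L+\varphi)\ltimes \phi) = (id+\phi_{q})\circ \varpi_{q}$, which is the very universal property defining the induced map of \textbf{Proposition \ref{invarianceofnonparabolics}}. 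That proposition then gives $((L+\varphi)\ltimes \phi)\circ \iota_{\mathcal{C}} = \iota_{\mathcal{D}}$ on all of $Z-P$, and restricting to $G$ and using the identifications above yields $((L+\varphi)\ltimes \phi)\circ \iota_{1} = \iota_{2}$, as desired.

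If one prefers a self-contained argument, the same conclusion follows directly: by \textbf{Proposition \ref{inclusaogrupo}} we have $\pi \circ \iota_{1} = \iota'$, the inclusion $G \hookrightarrow G+_{\partial}Y$, and likewise $\pi' \circ \iota_{2} = \iota'$; moreover $\pi' \circ ((L+\varphi)\ltimes \phi) = \pi$, since $\pi'_{q}\circ (id+\phi_{q}) = \pi_{q}$. Thus both $((L+\varphi)\ltimes \phi)\circ \iota_{1}$ and $\iota_{2}$ compose with $\pi'$ to give $\iota'$, and both take values in $\pi'^{-1}(G) \subseteq \pi'^{-1}(Z-P)$, where $\pi'$ is injective; equality follows. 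There is no genuine obstacle here: the only care needed is in the bookkeeping of the identifications $\iota_{1} = \iota_{\mathcal{C}}|_{G}$ and of the induced map with $(L+\varphi)\ltimes \phi$, both of which rest solely on uniqueness in the relevant universal properties.
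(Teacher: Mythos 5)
Your proof is correct and follows the same route as the paper, whose entire proof is the single citation of \textbf{Proposition \ref{invarianceofnonparabolics}}; you have simply made explicit the identifications ($\iota_{1} = \iota_{\mathcal{C}}|_{G}$, the induced map being $(L+\varphi)\ltimes\phi$) that the paper leaves implicit. The additional self-contained argument via injectivity of $\pi'$ on $\pi'^{-1}(Z-P)$ is a valid bonus but not needed.
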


\begin{proof}\textbf{Proposition \ref{invarianceofnonparabolics}}.
\end{proof}

So every morphism of the form $(L+\varphi) \ltimes \phi$ can be seen as the identity on $G$, which implies that it is a morphism in $T_{2}EComp(G)$. Thus, our parabolic blowup functor induces a new functor $(\varphi,\partial)\ltimes: \prod_{p\in P'}EPers(Stab_{\varphi} p) \rightarrow T_{2}EComp(G)$ that commutes the diagram:

$$ \xymatrix{ & Act(G) \\
            \prod\limits_{p\in \tilde{P'}}EPers(Stab_{\varphi} p) \ar[r]^{ \ \ \ (\varphi,\partial) \ltimes} \ar[ru]^{(L+\varphi) \ltimes} & T_{2}EComp(G) \ar[u]^{\Gamma} } $$

Where $\Gamma: T_{2}EComp(G) \rightarrow Act(G)$ is the forgetful functor.

\begin{prop}\label{persppullback}If $G+_{\partial}Y \in EPers$, then $(G+_{\partial}Y) \ltimes \mathcal{C} \in EPers(G)$.
\end{prop}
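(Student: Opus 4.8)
The space $X=(G+_{\partial}Y)\ltimes\mathcal{C}$ is compact Hausdorff, carries an action by homeomorphisms, and contains $G$ as an open equivariant subset by \textbf{Proposition \ref{inclusaogrupo}}; so $X$ already defines an object of $T_{2}EComp(G)$, i.e. $X=G+_{f}(X-G)$ for an admissible $f$, and the only thing left to check is the smallness condition: for every $u\in\U$ and every compact (hence finite) $K\subseteq G$, the set $\{g\in G:\,gK\notin Small(u)\}$ is finite. The plan is to reduce this to the two perspectivity hypotheses already available, namely that $Z=G+_{\partial}Y\in EPers(G)$ and that each fibre compactification $Stab_{\varphi}p+_{\partial_{p}}C_{p}$ is perspective with respect to $Stab_{\varphi}p$. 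First I would pass to a base of $\U$: as in the proof of \textbf{Proposition \ref{quaseconvexidadefibrado}}, $\U$ has the base $\B=\{\varpi_{p_{1}}^{-1}(u_{p_{1}})\cap\dots\cap\varpi_{p_{n}}^{-1}(u_{p_{n}})\}$, and since $\{g:gK\notin Small(u)\}=\bigcup_{i}\{g:\varpi_{p_{i}}(gK)\notin Small(u_{p_{i}})\}$ is a finite union, it suffices to treat a single factor $X_{p}$ and entourage $u_{p}\in\U_{p}$. Because $\iota$ is equivariant and $\varpi_{p}\circ\iota=\iota_{p}$ is the inclusion of $G$ into $X_{p}$, one has $\varpi_{p}(gK)=gK$ read inside the canonical copy of $G$ in $X_{p}=(Z-\{p\})+_{(\partial_{p})_{\Pi}}C_{p}$. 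Thus everything reduces to proving that $\{g\in G:\,gK\notin Small(u_{p})\}$ is finite for each such factor.

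Here I would use two features of $X_{p}$. On one hand, $X_{p}$ is the image of the perspective space $Stab_{\varphi}p+_{\partial_{p}}C_{p}$ under the isomorphism $\Pi$ of (Theorem 3.2 of \cite{So}), so $X_{p}\in EPers(\varphi|_{Stab_{\varphi}p\times(Z-\{p\})})$, that is, $X_{p}$ is perspective \emph{with respect to} $Stab_{\varphi}p$. On the other hand, the quotient $\pi_{p}:X_{p}\to Z$ is injective off $p$, so for any open neighbourhood $V$ of $p$ the restriction $\pi_{p}:\pi_{p}^{-1}(Z-V)\to Z-V$ is a continuous bijection of compacta, hence a uniform isomorphism. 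I would then split $G$ into three regions using nested neighbourhoods $V'\subseteq\overline{V'}\subseteq V$ of $p$. For $g$ with $gK\subseteq Z-V'$ the uniform isomorphism above converts $u_{p}$-smallness into $v$-smallness in $Z$ for a fixed $v\in\U_{Z}$, so the perspectivity of $Z$ with respect to $G$ leaves only finitely many bad $g$. For $g$ with $gK$ meeting $V'$ but not contained in $V$, choosing $v_{0}\in\U_{Z}$ so that no $v_{0}$-small set can meet $V'$ and $Z-V$ simultaneously makes such $gK$ fail to be $v_{0}$-small, and again perspectivity of $Z$ gives finiteness.

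The remaining, and genuinely hard, region is $R=\{g:\,gK\subseteq V,\ gK\cap V'\neq\emptyset\}$, where $gK\to p$. Here the base no longer sees the difference between $Z$ and $X_{p}$, and I must show that the $G$-elements approaching $p$ are coarsely confined to $Stab_{\varphi}p$. Since $p$ is bounded parabolic, fix a compact fundamental domain $D\subseteq Z-\{p\}$ with $\varphi(Stab_{\varphi}p,D)=Z-\{p\}$ and write each $g=h_{g}d_{g}$ with $h_{g}\in Stab_{\varphi}p$ and $d_{g}\in D\cap G$. If only finitely many cosets $Stab_{\varphi}p\cdot d_{g}$ occurred, then $gK=\varphi(h_{g},d_{g}K)$ with $d_{g}K$ ranging over finitely many compact subsets of $Z-\{p\}$, and the perspectivity of $X_{p}$ with respect to $Stab_{\varphi}p$ would bound $h_{g}$, hence bound $g$, to a finite set. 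So the crux is to rule out infinitely many distinct $d_{g}$. For this I would argue by contradiction: an infinite sequence in $R$ gives, after passing to a subnet and fixing $a,b\in K$, points $g_{n}a,g_{n}b\to p$ in $Z$ that remain $u_{p}$-separated in $X_{p}$, i.e. converge to distinct points of $C_{p}$; transporting them into the fundamental domain by $h_{n}^{-1}$ and reading the limiting directions through the change-of-coordinates homeomorphism (\textbf{Proposition \ref{changeofcoordinates}}) together with \textbf{Proposition \ref{small}} converts this into a wandering family in $Stab_{\varphi}p+_{\partial_{p}}C_{p}$ violating its perspectivity. I expect this step—showing that the approach to a single blown-up point is governed by the parabolic subgroup, so that the fibre's perspectivity can be invoked—to be the main obstacle; it is exactly the place where bounded parabolicity, the perspectivity of $Z$, and the perspectivity of the fibre must be combined.

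Finally, since each of the finitely many factors $p_{1},\dots,p_{n}$ of a basic entourage contributes only finitely many bad $g$, their union is finite, which is the required smallness condition; combined with the already-noted Hausdorffness, openness of $G$, and continuity of the action, this yields $(G+_{\partial}Y)\ltimes\mathcal{C}\in EPers(G)$. I would also note that \textbf{Proposition \ref{quaseconvexidadefibrado}} is consistent with and underlies this reduction, since it guarantees that for a fixed entourage only finitely many fibres $\pi^{-1}(p)$ are not small, so that no infinite interaction among distinct blown-up points can occur; and although \textbf{Proposition \ref{quocienteperspectivo}} transports perspectivity to quotients rather than to the finer space $X$, it is not directly applicable here, which is why the hands-on verification above is needed.
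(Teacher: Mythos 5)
Your outer reductions are sound and genuinely different from the paper's argument: compact subsets of $G$ are finite, the uniformity of the inverse limit has the base $\B=\{\varpi_{p_{1}}^{-1}(u_{p_{1}})\cap\dots\cap\varpi_{p_{n}}^{-1}(u_{p_{n}})\}$, the bad set for such an entourage is the finite union of the bad sets of the factors, and $X_{p}\in EPers(\varphi|_{Stab_{\varphi}p\times(Z-\{p\})})$ via the isomorphism $\Pi$. The gap is exactly where you flag it, and the mechanism you sketch for closing it does not work as described. After writing $g_{n}=h_{n}d_{n}$ with $h_{n}\in Stab_{\varphi}p$ and $d_{n}\in D\cap G$, the translated points $h_{n}^{-1}g_{n}a=d_{n}a$ lie in a fixed compact $D''\subseteq Z-\{p\}$ containing $(D\cap G)K$ (such a $D''$ exists because right translation extends continuously by the identity on $G+_{\partial}Y$); these points stay away from $C_{p}$ and have no ``limiting directions'' there, so they cannot be converted into a wandering family in $Stab_{\varphi}p+_{\partial_{p}}C_{p}$, and \textbf{Proposition \ref{changeofcoordinates}}, which compares the compactifications attached to different points of one orbit, plays no role. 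Moreover the relevant dichotomy is not on the $d_{n}$ but on the $h_{n}$. The step closes as follows: given infinitely many distinct $g_{n}$ with $g_{n}K\notin Small(u_{p})$, either infinitely many $h_{n}$ are distinct, in which case the perspectivity of $X_{p}$ with respect to $Stab_{\varphi}p$ applied to the single compact $D''$ gives $g_{n}K\subseteq\varphi(h_{n},D'')\in Small(u_{p})$ for all but finitely many $n$, a contradiction; or, after passing to a subnet, $h_{n}=h$ is constant, the distinct $d_{n}$ cluster at some $d_{\infty}\in Y-\{p\}$, continuity of $R(\_,a)+id$ on $G+_{\partial}Y$ (i.e.\ the perspectivity of the base) gives $d_{n}a\to d_{\infty}$ for every $a\in K$, hence all of $g_{n}K=\varphi(h,d_{n}K)$ collapses to the single point $\varphi(h,d_{\infty})\in Z-\{p\}=X_{p}-C_{p}$, where the topologies of $X_{p}$ and $Z$ agree, so $g_{n}K$ is eventually $u_{p}$-small — again a contradiction. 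Note that this two-case argument makes your three-region decomposition and your claim about finitely many $d_{g}$ unnecessary.

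For comparison, the paper proves the proposition without any case analysis by exploiting the equivalence between perspectivity of a compactification of $G$ and the continuity of the right multiplication extended by the identity on the boundary: since $G+_{\partial}Y$ is perspective, each $R(\_,g)+id$ is a homeomorphism of $G+_{\partial}Y$; it preserves $(G+_{\partial}Y)-\{p\}$ and is $Stab_{\varphi}p$-equivariant, so \textbf{Proposition \ref{functor}} upgrades it to a continuous self-map of each $X_{p}$ commuting with $\pi_{p}$, and these assemble into a homeomorphism $\tilde{R}_{g}$ of the inverse limit which is checked to be right multiplication on $\pi^{-1}(G)$ and the identity elsewhere; the existence of this extended right action is again equivalent to perspectivity. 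Your repaired argument uses exactly the same two inputs (perspectivity of the fibre compactification transported by $\Pi$, and perspectivity of the base), unpacked by hand rather than through the functorial machinery.
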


\begin{proof}We already have that $(G+_{\partial}Y) \ltimes \mathcal{C}$ is Hausdorff.

Since $G+_{\partial}Y$ has the perspectivity property, we have that the action $R+id: G \curvearrowright G+_{\partial}Y$ is by homeomorphisms. We have also that $\forall p \in P$, $R+id: Stab_{\varphi}p \curvearrowright Stab_{\varphi}p+_{\partial_{p}}C_{p}$ is by homeomorphisms.

Let $g \in G$. We have that $\forall p \in P$, $R_{p}(\_,g) = (R(\_,g)+id)|_{(G+_{\partial}Y) - \{p\}}$ is continuous and $(L+\varphi)$-equivariant (since $L$ and $R$ commute). So, by \textbf{Proposition \ref{functor}}, $R_{p}(\_,g)+id: X_{p} \rightarrow X_{p}$ is continuous, where $X_{p} = (G+_{\partial}Y) - \{p\})+_{(\partial_{p})_{\Pi}}C_{p}$, and commute the diagram:

$$ \xymatrix{ X_{p} \ar[r]_{R_{p}(\_,g)+id} \ar[d]_{\pi_{p}} & X_{p} \ar[d]_{\pi_{p}} \\
             G+_{\partial}Y \ar[r]_{R(\_,g)+id} & G+_{\partial}Y } $$

So the maps $R(\_,g)+id$ and $\{R_{p}(\_,g)+id\}_{p\in P}$ induce a homeomorphism $\tilde{R}_{g}: (G+_{\partial}Y) \ltimes \mathcal{C} \rightarrow (G+_{\partial}Y) \ltimes \mathcal{C}$ that commutes the diagrams:

$$ \xymatrix{ (G+_{\partial}Y) \ltimes \mathcal{C} \ar[r]_{\tilde{R}_{g}} \ar[d]_{\varpi_{p}} & (G+_{\partial}Y) \ltimes \mathcal{C} \ar[d]_{\varpi_{p}} & & (G+_{\partial}Y) \ltimes \mathcal{C} \ar[r]_{\tilde{R}_{g}} \ar[d]_{\pi} & (G+_{\partial}Y) \ltimes \mathcal{C} \ar[d]_{\pi} \\
             X_{p} \ar[r]_{R_{p}(\_,g)+id} & X_{p} & & G+_{\partial}Y \ar[r]_{R(\_,g)+id} & G+_{\partial}Y }$$

Since $R(\_,g)+id$ is bijective and the second diagram commute, we can decompose $\tilde{R}_{g} = \alpha+\beta: \pi^{-1}(G)+_{f}\pi^{-1}(Y) \rightarrow \pi^{-1}(G)+_{f}\pi^{-1}(Y)$, with the appropriate $f$. To say that $\alpha$ corresponds to the right multiplication for $g$ in the copy of $G$ in $(G+_{\partial}Y) \ltimes \mathcal{C}$ is to say that the upper trapezium of the diagram below commutes.

$$ \xymatrix{ G \ar[rrr]^{R(\_,g)} \ar[rd]^{\iota} \ar[dd]^{\iota'} & & & G \ar[ld]^{\iota} \ar[dd]^{\iota'} \\
            & (G+_{\partial}Y) \ltimes \mathcal{C} \ar[r]_{\tilde{R}_{g}} \ar[ld]_{\pi} & (G+_{\partial}Y) \ltimes \mathcal{C} \ar[rd]_{\pi} & \\
             G+_{\partial}Y \ar[rrr]_{R(\_,g)+id} & & & G+_{\partial}Y }$$

The triangles, the other trapezium and the outside rectangle are commutative. So $\pi \circ \iota \circ R(\_,g) = \iota' \circ R(\_,g) = (R(\_,g) + id)\circ \iota' = (R(\_,g) + id) \circ \pi \circ \iota = \pi \circ \tilde{R}_{g} \circ \iota$. Since $\pi|_{\pi^{-1}(G)}$ is injective, $\pi \circ \iota \circ R(\_,g)(G) \subseteq \iota'(G)$ and $\pi \circ \tilde{R}_{g} \circ \iota(G) \subseteq \iota'(G)$, it follows that $\iota \circ R(\_,g) = \tilde{R}_{g} \circ \iota$. So $\alpha$ is the right multiplication by the element $g$.

Let's take $x \in \pi^{-1}(Y)$. If $\pi(x) \notin P$, then $\#\pi^{-1}(\pi(x)) = 1$ and $\pi \circ \tilde{R}_{g}(x) = \pi(x)$, which implies that $\tilde{R}_{g}(x) = x$. If $\pi(x) = p \in P$, then $\#\varpi_{p}^{-1}(\varpi_{p}(x)) = 1$ and $\varpi_{p} \circ \tilde{R}_{g}(x) = \varpi_{p}(x)$, which implies that $\tilde{R}_{g}(x) = x$. So $\beta = id$.

So $\tilde{R}_{g}$ is the right multiplication map of $g$ when restricted to $\pi^{-1}(G)$ and the identity map everywhere else. Then, there is an action by homeomorphisms $\tilde{R}: G \curvearrowright (G+_{\partial}Y) \ltimes \mathcal{C}$ (defining $\tilde{R}(\_,g) = \tilde{R}_{g}$) which is the right multiplication in $\pi^{-1}(G)$ and the trivial action everywhere else. Thus, $(G+_{\partial}Y) \ltimes \mathcal{C}$ has the perspectivity property.
\end{proof}

So we have that if $G+_{\partial}Y$ has the equivariant perspectivity property, then our new functor restricts to $(\varphi,\partial)\ltimes: \prod_{p\in P'}EPers(Stab_{\varphi} p) \rightarrow EPers(G)$.

\begin{prop}If $G$ is dense in $G+_{\partial}Y$ and, $\forall p \in P'$, $Stab_{\varphi}p$ is dense in $Stab_{\varphi}p+_{\partial_{p}}C_{p}$, then $\iota(G)$ is dense in $(G+_{\partial}Y)\ltimes \mathcal{C}$.
\end{prop}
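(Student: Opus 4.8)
The plan is to show that the closure $Cl_X(\iota(G))$, where $X = (G+_\partial Y)\ltimes\mathcal{C}$, contains every fiber $\pi^{-1}(z)$ of the projection $\pi: X \to G+_\partial Y$; as these fibers cover $X$, this yields $Cl_X(\iota(G)) = X$. For a point $z \in (G+_\partial Y) - P$ the fiber $\pi^{-1}(z)$ is a single point $x_z$, and I would handle it as follows: by density of $G$ in $G+_\partial Y$ choose a net $\{g_\gamma\} \subseteq G$ with $g_\gamma \to z$; since $\pi$ is closed (a continuous map of a compact space into a Hausdorff space) and $\#\pi^{-1}(z) = 1$, \textbf{Proposition \ref{liftingnet}} applied to the single net $\{g_\gamma\}$ shows that any lift converges to $x_z$, and in particular $\iota(g_\gamma)$ (which satisfies $\pi(\iota(g_\gamma)) = g_\gamma$) tends to $x_z$. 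Thus $x_z \in Cl_X(\iota(G))$, and all non-parabolic fibers are covered.

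For the parabolic fibers I would first reduce to $p \in P'$. By \textbf{Proposition \ref{inclusaogrupo}} the map $\iota$ is equivariant and $\iota(G)$ is $(L+\varphi)\ltimes\eta$-invariant, so $Cl_X(\iota(G))$ is $G$-invariant; and for $p \in P$ with $\varphi(g,p_0) = p$, $p_0 \in P'$, the homeomorphism $(L+\varphi)\ltimes\eta(g,\_)$ carries $\pi^{-1}(p_0)$ onto $\pi^{-1}(p)$. Hence it suffices to show $\pi^{-1}(p) \subseteq Cl_X(\iota(G))$ for $p \in P'$. Fix such a $p$ and $x \in \pi^{-1}(p)$, and put $c = \varpi_p(x) \in C_p$. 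Since $X$ is the inverse limit $\lim_{\leftarrow}\{X_q, \pi_q\}$, it carries the subspace topology of $\prod_q X_q$, so I must produce a net in $\iota(G)$ converging to $x$ coordinatewise. I would feed in a net $\{s_\gamma\}\subseteq Stab_\varphi p$ with $s_\gamma \to c$ in $Stab_\varphi p +_{\partial_p} C_p$ (available by the density hypothesis) and check that $\iota(s_\gamma) \to x$. For $q \neq p$ one has $\varpi_q(x) = p \in (G+_\partial Y) - \{q\}$ and $\varpi_q(\iota(s_\gamma)) = \iota_q(s_\gamma) = s_\gamma$, so the required convergence follows once $s_\gamma \to p$ in $G+_\partial Y$, because $(G+_\partial Y) - \{q\}$ is open in both $G+_\partial Y$ and $X_q$ with the same trace topology. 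For $q = p$ the condition is $\iota_p(s_\gamma) = s_\gamma \to c$ in $X_p$; applying $\pi_p$ shows that this in turn forces $s_\gamma \to p$ in $G+_\partial Y$, so everything reduces to the single statement $s_\gamma \to c$ in $X_p = ((G+_\partial Y)-\{p\}) +_{(\partial_p)_\Pi} C_p$.

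The heart of the argument, and the step I expect to be the main obstacle, is transferring the convergence $s_\gamma \to c$ from the perspective compactification $Stab_\varphi p +_{\partial_p} C_p$ to $X_p$. I would build a single continuous comparison map $F = \iota_p^{0} + id_{C_p}: Stab_\varphi p +_{\partial_p} C_p \to X_p$, where $\iota_p^{0}: Stab_\varphi p \hookrightarrow (G+_\partial Y) - \{p\}$ is the inclusion; then $F(s_\gamma) = s_\gamma$ and $F(c) = c$, so $s_\gamma \to c$ in the domain immediately yields $s_\gamma \to c$ in $X_p$, completing the proof. Continuity of $F$ I would verify through the Artin--Wraith continuity criterion (Proposition 4.2 of \cite{So1}): it amounts to $\partial_p(Stab_\varphi p \cap A) \subseteq (\partial_p)_\Pi(A) = \partial_p(\Pi_K(A))$ for every closed $A \subseteq (G+_\partial Y) - \{p\}$. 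Here the freedom to choose the compact fundamental domain $K$ of $\varphi|_{Stab_\varphi p \times ((G+_\partial Y)-\{p\})}$ is decisive: since $\Pi$ is independent of this choice, I take $K \ni 1$, whence $s \in \varphi(s,K)$ for every $s \in Stab_\varphi p$, so $Stab_\varphi p \cap A \subseteq \Pi_K(A)$, and monotonicity of the admissible map $\partial_p$ finishes the inclusion.
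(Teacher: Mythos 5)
Your proposal is correct and follows essentially the same route as the paper: in both arguments one produces, for a non-parabolic fiber, a net in $G$ converging to the base point and lifts it, and for a parabolic fiber a net in $Stab_{\varphi}p$ converging to $\varpi_{p}(x)$ in $X_{p}$, which is then lifted to $X$. Your explicit comparison map $F=\iota_{p}^{0}+id_{C_{p}}$ (whose continuity also follows from Proposition \ref{functor} with $K_{1}=\{1\}$) supplies the justification for the step the paper's proof asserts without comment, namely that density of $Stab_{\varphi}p$ in $Stab_{\varphi}p+_{\partial_{p}}C_{p}$ transfers to $X_{p}$.
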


\begin{proof}Let $x \in (G+_{\partial}Y)\ltimes \mathcal{C} - \iota(G)$.

Let's suppose that $\pi(x) \in Y-P$. Then there exists a net $\{g_{\gamma}\}_{\gamma \in \Gamma}$ in $G$ that converges to $\pi(x)$. Since $\#\pi^{-1}(\pi(x)) = 1$, take a net $\{x_{\gamma}\}_{\gamma \in \Gamma} \subseteq X$ such that $\forall \gamma \in \Gamma$, $\pi(x_{\gamma}) = g_{\gamma}$. We have that this net is contained in $\iota(G)$ and converges to $x$. So $x$ is in the closure of $\iota(G)$.

Let's suppose that $\pi(x) = p \in P$. Then there exists a net $\{g_{\gamma}\}_{\gamma\in \Gamma}$ in $Stab_{\varphi}p$ that converges to $\varpi_{p}(x)$. Since $\#\varpi^{-1}_{p}(\varpi_{p}(x)) = 1$, take a net $\{x_{\gamma}\}_{\gamma \in \Gamma} \subseteq X$ such that $\forall \gamma \in \Gamma$, $\varpi_{p}(x_{\gamma}) = g_{\gamma}$. We have that this net is contained in $\iota(G)$ and converges to $x$. So $x$ is in the closure of $\iota(G)$.

Thus $\iota(G)$ is dense in $(G+_{\partial}Y)\ltimes \mathcal{C}$.
\end{proof}

\subsection{Dynamic quasiconvexity}

\begin{defi}Let $\psi : G \curvearrowright X$ be an action by homeomorphisms, where $X$ is a Hausdorff compact space. Let $K$ be a closed subset of $X$ and $H = \{g \in G: \psi(g,K) = K\}$. We say that $K$ is $\psi$-quasiconvex if $\forall u \in \U$, the set $\{g H:  \psi(g,K) \notin Small(u)\}$ is finite, where $\U$ is the uniform structure compatible with the topology of $X$.

Let $G+_{\partial}Y\in EPers(G)$ and $H < G$. We say that $H$ is dynamically quasiconvex if $\forall u \in \U_{\partial}$, $\#\{g H: \partial (gH) \notin Small(u)\} < \aleph_{0}$, where $\U_{\partial}$ is the uniform structure compatible with the topology of $G+_{\partial}Y$.
\end{defi}

\begin{obs}Let $L+\psi: G \curvearrowright G+_{\partial}Y$ be an action with the perspectivity property and $H$ a subgroup of $G$. If $\forall g \notin H$, $\partial(gH) \neq \partial(H)$, then $H = \{g \in G: \psi(g,\partial(H)) = \partial(H)\}$, which implies that  $\psi$-quasiconvexity of $\partial(H)$ is equivalent to dynamic quasiconvexity of $H$.

If $\psi$ has the convergence property, then this definition of dynamic quasiconvexity coincides with the usual one (i.e. for every $u \in \U_{\partial}$, the set $\{g H: \Lambda gH \notin Small(u)\}$ is finite).
\end{obs}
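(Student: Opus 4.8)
The plan is to reduce both assertions in the remark to a single equivariance property of the Artin--Wraith datum $\partial$, namely that $\partial(gS) = \psi(g,\partial(S))$ for every $S \subseteq G$ and every $g \in G$. First I would record that, because $G$ is discrete and open in $G+_{\partial}Y$, the closure of any $S \subseteq G$ is $Cl(S) = S \cup \partial(S)$, so that $\partial(S) = Cl(S)\cap Y$; this comes directly from the admissibility conditions on $\partial$ together with the description of the closed sets of $X+_{f}Y$. Since $L+\psi$ acts by homeomorphisms and a homeomorphism preserves closures, applying $(L+\psi)(g,\_)$ to the identity $Cl(S) = S \cup \partial(S)$ and intersecting with $Y$ yields $\partial(gS) = \psi(g,\partial(S))$, using that $gS \subseteq G$ is disjoint from $Y$.

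For the first assertion I would prove the two inclusions of $H = \{g \in G : \psi(g,\partial(H)) = \partial(H)\}$. If $h \in H$ then $hH = H$, so by equivariance $\psi(h,\partial(H)) = \partial(hH) = \partial(H)$, giving $\subseteq$. Conversely, if $\psi(g,\partial(H)) = \partial(H)$ then $\partial(gH) = \psi(g,\partial(H)) = \partial(H)$, and the hypothesis that $\partial(gH) \neq \partial(H)$ for all $g \notin H$ forces $g \in H$, giving $\supseteq$. With this identity in hand the two notions become literally the same counting statement: writing $K = \partial(H)$, the stabilizer $\{g : \psi(g,K) = K\}$ occurring in $\psi$-quasiconvexity of $K$ is exactly $H$, the translate $\psi(g,K)$ equals $\partial(gH)$, and both definitions read ``$\{gH : \partial(gH) \notin Small(u)\}$ is finite for every $u \in \U_{\partial}$''. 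I would also check that each such coset-indexed set is well defined, since $H$ stabilizes $K$ and $\partial(gH)$ depends only on the coset $gH$.

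For the second assertion the goal is to identify $\partial(gH)$ with the dynamical limit set $\Lambda(gH)$ under the convergence hypothesis, after which the two finiteness conditions coincide verbatim. By the same equivariance (which also gives $\Lambda(gH) = \psi(g,\Lambda(H))$) it suffices to treat $g = 1$, i.e. to show $\partial(H) = \Lambda(H)$. The inclusion $\Lambda(H) \subseteq \partial(H)$ is immediate because any point approximated by elements of $H$ lies in $Cl(H)\cap Y = \partial(H)$. For the reverse inclusion I would take $y \in \partial(H)$ together with a net in $H$ converging to $y$; since $y \in Y$ and $G$ is discrete and open, this net must eventually leave every finite subset of $G$, so after passing to a subnet with distinct entries it is wandering, and the convergence property supplies a collapsing subnet whose attracting point is $y$, exhibiting $y$ as a genuine limit point in $\Lambda(H)$. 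This is where the convergence property is essential: without it the topological accumulation set $\partial(H)$ need not be recognizable as a dynamical limit set.

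The main obstacle I anticipate is precisely this second assertion, where I must first pin down the exact meaning of $\Lambda(gH)$ intended in the ``usual'' definition and then verify that the collapsing behaviour granted by the convergence property recovers all of $\partial(H)$ rather than a proper subset. By contrast the first assertion is essentially formal once the equivariance of $\partial$ is in place; the only genuine care there is the bookkeeping showing that the coset-indexed families counted by the two definitions are identical, which the identity $H = \{g : \psi(g,\partial(H)) = \partial(H)\}$ delivers.
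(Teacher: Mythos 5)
The paper states this as a remark and supplies no proof at all, so there is nothing to compare your argument against; your reconstruction follows the only natural route and is essentially correct. The key identity $\partial(gS)=\psi(g,\partial(S))$, obtained from $Cl(S)=S\cup\partial(S)$ and the fact that $(L+\psi)(g,\_)$ is a homeomorphism preserving the decomposition $G\,\dot{\cup}\,Y$, is right, and the first assertion then reduces, as you say, to the purely formal observation that the stabilizer occurring in the definition of $\psi$-quasiconvexity of $K=\partial(H)$ is exactly $H$ and that both finiteness conditions count the same coset-indexed family (you may also want to note that smallness of subsets of $Y$ is the same whether measured in the uniformity of $Y$ or in the restriction of $\U_{\partial}$, which holds because $Y$ is a compact subspace of the compact Hausdorff space $G+_{\partial}Y$). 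The one place where you assert more than you prove is in the second assertion: the convergence property gives a collapsing subnet with \emph{some} attracting point $a$ and repelling point $b$, and you still owe the identification $a=y$. This is fixable in one line: choosing $x\in G$ with $x\neq b$, the subnet applied to $x$ converges to $a$, while the perspectivity of $G+_{\partial}Y$ (applied to the compact $\{1,x\}\subseteq G$) forces $\psi(h_{\gamma'},x)$ and $h_{\gamma'}=\psi(h_{\gamma'},1)$ to become arbitrarily close, so $a=\lim h_{\gamma'}=y$. You should also make explicit that you are invoking the convergence property of the action on the whole compactification $G+_{\partial}Y$ (or, equivalently in the perspective setting, Gerasimov's attractor-sum uniqueness) rather than only of $\psi$ on $Y$, since the collapsing subnet must be collapsing relative to a space containing the orbit of $1\in G$ for this argument to close.
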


\begin{prop}\label{convdimtopsub}Let $\psi : G \curvearrowright X$ be an action by homeomorphisms, where $X$ is a Hausdorff compact space. Let $K$ be a closed subset of $X$ and $H = \{g \in G: \psi(g,K) = K\}$. Suppose that $\forall g_{1},g_{2} \in G$, $\psi(g_{1},K)\cap \psi(g_{2},K) = \emptyset$ or $\psi(g_{1},K) = \psi(g_{2},K)$. We denote by $\sim = \Delta^{2} X\cup \bigcup_{g\in G} \psi(g,K)^{2}$.  Then $K$ is dynamically quasiconvex if and only if $\sim$ is topologically quasiconvex.

\end{prop}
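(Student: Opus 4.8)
The plan is to reduce both sides to statements about the same two families of objects --- the left cosets $gH$ and the translates $\psi(g,K)$ --- and to observe that, once the correspondence between these families is in place, the defining conditions coincide verbatim. First I would check that $\sim$ is genuinely an equivalence relation. Reflexivity and symmetry are immediate from the definition $\sim = \Delta^{2}X \cup \bigcup_{g} \psi(g,K)^{2}$, and transitivity is exactly where the hypothesis is used: if $x \sim y$ and $y \sim z$ with $x,y \in \psi(g_{1},K)$ and $y,z \in \psi(g_{2},K)$, then $y \in \psi(g_{1},K)\cap\psi(g_{2},K)$ forces $\psi(g_{1},K) = \psi(g_{2},K)$, so $x,z$ lie in a common translate and $x \sim z$.

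Next I would describe the classes explicitly. By the disjointness hypothesis, for $x \in \bigcup_{g}\psi(g,K)$ the class $[x]$ is the unique translate containing $x$, while for $x \notin \bigcup_{g}\psi(g,K)$ the class is the singleton $\{x\}$. I would then set up the bijection $gH \mapsto \psi(g,K)$ from $G/H$ onto the set of translates: since $\psi(g_{1},K) = \psi(g_{2},K)$ iff $\psi(g_{2}^{-1}g_{1},K) = K$ iff $g_{2}^{-1}g_{1} \in H$ iff $g_{1}H = g_{2}H$, this map is well defined and injective, and it is surjective by construction. This dictionary identifies the two indexing families appearing in the two definitions.

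With this in hand I would verify the two conditions of topological quasiconvexity of $\sim$. Condition (1), that each class is closed, holds automatically: singletons are closed because $X$ is Hausdorff, and each $\psi(g,K)$ is closed because $K$ is closed and $\psi(g,\_)$ is a homeomorphism. For condition (2) I would note that a singleton class is $u$-small for every entourage $u$ (points are arbitrarily small), so the non-$u$-small classes are always among the translates. Hence for each $u \in \U$ the bijection above carries $\{[x] : [x] \notin Small(u)\}$ onto $\{gH : \psi(g,K) \notin Small(u)\}$, and one set is finite iff the other is. Therefore condition (2) holds for every $u$ precisely when $K$ is $\psi$-quasiconvex, and combined with the automatic condition (1) this yields the claimed equivalence.

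I do not anticipate a serious obstacle; the proof is essentially a translation of definitions. The only points that need care are the use of the disjointness hypothesis to obtain transitivity of $\sim$ (and with it the clean description of the classes as translates and singletons), and the observation that singleton classes never violate $u$-smallness, so that the counting on both sides is carried entirely by the translates.
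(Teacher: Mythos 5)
Your proof is correct and follows essentially the same route as the paper: both hinge on the bijection $gH \mapsto \psi(g,K)$ between cosets and translates (well defined and injective because $\psi(g_{1},K)=\psi(g_{2},K)$ iff $g_{1}H=g_{2}H$), which identifies the two finiteness conditions entourage by entourage. Your version is in fact slightly more complete, since you also verify that $\sim$ is an equivalence relation, that its classes are closed, and that singleton classes are always small, points the paper leaves implicit.
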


\begin{proof}$(\Rightarrow)$ Let $\U$ be the uniform structure compatible with the topology of $X$ and $u \in \U$. We define $f_{u}:\{g H: \psi(g,K) \notin Small(u)\} \rightarrow \{\psi(g,K): \psi(g,K) \notin Small(u)\}$ by $f_{u}(gH) = \psi(g,K)$. By construction this map is surjective. If $K$ is $\psi$-quasiconvex, then  $\forall u \in \U$, $\#\{g H: \psi(g,K) \notin Small(u)\} < \aleph_{0}$, which implies that $\forall u \in \U$, $\#\{\psi(g,K): \psi(g,K) \notin Small(u)\} < \aleph_{0}$. Thus, $\sim$ is topologically quasiconvex.

$(\Leftarrow)$ Let $g_{1},g_{2} \in G$ such that $\psi(g_{1},K) = \psi(g_{2},K)$. Then $\psi(g_{1}^{-1}g_{2},K) = \psi(g_{1}^{-1},\psi(g_{2},K)) = \psi(g_{1}^{-1},\psi(g_{1},K)) = \psi(g_{1}^{-1}g_{1},K) = K$. Then $g_{1}^{-1}g_{2} \in H$, which implies that $g_{2}H = g_{1}H$. So $f_{u}$ is injective, which implies that it is bijective. Thus, $\forall u \in \U$,  $\#\{g H: \psi(g,K) \notin Small(u)\} = \#\{\psi(g,K): \psi(g,K) \notin Small(u)\}$, which implies that $\sim$ topologically quasiconvex implies $K$  $\psi$-quasiconvex.
\end{proof}

\begin{prop}\label{convdimtop}Let $G+_{\partial}Y\in EPers(G)$, $\U_{\partial}$ the uniform structure compatible with $G+_{\partial}Y$ and $H < G$. Suppose that $\forall g_{1},g_{2} \in G$, $\partial(g_{1}H)\cap \partial(g_{2}H) = \emptyset$ or $\partial(g_{1}H) = \partial(g_{2}H)$. We denote by $\sim = \Delta^{2} (G+_{\partial}Y)\cup \bigcup_{g\in G} \partial(gH)^{2}$.  If $H$ is dynamically quasiconvex, then $\sim$ is topologically quasiconvex. If $\forall g \notin H$, $\partial(gH) \neq \partial(H)$, then the converse holds.

\end{prop}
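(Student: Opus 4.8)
The plan is to reduce this statement to \textbf{Proposition \ref{convdimtopsub}} applied to the action $\psi = L+\varphi : G \curvearrowright X$ with $X = G+_{\partial}Y$ and the closed set $K = \partial(H)$, while keeping careful track of the difference between the subgroup $H$ and the full stabilizer $H' = \{g\in G : \psi(g,\partial(H)) = \partial(H)\}$ of $K$.

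First I would record the equivariance identity $\partial(gH) = \psi(g,\partial(H))$ for every $g \in G$. This holds because $\psi(g,\_)$ is a homeomorphism of $G+_{\partial}Y$ that preserves $G$ and $Y$ setwise, so $\psi(g, Cl_{X}(H)) = Cl_{X}(gH)$, and intersecting with $Y$ gives $\psi(g,\partial(H)) = \partial(gH)$. With this identity the relation $\sim$ of the statement coincides with the relation $\Delta^{2}X \cup \bigcup_{g\in G} \psi(g,K)^{2}$ of \textbf{Proposition \ref{convdimtopsub}}, and the hypothesis that distinct translates $\partial(g_{1}H), \partial(g_{2}H)$ are either disjoint or equal is exactly the disjointness hypothesis required there. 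Under this hypothesis the nontrivial $\sim$-classes are precisely the pairwise disjoint distinct sets among $\{\partial(gH)\}_{g\in G}$; each such set is closed (it is an element of $Closed(Y)$, hence of $Closed(X)$) and each singleton class is automatically $u$-small, so for every $u \in \U_{\partial}$ one has $\{[x] : [x]\notin Small(u)\} = \{\partial(gH) : \partial(gH) \notin Small(u)\}$.

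For the forward implication I would argue directly. Dynamic quasiconvexity of $H$ gives, for each $u \in \U_{\partial}$, that $\{gH : \partial(gH) \notin Small(u)\}$ is finite. Since the well-defined map $gH \mapsto \partial(gH)$ surjects this set onto $\{\partial(gH) : \partial(gH)\notin Small(u)\}$, the latter is finite, and by the previous paragraph this is exactly the set of non-small $\sim$-classes; hence $\sim$ is topologically quasiconvex. Note that this direction does not require $H$ to equal $H'$: passing from cosets to distinct translates can only decrease cardinality.

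For the converse I would use the extra hypothesis $\partial(gH) \neq \partial(H)$ for all $g \notin H$, which says precisely that $H = H'$, i.e. $H$ is the full stabilizer of $K = \partial(H)$. Indeed, if $\partial(g_{1}H) = \partial(g_{2}H)$ then $\psi(g_{1}^{-1}g_{2},\partial(H)) = \partial(H)$, so $g_{1}^{-1}g_{2} \in H' = H$ and $g_{1}H = g_{2}H$; thus $gH \mapsto \partial(gH)$ becomes a bijection. Consequently $\#\{gH : \partial(gH)\notin Small(u)\} = \#\{[x] : [x]\notin Small(u)\}$, which is finite by topological quasiconvexity of $\sim$, giving dynamic quasiconvexity of $H$. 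Equivalently, with $K = \partial(H)$ and $H = H'$ the hypotheses of \textbf{Proposition \ref{convdimtopsub}} hold verbatim and its conclusion yields the converse. The only delicate point, and the one I would treat most carefully, is this bookkeeping between $H$ and the stabilizer $H'$: it is harmless for the forward direction but is exactly what the supplementary hypothesis repairs in order to make the converse valid.
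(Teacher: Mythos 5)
Your proposal is correct and follows essentially the same route as the paper: both reduce the statement to \textbf{Proposition \ref{convdimtopsub}} with $K = \partial(H)$, using the surjectivity of $gH \mapsto \partial(gH)$ for the forward direction and the supplementary hypothesis (which identifies $H$ with the full stabilizer of $\partial(H)$, making that map injective) for the converse. Your write-up merely makes explicit the equivariance identity $\partial(gH) = \psi(g,\partial(H))$ and the closedness of the classes, which the paper leaves implicit.
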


\begin{proof}$(\Rightarrow)$ Analogous to the last proposition.

$(\Leftarrow)$ Let $L+\psi: G \curvearrowright G+_{\partial}Y$ be an action with the perspectivity property. Since $\forall g \notin H$, $\partial(gH) \neq \partial(H)$, then $\psi$-quasiconvexity of $\partial(H)$ is equivalent to dynamic quasiconvexity of $H$. Thus, the proposition follows from the previous one.
\end{proof}

\begin{cor}Let $G+_{\partial}Y\in EPers(G)$ and $H$ a dynamically quasiconvex subgroup of $G$ such that $\forall g_{1}\neq g_{2}\in G$, $\partial(g_{1}H) \cap \partial(g_{2}H) = \emptyset$ or $\partial(g_{1}H) = \partial(g_{2}H)$. If $A \subseteq \{gH: g \in G\}$ and $\sim_{A} = \Delta^{2} X \cup \bigcup\limits_{gH \in A}\partial(gH)^{2}$, then $X/ \sim_{A}$ is Hausdorff.
\end{cor}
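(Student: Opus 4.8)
The plan is to reduce the corollary to the machinery already established for topologically quasiconvex relations, namely \textbf{Proposition \ref{convdimtop}} and \textbf{Proposition \ref{quaseconvexidadetop}}. The corollary asks us to show that $X/\!\sim_{A}$ is Hausdorff whenever $A$ is an arbitrary subset of the set of cosets $\{gH: g\in G\}$, given that $H$ is dynamically quasiconvex and the translates $\partial(gH)$ form a family whose members are pairwise equal or disjoint. The key observation is that $\sim_{A}$ is exactly the relation obtained from the full relation $\sim = \Delta^{2}(G+_{\partial}Y)\cup\bigcup_{g\in G}\partial(gH)^{2}$ by the recipe in \textbf{Proposition \ref{quaseconvexidadetop}}, restricting the union to the classes indexed by $A$.

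First I would apply the forward direction of \textbf{Proposition \ref{convdimtop}}: since $H$ is dynamically quasiconvex and the translates $\partial(gH)$ are pairwise equal or disjoint, the relation $\sim = \Delta^{2}(G+_{\partial}Y)\cup\bigcup_{g\in G}\partial(gH)^{2}$ is topologically quasiconvex. Note this direction does not require the extra hypothesis $\partial(gH)\neq\partial(H)$ for $g\notin H$, so it applies under exactly the assumptions of the corollary. Thus we have in hand a topologically quasiconvex relation $\sim$ on the Hausdorff compact space $X=G+_{\partial}Y$.

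Second, I would match the definition of $\sim_{A}$ in the corollary with the definition of $\sim_{A}$ in \textbf{Proposition \ref{quaseconvexidadetop}}. There, for a subset $A\subseteq X/\!\sim$, one sets $\sim_{A}=\Delta^{2}X\cup\bigcup_{[x]\in A}[x]^{2}$; here the equivalence classes of $\sim$ that are not singletons are precisely the distinct sets $\partial(gH)$, so specifying a subset of cosets $A\subseteq\{gH:g\in G\}$ is the same as specifying the corresponding collection of $\sim$-classes (the pairwise equal-or-disjoint hypothesis guarantees that the $\partial(gH)$ are genuine equivalence classes, and that $gH\mapsto\partial(gH)$ gives a well-defined assignment of cosets to classes). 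Hence $\sim_{A}=\Delta^{2}X\cup\bigcup_{gH\in A}\partial(gH)^{2}$ coincides with the relation of \textbf{Proposition \ref{quaseconvexidadetop}} for the corresponding family of classes.

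Finally, \textbf{Proposition \ref{quaseconvexidadetop}} states that for a topologically quasiconvex relation $\sim$ and \emph{any} subset $A$ of $X/\!\sim$, the quotient $X/\!\sim_{A}$ is Hausdorff. Applying this with the $\sim$ and $A$ identified above yields that $X/\!\sim_{A}$ is Hausdorff, which is the claim. The only point requiring care — and the step I expect to be the main obstacle — is the bookkeeping in the second step: verifying that the coset-indexed union in the corollary and the class-indexed union in \textbf{Proposition \ref{quaseconvexidadetop}} describe the same relation, which hinges on using the equal-or-disjoint hypothesis to identify $\partial(gH)$ with a single $\sim$-class and to ensure the map from cosets in $A$ to classes is well defined. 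Once this identification is in place, the corollary is immediate.
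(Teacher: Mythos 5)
Your proposal is correct and follows exactly the route the paper intends: the paper's proof is simply ``Immediate from \textbf{Proposition \ref{quaseconvexidadetop}},'' with the topological quasiconvexity of $\sim$ supplied by the forward direction of \textbf{Proposition \ref{convdimtop}}, just as you argue. Your extra care in identifying the coset-indexed relation $\sim_{A}$ with the class-indexed one of \textbf{Proposition \ref{quaseconvexidadetop}} is the only bookkeeping the paper leaves implicit, and you handle it correctly.
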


\begin{proof}Immediate from \textbf{Proposition \ref{quaseconvexidadetop}}.
\end{proof}

\begin{lema}Let $G+_{\partial}Y$ be a Hausdorff compact space and $\psi: G \curvearrowright G+_{\partial}Y$ an action by homeomorphisms that restricted to $G$ is the left multiplication action. If $p \in Y$ is a bounded parabolic point, then $\partial(Stab_{\psi}p) = \{p\}$.
\end{lema}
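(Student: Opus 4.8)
The plan is to pin down $\partial(H)$ for $H = Stab_{\psi}p$ as a topological object and then squeeze it between $\{p\}$ from above and below. First I would record two facts straight from the Artin--Wraith closed-set description. Since $H \subseteq G$ is closed in $G$ (everything is, $G$ being discrete), $\partial(H)$ is closed in $Y$ and the set $H \cup \partial(H)$ satisfies the three closedness conditions, so it is closed in $X := G+_{\partial}Y$; as it is clearly the smallest closed set containing $H$, we get $Cl_{X}(H) = H \cup \partial(H)$ and hence $\partial(H) = Cl_{X}(H) \cap Y$. Also, since $G$ is open and discrete in $X$, every element of $H$ is an isolated point of $X$, and because $\psi(h,H)=H$ for $h\in H$ the set $\partial(H)$ is $H$-invariant. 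The lemma thus reduces to the two inclusions $Cl_{X}(H)\cap(X-\{p\}) = H$ (giving $\partial(H)\subseteq\{p\}$) and $p\in Cl_{X}(H)$ (giving $p\in\partial(H)$).

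For the first inclusion I would use proper discontinuity on the locally compact Hausdorff space $W := X-\{p\}$. The $H$-orbit of the point $1\in G$ is exactly $H$ (left multiplication), a set of isolated points of $X$; I claim it is closed and discrete in $W$. If it accumulated at some $z\in W$, I would take a compact neighbourhood $K$ of $z$ in $W$; it would contain infinitely many distinct $h\in H$, and fixing one such $h_{0}$ the elements $g=hh_{0}^{-1}$ satisfy $\psi(g,h_{0})=h\in K$ with $h_{0}\in K$, so infinitely many $g$ would have $\psi(g,K)\cap K\neq\emptyset$, contradicting proper discontinuity. Hence $Cl_{X}(H)\cap W = H\subseteq G$, and since $\partial(H)\subseteq Y$ is disjoint from $G$ this forces $\partial(H)\subseteq\{p\}$.

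For the second inclusion I would use cocompactness: fix a compact $K\subseteq W$ with $\psi(H,K)=W$. Assuming $p$ is non-isolated (equivalently $H$ infinite; if $p$ were isolated then $W$ would be compact and proper discontinuity would force $H$ finite, the degenerate case in which $\partial(H)=\emptyset$), pick a net $w_{\gamma}\to p$ in $W$ and write $w_{\gamma}=\psi(h_{\gamma},k_{\gamma})$ with $k_{\gamma}\in K$; passing to a subnet, $k_{\gamma}\to k\in K\subseteq W$. I then show $h_{\gamma}\to p$ in the compact space $X$: any cluster point $z\neq p$ lies in $W$, so by the discreteness just proved the net $h_{\gamma}$ is eventually the constant $z\in H$ along a subnet, whence $w_{\gamma}=\psi(z,k_{\gamma})\to\psi(z,k)=p$; but $z$ fixes $p$, so $k=\psi(z^{-1},p)=p$, contradicting $k\in W$. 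Thus every cluster point is $p$, so $h_{\gamma}\to p$ and $p\in Cl_{X}(H)=\partial(H)\cup H$, giving $p\in\partial(H)$. Combining the two inclusions yields $\partial(H)=\{p\}$. The main obstacle is precisely this last step: transferring the convergence $w_{\gamma}\to p$ of translates of the compact fundamental set to convergence $h_{\gamma}\to p$ of the group elements themselves, which is where the discreteness of the orbit from the previous step has to be reused.
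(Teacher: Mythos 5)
Your proof is correct. For the containment $\partial(Stab_{\psi}p)\subseteq\{p\}$ you argue essentially as the paper does: proper discontinuity of $Stab_{\psi}p$ on $X-\{p\}$, applied to the orbit of a point of $G$ (which is $Stab_{\psi}p$ itself under left multiplication), shows this orbit is closed and locally finite away from $p$. Where you genuinely diverge is the reverse containment $p\in\partial(Stab_{\psi}p)$: you invoke cocompactness, lift a net $w_{\gamma}\to p$ to translates $\psi(h_{\gamma},k_{\gamma})$ of a compact fundamental set, and rule out cluster points of $\{h_{\gamma}\}$ in $X-\{p\}$ by reusing the discreteness just established. The paper never uses cocompactness; it gets this inclusion from proper discontinuity alone, observing that for any neighbourhood $U$ of $p$ the complement $X-U$ is already a compact subset of $X-\{p\}$, so $Stab_{\psi}p\cap(X-U)$ is finite and the (infinite) stabilizer must therefore accumulate at $p$ -- an observation which in fact yields both inclusions at once. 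So the paper's route is shorter, and your detour through cocompactness costs you the net-extraction argument without buying additional generality here. One point in your favour: you explicitly isolate the degenerate case of a finite stabilizer (equivalently, $p$ isolated), where $\partial(Stab_{\psi}p)=\emptyset$ and the statement as written fails; the paper silently assumes the stabilizer is infinite when it asserts that $Stab_{\psi}p\cap U\neq\emptyset$.
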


\begin{proof}Let $U$ be an open neighbourhood of $p$ and $K = G+_{\partial}Y - U$. We have that $K$ is compact. Since $\psi|_{Stab_{\psi}p\times (G+_{\partial}Y-\{p\})}$ is properly discontinuous, the set $\{g \in Stab_{\psi}p: \psi(g,\{1\})\cap K \neq \emptyset \} = \{g \in Stab_{\psi}p: g\in K\}$ is finite, which implies that there exist $g_{U}\in Stab_{\psi}p \cap U$. Thus, $\{g_{U}\}_{U}$ is a net contained in $Stab_{\psi}p$ that converges to $p$, which implies that $p \in \partial(Stab_{\psi}p)$.

Let's suppose that there exists $q \in \partial(Stab_{\psi}p) - \{p\}$. There exists $U$ and $V$ disjoint open sets such that $p \in U$ and $q \in V$. Let $K = G+_{\partial}Y - U$. We have that $K$ is compact and $V \subseteq K$. Since $q$ is in the closure of $Stab_{\psi}p$, there exists a net $\{g_{\gamma}\}_{\gamma \in \Gamma}$ contained in $Stab_{\psi}p$ that converges to $q$. So there exists $\gamma_{0} \in \Gamma$ such that $\forall \gamma > \gamma_{0}$, $g_{\gamma} \in V$, which implies that the set $\{g_{\gamma}\}_{\gamma \in \Gamma}\cap V$ is infinite. But $\{g_{\gamma}\}_{\gamma \in \Gamma}\cap V \subseteq \{g \in Stab_{\psi}p: \psi(g,\{1\})\cap K \neq \emptyset\}$ that must be finite because $p$ is bounded parabolic. Absurd. Thus, $\partial(Stab_{\psi}p) = \{p\}$.
\end{proof}

\begin{prop}\label{quaseconvexidade} Let $G$ be a group, $G +_{\partial} \! Y \! \in EPers(G)$, $L+\varphi: G \curvearrowright G +_{\partial} \! Y$ an action by homeomorphisms, $P \subseteq Y$ the set of bounded parabolic points of $L+\varphi$, $P' \subseteq P$ a representative set of orbits, $\mathcal{C} = \{C_{p}\}_{p\in P'}$ a family of compact Hausdorff spaces and $\{Stab_{\varphi} p+_{\partial_{p}}C_{p}\}_{p\in P'}$ a family of spaces with the equivariant perspective property with actions $\eta = \{\eta_{p}\}_{p\in P'}$, such that $L_{p}+\eta_{p}: Stab_{\varphi} p \curvearrowright Stab_{\varphi} p+_{\partial_{p}}C_{p}$ is by homeomorphisms. If $p \in P$, then $Stab_{\varphi}p$ is dynamically quasiconvex with respect to  $\varphi\ltimes \eta$.
\end{prop}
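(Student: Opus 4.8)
The plan is to reduce the dynamic quasiconvexity of $H = Stab_{\varphi}p$ in the blowup $X = (G+_{\partial}Y)\ltimes\mathcal{C}$ to the topological quasiconvexity of the fiber relation of \textbf{Proposition \ref{quaseconvexidadefibrado}}, and then to invoke \textbf{Proposition \ref{convdimtop}}. By \textbf{Proposition \ref{persppullback}} we have $X \in EPers(G)$, so that dynamic quasiconvexity of $H$ is meaningful; let $\partial'$ denote the Artin-Wraith structure of $X = G +_{\partial'} W$, and let $\pi : X \rightarrow G+_{\partial}Y$ and $\iota : G \rightarrow X$ be the projection and inclusion furnished by \textbf{Proposition \ref{inclusaogrupo}}.

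The geometric heart of the argument is to localize each coset's limit set inside a single fiber, namely to prove that for every $g \in G$
$$\partial'(gH) \subseteq \pi^{-1}(\varphi(g,p)).$$
Indeed, $\pi \circ \iota$ is the identity on $G$, so $\pi(\iota(gH)) = gH$, and continuity of $\pi$ gives $\pi(Cl_{X}(\iota(gH))) \subseteq Cl_{G+_{\partial}Y}(gH)$. Since $L+\varphi(g,\_)$ is a homeomorphism and the Lemma above computes $Cl_{G+_{\partial}Y}(H) = H \cup \{p\}$, equivariance yields $Cl_{G+_{\partial}Y}(gH) = gH \cup \{\varphi(g,p)\}$. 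As points of $\partial'(gH)$ lie outside $\iota(G)$, they must project into $\{\varphi(g,p)\}$, which gives the inclusion. Note moreover that $\varphi(g,p) \in P$ (bounded parabolicity is preserved by the action), that $g H \mapsto \varphi(g,p)$ is a bijection from $G/H$ onto $Orb_{\varphi}p$, and that $H$ is infinite with $\iota(H)$ accumulating at a point of $W$ which, by the same projection argument, lies in $\pi^{-1}(p)$; hence $\partial'(H) \neq \emptyset$.

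With this in hand I would verify the hypotheses of \textbf{Proposition \ref{convdimtop}} for the relation $\sim = \Delta^{2}X \cup \bigcup_{g\in G}\partial'(gH)^{2}$. Distinct orbit points have disjoint fibers, so $\partial'(g_{1}H)$ and $\partial'(g_{2}H)$ are disjoint when $g_{1}H \neq g_{2}H$ and coincide otherwise, which is the disjoint-or-equal condition; and for $g \notin H$ the fibers $\pi^{-1}(\varphi(g,p))$ and $\pi^{-1}(p)$ are disjoint, so $\partial'(gH) \neq \partial'(H)$ since $\partial'(H)$ is nonempty. Topological quasiconvexity of $\sim$ then follows from \textbf{Proposition \ref{quaseconvexidadefibrado}}: given an entourage $u$, only finitely many fibers $\pi^{-1}(q)$ with $q \in P$ fail to be $u$-small, and whenever a nontrivial class $\partial'(gH)$ is not $u$-small neither is its superset $\pi^{-1}(\varphi(g,p))$; injectivity of $gH \mapsto \varphi(g,p)$ bounds the number of such classes. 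Applying the converse direction of \textbf{Proposition \ref{convdimtop}} gives that $H = Stab_{\varphi}p$ is dynamically quasiconvex with respect to $\varphi\ltimes\eta$.

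I expect the main obstacle to be the localization inclusion $\partial'(gH) \subseteq \pi^{-1}(\varphi(g,p))$: it is the step where the bounded-parabolic hypothesis genuinely enters, through the Lemma identifying $\partial(Stab_{\varphi}p)$, and where one must carefully track the interaction of $\pi$, $\iota$, equivariance and closures. Once this inclusion and the nonemptiness of $\partial'(H)$ are established, the remaining verifications are bookkeeping built directly on \textbf{Propositions \ref{quaseconvexidadefibrado}} and \textbf{\ref{convdimtop}}.
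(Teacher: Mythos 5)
Your proposal is correct and follows essentially the same route as the paper: localize $\partial'(gH)$ inside the fiber $\pi^{-1}(\varphi(g,p))$ using the lemma $\partial(Stab_{\varphi}p)=\{p\}$ and continuity of the projection, deduce topological quasiconvexity of the resulting relation from Proposition \ref{quaseconvexidadefibrado}, verify the disjoint-or-equal and distinctness hypotheses via the coset-to-orbit bijection, and conclude with Proposition \ref{convdimtop}. The only cosmetic difference is that the paper channels the counting step through Proposition \ref{transitivityquasiconvexitytopology} rather than arguing it directly, and you are somewhat more explicit about the nonemptiness of $\partial'(H)$, which the paper leaves implicit.
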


\begin{proof}Let $(G+_{\partial}Y)\ltimes \mathcal{C} = G+_{\delta}W$, for appropriate choices of $W$ and $\delta$. Let $id+\pi: G+_{\delta}W \rightarrow G+_{\partial}Y$ be the projection map. We have that the relation $\sim = \Delta^{2}X\cup\bigcup\limits_{p\in P} \pi^{-1}(p)^{2}$ is topologically quasiconvex (\textbf{Proposition \ref{quaseconvexidadefibrado}}), which implies that $p \in P$, $\sim_{p} =  \Delta^{2}X\cup\bigcup\limits_{q\in Orb_{\varphi}p} \pi^{-1}(q)^{2}$ is topologically quasiconvex. Since $id+\pi$ is continuous, we have that $\forall q \in P$, $\delta(Stab_{\varphi} q) \subseteq \pi^{-1}(\partial(Stab_{\varphi} q)) = \pi^{-1}(q)$. By \textbf{Proposition \ref{transitivityquasiconvexitytopology}}  $\forall p \in P$,  the relation $\sim'_{p} =  \Delta^{2}X\cup\bigcup\limits_{q\in Orb_{\varphi}p} \delta(Stab_{\varphi}q)^{2}$ is topologically quasiconvex. But, by the construction of  $\varphi\ltimes\eta$, we have that if $g \notin Stab_{\varphi}p$, then $\varphi\ltimes \eta(g,\delta(Stab_{\varphi}p)) = \delta(g Stab_{\varphi}p) = \delta(g (Stab_{\varphi}p)g^{-1}) = \delta(Stab_{\varphi}\varphi(g,p))$  and $\delta(Stab_{\varphi}\varphi(g,p)) = \delta(Stab_{\varphi}p)$ if $g \in Stab_{\varphi} p$ and $\delta(Stab_{\varphi}\varphi(g,p)) \cap \delta(Stab_{\varphi}p) = \emptyset$ otherwise. Thus, by \textbf{Proposition \ref{convdimtop}},  $Stab_{\varphi}p$ is dynamically quasiconvex.
\end{proof}

\subsection{Surjective maps}

\begin{prop}Let $\varphi_{i}: G \curvearrowright X_{i}$, with $i = \{1,2\}$, be properly discontinuous cocompact actions, $X_{i}+_{\partial_{i}}Y \in T_{2}EComp(\varphi_{i})$, and an equivariant continuous map $\pi+id: X_{1}+_{\partial_{1}}Y \rightarrow X_{2}+_{\partial_{2}}Y$. If $X_{2}+_{\partial_{2}}Y \in EPers(\varphi_{2})$, then $X_{1}+_{\partial_{1}}Y \in EPers(\varphi_{1})$.
\end{prop}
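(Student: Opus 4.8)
The plan is to verify directly the single remaining clause in the definition of perspectivity for $X_{1}+_{\partial_{1}}Y$: that for every entourage $u_{1}$ of the unique uniform structure $\mathcal{U}_{1}$ and every compact $K\subseteq X_{1}$ the set $S:=\{g\in G:\ \varphi_{1}(g,K)\notin Small(u_{1})\}$ is finite. Hausdorffness and the fact that the action is by homeomorphisms come for free, since $X_{1}+_{\partial_{1}}Y\in T_{2}EComp(\varphi_{1})$. Because the open entourages form a base of $\mathcal{U}_{1}$ and $\{g:\varphi_{1}(g,K)\notin Small(u_{1})\}\subseteq\{g:\varphi_{1}(g,K)\notin Small(w)\}$ whenever $w\subseteq u_{1}$, I may assume $u_{1}$ is open. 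I would first record the two facts that make the map usable: written as $\pi+id$, it restricts to the equivariant continuous map $\pi:X_{1}\rightarrow X_{2}$ on the open parts and to $id_{Y}$ on the compact part; hence $\pi(K)\subseteq X_{2}$ is compact and $\pi\circ\varphi_{1}(g,\_)=\varphi_{2}(g,\_)\circ\pi$. Perspectivity of $X_{2}+_{\partial_{2}}Y$ then gives that $\{g:\ \varphi_{2}(g,\pi(K))\notin Small(u_{2})\}$ is finite for every $u_{2}\in\mathcal{U}_{2}$.

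Suppose, for contradiction, that $S$ is infinite. For each $g\in S$ choose $a_{g}=\varphi_{1}(g,k_{g})$ and $b_{g}=\varphi_{1}(g,k'_{g})$ in $\varphi_{1}(g,K)$ with $(a_{g},b_{g})\notin u_{1}$. Directing the compact subsets $L$ of $X_{1}$ by inclusion and using proper discontinuity of $\varphi_{1}$ --- which makes $\{g\in S:\ \varphi_{1}(g,K)\cap L\neq\emptyset\}$ finite for each $L$ --- I can pick $g_{L}\in S$ with $\varphi_{1}(g_{L},K)\cap L=\emptyset$, obtaining a net $(g_{L})$ whose translates $\varphi_{1}(g_{L},K)$ eventually avoid every fixed compact subset of $X_{1}$. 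Passing to a convergent subnet in the compact space $(X_{1}+_{\partial_{1}}Y)^{2}$ yields $a_{g_{L}}\to a$ and $b_{g_{L}}\to b$. Since the translates escape every compact subset of the open part $X_{1}$, neither limit can lie in $X_{1}$, so $a,b\in Y$; and since $u_{1}$ is open while $(a_{g_{L}},b_{g_{L}})\notin u_{1}$, the limit satisfies $(a,b)\notin u_{1}\supseteq\Delta$, whence $a\neq b$.

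Next I would push this through the map. Fixing $u_{2}$ and letting $F=\{g:\varphi_{2}(g,\pi(K))\notin Small(u_{2})\}$ (finite), note that for each $g_{0}\in F$ we have $g_{L}\neq g_{0}$ once $L\supseteq\varphi_{1}(g_{0},K)$; as $F$ is finite this forces $g_{L}\notin F$ for large $L$, i.e. $\varphi_{2}(g_{L},\pi(K))\in Small(u_{2})$ eventually. Thus along the net the diameters of $\varphi_{2}(g_{L},\pi(K))$ tend to $0$, and since $\pi(a_{g_{L}}),\pi(b_{g_{L}})\in\varphi_{2}(g_{L},\pi(K))$ converge respectively to $(\pi+id)(a)$ and $(\pi+id)(b)$, these two limits coincide. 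But $a,b\in Y$ and $\pi+id$ is the identity on $Y$, so $a=(\pi+id)(a)=(\pi+id)(b)=b$, contradicting $a\neq b$. Hence $S$ is finite and $X_{1}+_{\partial_{1}}Y\in EPers(\varphi_{1})$. I note that this argument uses neither surjectivity of the map nor cocompactness, only proper discontinuity of $\varphi_{1}$ and perspectivity of the target.

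The hard part is the middle step, which must rule out the possibility that $\pi$ "collapses the spread" of the translates $\varphi_{1}(g,K)$: a priori $\varphi_{1}(g,K)$ could fail to be $u_{1}$-small while its image $\varphi_{2}(g,\pi(K))$ is $u_{2}$-small, precisely because $\pi$ is genuinely non-injective on the interior $X_{1}$. The resolution, and the heart of the proof, is that proper discontinuity of $\varphi_{1}$ drives these translates to the boundary $Y$, where $\pi+id$ is the identity and hence injective; so the interior collapsing cannot survive in the limit, and the target's shrinking information $\varphi_{2}(g,\pi(K))\to\text{point}$ is converted into the required distinctness upstairs. Making the escape-to-$Y$ and the accompanying subnet extraction precise (in the possibly non-metrizable setting) is the only delicate bookkeeping.
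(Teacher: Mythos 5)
Your proof is correct and follows essentially the same route as the paper's: assume the bad set is infinite, use proper discontinuity to drive a net of witness pairs to the boundary $Y$ where the limit pair is off the diagonal, push it through $\pi+id$ (which is the identity on $Y$), and invoke perspectivity of the target to force the two limits to coincide, a contradiction. The only difference is that you spell out the details the paper leaves implicit (the reduction to open entourages, the explicit directed set of compact subsets forcing escape to $Y$, and why the limit pair avoids the diagonal).
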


\begin{proof}Let $u$ be an element of  $\U_{\partial_{1}}$ and $K$ be a compact subset of $X_{1}$ such that the set $\{g\in G: \varphi_{1}(g,K) \notin Small(u)\}$ is infinite. So there exists a net $\{(\varphi_{1}(g_{i},k_{i,1}),\varphi_{1}(g_{i},k_{i,2})\}_{i\in\Gamma}$, with $\{k_{i,j}\}_{i,j}\subseteq K$ and $(\varphi_{1}(g_{i},k_{i,1}),\varphi_{1}(g_{i},k_{i,2})) \notin u$, that converges to a point $(x,y)\in Y\times Y - \Delta Y$. Since $\pi+id$ is continuous, the net $\{(\pi(\varphi_{1}(g_{i},k_{i,1})),\pi(\varphi_{1}(g_{i},k_{i,2}))\}_{i\in\Gamma} = \{(\varphi_{2}(g_{i},\pi(k_{i,1})),\varphi_{2}(g_{i},\pi(k_{i,2}))\}_{i\in\Gamma}$ converges to $(x,y)$. But $\pi(k_{i,j})\in \pi(K)$ and $\forall v \in \U_{\partial_{2}}$, the set $\{g\in G: \varphi_{2}(g,\pi(K)) \notin Small(v)\}$ is finite. So $\forall v \in \U_{\partial_{2}}$, there exists $i_{0}\in \Gamma:$ $\forall i > i_{0}$, $(\varphi_{2}(g_{i},\pi(k_{i,1}),\varphi_{1}(g_{i},\pi(k_{i,2}))) \in v$, which implies that $x = y$, absurd. Thus $\{g\in G: \varphi_{1}(g,K) \notin Small(u)\}$ is finite.
\end{proof}

\begin{teo}\label{surjectiveness}Let $X,Z$ be Hausdorff compact spaces, $\psi: G \curvearrowright X$ and $\varphi: G \curvearrowright Z$ actions by homeomorphisms, $P$ the set of bounded parabolic points of $Z$ and a continuous surjective equivariant map $\pi: X \rightarrow Z$ such that $\forall x \in Z-P$, $\#\pi^{-1}(x) = 1$. For $p \in P$, take $X = W_{p}+_{\delta_{p}}\pi^{-1}(p)$, where $W_{p} = X - \pi^{-1}(p)$ and $\delta_{p}$ is the only admissible map that gives the original topology. Let also decompose $\psi|_{Stab_{\varphi} p \times X}$ as $\psi_{p}^{1}+\psi_{p}^{2}$, with respect to the decomposition of the space. Those are equivalents:

 \begin{enumerate}

    \item The map $\pi$ is topologically quasiconvex and $\forall p \in P$, $X \in EPers(\psi_{p}^{1})$.

    \item There exists an equivariant homeomorphism $T$ to a parabolic blowup $Z\ltimes \mathcal{C}$ that commutes the diagram:

$$ \xymatrix{ X  \ar[rd]^{\pi} \ar[d]_{T} &  \\
             Z\ltimes \mathcal{C} \ar[r]_{\pi'} & Z} $$

Where $\pi'$ is the projection map.

\end{enumerate}

\end{teo}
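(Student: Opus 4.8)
The plan is to prove the two implications separately, treating $(2)\Rightarrow(1)$ as the short direction and investing the work in $(1)\Rightarrow(2)$. For $(2)\Rightarrow(1)$, assume $T\colon X\to Z\ltimes\mathcal C$ is an equivariant homeomorphism over $Z$. Topological quasiconvexity of $\pi$ is then immediate from \textbf{Proposition \ref{quaseconvexidadefibrado}}, since $Z\ltimes\mathcal C$ is by construction an inverse limit whose fibre relation $\Delta^2 X\cup\bigcup_{p\in P}\pi^{-1}(p)^2$ is topologically quasiconvex, and $T$ is a homeomorphism over $Z$. For the perspectivity, fix $p\in P$ and recall that the blowup factor $X_p=(Z-\{p\})+_{(\partial_p)_\Pi}C_p$ is the image of the perspective space $Stab_\varphi p+_{\partial_p}C_p$ under the isomorphism $\Pi$, hence lies in $EPers(\psi'_p)$, where $\psi'_p$ is the proper cocompact action on $Z-\{p\}$. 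The projection $\varpi_p$ restricts to a proper equivariant surjection $W_p\to Z-\{p\}$ and to the identity on the remainder $C_p\cong\pi^{-1}(p)$, so it has the form $(\pi|_{W_p})+id$; the unlabelled proposition immediately preceding the theorem then yields $X\in EPers(\psi_p^1)$ from $X_p\in EPers(\psi'_p)$.

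For $(1)\Rightarrow(2)$ I would first extract the blowup data. Put $C_p=\pi^{-1}(p)$ and $\eta_p=\psi_p^2$ for $p\in P'$. The hypothesis $X=W_p+_{\delta_p}\pi^{-1}(p)\in EPers(\psi_p^1)$ lets me apply the category isomorphism $\Lambda\colon EPers(\psi_p^1)\to EPers(Stab_\varphi p)$ to produce $Stab_\varphi p+_{\partial_p}C_p:=\Lambda(X)\in EPers(Stab_\varphi p)$, with $\partial_p=(\delta_p)_\Lambda$; this is exactly the perspective data needed to form $Z\ltimes\mathcal C$. To organise $X$ as an inverse limit I introduce the relation $\sim=\Delta^2X\cup\bigcup_{p\in P}\pi^{-1}(p)^2$; since $\pi$ collapses precisely these fibres and is injective elsewhere, $X/\!\sim\,\cong Z$. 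Topological quasiconvexity of $\pi$ says $\sim$ is topologically quasiconvex, so by \textbf{Corollary \ref{equivalencetopconvexity}} each quotient $X_p=X/\!\sim_p$ (collapsing every fibre except $\pi^{-1}(p)$) is Hausdorff, and the proposition preceding that corollary identifies $X$ with $\lim\limits_{\longleftarrow}\{X_p,\pi_p\}$.

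It then remains to identify each $X_p$ with the blowup factor $(Z-\{p\})+_{(\partial_p)_\Pi}C_p$ and to promote the identification to a $G$-equivariant homeomorphism over $Z$. Writing $X_p=(Z-\{p\})+_{\delta'_p}C_p$ (the surviving non-$p$ fibres recover $Z-\{p\}$, and the fibre over $p$ is $C_p$), the equivariant quotient $X\to X_p$ is $(\pi|_{W_p})+id$, so by the functoriality of $\Pi$ along the proper equivariant surjection $\pi|_{W_p}\colon W_p\to Z-\{p\}$ (\textbf{Proposition \ref{functor}}) together with $\Pi\Lambda=id$, this map coincides with the canonical $X=W_p+_{(\partial_p)_\Pi}C_p\to(Z-\{p\})+_{(\partial_p)_\Pi}C_p$; hence $\delta'_p=(\partial_p)_\Pi$ and $\lim\limits_{\longleftarrow} X_p=Z\ltimes\mathcal C$. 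Finally, for $p$ in the orbit of $p_0\in P'$ I transport $C_{p_0}=\pi^{-1}(p_0)$ to $\pi^{-1}(p)$ by $\psi(h_{p_0,p},\_)$, checking that this matches the twisted action $\eta_p(h,\_)=\eta_{p_0}(h_{p_0,p}^{-1}hh_{p_0,p},\_)$ of the construction, so that the resulting homeomorphism $T$ is $G$-equivariant and satisfies $\pi'\circ T=\pi$.

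The main obstacle I expect is the step $\delta'_p=(\partial_p)_\Pi$: one must show that collapsing all the other parabolic fibres does not alter the admissible gluing data transported to $p$, i.e. that the boundary data $\Lambda_{W_p}$ reads off $X$ agrees with $\Lambda_{Z-\{p\}}$ read off $X_p$ after identifying the remainders. This is where both hypotheses are genuinely used --- quasiconvexity to realise $X$ as the inverse limit and perspectivity to make $\Pi,\Lambda$ available --- and where the bookkeeping of fundamental domains under $\pi|_{W_p}$ and the orbit-wise conjugations must be handled carefully; the global equivariance check, though lengthy, is then mechanical.
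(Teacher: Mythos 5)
Your proof of $(2)\Rightarrow(1)$ matches the paper's (which simply cites the unlabelled proposition preceding the theorem together with \textbf{Corollary \ref{equivalencetopconvexity}}), and the skeleton of your $(1)\Rightarrow(2)$ --- realise $X$ as $\lim\limits_{\longleftarrow}\{X_p,\pi_p\}$ via topological quasiconvexity, extract perspective compactifications of the stabilizers with $\Lambda$, and reassemble --- is also the paper's. The one substantive divergence is exactly the step you flag as your ``main obstacle'': you define $Stab_\varphi p+_{\partial_p}C_p$ as $\Lambda_{W_p}(X)$, i.e.\ you apply $\Lambda$ for the action on $W_p$ to $X$ itself, and are then forced to prove that $\Pi_{Z-\{p\}}\Lambda_{W_p}(X)$ recovers the quotient $X_p$, which requires chasing \textbf{Proposition \ref{functor}} along $\pi|_{W_p}$ and comparing fundamental domains. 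You leave this unresolved. The paper orders the operations differently so that this issue never arises: it first passes to the quotient $X_p=V_p+_{\partial_p}\pi_p^{-1}(p)$ (Hausdorff by quasiconvexity), invokes \textbf{Proposition \ref{quocienteperspectivo}} to conclude that $X_p\in EPers$ of the action of $Stab_\varphi p$ on $V_p\cong Z-\{p\}$, and only then applies $\Lambda$ \emph{for the $V_p$-action} to define $\partial_{p\Lambda}$. The blowup factor built from this data is $V_p+_{\partial_{p\Lambda\Pi}}\pi_p^{-1}(p)$, and since $\Pi$ and $\Lambda$ here are mutually inverse functors for the \emph{same} action, $\partial_{p\Lambda\Pi}=\partial_p$ and the factor is literally $X_p$; the identification $Z\ltimes\mathcal{C}\cong X$ then follows with no further bookkeeping. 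So your route is viable but buys itself an extra compatibility lemma; if you keep it, you must actually prove $\delta'_p=(\partial_p)_\Pi$ (a continuous equivariant bijection between the two compact Hausdorff compactifications of $V_p$ by $C_p$, obtained from \textbf{Proposition \ref{functor}}, would do it), or better, adopt the paper's ordering and let $\Pi\Lambda=id$ do the work.
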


\begin{obs}On the case where $X = G+_{\delta}W$, $Z = G+_{\delta}Y$ and $\pi|_{G} = id_{G}$, we have that this hypothesis of perspectivity is stronger then just asking that $\forall p \in P$, $Stab_{\varphi}p+\delta(Stab_{\varphi}p) \in EPers(Stab_{\varphi}p)$.
\end{obs}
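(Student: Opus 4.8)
The plan is to read the asserted comparison of hypotheses as the implication ``$X \in EPers(\psi_{p}^{1})$ for all $p \in P \Rightarrow Stab_{\varphi}p +_{\delta}(Stab_{\varphi}p) \in EPers(Stab_{\varphi}p)$ for all $p$'', together with the fact that the converse fails, and to establish the implication first, since it carries the main content. Fix $p \in P$ and set $S := Stab_{\varphi}p +_{\delta}(Stab_{\varphi}p)$, that is, the closure $Cl_{X}(Stab_{\varphi}p)$ of the group inside $X = G+_{\delta}W$. First I would record the structural facts: $S$ is closed in $X$, hence compact Hausdorff; its open discrete part is $Stab_{\varphi}p = G \cap S$, on which $Stab_{\varphi}p$ acts by left multiplication (properly discontinuously and cocompactly, trivially so); and its remainder is $\delta(Stab_{\varphi}p)$. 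Because $\pi$ is continuous and equivariant with $\pi|_{G} = id_{G}$, it maps $Cl_{X}(Stab_{\varphi}p)$ into $Cl_{Z}(Stab_{\varphi}p)$ and the remainder into $Y$, so using that $p$ bounded parabolic forces $\partial(Stab_{\varphi}p) = \{p\}$ in $Z$ (the lemma stated just before \textbf{Proposition \ref{quaseconvexidade}}) we get $\delta(Stab_{\varphi}p) \subseteq \pi^{-1}(\partial(Stab_{\varphi}p)) = \pi^{-1}(p)$. In particular $Stab_{\varphi}p \subseteq G \subseteq W_{p} = X - \pi^{-1}(p)$, so every (finite) compact subset of the discrete group $Stab_{\varphi}p$ is a compact subset of $W_{p}$, and on such subsets $\psi_{p}^{1}$ restricts to the left multiplication action (the action $\psi_{p}^{1}$ itself being properly discontinuous and cocompact on $W_{p}$ by \textbf{Corollary \ref{liftingparabolic}}).

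The implication is then just a restriction of perspectivity. Since $X$ is compact Hausdorff it carries a unique uniform structure $\U$, and the unique uniform structure of the closed subspace $S$ is its trace $\{u \cap (S \times S) : u \in \U\}$. For any $A \subseteq S$ one has $A \times A \subseteq S \times S$, so $A$ is $(u \cap (S \times S))$-small in $S$ if and only if $A$ is $u$-small in $X$. Consequently, for a finite $K \subseteq Stab_{\varphi}p$ and $u \in \U$,
\[ \{g \in Stab_{\varphi}p : \psi_{p}^{1}(g,K) \notin Small(u \cap (S \times S))\} = \{g \in Stab_{\varphi}p : \psi_{p}^{1}(g,K) \notin Small(u)\}, \]
because $\psi_{p}^{1}(g,K) = gK \subseteq S$; and the right-hand set is finite by the hypothesis $X \in EPers(\psi_{p}^{1})$ applied to the compact set $K \subseteq W_{p}$. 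As every element of the uniformity of $S$ and every compact (finite) subset of the group $Stab_{\varphi}p$ arise this way, this is exactly the perspectivity inequality for $S$; combined with the structural facts above it yields $S \in EPers(Stab_{\varphi}p)$.

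It remains to see that the hypothesis is \emph{strictly} stronger, and this is where I expect the difficulty. The asymmetry is that $X \in EPers(\psi_{p}^{1})$ controls the $Stab_{\varphi}p$-translates of \emph{every} compact $K \subseteq W_{p}$ — including compacta meeting $W_{p} - S$, such as pairs of points lying in $G - Stab_{\varphi}p$ or in fibers $\pi^{-1}(q)$ with $q \neq p$ — whereas perspectivity of $S$ only sees $K \subseteq Stab_{\varphi}p$. The plan is to exhibit an $X$ in which some two-point compactum $K \subseteq W_{p}$ has infinitely many translates $gK$ ($g \in Stab_{\varphi}p$) whose two points converge to distinct points of a non-degenerate fiber $\pi^{-1}(p)$, so that $gK \notin Small(u)$ for infinitely many $g$ and perspectivity of $X$ with respect to $\psi_{p}^{1}$ fails, while the local piece $S = Cl_{X}(Stab_{\varphi}p)$ stays a perspective compactification of $Stab_{\varphi}p$. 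By \textbf{Theorem \ref{surjectiveness}} such an $X$ is precisely one that, although it maps onto $Z$ collapsing only over $P$, is not equivariantly a parabolic blowup. The genuine obstacle is thus to arrange the global dynamics of $Stab_{\varphi}p$ on all of $W_{p}$ so as to spoil uniform smallness while leaving the restriction to $Cl_{X}(Stab_{\varphi}p)$ perspective, and to verify both properties in a concrete model.
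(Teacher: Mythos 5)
The paper states this remark without any proof, so there is no argument of the author's to compare against; your proposal has to be judged on its own terms. The first half — the actual implication — you prove correctly and completely: the key points are all in place, namely that $\delta(Stab_{\varphi}p)\subseteq \pi^{-1}(p)$ (via the lemma preceding \textbf{Proposition \ref{quaseconvexidade}}, which gives $\partial(Stab_{\varphi}p)=\{p\}$ in $Z$, together with $\pi|_{G}=id_{G}$), that $S=Cl_{X}(Stab_{\varphi}p)$ is a compact Hausdorff space whose unique uniformity is the trace of $\U$, that $S\cap G = Stab_{\varphi}p$ is open and discrete in $S$ so its compacta are finite and lie in $W_{p}$, and that for $A\subseteq S$ smallness with respect to $u\cap(S\times S)$ coincides with smallness with respect to $u$. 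Restricting the perspectivity condition of $X\in EPers(\psi_{p}^{1})$ to finite $K\subseteq Stab_{\varphi}p$ then yields exactly $Stab_{\varphi}p+\delta(Stab_{\varphi}p)\in EPers(Stab_{\varphi}p)$. This is the natural argument and, as far as the comparison of hypotheses goes, it is more than the paper itself supplies.

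The gap is in the second half. If "stronger" is read as \emph{strictly} stronger — which is the cautionary point of the remark, that one cannot weaken the theorem's hypothesis to the local condition on the closures of the stabilizers — then your proposal does not establish it: you correctly diagnose where the asymmetry lives (the hypothesis $X\in EPers(\psi_{p}^{1})$ quantifies over all compacta $K\subseteq W_{p}$, including those meeting other fibers $\pi^{-1}(q)$ or $G-Stab_{\varphi}p$, which the condition on $S$ never sees), but you stop at a plan and explicitly defer the construction and verification of a concrete model. Your diagnosis is confirmed by the degenerate case: if $Z=G\cup\{p\}$ is the one-point compactification, then $W_{p}=G$, every compact $K\subseteq W_{p}$ is a finite subset of the group, $gK\subseteq S$ for $g\in Stab_{\varphi}p=G$, and the two conditions become equivalent by exactly your trace-uniformity argument — so any witness to strictness must use a base with additional boundary points, and arranging the global dynamics (Hausdorffness, compactness, continuity of the $G$-action, failure of smallness for translates of a two-point compactum accumulating on a non-degenerate fiber, while $Cl_{X}(Stab_{\varphi}p)$ stays perspective) is genuine work that remains undone. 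In short: the implication is proved; the strictness, which the wording of the remark suggests, is identified but not delivered.
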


\begin{proof}$(\Leftarrow)$ Immediate from the last proposition and \textbf{Corollary \ref{equivalencetopconvexity}}.

$(\Rightarrow)$ Let $P' \subseteq P$ be a representative set of orbits. Let, for $p \in P$, $\sim_{p} = \Delta^{2}(X)\cup \bigcup_{q \neq p}\pi^{-1}(q)$ and $X_{p} = X/ \! \sim_{p}$. Since $\sim_{p}$ is topologically quasiconvex, we have that $X_{p}$ is Hausdorff. If $\pi_{p}: X_{p} \rightarrow Z$ is the quotient map, then $X \cong \lim\{X_{p},\pi_{p}\}_{p\in P}$. We have that $X_{p} = V_{p}+_{\partial_{p}}\pi_{p}^{-1}(p)$, where $V_{p}$ is just $X_{p} - \pi_{p}^{-1}(p)$ and $\partial_{p}$ gives the original topology of $X_{p}$.

Let $\mathcal{C} = \{\pi^{-1}_{p}(p)\}_{p \in P'}$ and $p \in P'$. Since $p$ is a bounded parabolic point, $Stab_{\varphi}p$ acts properly discontinuously and cocompactly on $W_{p}$  and on  $V_{p}$ (\textbf{Proposition \ref{liftingparabolic}}). We have also that the induced action of $Stab_{\varphi} p$ on  $V_{p}+_{\partial_{p}}\pi_{p}^{-1}(p)$  has the perspective property (\textbf{Proposition \ref{quocienteperspectivo}}). Take $\{Stab_{\varphi}p+_{\partial_{p\Lambda}}\pi^{-1}_{p}(p)\}_{p\in P'}$ (where $\Lambda$ is from \textbf{Definition \ref{pielambda}}).  This is a family of spaces with the perspective property and we take $Z\ltimes \mathcal{C}$. We have on this case that $Z\ltimes \mathcal{C} = \lim \{Y_{p},\pi'_{p}\}_{p \in P}$, where $Y_{p} = V_{p}+_{\partial_{p\Lambda\Pi}}\pi_{p}^{-1}(p)$ and $\pi'_{p}: Y_{p} \rightarrow Z$ is the quotient map. But $Y_{p} =  V_{p}+_{\partial_{p\Lambda\Pi}}\pi_{p}^{-1}(p) = V_{p}+_{\partial_{p}}\pi_{p}^{-1}(p) = X_{p}$ (by the perspectivity property) and $\pi'_{p} = \pi_{p}$, which implies that $Z\ltimes \mathcal{C} \cong X$ and the homeomorphism is compatible with $\pi$ and $\pi'$.
\end{proof}

If $\pi$ satisfies the conditions of the theorem, then we call it a blowup map. We use the same terminology of coverings: $Z$ is the base, $X$ is the blowup space and ${\pi^{-1}(p)}_{p}$ are the fibers of the blowup.

\begin{cor}Let $G+_{\delta}W,G+_{\partial}Y\in T_{2}EComp(G)$, with respective actions $L+\psi$ and $L+\varphi$, $P$ the set of bounded parabolic points of $L+\varphi$ and a continuous surjective equivariant map $id+\alpha: G+_{\delta}W \rightarrow G+_{\partial}Y$ that is topologically quasiconvex and $\forall x \in Y-P$, $\#\alpha^{-1}(x) = 1$. For $p \in P$, take $G+_{\delta}W = W_{p}+_{\delta_{p}}\pi^{-1}(p)$, where $W_{p} = G+_{\delta}W - \pi^{-1}(p)$ and $\delta_{p}$ is the only map that gives the original topology. Let also $L_{p}+\psi_{p} = L+\psi$ but decomposed with respect to the new decomposition of the space. If $\forall p \in P$, $G+_{\delta}W \in EPers(L_{p})$, then $\forall p \in P$, $Stab_{\varphi}p$ is dynamically quasiconvex with respect to $L+\psi$. \eod
\end{cor}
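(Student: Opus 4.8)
The plan is to recognize $id+\alpha$ as a blowup map via Theorem \ref{surjectiveness} and then transport the dynamic quasiconvexity of parabolic stabilizers supplied by Proposition \ref{quaseconvexidade}. First I would verify that the hypotheses of the corollary are exactly condition (1) of Theorem \ref{surjectiveness} for $\pi = id+\alpha : X \to Z$ with $X = G+_{\delta}W$ and $Z = G+_{\partial}Y$. Topological quasiconvexity of $\pi$ is assumed outright; the fibers over $Z-P$ are singletons because $\#\alpha^{-1}(x)=1$ for $x \in Y-P$ and $G \subseteq Z-P$ is discrete; and the relative perspectivity $X \in EPers(\psi_{p}^{1})$ demanded by the theorem is precisely the standing assumption $G+_{\delta}W \in EPers(L_{p})$, since $L_{p}$ is the $W_{p}$-part of the decomposition of $L+\psi$ over $Stab_{\varphi}p$. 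Hence Theorem \ref{surjectiveness} yields an equivariant homeomorphism $T : G+_{\delta}W \to (G+_{\partial}Y)\ltimes\mathcal{C}$ commuting with the projections to $G+_{\partial}Y$.

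Next I would apply Proposition \ref{quaseconvexidade}, which gives that in the parabolic blowup $(G+_{\partial}Y)\ltimes\mathcal{C}$ every $Stab_{\varphi}p$ is dynamically quasiconvex for $(L+\varphi)\ltimes\eta$, and then push this back along $T$. Because $T$ commutes with the projections and the projection is injective over $G \subseteq Z-P$, $T$ must send $\iota(g)$ to the copy $\iota(g)$ of $g$ in the blowup (Proposition \ref{inclusaogrupo}); consequently $T$ carries the coset-boundary set $\delta(g\,Stab_{\varphi}p)$ onto its counterpart in the blowup. As a uniform isomorphism of compact Hausdorff spaces, $T$ preserves $u$-smallness up to replacing each entourage by its $T$-image, so the finiteness of $\{gH : \text{boundary not }u\text{-small}\}$ transfers verbatim, giving dynamic quasiconvexity of $Stab_{\varphi}p$ with respect to $L+\psi$.

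The main obstacle I expect is a mismatch of perspectivity hypotheses: Proposition \ref{quaseconvexidade} is stated for a base $G+_{\partial}Y \in EPers(G)$, while the corollary only assumes $G+_{\partial}Y \in T_{2}EComp(G)$ together with the relative perspectivities $EPers(L_{p})$. To close this gap I would bypass global perspectivity altogether and re-run the argument of Proposition \ref{quaseconvexidade} directly in the present space, whose engine is the unconditional equivalence of Proposition \ref{convdimtopsub} between $\psi$-quasiconvexity of a closed set $K$ and topological quasiconvexity of $\sim = \Delta^{2}X \cup \bigcup_{g}\psi(g,K)^{2}$. Taking $K = \delta(Stab_{\varphi}p)$, the Lemma giving $\partial(Stab_{\psi}q)=\{q\}$ together with equivariance of $\pi$ shows $\delta(g\,Stab_{\varphi}p) \subseteq \pi^{-1}(\varphi(g,p))$, whence distinct cosets have disjoint boundaries and $\{g : \psi(g,K)=K\} = Stab_{\varphi}p$; thus $\sim$ refines the fiber relation of $\pi$, which is topologically quasiconvex by hypothesis (also Proposition \ref{quaseconvexidadefibrado}), and within each fiber $\sim$ has only one non-trivial class. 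Proposition \ref{transitivityquasiconvexitytopology} then makes $\sim$ topologically quasiconvex, and Proposition \ref{convdimtopsub} delivers $\psi$-quasiconvexity of $K$, i.e. dynamic quasiconvexity of $Stab_{\varphi}p$. Either route—transport through $T$ or the direct relation computation—completes the proof.
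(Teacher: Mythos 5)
Your proposal is correct and assembles exactly the pieces the paper intends for this corollary, which is left unproved as an immediate consequence of Theorem \ref{surjectiveness}: the hypotheses are precisely condition (1) of that theorem (with $EPers(L_{p})$ playing the role of $EPers(\psi_{p}^{1})$), so $id+\alpha$ is a blowup map, and the dynamic quasiconvexity of the $Stab_{\varphi}p$ then comes from the dynamic-quasiconvexity machinery of Section 2.4. Your caution about the hypothesis mismatch is well placed --- Proposition \ref{quaseconvexidade} is indeed stated only for a base in $EPers(G)$, while here the base is merely in $T_{2}EComp(G)$ --- and your fallback route is sound: the chain consisting of the unnumbered Lemma giving $\partial(Stab_{\varphi}q)=\{q\}$, the inclusion $\delta(g\,Stab_{\varphi}p)\subseteq\alpha^{-1}(\varphi(g,p))$ obtained from continuity of $id+\alpha$, and then Propositions \ref{transitivityquasiconvexitytopology} and \ref{convdimtopsub}, is exactly the paper's own proof of Proposition \ref{quaseconvexidade} transplanted to the present space, and it never uses perspectivity of the base. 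In fact your direct computation proves slightly more than is claimed: since $g\,Stab_{\varphi}p\mapsto\alpha^{-1}(\varphi(g,p))$ is injective and each coset boundary sits inside the corresponding fibre, the topological quasiconvexity of $id+\alpha$ alone already bounds the number of non-$u$-small sets $\delta(g\,Stab_{\varphi}p)$, so the hypotheses $G+_{\delta}W\in EPers(L_{p})$ are not actually needed for this particular conclusion (they are needed only if one insists on first invoking Theorem \ref{surjectiveness} to recognize $id+\alpha$ as a blowup map).
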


\section{Applications}

\subsection{Convergence property}

\begin{teo}\label{semidirectforconv}Let $X,Z$ be Hausdorff compact spaces, $\psi: G \curvearrowright X$ and $\varphi: G \curvearrowright Z$ actions by homeomorphisms, $P$ the set of bounded parabolic points of $Z$ and a continuous surjective equivariant map $\alpha: X \rightarrow Z$ such that $\forall x \in Z-P$, $\#\alpha^{-1}(x) = 1$. If $\forall p \in P$, $\psi|_{Stab_{\varphi}p\times X}$ has the convergence property, then there exists an equivariant homeomorphism $T$ to a parabolic blowup $Z\ltimes \mathcal{C}$ that commutes the diagram:

$$ \xymatrix{ X  \ar[rd]^{\pi} \ar[d]_{T} &  \\
             Z\ltimes \mathcal{C} \ar[r]_{\pi'} & Z} $$

Where $\pi'$ is the projection map.
\end{teo}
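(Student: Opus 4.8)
The plan is to reduce everything to \textbf{Theorem \ref{surjectiveness}}: it suffices to verify its condition (1), namely that $\alpha$ is topologically quasiconvex and that for every $p \in P$ the space $X = W_{p}+_{\delta_{p}}\alpha^{-1}(p)$ lies in $EPers(\psi_{p}^{1})$, where $W_{p} = X - \alpha^{-1}(p)$. A continuous map between compact Hausdorff spaces is proper, so the restriction $\alpha|_{W_{p}}: W_{p} \rightarrow Z - \{p\}$ is a proper continuous surjective $Stab_{\varphi}p$-equivariant map; since $p$ is bounded parabolic, \textbf{Corollary \ref{liftingparabolic}} gives that $\psi_{p}^{1}: Stab_{\varphi}p \curvearrowright W_{p}$ is properly discontinuous and cocompact, as required to even speak of $EPers(\psi_{p}^{1})$. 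The engine of both verifications is one dynamical observation: if $\{g_{\gamma}\} \subseteq Stab_{\varphi}p$ is a wandering net, the convergence hypothesis yields a subnet collapsing with attracting point $a$ and repelling point $b$, and I claim $a,b \in \alpha^{-1}(p)$. Indeed, proper discontinuity of $Stab_{\varphi}p$ on $Z - \{p\}$ forces $\varphi(g_{\gamma},z) \rightarrow p$ for every $z \neq p$; pushing a convergent orbit $\psi(g_{\gamma},x^{\ast}) \rightarrow a$ (with $\alpha(x^{\ast}) \neq p$ and $x^{\ast} \neq b$) through $\alpha$ and using equivariance yields $\alpha(a) = p$, and the same reasoning applied to the inverse net, whose attracting point is $b$, yields $\alpha(b) = p$.

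Granting this, perspectivity is immediate. Given $u \in \U$ and a compact $K \subseteq W_{p}$, suppose $\{g \in Stab_{\varphi}p : \psi(g,K) \notin Small(u)\}$ were infinite. Extracting a wandering net from it and passing to a collapsing subnet as above, we have $b \in \alpha^{-1}(p)$, hence $K \subseteq W_{p} \subseteq X - \{b\}$, so $\psi(g_{\gamma},K) \rightarrow a$ uniformly and is eventually $u$-small, a contradiction. Thus $X \in EPers(\psi_{p}^{1})$ for every $p \in P$.

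For topological quasiconvexity I work with a base of closed symmetric entourages (\textbf{Proposition \ref{quaseconvexidadebase}}) and argue by contradiction. Fix such a $u$ and suppose infinitely many distinct $p_{n} \in P$ satisfy $\alpha^{-1}(p_{n}) \notin Small(u)$, choosing $x_{n},y_{n} \in \alpha^{-1}(p_{n})$ with $(x_{n},y_{n}) \notin u$. Passing to a subnet, $x_{n} \rightarrow x$ and $y_{n} \rightarrow y$; a standard estimate with an entourage $u'$ satisfying $u' \circ u' \subseteq u$ forces $x \neq y$, while $\alpha(x) = \alpha(y) = \lim p_{n} =: z_{0}$, so $z_{0} \in P$ since fibres over $Z - P$ are singletons. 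Using that $z_{0}$ is bounded parabolic, write $p_{n} = \varphi(h_{n},q_{n})$ with $q_{n}$ in a compact fundamental domain $D \subseteq Z - \{z_{0}\}$ and $h_{n} \in Stab_{\varphi}z_{0}$; since $p_{n} \rightarrow z_{0}$ the $h_{n}$ are necessarily distinct. Lifting, $x_{n} = \psi(h_{n},\tilde{x}_{n})$ with $\tilde{x}_{n} \in \alpha^{-1}(q_{n})$, and a subnet gives $\tilde{x}_{n} \rightarrow \tilde{x}_{\infty}$ with $\alpha(\tilde{x}_{\infty}) = \lim q_{n} \neq z_{0}$. Applying the dynamical observation to $\{h_{n}\}$ produces attracting $a$ and repelling $b$ with $b \in \alpha^{-1}(z_{0})$, whence $\tilde{x}_{\infty} \neq b$; choosing a compact neighbourhood $N$ of $\tilde{x}_{\infty}$ missing $b$ gives $x_{n} \in \psi(h_{n},N) \rightarrow a$, so $x = a$, and symmetrically $y = a$, contradicting $x \neq y$. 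Hence only finitely many fibres are non-$u$-small and $\alpha$ is topologically quasiconvex.

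With both conditions of \textbf{Theorem \ref{surjectiveness}}(1) in hand, its equivalence with (2) supplies the desired equivariant homeomorphism $T$ onto a parabolic blowup compatible with $\alpha$ and the projection $\pi'$. The main obstacle is the topological quasiconvexity step: the convergence hypothesis is assumed only for the parabolic subgroups, so the collapsing dynamics must be run inside $Stab_{\varphi}z_{0}$ with the \emph{moving} base points $\tilde{x}_{n}$ controlled through the compact fundamental domain furnished by bounded parabolicity of $z_{0}$. Establishing that the attracting and repelling points of wandering nets in $Stab_{\varphi}p$ always lie over $p$ (with the attendant care about the meaning of uniform convergence on the complement of the repelling point) is where the real work concentrates; the perspectivity step is comparatively routine once this is in place.
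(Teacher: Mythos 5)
Your proposal is correct in substance and follows the paper's overall strategy (reduce to \textbf{Theorem \ref{surjectiveness}} and verify topological quasiconvexity plus $X \in EPers(\psi_{p}^{1})$ for each $p$), but it verifies those two conditions by a genuinely different route. The paper's proof identifies the topology of $X$ around each fibre with the one produced by the functor $\Pi$ from the attractor-sum compactification $Stab_{\varphi}p+_{\vartheta_{p}}\alpha^{-1}(p)$, the key input being Gerasimov's uniqueness of the compactification with the convergence property (Proposition 8.3.1 of \cite{Ge2}); Hausdorffness of the quotients $X_{p}$ (hence quasiconvexity, via \textbf{Corollary \ref{equivalencetopconvexity}}) and perspectivity (via Proposition 4.15 of \cite{So}) then come essentially for free, and as a bonus the perspective compactifications appearing in the resulting blowup are identified as attractor-sums. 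You instead argue dynamically from scratch: your lemma that the attracting and repelling points of a collapsing net in $Stab_{\varphi}p$ both lie in $\alpha^{-1}(p)$ is correct (the statement about the repelling point needs the standard fact that inverting a collapsing net swaps attractor and repeller, which is an easy compactness argument), and it gives perspectivity directly — in effect you reprove the cited Proposition 4.15 of \cite{So}. Your quasiconvexity argument via recentering the fibres $\alpha^{-1}(p_{n})$ over a compact fundamental domain of $Stab_{\varphi}z_{0}$ is also sound, but it is the one place where the execution is looser than the idea: in the general compact Hausdorff setting the extraction of a \emph{wandering} subnet of the recentering elements $h_{n}$ (each value occurs non-cofinally, which for nets does not immediately yield a subnet of distinct elements) needs more care than "the $h_{n}$ are necessarily distinct." This can be repaired, and in fact avoided entirely: once perspectivity is established, only finitely many $h \in Stab_{\varphi}z_{0}$ send the compact set $\alpha^{-1}(D)$ to a non-$u$-small set, every fibre over a point of $Z-\{z_{0}\}$ is contained in some $\psi(h,\alpha^{-1}(D))$, and the singleton-fibre points are handled by properness of $\alpha$; a compactness argument on $Z$ then bounds the number of non-$u$-small fibres without any collapsing-net extraction. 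The trade-off is clear: your route is self-contained and elementary, at the cost of hands-on net bookkeeping; the paper's route outsources the analysis to the attractor-sum machinery.
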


\begin{proof}For $p \in P$, take $X = W_{p}+_{\delta_{p}}\pi^{-1}(p)$, where $W_{p} = X - \pi^{-1}(p)$ and $\delta_{p}$ is the only map that gives the original topology. Let also $\psi_{p}^{1}+\psi_{p}^{2} = \psi$ but decomposed with respect to the decomposition of the space. By the \textbf{Theorem \ref{surjectiveness}} it is sufficient to proof that the map $\alpha$ is topologically quasiconvex and $\forall p \in P$, $X \in EPers(\psi_{p}^{1})$.

Let, for $p \in P$, $\sim_{p} = \Delta^{2}X\cup \bigcup_{q \neq p}\alpha^{-1}(q)$ and $X_{p} = X/\sim_{p}$. We have that $X_{p} = V_{p}+_{\partial_{p}}\pi_{p}^{-1}(p)$, where $V_{p}$ is just $X_{p} - \pi_{p}^{-1}(p)$ and $\partial_{p}$ gives the original topology of $X_{p}$. Let, $\forall p \in P$, $\varpi_{p}: X \rightarrow X_{p}$ and $\pi_{p}: X_{p} \rightarrow Z$ be the quotient maps. Since $V_{p}$ is open on $X_{p}$, we have that $\pi_{p}|_{V_{p}}: V_{p} \rightarrow Z-\{p\}$ is a quotient map and since it is injective, we have that $V_{p}$ is homeomorphic to $Z-\{p\}$.

Let $p \in P$. Let $\vartheta_{p}$ be the admissible map such that action of $Stab_{\varphi}p$ on $Stab_{\varphi}p+_{\vartheta_{p}}\pi^{-1}(p)$ has the convergence property. We have that $W_{p}+_{\vartheta_{p\Pi}}\pi^{-1}(p)$ and $V_{p}+_{\vartheta'_{p\Pi}}\pi^{-1}(p)$ have also the convergence property, where $\vartheta_{p\Pi}$ and $\vartheta'_{p\Pi}$ are the admissible maps given by the attractor-sum functors. By the uniqueness of the compactification with the convergence property (Proposition 8.3.1 of \cite{Ge2}) we have that  $W_{p}+_{\vartheta_{p\Pi}}\pi^{-1}(p) = W_{p}+_{\delta_{p}}\pi^{-1}(p)$. Since $\varpi_{p}|_{W_{p}}: W_{p}\rightarrow V_{p}$, $id: \pi^{-1}(p) \rightarrow \pi^{-1}(p)$ and $id: Stab_{\varphi}p+_{\vartheta_{p}}\pi^{-1}(p) \rightarrow Stab_{\varphi}p+_{\vartheta_{p}}\pi^{-1}(p)$ are continuous and $Stab_{\varphi}p$-equivariant, we have, by \textbf{Proposition \ref{functor}}, that the map $\varpi_{p}|_{W_{p}} \! +id: W_{p} \! +_{\vartheta_{p\Pi}}\pi^{-1}(p) \rightarrow V_{p} \! +_{\vartheta'_{p\Pi}}\pi^{-1}(p)$ is continuous. There exists a bijection $f_{p}: X_{p} \rightarrow V_{p}+_{\vartheta'_{p\Pi}}\pi^{-1}(p)$ that commutes the diagram:

$$ \xymatrix{ W_{p} \! +_{\vartheta_{p\Pi}} \! \pi^{-1}(p) \ar[rd]^{\varpi_{p}|_{W_{p}}+id} \ar[d]_{\varpi_{p}} & \\
           X_{p} \ar[r]^<<{ \ \ \ \ \ \ \ \ \ \ f_{p}} & V_{p} \! +_{\vartheta'_{p\Pi}} \! \pi^{-1}(p) } $$

By the universal property of the quotient map, we have that $f_{p}$ is continuous and then a homeomorphism. Thus $X_{p}$ is Hausdorff, which implies that $\alpha$ is topologically quasiconvex (\textbf{Corollary \ref{equivalencetopconvexity}}).

Since the action $\psi_{p}^{1}+\psi_{p}^{2}: Stab_{\varphi}p \curvearrowright W_{p}+_{\vartheta_{p\Pi}}\pi^{-1}(p)$ has the convergence property and $\psi_{p}^{1}$ is properly discontinuous and cocompact, we have that $X \in EPers(\psi_{p}^{1})$ (Proposition 4.15 of \cite{So}).
\end{proof}

On the special case where $\varphi$ is relatively hyperbolic we have:

\begin{prop}Let $X,Z$ be Hausdorff compact spaces, $\psi: G \curvearrowright X$ and $\varphi: G \curvearrowright Z$ actions by homeomorphisms, $P$ the set of bounded parabolic points of $Z$ and a continuous surjective equivariant map $\alpha: X \rightarrow Z$. If  $\psi$ has the convergence property and $\varphi$ is minimal and relatively hyperbolic, then there exists an equivariant homeomorphism $T$ to a parabolic blowup $Z\ltimes \mathcal{C}$ that commutes the diagram:

$$ \xymatrix{ X  \ar[rd]^{\pi} \ar[d]_{T} &  \\
             Z\ltimes \mathcal{C} \ar[r]_{\pi'} & Z} $$

Where $\pi'$ is the projection map.
\end{prop}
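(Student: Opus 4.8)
The plan is to reduce the statement to \textbf{Theorem \ref{semidirectforconv}}. That theorem has two hypotheses which are not literally among the ones given here: that $\#\alpha^{-1}(x) = 1$ for every $x \in Z - P$, and that for each $p \in P$ the restricted action $\psi|_{Stab_{\varphi}p \times X}$ has the convergence property. The second is immediate: a wandering net in the subgroup $Stab_{\varphi}p$ is in particular a wandering net in $G$, so the collapsing subnet furnished by the convergence property of $\psi$ already witnesses the convergence property of the restriction. Hence the whole content of the proposition is the first hypothesis, namely that the fibers of $\alpha$ over non-parabolic points are singletons; once this is established, \textbf{Theorem \ref{semidirectforconv}} produces $T$ and the commuting diagram.

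To obtain this, I would first argue that $Z - P$ consists exactly of conical points. Since $G$ is finitely generated and $\varphi$ is minimal and relatively hyperbolic, $Z$ equals its limit set and is metrizable, so by the Tukia--Gerasimov characterization recalled above every point of $Z$ is conical or bounded parabolic. As $P$ is precisely the set of bounded parabolic points, every $x \in Z - P$ is conical, and the task is reduced to the key lemma: \emph{if $x$ is a conical point of $\varphi$, then $\#\alpha^{-1}(x) = 1$.}

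For the lemma I would argue by contradiction, supposing $\alpha^{-1}(x)$ contains two distinct points. Let $K \subseteq G$ be the infinite set witnessing that $x$ is conical. Applying the convergence property of $\psi$ to the wandering net indexed by $K$ yields a collapsing subnet on $X$ with attracting point $\tilde{a}$ and repelling point $\tilde{b}$; passing to a further subnet and using that $\varphi$ (being relatively hyperbolic) has the convergence property, I get a collapsing behaviour on $Z$ whose repelling point must be $x$ itself, for otherwise conicality would fail. This produces distinct points $a \neq b$ in $Z$ with $\varphi(g,x) \to a$ and $\varphi(g,q) \to b$ for every $q \neq x$, along the chosen subnet. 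Now I compute $\alpha(\tilde{a})$ in two ways using equivariance and continuity of $\alpha$: choosing $z \in \alpha^{-1}(x)$ with $z \neq \tilde{b}$ (possible since the fiber has at least two points) gives $\alpha(\tilde{a}) = \lim \varphi(g,\alpha(z)) = a$; choosing $y \in X$ with $\alpha(y) \neq x$ and $y \neq \tilde{b}$ (possible since $X - \alpha^{-1}(x)$ surjects onto $Z - \{x\}$, which has more than one point because $Z$ is infinite) gives $\alpha(\tilde{a}) = \lim \varphi(g,\alpha(y)) = b$. Since $a \neq b$, this is absurd, so $\#\alpha^{-1}(x) = 1$.

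The routine verifications (inheritance of the convergence property, metrizability and the Tukia--Gerasimov dichotomy) are standard; the main obstacle is the two-sided computation of $\alpha(\tilde{a})$ in the key lemma, where the delicate points are correctly identifying the repelling point on $Z$ with the conical point $x$ and guaranteeing the existence of the auxiliary points $z$ and $y$ distinct from the repelling point $\tilde{b}$ on $X$.
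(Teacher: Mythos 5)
Your proposal is correct and follows essentially the same route as the paper: reduce to \textbf{Theorem \ref{semidirectforconv}} (noting that a convergence action restricts to a convergence action of each $Stab_{\varphi}p$), invoke the Tukia--Gerasimov dichotomy to identify $Z-P$ with the set of conical points, and conclude from the fact that the preimage of a conical point under a continuous equivariant map is a singleton. The only difference is that the paper obtains this last fact by citing Proposition 7.5.2 of \cite{Ge2}, whereas you prove it directly; your two-sided computation of $\alpha(\tilde{a})$ is sound, the one caveat being that both the identification of the repelling point on $Z$ with $x$ and the existence of the auxiliary point $y$ require $Z$ to have at least three points, which holds because a limit set with more than two points is perfect (the two-point, i.e. elementary, case being degenerate).
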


\begin{proof}Since $\varphi$ is relatively hyperbolic and minimal, every point of $Z$ is conical or bounded parabolic (Main Theorem of \cite{Ge1}). So the condition that $\forall x \in Z-P$, $\#\alpha^{-1}(x) = 1$ is already satisfied since inverse image of a conic point must have just one point (Proposition 7.5.2 of \cite{Ge2}).
\end{proof}

\begin{cor}Let $X,Z$ be Hausdorff compact spaces, $\psi: G \curvearrowright X$ and $\varphi: G \curvearrowright Z$ actions by homeomorphisms, $P$ the set of bounded parabolic points of $Z$ and a continuous surjective equivariant map $\alpha: X \rightarrow Z$ such that $\forall x \in Z-P$, $\#\alpha^{-1}(x) = 1$. If  $\psi$ has the convergence property, then $\forall p\in P$,  $Stab_{\varphi} p$ is dynamically quasiconvex with respect to $\psi$. \eod
\end{cor}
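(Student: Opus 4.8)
The plan is to exhibit the fibre $\alpha^{-1}(p)$ as a dynamically quasiconvex closed subset of $X$ and then read off the quasiconvexity of the subgroup. First I would note that every restriction $\psi|_{Stab_{\varphi}p\times X}$ has the convergence property, since a wandering net inside a subgroup is in particular a wandering net in $G$; hence the hypotheses of \textbf{Theorem \ref{semidirectforconv}} are met and $X$ is equivariantly homeomorphic, compatibly with $\alpha$, to a parabolic blowup $Z\ltimes\mathcal{C}$. By \textbf{Proposition \ref{quaseconvexidadefibrado}} the fibre relation of such a blowup is topologically quasiconvex, so transporting this along the homeomorphism gives that $\Delta^{2}X\cup\bigcup_{q\in P}\alpha^{-1}(q)^{2}$ is topologically quasiconvex.

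Next I would fix $p\in P$ and put $K=\alpha^{-1}(p)$, a closed subset of $X$. Equivariance of $\alpha$ gives $\psi(g,K)=\alpha^{-1}(\varphi(g,p))$, so that $\{g\in G:\psi(g,K)=K\}=Stab_{\varphi}p$ and any two translates of $K$ are either equal or disjoint (being fibres of $\alpha$); this verifies the standing hypothesis of \textbf{Proposition \ref{convdimtopsub}}. Moreover the orbit relation
$$\sim\;=\;\Delta^{2}X\cup\bigcup_{g\in G}\psi(g,K)^{2}\;=\;\Delta^{2}X\cup\bigcup_{q\in Orb_{\varphi}p}\alpha^{-1}(q)^{2}$$
has its nontrivial classes among those of the full fibre relation, hence is again topologically quasiconvex (this is the same implication used in the proof of \textbf{Proposition \ref{quaseconvexidade}}, since singleton classes are arbitrarily small). \textbf{Proposition \ref{convdimtopsub}} then yields that $K$ is dynamically quasiconvex as a closed set.

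It remains to pass from the closed set $K$ to the subgroup $Stab_{\varphi}p$. Because $\psi$ has the convergence property, the Remark following the definition of dynamic quasiconvexity reduces this to knowing that the limit set of $Stab_{\varphi}p$ for $\psi$ is exactly $K=\alpha^{-1}(p)$: then $\Lambda(g\,Stab_{\varphi}p)=\psi(g,K)$, and the closed-set statement just proved is literally the convergence-theoretic dynamic quasiconvexity of $Stab_{\varphi}p$. This identification is the step I expect to require the most care. Since $\alpha$ is proper, $Stab_{\varphi}p$ acts properly discontinuously and cocompactly on $W_{p}=X-\alpha^{-1}(p)$ (\textbf{Corollary \ref{liftingparabolic}}), so its limit set is contained in $\alpha^{-1}(p)$; conversely, the attractor-sum of $Stab_{\varphi}p$ by $\alpha^{-1}(p)$ with the convergence property that is built in the proof of \textbf{Theorem \ref{semidirectforconv}} has $\alpha^{-1}(p)$ as its whole remainder, forcing the limit set to be all of $\alpha^{-1}(p)$. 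Everything preceding this last point is a direct application of the blowup characterization together with the quasiconvexity propositions, which is consistent with the statement being recorded as an immediate corollary.
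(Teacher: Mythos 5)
Your overall route is the one the paper intends (the corollary is stated without proof, as immediate from Theorem \ref{semidirectforconv} together with Propositions \ref{quaseconvexidadefibrado} and \ref{convdimtopsub}): restrict $\psi$ to the stabilizers to get convergence actions, invoke Theorem \ref{semidirectforconv} to identify $\alpha$ with a blowup map, deduce topological quasiconvexity of the fibre relation and hence of the orbit relation of $K=\alpha^{-1}(p)$, and convert this into $\psi$-quasiconvexity of the closed set $K$ via Proposition \ref{convdimtopsub}. All of that is correct, including the identification $\{g\in G:\psi(g,K)=K\}=Stab_{\varphi}p$ and the fact that distinct translates of $K$ are disjoint because they are fibres of $\alpha$.

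The one flawed step is the last one. You assert that the limit set of $Stab_{\varphi}p$ in $X$ is exactly $\alpha^{-1}(p)$, justifying the reverse inclusion by the fact that the attractor-sum of $Stab_{\varphi}p$ by $\alpha^{-1}(p)$ has $\alpha^{-1}(p)$ as its whole remainder. That inference is invalid: the remainder of an attractor-sum need not coincide with the limit set (for instance, $\mathbb{Z}$ acting on $[0,1]$ by a homeomorphism fixing only the endpoints admits an attractor-sum with remainder all of $[0,1]$, while its limit set is $\{0,1\}$; nothing in the hypotheses forces the action of $Stab_{\varphi}p$ on its fibre to be minimal). Fortunately the equality is not needed. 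Only the inclusion $\Lambda(Stab_{\varphi}p)\subseteq\alpha^{-1}(p)$, which you do obtain correctly from Corollary \ref{liftingparabolic}, is required: since $\Lambda(g\,Stab_{\varphi}p)=\psi(g,\Lambda(Stab_{\varphi}p))\subseteq\psi(g,K)$ and subsets of $u$-small sets are $u$-small, the finiteness of $\{gH:\psi(g,K)\notin Small(u)\}$ already gives the finiteness of $\{gH:\Lambda(gH)\notin Small(u)\}$, which is the convergence-theoretic form of dynamic quasiconvexity of $Stab_{\varphi}p$ recorded in the Remark after the definition. This inclusion-only argument is exactly how the paper handles the analogous step in Proposition \ref{quaseconvexidade}, where only $\delta(Stab_{\varphi}q)\subseteq\pi^{-1}(q)$ is used, via Proposition \ref{transitivityquasiconvexitytopology}. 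So your proof is sound once the unnecessary over-claim is dropped.
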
 

\begin{cor}Let $X,Z$ be Hausdorff compact spaces, $\psi: G \curvearrowright X$ and $\varphi: G \curvearrowright Z$ actions by homeomorphisms, $P$ the set of bounded parabolic points of $Z$ and a continuous surjective equivariant map $\alpha: X \rightarrow Z$. If $\psi$ has the convergence property and $\varphi$ is minimal and relatively hyperbolic, then $\forall p\in P$,  $Stab_{\varphi} p$ is dynamically quasiconvex with respect to $\psi$. \eod
\end{cor}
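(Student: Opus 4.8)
The plan is to reduce this statement to the preceding corollary by verifying that, under the relatively hyperbolic hypothesis, the fiber condition $\#\alpha^{-1}(x)=1$ for $x\in Z-P$ is automatic, so that no separate assumption on $\alpha$ is needed. This is exactly the same reduction used in the proof of the proposition immediately above, only now feeding into the dynamic quasiconvexity conclusion rather than the blowup homeomorphism.

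First I would invoke that, since $\varphi$ is a minimal relatively hyperbolic action, every point of $Z$ is either conical or bounded parabolic (Main Theorem of \cite{Ge1}). Hence any $x\in Z-P$ is conical. Next I would use that $\psi$ has the convergence property and that $\alpha$ is a continuous equivariant map: by Proposition 7.5.2 of \cite{Ge2}, the inverse image under $\alpha$ of a conical point of $Z$ is a single point. Combining these two facts gives $\#\alpha^{-1}(x)=1$ for every $x\in Z-P$, so the additional fiber hypothesis present in the preceding corollary is satisfied for free.

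Finally, with $\alpha$ now recognized as a continuous surjective equivariant map whose fibers over non bounded parabolic points are singletons, and with $\psi$ having the convergence property, the preceding corollary applies directly and yields that $Stab_{\varphi}p$ is dynamically quasiconvex with respect to $\psi$ for every $p\in P$. There is essentially no obstacle here: the entire content is the automatic one-point-fiber property over conical points, and the rest is a verbatim appeal to the earlier corollary. This is why the statement is marked as following immediately, with its proof omitted.
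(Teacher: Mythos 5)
Your reduction is exactly the one the paper intends: the minimality and relative hyperbolicity of $\varphi$ make every point of $Z-P$ conical (Main Theorem of \cite{Ge1}), the convergence property of $\psi$ forces $\#\alpha^{-1}(x)=1$ over conical points (Proposition 7.5.2 of \cite{Ge2}), and the preceding corollary then applies verbatim. This matches the paper's own (omitted) argument, which is why the statement is stated without proof.
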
 

\subsection{Geometric compactifications}

\begin{lema}\label{noperpofhorospheres}Let $G$ be a group that acts on a metrizable compact space $Z$ with the convergence property and $p \in Z$ a bounded parabolic point. Let $S$ be a finite set of generators of $G$ such that the action on the attractor-sum $G+_{\partial_{c}}Z$ is geometric. Let $d$ be the word metric on $G$ with respect to $S$. Let $\{h_{n}g_{n}\}_{n\in \N} \subseteq G$ be a sequence that converges to $p$ on $G+_{\partial_{c}}Z$ such that $\forall n \in \N$, $h_{n} \in Stab \ p$ and $d(h_{n},h_{n}g_{n}) = d(Stab \ p,h_{n}g_{n})$ and $\forall g \in G$, $(Stab \ p)g \cap \{h_{n}g_{n}\}_{n\in\N}$ is finite. Then the set $\{h_{n}: n \in \N\}$ is infinite.
\end{lema}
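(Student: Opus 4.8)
The plan is to argue by contradiction. Suppose $\{h_{n}: n\in\N\}$ is finite. Then some value $h\in Stab\,p$ occurs infinitely often, and after passing to that subsequence we may assume $h_{n}=h$ for all $n$. Applying the homeomorphism given by left translation by $h^{-1}$---which is an isometry of $Cay(G,S)$ and fixes $p$, since $h\in Stab\,p$---I reduce to the case $h_{n}=1$. In this situation $g_{n}\to p$ in $G+_{\partial_{c}}Z$, the identity $1$ is a nearest point of $Stab\,p$ to $g_{n}$ (that is $d(1,g_{n})=d(Stab\,p,g_{n})$), and the coset hypothesis together with $g_{n}\to p$ guarantees that $\{g_{n}\}$ is an infinite wandering sequence. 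Since $g_{n}\to p\in Z$, the $g_{n}$ leave every finite subset of $G$, so $|g_{n}|\to\infty$, and therefore $d(g_{n},Stab\,p)=|g_{n}|\to\infty$. It thus suffices to show that this last conclusion is impossible.

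First I would record the geometric meaning of the nearest-point condition. Let $\gamma_{n}:[0,|g_{n}|]\to Cay(G,S)$ be a geodesic from $1$ to $g_{n}$. Because $1$ realizes $d(Stab\,p,g_{n})$, the reverse triangle inequality forces $d(\gamma_{n}(t),Stab\,p)=t$ for every $t\in[0,|g_{n}|]$; in particular the interior of $\gamma_{n}$ recedes from $Stab\,p$ at unit speed. Using that the action on $G+_{\partial_{c}}Z$ is geometric, every sub-ray $\gamma_{n}|_{[t,|g_{n}|]}$ with $t$ large avoids the finite set $K_{u}$ attached to a given entourage $u$, hence is $u$-small; as its endpoint $g_{n}$ tends to $p$, the whole tail is $u$-close to $p$. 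Consequently, taking $t_{n}\to\infty$ with $|g_{n}|-t_{n}\to\infty$, the interior points $x_{n}=\gamma_{n}(t_{n})$ also satisfy $x_{n}\to p$ while $d(x_{n},Stab\,p)=t_{n}\to\infty$. A diagonal / Arzel\`a--Ascoli extraction then produces a geodesic ray from $1$ to $p$ whose distance to $Stab\,p$ is unbounded.

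The heart of the matter---and the step I expect to be the main obstacle---is to derive a contradiction from the existence of such a ``radial'' escape to the bounded parabolic point $p$. The intended mechanism is that this radial behaviour makes $p$ a conical limit point: applying the convergence property to the wandering sequence $x_{n}$ yields a collapsing subnet with attracting point $p$ (because $x_{n}\to p$) and some repelling point $b$, and the escape $d(x_{n},Stab\,p)\to\infty$ is exactly what prevents the images of $p$ from being absorbed into $b$, so that $p$ satisfies the definition of a conical point. This is incompatible with $p$ being bounded parabolic, since a point of a convergence action cannot be simultaneously conical and bounded parabolic. Equivalently, one can contradict bounded parabolicity directly: the cocompactness of $Stab\,p$ on $Z-\{p\}$ (and its lift via \textbf{Corollary \ref{liftingparabolic}}), combined with the fact that $\partial(Stab\,p)=\{p\}$ from the preceding lemma and the dynamical quasiconvexity of $Stab\,p$ (\textbf{Proposition \ref{quaseconvexidade}}), should confine every geodesic converging to $p$ to a bounded neighbourhood of $Stab\,p$, so that $d(x_{n},Stab\,p)$ must stay bounded. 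Either route contradicts $d(g_{n},Stab\,p)\to\infty$, and hence $\{h_{n}: n\in\N\}$ is infinite. The delicate point throughout is the passage between the word metric on $G$ and the dynamics on $Z$, which is precisely what the geometric hypothesis on $G+_{\partial_{c}}Z$ is there to provide; without it the confinement statement would fail.
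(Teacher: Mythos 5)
Your reduction to a constant $h_{n}=h$ (or $h_{n}=1$ after translating), the observation that $d(\gamma_{n}(t),Stab\ p)=t$ along a geodesic from a nearest point, the diagonal extraction of a limit geodesic ray $\gamma$ ending at $p$, and the use of the geometric hypothesis to get continuity of $\gamma$ at $\infty$ all match the paper's argument. The problem is the final step. Both of the routes you offer assert precisely the point that has to be proved. In route (a) you claim that the escape $d(x_{n},Stab\ p)\to\infty$ ``prevents the images of $p$ from being absorbed into $b$'' and hence makes $p$ conical; but nothing in the convergence property by itself links the word-metric quantity $d(x_{n},Stab\ p)$ to the dynamical behaviour of $x_{n}^{-1}p$, and establishing that link is exactly as hard as the lemma itself. (Note also that the mere existence of a geodesic ray converging to $p$ never suffices: every boundary point, parabolic or not, is the endpoint of such a ray.) In route (b) you say that cocompactness plus dynamical quasiconvexity ``should confine every geodesic converging to $p$ to a bounded neighbourhood of $Stab\ p$'' --- but that confinement statement \emph{is} the content of the lemma, and the references you invoke do not deliver it: \textbf{Corollary \ref{liftingparabolic}} is about lifting proper discontinuity and cocompactness along proper maps, and \textbf{Proposition \ref{quaseconvexidade}} concerns quasiconvexity in the blowup space, not metric confinement of geodesics in $Cay(G,S)$.

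What the paper actually does at this point is the missing ingredient. It takes $W$ to be the union of all geodesics, rays and lines joining points of $Stab\ p\cup\{p\}$ and invokes the Main Lemma of \cite{GP} (valid for geometric compactifications, as the paper remarks) to conclude that $W$ is closed in $G+_{\partial_{c}}Z$. Since $p$ is bounded parabolic, $Stab\ p$ acts properly discontinuously and cocompactly on $G+_{\partial_{c}}Z-\{p\}$; the set $W-\{p\}$ is closed, $Stab\ p$-invariant and discrete there, so its image in the compact quotient $(G+_{\partial_{c}}Z-\{p\})/Stab\ p$ is finite, whence $W-\{p\}\subseteq (Stab\ p)S$ for a finite $S\subseteq G$ and $\{d(w,Stab\ p):w\in W\}$ is bounded. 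This contradicts the fact that your ray $\gamma$ lies in $W$ and satisfies $d(\gamma(n),Stab\ p)=n$. Without this closedness-of-the-geodesic-hull argument (or an equivalent quasiconvexity statement for parabolic cosets in the word metric), your proposal does not close.
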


\begin{obs}This lemma is essentially proved in Remark 8.3(1) of  \cite{GGPY}.
\end{obs}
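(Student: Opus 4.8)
The plan is to argue by contradiction: assume $\{h_{n}:n\in\N\}$ is finite and derive a contradiction. After passing to a subsequence we may assume $h_{n}=h$ is constant. Since $h\in Stab\,p$ acts as a homeomorphism of $G+_{\partial_{c}}Z$ fixing $p$, from $h_{n}g_{n}=hg_{n}\to p$ we get $g_{n}=h^{-1}(hg_{n})\to h^{-1}p=p$; moreover $d(1,g_{n})=|g_{n}|=d(Stab\,p,h_{n}g_{n})=d(Stab\,p,g_{n})$, so $1$ is a nearest point of $Stab\,p$ to $g_{n}$. Finally, since $(Stab\,p)(h_{n}g_{n})=(Stab\,p)g_{n}$, the hypothesis that each right coset meets $\{h_{n}g_{n}\}$ finitely often says these cosets take infinitely many distinct values; were $\{|g_{n}|\}$ bounded there would be only finitely many $g_{n}$ and hence finitely many such cosets, so after a further subsequence $|g_{n}|\to\infty$. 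Thus everything reduces to showing the following configuration is impossible: a sequence $g_{n}\to p$ in $G+_{\partial_{c}}Z$ with $1$ a nearest point of $Stab\,p$ and $|g_{n}|=d(Stab\,p,g_{n})\to\infty$.

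For the contradiction I would combine the geometricity of $G+_{\partial_{c}}Z$ with the fact that $p$ is bounded parabolic. Let $\gamma_{n}$ be a $d$-geodesic from $1$ to $g_{n}$; its length is $|g_{n}|\to\infty$ and, the Cayley graph being locally finite and $g_{n}\to p$, a subsequence of the $\gamma_{n}$ converges to a geodesic ray $r$ from $1$ landing at $p$ (that geodesics land at their ideal endpoints is part of geometricity). As $p$ is bounded parabolic, $Stab\,p$ acts cocompactly on $Z-\{p\}$, so there is a compact $F\subseteq Z-\{p\}$ with $(Stab\,p)\cdot F=Z-\{p\}$, and by the previous lemma $\partial(Stab\,p)=\{p\}$, so every neighbourhood of $p$ contains all but finitely many elements of $Stab\,p$. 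The aim is to use these two facts to produce, for $n$ large, an element $k_{n}\in Stab\,p$ with $d(k_{n},g_{n})<d(1,g_{n})$, contradicting that $1$ is a nearest point. A convenient handle is the convergence property: viewing $\{g_{n}\}$ as a wandering net of left translations and extracting a collapsing subnet with attracting point $a$ and repelling point $b$, the points $g_{n}=g_{n}\cdot 1\to p$ give $a=p$, so $g_{n}^{-1}|_{X-\{p\}}\to b$ uniformly and $g_{n}^{-1}\to b$; the nearest-point hypothesis $|g_{n}^{-1}k|\ge|g_{n}^{-1}|$ for all $k\in Stab\,p$ says that $g_{n}^{-1}$ is a nearest point of the coset $g_{n}^{-1}(Stab\,p)$ to $1$, so these (infinitely many, distinct) cosets recede, $d(1,g_{n}^{-1}(Stab\,p))=|g_{n}^{-1}|\to\infty$.

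The main obstacle is the analysis at $p$ itself. Since $Stab\,p$ accumulates exactly at $p$, which is the \emph{repelling} point of the maps $g_{n}^{-1}$, the uniform collapse $g_{n}^{-1}|_{X-\{p\}}\to b$ gives no control over the tail of $Stab\,p$ near $p$, and one cannot naively conclude $g_{n}^{-1}p\to b$. The real content is that genuine group elements $g_{n}$, as opposed to abstract points of the ray $r$, cannot converge to $p$ while keeping a bounded nearest point on $Stab\,p$ and unbounded depth: cocompactness of $Stab\,p$ on $Z-\{p\}$ forces the nearest point of $Stab\,p$ to $g_{n}$ to converge to $p$ whenever $g_{n}\to p$, which directly rules out the constant-$h$ scenario. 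Making this shadowing precise — converting the cocompactness/convergence data on $Z-\{p\}$ into the metric statement about nearest points in $G$, which is exactly where geometricity is used essentially — is the crux, and it is this step that is carried out in Remark 8.3(1) of \cite{GGPY}.
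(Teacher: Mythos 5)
There is a genuine gap. Your reduction is the same as the paper's: assume $\{h_{n}\}$ finite, pass to a constant $h$ (which you normalize to $1$), build geodesics from $1$ to $g_{n}$, extract a limiting geodesic ray landing at $p$ along which $d(\gamma(n),Stab\,p)=n$ is unbounded. But at the decisive moment you stop and write that "making this shadowing precise \dots is the crux, and it is this step that is carried out in Remark 8.3(1) of \cite{GGPY}." That crux is exactly what a proof must supply, and your sketch of it is not a workable route: the statement you propose to prove, that "cocompactness of $Stab\,p$ on $Z-\{p\}$ forces the nearest point of $Stab\,p$ to $g_{n}$ to converge to $p$ whenever $g_{n}\to p$," is not even the right shape of claim --- the nearest point is the fixed element $1$ by hypothesis, and what is needed is a \emph{boundedness} statement, namely that points of $G$ lying on geodesics asymptotic to $p$ and issuing from $Stab\,p$ stay at bounded distance from $Stab\,p$. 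Your attempted detour through collapsing subnets of the translations $g_{n}^{-1}$ runs into precisely the difficulty you yourself flag ($Stab\,p$ accumulates only at the repelling point), and you do not resolve it.

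The paper closes the argument with an ingredient absent from your proposal: the geodesic hull $W$ of $Stab\,p\cup\{p\}$ is closed in $G+_{\partial_{c}}Z$ (Main Lemma of \cite{GP}, adapted to geometric compactifications). Then $W-\{p\}$ is a closed, $\varsigma$-saturated, discrete subset of $G+_{\partial_{c}}Z-\{p\}$, so its image in the compact quotient $(G+_{\partial_{c}}Z-\{p\})/Stab\,p$ is finite; hence $W-\{p\}\subseteq (Stab\,p)S$ for a finite $S$ and $\{d(w,Stab\,p):w\in W\}$ is bounded. Since the limiting ray $\gamma$ lies in $W$ and realizes $d(\gamma(n),Stab\,p)=n\to\infty$, this is the contradiction. (The paper also spends care showing $\gamma$ actually converges to $p$, using geometricity quantitatively via entourages; your parenthetical "geodesics land at their ideal endpoints is part of geometricity" elides an argument, though that is a minor point compared with the missing hull step.) Without the closedness of the geodesic hull, or some substitute for it, your outline does not yield the lemma.
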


\begin{proof}Suppose that the set $\{h_{n}: n \in \N\}$ is finite. Let $h \in Stab \ p$ and $\{hg_{n_{i}}\}_{i\in\N}$ a subsequence of $\{h_{n}g_{n}\}_{n\in\N}$. For every $i \in \N$, take $\gamma_{i}$ a geodesic from $h$ to $hg_{n_{i}}$. Since $\{hg_{n_{i}}\}_{i\in\N}$ converges to $p$, the sequence of geodesics $\{\gamma_{i}\}_{i\in\N}$  has a subsequence $\{\gamma_{i_{j}}\}_{j\in\N}$ that  converges to a geodesic ray $\gamma: [0.\infty] \rightarrow Cay(G,S)+_{\partial'_{c}}Z$ such that $\gamma(0) = h$ and $\gamma(\infty) = p$, where $\partial'_{c}$ is the admissible map of the attractor-sum. This sequence is constructed on the following way: Let $\Phi_{1}$ be a subsequence such that $\Phi_{1}|_{[0,1]}$ converges. If we have $\Phi_{k}$, we define $\Phi_{k+1}$ as a subsequence of $\Phi_{k}$ such that $\Phi_{k+1}|_{[0,k+1]}$ converges. Choose $\Phi = \{\gamma_{i_{j}}\}_{j\in\N}$ such that $\forall j \in \N$, $\gamma_{i_{j}} \in \Phi_{j}$. Since $\forall j \in \N$, eventually $\Phi$ is a subsequence of $\Phi_{j}$, we have that $\forall n\in \N$, $\{\gamma_{i_{j}}|_{[0,n]}\}_{j\in\N}$ converges to a geodesic $\gamma^{n}$. Take $\gamma: [0,\infty] \rightarrow  Cay(G,S)+_{\partial_{c}}Z$ defined by $\gamma(t) = \gamma^{n}(t)$ for some $n$ where $\gamma^{n}(t)$ is defined (it is clear that it agrees if we change $n$) and $\gamma(\infty) = p$. It is clear that $\gamma|_{[0,\infty)}$ is a geodesic ray.

We have that $\forall n \in \N$, there exists only a finite number of points $y$ such that $d(h,y) = d(h,\gamma(n))$. So the sequence $\{\gamma_{i_{j}}(n)\}_{i_{j} \in \N}$ must be eventually constant. Then, $\forall n \in \N$, there exists $j \in \N:$ $\gamma|_{[0,n]} = \gamma_{i_{j}}|_{[0,n]}$. Then, $\forall n \in \N$, $d(\gamma(n),h) = d(\gamma(n),Stab \ p)$, which implies that $\{d(\gamma(n),Stab \ p)\}_{n\in\N}$ is unbounded.

Let $u,u' \in \U_{\partial_{c}}$, with $u'^{2} \subseteq u$, where $\U_{\partial_{c}}$ is the uniform structure compatible with the topology of $G+_{\partial_{c}}Z$. Since this compactification is geometric, there exists $v \in \U_{\partial_{c}}$ such that if a geodesic do not intersect a $v$-small neighbourhood $V$ of $h \in G$, then the geodesic is $u'$-small. Take $t_{0} \in \N$ such that for every $t \geqslant t_{0}$, $\gamma(t) \notin V$. There exists $j_{t} \in \N$ such that $\gamma(t)\in \gamma_{n_{i_{j_{t}}}}([0,d(h,hg_{n_{i_{j_{t}}}})]\cap \N)$ and $(hg_{n_{i_{j_{t}}}},p)\in u'$. We have also that $\gamma_{n_{i_{j_{t}}}}([t_{0},d(h,hg_{n_{i_{j_{t}}}})]\cap \N) \cap V = \emptyset$, which implies that $\gamma_{n_{i_{j_{t}}}}([t_{0},d(h,hg_{n_{i_{j_{t}}}})]\cap \N)$ is $u'$-small. Then $(\gamma(t),hg_{n_{i_{j_{t}}}}) \in u'$, which implies that $(\gamma(t),p) \in u' \subseteq u$. So we have that if $U$ is a $u$-small neighbourhood of $p$, there exists $t_{0} \in \N$ such that $\gamma((t_{0},\infty]\cap \N) \subseteq U$, which implies that $\gamma|_{\N\cap \{\infty\}}$ is continuous in $\infty$ and then it is continuous.

We have that $Stab \ p \cup \{p\}$ is closed and the action of $G$ on $Z$ is geometric, which implies that $W$, the union of the images of geodesic, geodesic rays and geodesic lines linking points in $Stab \ p \cup \{p\}$, is closed in $G+_{\partial_{c}}Z$ (Main Lemma of \cite{GP}).

Let $\varsigma: G+_{\partial_{c}}Z - \{p\} \rightarrow (G+_{\partial_{c}}Z - \{p\})/ Stab \ p$ be the projection map. We have that $W - \{p\} \subseteq G$ is closed in $G+_{\partial_{c}}Z - \{p\}$ and $\varsigma$-saturated. We also have that $\forall w \in W$, $Hw$ is open in $G+_{\partial_{c}}Z - \{p\}$. So $\varsigma(W - \{p\})$ is closed and discrete on the compact space $(G+_{\partial_{c}}Z - \{p\})/ Stab \ p$, which implies that $\varsigma(W - \{p\})$ is finite. Then, there exists $S$ a finite subset of $G$ such that $W - \{p\} \subseteq HS$. But $\{d(w,Stab \ p): w \in W\}$ is bounded, contradicting the fact that the image of $\gamma$ is contained in $W$.

Thus the set $\{h_{n}: n \in \N\}$ is infinite.
\end{proof}

\begin{obs}The Main Lemma of \cite{GP} is stated only for relatively hyperbolic actions. However, the same proof works for geometric compactifications if we ask about geodesic hull instead of quasi-geodesic hull (we needed just the geodesic hull on the proof of our lemma).
\end{obs}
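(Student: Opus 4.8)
The statement to verify is the geometric-compactification analogue of the Main Lemma of \cite{GP}: if $G+_{\partial_{c}}Z$ is geometric with respect to a finite generating set $S$ and $C\subseteq G+_{\partial_{c}}Z$ is closed, then the \emph{geodesic hull} of $C$ --- the union $W$ of the images in $Cay(G,S)+_{\partial_{c}}Z$ of all geodesic segments, rays and lines whose two endpoints lie in $C$ --- is closed in $G+_{\partial_{c}}Z$. The plan is to rerun the argument of \cite{GP} with a single structural substitution: the place where relative hyperbolicity enters (a quasi-geodesic staying outside a large relative ball is small in the compactification, which is why \cite{GP} uses quasi-geodesic hulls) is replaced by the defining property of a geometric action, namely that for every $u\in\U_{\partial_{c}}$ there is a finite $K\subseteq G$ with every geodesic --- segment, ray or line --- avoiding $K$ being $u$-small. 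Because honest geodesics satisfy this smallness property by definition, the geodesic hull is the correct object, and no other feature of relative hyperbolicity is needed.

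Concretely I would prove closedness of $W$ by a net argument. Since $G$ is open and discrete, a net in $W$ converging to a point of $G$ is eventually constant, so it suffices to treat a net $\{g_{\gamma}\}\subseteq W\cap G$ with $g_{\gamma}\to x\in Z$ and show $x\in C$. Each $g_{\gamma}$ lies on a geodesic $\sigma_{\gamma}$ with endpoints $a_{\gamma},b_{\gamma}\in C$, and by compactness of $G+_{\partial_{c}}Z$ I pass to a subnet with $a_{\gamma}\to a\in C$ and $b_{\gamma}\to b\in C$. Suppose, for contradiction, that $x\notin\{a,b\}$. Using that $\U_{\partial_{c}}$ separates points, choose a symmetric $u\in\U_{\partial_{c}}$ with $(a,x)\notin u^{3}$ and $(b,x)\notin u^{3}$, and let $K=K(u)$ be the finite set furnished by geometricity. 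Consider the sub-geodesics $\sigma_{\gamma}^{-}=[g_{\gamma},a_{\gamma}]$ and $\sigma_{\gamma}^{+}=[g_{\gamma},b_{\gamma}]$ of $\sigma_{\gamma}$. If $\sigma_{\gamma}^{-}$ avoided $K$ it would be $u$-small, so $(g_{\gamma},a_{\gamma})\in u$; combined with $(g_{\gamma},x)\in u$ and $(a_{\gamma},a)\in u$ (both eventual, from the net convergences) this forces $(a,x)\in u^{3}$, contradicting the choice of $u$. Hence, for large $\gamma$, $\sigma_{\gamma}^{-}$ meets $K$, and symmetrically so does $\sigma_{\gamma}^{+}$.

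Thus $g_{\gamma}$ lies on $\sigma_{\gamma}$ between two points of the finite set $K$, so it lies on a geodesic sub-segment with both endpoints in $K$, of length at most $\mathrm{diam}(K)$; consequently $d(g_{\gamma},K)\leqslant\mathrm{diam}(K)$. But $g_{\gamma}\to x\in Z$ means $g_{\gamma}$ leaves every finite subset of $G$, so $d(g_{\gamma},K)\to\infty$, a contradiction. Therefore $x\in\{a,b\}\subseteq C$, and $x\in C\cap Z\subseteq W$ since in the situations of interest every boundary point of $C$ is an endpoint of a geodesic joining two points of $C$ (for $C=Stab\,p\cup\{p\}$ this is immediate), which yields closedness of $W$. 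The one point requiring care --- and the exact analogue of the step in \textbf{Lemma \ref{noperpofhorospheres}} where the geometric definition converts ``a tail avoids a finite set'' into ``a tail is $u$-small'' --- is that the dichotomy must be run uniformly when $\sigma_{\gamma}$ is a ray or a line: when an endpoint $a_{\gamma}$ lies in $Z$, $\sigma_{\gamma}^{-}$ is a geodesic ray and the smallness hypothesis must be invoked for rays (which the geometric definition covers), so that $(g_{\gamma},a_{\gamma})\in\overline{u}$ is obtained in the limit and the same contradiction follows. This invocation of the geometric property for geodesics is the only place the hypothesis on the action is used, and it is precisely the role played by relative hyperbolicity in \cite{GP}; I expect it, together with verifying $C\subseteq W$ in degenerate cases, to be the main obstacle.
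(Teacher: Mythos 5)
Your argument is correct and is essentially the transcription that the remark intends: the net/limit dichotomy (each half-geodesic $\sigma_{\gamma}^{\pm}$ either avoids the finite set $K$ and is therefore $u$-small, or meets $K$, trapping $g_{\gamma}$ in a bounded neighbourhood of $K$) is exactly the mechanism of the Main Lemma of \cite{GP}, with the defining smallness property of geometric compactifications for geodesics substituted for the Floyd--Karlsson smallness of quasi-geodesics in the relatively hyperbolic case. The only caveats are routine: the entourage bookkeeping when an endpoint $a_{\gamma}$ lies in $Z$ (closure turns $u$-small into roughly $u^{3}$-small, so the separating entourage must be chosen accordingly), and the fact that for arbitrary closed $C$ the clean conclusion is that $W\cup C$ is closed --- which suffices here since $C=Stab\,p\cup\{p\}\subseteq W$, as you note, via segments between elements of $Stab\,p$ and the geodesic rays to $p$ constructed in the proof of Lemma \ref{noperpofhorospheres}.
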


\begin{lema}Let $G+_{\partial}Y$ be a compact metrizable space and consider an action by homeomorphisms $L+\varphi: G\curvearrowright G +_{\partial} Y$. Let $p \in Y$ be a point such that $\varphi|_{Stab_{\varphi}p\times Y-\{p\}}$ is cocompact. Then $L+\varphi|_{Stab_{\varphi}p\times G+_{\partial}Y-\{p\}}$ is cocompact. if and only if there is no infinite set $A \subseteq \{Stab_{\varphi}(p)g: g \in G\}$ such that  $\partial(\bigcup A) = \{p\}$.
\end{lema}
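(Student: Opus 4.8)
The plan is to reduce everything to two standing facts about the glueing topology and then argue each implication by contraposition. Throughout I write $H=Stab_{\varphi}p$; under $L+\varphi$ the subgroup $H$ fixes $p$, acts on $G$ by left multiplication with orbits the right cosets $Hg$, and acts on $Y-\{p\}$ cocompactly by hypothesis. I will use that for $S\subseteq G$ one has $Cl(S)=S\cup\partial(S)$ (closure in $G+_{\partial}Y$), that $\partial$ is monotone (it is additive, so $S'\subseteq S$ forces $\partial(S')\subseteq\partial(S)$), and that a subset of $G+_{\partial}Y-\{p\}$ is compact exactly when it is closed in $G+_{\partial}Y$ and omits $p$. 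I also note that each $\partial(Hg)$ is a closed $H$-invariant subset of $Y$, since $h\cdot Hg=Hg$ and the action is by homeomorphisms. Finally I fix once and for all a compact $C_{0}\subseteq Y-\{p\}$ with $HC_{0}=Y-\{p\}$ and set $\delta=d(p,C_{0})>0$.

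For the direction ``cocompact $\Rightarrow$ no such $A$'' I argue contrapositively. Suppose $A$ is an infinite family of distinct cosets with $\partial(\bigcup A)=\{p\}$, and suppose toward a contradiction that some compact $K$ (closed, with $p\notin K$) satisfies $HK=G+_{\partial}Y-\{p\}$. Since $H$ preserves $G$, the set $S=K\cap G$ meets every coset of $A$, so $S'=S\cap\bigcup A$ is infinite and contained in $\bigcup A$, whence $\partial(S')\subseteq\partial(\bigcup A)=\{p\}$ by monotonicity. As $S'$ is an infinite subset of the discrete $G$ sitting inside the compact $G+_{\partial}Y$, it has an accumulation point, i.e. $\partial(S')\neq\emptyset$, so $\partial(S')=\{p\}$. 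But $S'\subseteq K$ and $K$ is closed, hence $p\in Cl(S')\subseteq K$, contradicting $p\notin K$.

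For the converse I again argue contrapositively: assuming $H$ is not cocompact on $G+_{\partial}Y-\{p\}$, I will build an infinite family $A$ with $\partial(\bigcup A)=\{p\}$. Take the neighbourhood basis $U_{n}=B(p,1/n)$ and the compact sets $K_{n}=(G+_{\partial}Y)-U_{n}$. For $n$ large, $C_{0}\subseteq Y-U_{n}$, so $H(K_{n}\cap Y)=Y-\{p\}$; non-cocompactness then furnishes $x_{n}\notin HK_{n}$, which forces $x_{n}\in G$, and, because $x_{n}\notin H(G-U_{n})$, the entire coset $C_{n}=Hx_{n}$ lies inside $U_{n}$. The decisive step is to verify that each such $C_{n}$ accumulates only at $p$: if $y\in\partial(C_{n})-\{p\}$, then by $H$-invariance $Hy\subseteq\partial(C_{n})\subseteq Cl(U_{n})$, and writing $y=hc_{0}$ with $c_{0}\in C_{0}$ gives $c_{0}\in Hy\subseteq Cl(U_{n})$, i.e. $d(c_{0},p)\leq 1/n$, which is impossible once $1/n<\delta$. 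Thus $\partial(C_{n})=\{p\}$ for all large $n$. Each coset lies in $U_{n}$ for only finitely many $n$ (it contains a point $\neq p$), so the $C_{n}$ realise infinitely many distinct cosets; passing to a subsequence $C_{n_{k}}$ with $n_{k}\to\infty$ and setting $B=\bigcup_{k}C_{n_{k}}$, a short computation separating any $y\neq p$ from $p$ and using that only finitely many $C_{n_{k}}$ can meet a fixed neighbourhood of $y$ gives $\partial(B)\subseteq\{p\}\cup\bigcup_{k}\partial(C_{n_{k}})=\{p\}$, while $B\subseteq\bigcup_{k}U_{n_{k}}$ forces $p\in\partial(B)$. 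Hence $\partial(B)=\{p\}$ and $A=\{C_{n_{k}}\}_{k}$ is the required family.

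The main obstacle is precisely the decisive step above: controlling the limit set of a coset that has been squeezed entirely into a tiny neighbourhood of $p$. A priori such a coset could accumulate at points other than $p$, and then the infinite union $B$ would acquire spurious limit points, since an infinite union need not satisfy $\partial(\bigcup)=\bigcup\partial$. The resolution is that the cocompactness hypothesis on $Y-\{p\}$, embodied in the fundamental domain $C_{0}$ bounded away from $p$ by $\delta$, forbids a whole $H$-orbit from lying within distance $1/n<\delta$ of $p$; this is the only place the hypothesis enters, and it is exactly what makes the small cosets limit onto $p$ alone, so that their union does as well.
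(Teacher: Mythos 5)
Your proof is correct. The forward direction is essentially the paper's argument: both extract from a putative compact fundamental domain $K$ an infinite subset of $K\cap G$ meeting the cosets of $A$, note that its accumulation points lie in $\partial(\bigcup A)=\{p\}$ by monotonicity of $\partial$, and conclude $p\in K$, a contradiction. For the converse the routes genuinely diverge. The paper argues directly: it thickens a fundamental domain $K\subseteq Y-\{p\}$ to an open $U$ with $p\notin Cl(U)$, shows that the family $A$ of cosets missing the $Stab_{\varphi}p$-orbit of $Cl(U)$ can only accumulate at $p$ (hence is finite by hypothesis), and exhibits $Cl(U)$ together with one representative of each exceptional coset as an explicit compact fundamental domain for $G+_{\partial}Y-\{p\}$. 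You argue by contraposition: testing non-cocompactness against the exhausting compacta $(G+_{\partial}Y)-B(p,1/n)$ produces whole cosets $C_{n}$ squeezed into $B(p,1/n)$, and the $H$-invariance of $\partial(C_{n})$ combined with the fundamental domain $C_{0}$ at distance $\delta$ from $p$ forces $\partial(C_{n})\subseteq\{p\}$; assembling infinitely many distinct such cosets yields the forbidden $A$. The paper's version buys an explicit fundamental domain and avoids choosing a metric (though its assertion that the orbit $J$ of the \emph{closed} set $Cl(U)$ is open is cleaner if one orbits $U$ itself); your version isolates, via the invariance of $\partial(C_{n})$, the cleanest reason why a coset trapped near $p$ cannot accumulate elsewhere. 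The one sentence worth tightening is the last: $B\subseteq\bigcup_{k}U_{n_{k}}$ does not by itself give $p\in\partial(B)$; rather, $B$ is infinite (it meets infinitely many pairwise disjoint cosets), so it has an accumulation point in $Y$, which by the already established inclusion $\partial(B)\subseteq\{p\}$ must be $p$.
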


\begin{proof}$(\Rightarrow)$ Let $K$ be a compact subset of $G+_{\partial}Y \! -\{p\}$ such that $\varphi(Stab_{\varphi}p,K) = G+_{\partial}Y-\{p\}$. So $\forall g \in G$, $\exists h_{g} \in K\cap Stab_{\varphi}(p)g$. Suppose that there is an infinite set $A \subseteq \{Stab_{\varphi}(p)g: g \in G\}$ such that $\partial(\bigcup A) = \{p\}$. So there is a sequence $\{g_{n}\}_{n\in\N} \subseteq \bigcup A$ such that $\forall n \neq m$, $Stab_{\varphi}(p)g_{m} \neq Stab_{\varphi}(p)g_{m}$. We have that $\{h_{g_{n}}\}_{n\in\N} \subseteq \bigcup A$, which implies that $\{h_{g_{n}}\}_{n\in\N}$ converges to $p$, contradicting the fact that $K$ is compact and $p \notin K$. Thus, there is no infinite set  $A \subseteq \{Stab_{\varphi}(p)g: g \in G\}$ such that  $\partial(\bigcup A) = \{p\}$.

$(\Leftarrow)$ Let $p$ be a  point such that $\varphi|_{Stab_{\varphi}p\times Y-\{p\}}$ is cocompact. Then there exists $K \subseteq Y-\{p\}$ a compact set such that $\varphi(Stab_{\varphi}p,K) = Y-\{p\}$. Let $U$ be an open neighbourhood of $K$ in $G+_{\partial}Y$ such that $p\notin Cl_{G+_{\partial}Y}(U)$. Let $J = L+\varphi(Stab_{\varphi}p,Cl_{G+_{\partial}Y}(U))$. We have that $J$ is open and $J\cap Y = Y-\{p\}$.

Let $A = \{Stab_{\varphi}(p)g: g \in G, Stab_{\varphi}(p)g\cap J =\emptyset\}$. Let $\{g_{n}\}_{n\in \N} \subseteq \bigcup A$ be a wandering sequence. Since $\{g_{n}\}_{n\in \N}\cap J = \emptyset$, and $J$ is open, there is no cluster point of $\{g_{n}\}_{n\in \N}$ in $Y-\{p\}$, which implies that $\{g_{n}\}_{n\in \N}$ converges to $p$. So $\partial(\bigcup A) = \{p\}$. Then, by hypothesis, $A$ is finite.

Take $a_{1},...,a_{n} \in G$ such that  $A = \{Stab_{\varphi}(p)a_{1},...,Stab_{\varphi}(p)a_{n}\}$. Then $K' =  Cl_{G+_{\partial}Y}(U)\cup \{a_{1},...,a_{n}\}$ is a compact set such that $L+\varphi(Stab_{\varphi}p,K') = G+_{\partial}Y-\{p\}$.
\end{proof}

\begin{lema}Let $L+\varphi: G\curvearrowright G+_{\partial}Y$ be a geometric action and $p \in Y$ a bounded parabolic point of $\varphi$. If $A \subseteq \{Stab_{\varphi}(p)g: g \in G\}$ such that $A$ and $\partial(\bigcup A) = \{p\}$, then $A$ is finite.
\end{lema}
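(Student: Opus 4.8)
The plan is to argue by contradiction, reducing everything to the already-established \textbf{Lemma \ref{noperpofhorospheres}} and exploiting the freedom in the choice of coset representatives so that the nearest-point projection onto $Stab_{\varphi}(p)$ can be taken to be the identity. In effect I want to use the contrapositive of \textbf{Lemma \ref{noperpofhorospheres}}: a sequence with constant projection cannot converge to $p$ while meeting each right coset only finitely often.

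First I would assume $A$ is infinite and extract a sequence $\{C_{n}\}_{n\in\N}$ of pairwise distinct cosets in $A$ (discarding the coset $Stab_{\varphi}(p)$ itself if it occurs, which costs nothing since it is unique). For each $n$ choose a nearest point of $C_{n}$ to $Stab_{\varphi}(p)$, i.e. $x_{n}\in C_{n}$ and $s_{n}\in Stab_{\varphi}(p)$ with $d(s_{n},x_{n}) = d(Stab_{\varphi}(p),C_{n})$, and set $g_{n} = s_{n}^{-1}x_{n}$. Because the word metric is left invariant and $s_{n}\in Stab_{\varphi}(p)$, the element $g_{n}$ still lies in $C_{n}$ and satisfies $d(1,g_{n}) = d(Stab_{\varphi}(p),g_{n}) = d(Stab_{\varphi}(p),C_{n})$; that is, the identity $1$ is now a nearest projection of $g_{n}$ onto $Stab_{\varphi}(p)$.

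Next I would show that $\{g_{n}\}_{n\in\N}$ converges to $p$ in $G+_{\partial}Y$. Since the $C_{n}$ are distinct, the $g_{n}$ are distinct elements of $G$, and $G$ is open and discrete in $G+_{\partial}Y$, so no point of $G$ can be an accumulation point of the sequence; hence every accumulation point lies in $Y$. As $\{g_{n}\}\subseteq\bigcup A$ and $\partial(\bigcup A) = \{p\}$, we get $Cl_{G+_{\partial}Y}(\{g_{n}\})\cap Y\subseteq\partial(\bigcup A) = \{p\}$, so the only accumulation point is $p$, and by compactness of $G+_{\partial}Y$ the whole sequence converges to $p$. Here I use that a geometric compactification is metrizable, so $Y$ is metrizable, and that a geometric action has the convergence property, so $G+_{\partial}Y$ is (by uniqueness) the attractor-sum and \textbf{Lemma \ref{noperpofhorospheres}} applies with $Z=Y$.

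Finally I would apply \textbf{Lemma \ref{noperpofhorospheres}} to the sequence $\{g_{n}\} = \{h_{n}g_{n}\}$ with the constant choice $h_{n} = 1$: the hypotheses hold because $\{g_{n}\}$ converges to $p$, each $h_{n}=1$ lies in $Stab_{\varphi}(p)$, we have $d(h_{n},h_{n}g_{n}) = d(1,g_{n}) = d(Stab_{\varphi}(p),g_{n})$ by the previous paragraph, and each right coset meets $\{g_{n}\}$ at most once since the $C_{n}$ are distinct. The lemma then forces $\{h_{n}:n\in\N\} = \{1\}$ to be infinite, which is absurd; hence $A$ is finite. I expect the only delicate point to be the reduction making the projection constantly equal to $1$ --- verifying that replacing $x_{n}$ by $g_{n}=s_{n}^{-1}x_{n}$ preserves membership in $C_{n}$ and turns $1$ into a genuine nearest projection --- together with the bookkeeping that checks all hypotheses of \textbf{Lemma \ref{noperpofhorospheres}} (metrizability from geometricity, and the identification of $G+_{\partial}Y$ with the attractor-sum via the convergence property). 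Once that reduction is in place, the contradiction is immediate.
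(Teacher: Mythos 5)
Your argument is correct and is essentially the paper's own proof: assume $A$ infinite, pick one representative $g_{n}$ per coset normalized so that $d(1,g_{n}) = d(Stab_{\varphi}(p),g_{n})$, observe that $\{g_{n}\}$ converges to $p$ because $\partial(\bigcup A) = \{p\}$, and contradict \textbf{Lemma \ref{noperpofhorospheres}} with the constant projection $h_{n}=1$. You simply spell out the details (left-invariance giving the normalized representative, and the compactness argument for convergence) that the paper leaves implicit.
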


\begin{proof}Suppose there exists $A \subseteq \{Stab_{\varphi}(p)g: g \in G\}$ such that $A$ is infinite and $\partial(\bigcup A) = \{p\}$. Take $\{g_{n}\}_{n\in\N}$ a wandering sequence such that $\forall n \in \N$, $Stab_{\varphi}(p)g_{n} \in A$ and $d(1,g_{n}) = d(Stab_{\varphi}(p),g_{n})$. Since $\partial(\bigcup A) = \{p\}$, we have that $\{g_{n}\}_{n\in\N}$ converges to $p$, a contradiction to \textbf{Lemma \ref{noperpofhorospheres}}.
\end{proof}

\begin{prop}\label{parabolicpoints}Let $L+\varphi: G\curvearrowright G+_{\partial}Y$ be a geometric action and $p \in Y$. Then $p$ is bounded parabolic with respect to $L+\varphi$ if and only if it is bounded parabolic with respect to $\varphi$.
\end{prop}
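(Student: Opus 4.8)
The plan is to exploit that both notions of bounded parabolicity involve the \emph{same} group: since $p\in Y$ and $L+\varphi$ restricts to $\varphi$ on $Y$, we have $Stab_{L+\varphi}p=Stab_{\varphi}p=:H$. The task therefore reduces to comparing the properly discontinuous cocompact condition for $H$ acting on $Y-\{p\}$ with that for $H$ acting on $(G+_{\partial}Y)-\{p\}=G\sqcup(Y-\{p\})$. The implication from $L+\varphi$ to $\varphi$ is purely topological and uses no geometricity: proper discontinuity restricts to the invariant subspace $Y-\{p\}$ because every compact subset of $Y-\{p\}$ is compact in $(G+_{\partial}Y)-\{p\}$; and if $K$ is compact with $(L+\varphi)(H,K)=(G+_{\partial}Y)-\{p\}$, then $K\cap Y$ is compact (as $Y$ is closed in $G+_{\partial}Y$), and invariance of $Y$ gives $k=(L+\varphi)(h^{-1},y)\in Y$ whenever $(L+\varphi)(h,k)=y\in Y$, so $\varphi(H,K\cap Y)=Y-\{p\}$.

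For the converse I would first record that a geometric action has the convergence property (Proposition 4.3.1 of \cite{Ge2}), so $L+\varphi$ is a convergence action, and then split the argument into cocompactness and proper discontinuity. Cocompactness of $H$ on $(G+_{\partial}Y)-\{p\}$ is essentially packaged by the two preceding lemmas: the cocompactness-characterization lemma reduces it, given cocompactness of $H$ on $Y-\{p\}$ (which is part of $p$ being bounded parabolic for $\varphi$), to the non-existence of an infinite $A\subseteq\{Stab_{\varphi}(p)g:g\in G\}$ with $\partial(\bigcup A)=\{p\}$, and the lemma immediately preceding this proposition rules out exactly such an infinite $A$ using geometricity.

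It then remains to establish proper discontinuity of $H$ on $(G+_{\partial}Y)-\{p\}$, and here I would argue by contradiction through the convergence property. If $\{h\in H:hK\cap K\neq\emptyset\}$ were infinite for some compact $K\subseteq(G+_{\partial}Y)-\{p\}$, I would pick distinct $h_{n}\in H$ and $k_{n},k'_{n}\in K$ with $h_{n}k_{n}=k'_{n}$; being wandering, $\{h_{n}\}$ has a collapsing subnet with attracting and repelling points $a,b$. Both must lie in $Y$, since an isolated point of the discrete open set $G$ cannot be such a point (the $h_{n}$ are injective, so a two-point compact set cannot be pushed uniformly into a singleton). Restricting the collapse to the closed invariant subspace $Y$ yields a collapsing net for the convergence action $\varphi|_{Y}$ with the same points $a,b\in Y$, and since $p$ is bounded parabolic for $\varphi|_{Y}$ the standard behaviour of parabolic stabilizers forces $a=b=p$. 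Consequently $h_{n}(K)$ converges uniformly to $p$, whence $k'_{n}=h_{n}(k_{n})\to p$, contradicting $p\notin K$.

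I expect the main obstacle to be precisely this last step: justifying that every escaping sequence in the parabolic stabilizer collapses with both attracting and repelling point equal to $p$, and transferring this collapse faithfully between $Y$ and $G+_{\partial}Y$. By contrast, the cocompactness half is almost immediate once the two preceding lemmas are invoked, and the direction from $L+\varphi$ to $\varphi$ is routine.
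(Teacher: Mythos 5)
Your proposal is correct and follows essentially the same route as the paper: the forward direction is immediate, cocompactness of $Stab_{\varphi}p$ on $(G+_{\partial}Y)-\{p\}$ is obtained from the two preceding lemmas, and proper discontinuity comes from the convergence property of the geometric action together with the fact that the limit set of $Stab_{\varphi}p$ is $\{p\}$. The only difference is that where you re-derive by hand that a wandering sequence in the stabilizer collapses with attracting and repelling point both equal to $p$, the paper simply cites Bowditch's result that a convergence group acts properly discontinuously outside its limit set.
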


\begin{proof}$(\Rightarrow)$ Immediate.

$(\Leftarrow)$ Let $p$ be a bounded parabolic point with respect to $\varphi$. We have that $Stab_{\varphi}p$ acts with the convergence property on $G+_{\partial}Y$, which implies that it acts properly discontinuously outside the limit set of  $Stab_{\varphi}p$ (\cite{Bo2}). But the limit set is just $\{p\}$, which implies that  $Stab_{\varphi}p$ acts properly discontinuously on $G+_{\partial}Y - \{p\}$. The cocompactness comes from the previous two lemmas.
\end{proof}

\begin{obs}For relatively hyperbolic actions this proposition is a simple consequence of the Main Theorem (b) of \cite{Ge1}.
\end{obs}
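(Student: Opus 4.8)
The plan is to bypass the three geometric lemmas above (which invoke the Main Lemma of \cite{GP} and explicit geodesic hulls) and instead read both notions of bounded parabolicity off the intrinsic dynamics of the limit set, using the conical/bounded-parabolic dichotomy of the Main Theorem (b) of \cite{Ge1}. As in the direct proof of \textbf{Proposition \ref{parabolicpoints}}, the implication $(\Rightarrow)$ is immediate, so the whole content is the converse: a point $p \in Y$ that is bounded parabolic for $\varphi$ must be bounded parabolic for $L+\varphi$.

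First I would record that, in the relatively hyperbolic case, the ambient action $L+\varphi$ is itself relatively hyperbolic. Indeed $L+\varphi$ is geometric, hence has the convergence property (Proposition 4.3.1 of \cite{Ge2}); since $G$ is dense and open in the metrizable space $G+_{\partial}Y$ with remainder $Y$, the group $G$ lies in the domain of discontinuity and the limit set of $L+\varphi$ is exactly $Y$, on which $G$ acts precisely as $\varphi$. As $\varphi$ is relatively hyperbolic, the induced action on distinct pairs of $Y$ is cocompact, and the action of $G$ on its limit set $Y$ is minimal (a standard fact for convergence groups), so $L+\varphi$ is relatively hyperbolic as well. Applying the Main Theorem (b) of \cite{Ge1} to each of the two actions on the limit set $Y$, I obtain that every point of $Y$ is conical or bounded parabolic with respect to $\varphi$, and, separately, conical or bounded parabolic with respect to $L+\varphi$.

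The key step is that conicality is intrinsic to the limit set, i.e.\ $p$ is conical for $L+\varphi$ if and only if it is conical for $\varphi$. The forward direction is trivial, since $Y - \{p\} \subseteq (G+_{\partial}Y) - \{p\}$ and one only has to check fewer points. For the converse, suppose $p$ is conical for $\varphi$, witnessed by an infinite $K \subseteq G$; passing to a collapsing subnet via the convergence property of $L+\varphi$ produces an attracting point $a$ and a repelling point $b$, both in $Y$, with the orbit maps converging to $a$ uniformly on compact subsets of $(G+_{\partial}Y) - \{b\}$. Conicality on $Y$ forces $a \neq b$ and identifies the repelling point as $p$ itself, so the same collapsing subnet witnesses conicality of $p$ for $L+\varphi$. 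I expect the only point needing care to be exactly this extension of the uniform convergence from $Y - \{p\}$ to all of $(G+_{\partial}Y) - \{p\}$, that is, ruling out hidden repelling behaviour inside the discrete part $G$; this is precisely where the convergence property of the ambient action is used.

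Finally, since a bounded parabolic point is never conical, the two dichotomies combine into the chain of equivalences: $p$ is bounded parabolic for $\varphi$ iff $p$ is not conical for $\varphi$ iff $p$ is not conical for $L+\varphi$ iff $p$ is bounded parabolic for $L+\varphi$. This yields \textbf{Proposition \ref{parabolicpoints}} for relatively hyperbolic actions directly from the Main Theorem (b) of \cite{Ge1}, with no appeal to the preceding lemmas.
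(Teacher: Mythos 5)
Your conicality comparison is essentially sound (and the nontrivial direction can indeed be made rigorous along the lines you sketch), but the last link of your chain --- ``$p$ not conical for $L+\varphi$ implies $p$ bounded parabolic for $L+\varphi$'' --- is exactly where the argument breaks. By the paper's definition, bounded parabolicity for $L+\varphi$ means that $Stab_{\varphi}p$ acts properly discontinuously and cocompactly on $(G+_{\partial}Y)-\{p\}$, the complement in the \emph{whole compactum}, including the copy of $G$. You invoke the dichotomy, in your own words, ``to each of the two actions on the limit set $Y$''; but the restriction of $L+\varphi$ to $Y$ is just $\varphi$, so this produces the same statement twice (conical or bounded parabolic \emph{with respect to} $\varphi$) and says nothing about cocompactness on $(G+_{\partial}Y)-\{p\}$. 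Nor can you substitute the Tukia--Gerasimov statement quoted in the paper, since it requires a minimal action, and $L+\varphi$ on $G+_{\partial}Y$ is not minimal ($Y$ is a proper closed invariant subset). The two notions of bounded parabolicity differ precisely in the group part of the compactum --- ruling out infinitely many cosets $(Stab_{\varphi}p)g$ accumulating only at $p$ is the whole content of the three lemmas preceding \textbf{Proposition \ref{parabolicpoints}} --- so your opening premise, that both notions can be read off the intrinsic dynamics of the limit set, is the conceptual error: limit-set dynamics alone cannot distinguish them.

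What the remark intends is more direct. For relatively hyperbolic $\varphi$, the space $G+_{\partial}Y$ is the attractor sum of $\varphi$, and by \cite{Ge2} the attractor-sum action is $3$-discontinuous and $2$-cocompact \emph{on the compactum $G+_{\partial}Y$ itself} (note that your justification of relative hyperbolicity of $L+\varphi$ via cocompactness on distinct pairs of $Y$ only is insufficient for this). The Main Theorem (b) of \cite{Ge1}, applied to this compactum, then states that for each parabolic point $p$ the stabilizer acts cocompactly on $(G+_{\partial}Y)-\{p\}$; proper discontinuity off the limit set $\{p\}$ of $Stab_{\varphi}p$ is standard for convergence actions, as in the paper's own proof of \textbf{Proposition \ref{parabolicpoints}}. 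With that compactum-level statement in hand, only the trivial direction of your conicality comparison is needed ($p$ not conical for $\varphi$ implies $p$ not conical for $L+\varphi$), so the hard direction you prove is superfluous; without it, the implication from non-conicality to compactum-level bounded parabolicity is precisely the assertion to be proved, and your chain is circular at that point.
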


\begin{teo}\label{geoconv}Let $G$ be a finitely generated group, $L+\varphi: G \curvearrowright G+_{\partial_{c}}Z$ a geometric action  $P \subseteq Z$ the set of bounded parabolic points, $P' \subseteq P$ a representative set of orbits, $\mathcal{C} = \{C_{p}\}_{p\in P'}$ a family of compact Hausdorff spaces and $\{Stab_{\varphi} p+_{\partial_{p}}C_{p}\}_{p\in P'}$ a family of spaces with convergence actions $\eta = \{\eta_{p}\}_{p\in P'}$, with $L_{p}+\eta_{p}: Stab_{\varphi} p \curvearrowright Stab_{\varphi} p +_{\partial_{p}}C_{p}$ (where $L_{p}$ is the left multiplication action). Then the action $\varphi \ltimes \eta: G \curvearrowright X$ has the convergence property, where $X = Z \ltimes \mathcal{C}$.
\end{teo}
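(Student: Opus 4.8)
The plan is to verify the convergence property directly on $X = Z\ltimes\mathcal{C}$ by analyzing wandering nets, using the geometric structure on $G+_{\partial_c}Z$ to control the behaviour near the blown-up parabolic points and the lifting lemma to transport convergence through the projection $\pi\colon X\to Z$. Since geometric actions have the convergence property (Proposition 4.3.1 of \cite{Ge2}) and $Z$ is a closed $G$-invariant subset of $G+_{\partial_c}Z$, the action $\varphi$ has the convergence property. Given a wandering net $\{g_{\gamma}\}$ in $G$, I would first pass to a subnet so that $\{g_{\gamma}\}$ collapses on $Z$ with attracting point $a_0$ and repelling point $b_0$; moreover, inside $G+_{\partial_c}Z$ we then have $g_{\gamma}\to a_0$ and $g_{\gamma}^{-1}\to b_0$ as group elements (because $1\neq b_0$ and $1\neq a_0$). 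The goal is to produce $a,b\in X$ with $\pi(a)=a_0$, $\pi(b)=b_0$ such that $\{g_{\gamma}\}$ collapses on $X$ with attracting point $a$ and repelling point $b$. Throughout I use that, by \textbf{Proposition \ref{parabolicpoints}}, the bounded parabolic points of $L+\varphi$ and of $\varphi$ coincide, so the geometric machinery, and in particular \textbf{Lemma \ref{noperpofhorospheres}}, is available around every point of $P$.

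The map $\pi$ is continuous, closed and equivariant, with $\#\pi^{-1}(z)=1$ for $z\notin P$. For a compact $K\subseteq X$ whose image $\pi(K)$ lies in a compact subset of $Z\setminus\{b_0\}$, the nets $\{g_{\gamma}\pi(x)\}_{x\in K}$ converge uniformly to $a_0$; if $a_0\notin P$, then $\#\pi^{-1}(a_0)=1$ and \textbf{Proposition \ref{liftingnet}} lifts this to $g_{\gamma}K\to\pi^{-1}(a_0)$ uniformly. Hence when neither $a_0$ nor $b_0$ lies in $P$ the collapse on $X$ follows immediately, with $a=\pi^{-1}(a_0)$ and $b=\pi^{-1}(b_0)$. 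This reduces the problem to locating the attracting (resp. repelling) point inside the fibre $\pi^{-1}(a_0)\cong C_{a_0}$ (resp. $\pi^{-1}(b_0)\cong C_{b_0}$) when $a_0$ (resp. $b_0$) is parabolic; replacing $\{g_{\gamma}\}$ by $\{g_{\gamma}^{-1}\}$ interchanges the two, so I treat only the attracting side.

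Suppose now $a_0=p\in P$ and set $H=Stab_{\varphi}p$. Since $p$ is bounded parabolic, $H$ acts properly discontinuously and cocompactly on $Z-\{p\}$, and the attractor-sum functor $\Pi$ carries the convergence compactification $H+_{\partial_p}C_p$ to $X_p=(Z-\{p\})+_{(\partial_p)_{\Pi}}C_p$; by uniqueness of the convergence compactification (Proposition 8.3.1 of \cite{Ge2}) the induced action $H\curvearrowright X_p$ again has the convergence property, with limit set $C_p$. I would write $g_{\gamma}=h_{\gamma}k_{\gamma}$ with $h_{\gamma}\in H$ and $k_{\gamma}$ a shortest representative of the coset $Hg_{\gamma}$ in the word metric. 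If only finitely many cosets occur, a subnet has $k_{\gamma}=k$ constant, so $h_{\gamma}$ is a wandering net in $H$ which, after a further subnet, collapses on $X_p$ to a pair $(a_1,b_1)\in C_p\times C_p$. As $\varpi_p\colon X\to X_p$ is $H$-equivariant, continuous, closed and restricts to a homeomorphism on $\pi^{-1}(p)$ — so that $\#\varpi_p^{-1}(a_1)=\#\varpi_p^{-1}(b_1)=1$ — \textbf{Proposition \ref{liftingnet}} lifts the collapse to $X$, and composing with the fixed homeomorphism $\psi(k)$ via \textbf{Proposition \ref{composicaouniforme}} gives the desired collapse of $\{g_{\gamma}\}$, with attracting point the lift of $a_1$.

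The main obstacle is the remaining regime, where infinitely many cosets occur: here \textbf{Lemma \ref{noperpofhorospheres}} forces $\{h_{\gamma}\}$ to be wandering while $\{k_{\gamma}\}$ is unbounded, so $\psi(k_{\gamma})$ can no longer be treated as a fixed map. The plan is to use the $H$-equivariance of $\varpi_p$ to write $\varpi_p(g_{\gamma}x)=h_{\gamma}\bigl(\varpi_p(k_{\gamma}x)\bigr)$, and then to show, using that the $k_{\gamma}$ are shortest coset representatives (entrance points of the horoball at $p$) together with the geodesic-smallness of the geometric compactification, that for $x$ away from the repelling fibre the points $\varpi_p(k_{\gamma}x)$ stay in a compact part of $X_p$ uniformly bounded away from the repelling point $b_1\in C_p$ of $\{h_{\gamma}\}$. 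The convergence of $\{h_{\gamma}\}$ on $X_p$ then funnels them to $a_1$, and a last application of \textbf{Proposition \ref{liftingnet}} through $\varpi_p$ pins the attracting point inside $\pi^{-1}(p)$. Establishing this uniform separation — equivalently, that the horospherical part $h_{\gamma}$, rather than the transversal part $k_{\gamma}$, dictates the limit in the fibre — is the delicate step, and it is exactly where the geometric hypothesis is indispensable; the repelling fibre is then handled symmetrically by passing to $\{g_{\gamma}^{-1}\}$.
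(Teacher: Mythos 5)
Your overall strategy is the paper's: collapse the net on $Z$ first, reduce to the case where the attracting point is a bounded parabolic point, decompose each group element as an element of the stabilizer times a shortest coset representative, and invoke \textbf{Lemma \ref{noperpofhorospheres}} together with \textbf{Propositions \ref{liftingnet}} and \textbf{\ref{composicaouniforme}}. But the step you flag as ``delicate'' is precisely the heart of the proof, and you leave it unproved. The paper's mechanism is concrete: in the regime where every coset $(Stab_{\varphi}a)g$ meets the net finitely, \textbf{Lemma \ref{noperpofhorospheres}} gives not only that $\{h_{\gamma}\}$ wanders in $Stab_{\varphi}a$ but also that $a$ is \emph{not} a cluster point of the shortest representatives $\{k_{\gamma}\}$; one then passes to a subnet so that $\{k_{\gamma}\}$ itself collapses on $Z$ with attracting point $a'\neq a$ and repelling point $b'$, lifts this to $X_{a}$ where $\pi_{a}^{-1}(a')$ is a \emph{single point lying outside} $C_{a}$ (only the fibre over $a$ is nontrivial in $X_{a}$), and observes that it is therefore automatically distinct from the repelling point $d\in C_{a}$ of $\{h_{\gamma}\}$; \textbf{Proposition \ref{composicaouniforme}} then funnels everything to the attracting point $c\in C_{a}$ of $\{h_{\gamma}\}$. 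The ``uniform separation'' you need is thus not a quantitative geodesic estimate but the qualitative fact that the limit of $\varpi_{a}(k_{\gamma}x)$ sits outside the fibre while the repelling point of $\{h_{\gamma}\}$ sits inside it; your sketch never identifies this, so the hardest case remains open.

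The second gap is the symmetry reduction. Replacing $\{g_{\gamma}\}$ by $\{g_{\gamma}^{-1}\}$ does interchange attracting and repelling data, but what your argument produces on each side is uniform convergence off an entire \emph{fibre}, not off a single point: knowing that $g_{\gamma}\to a$ uniformly on $X-\pi^{-1}(b_0)$ and that $g_{\gamma}^{-1}\to b$ uniformly on $X-\pi^{-1}(a_0)$ does not imply that $g_{\gamma}\to a$ uniformly on $X-\{b\}$, since points of $\pi^{-1}(b_0)-\{b\}$ could be carried to points of $\pi^{-1}(a_0)-\{a\}$. The paper instead shows directly that the convergence extends over the repelling fibre: its base point $b'$ is either conical, in which case $\#\pi^{-1}(b')=1$, or not conical, in which case a further subnet carries $b'$ itself to $a'$ and hence the whole fibre $\pi^{-1}(b')$ to the attracting point, yielding uniform convergence on all of $X$. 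Some version of this step is unavoidable and is missing from your plan.
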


\begin{obs}During this proof, when there is any subset of $G$, we change freely between the elements of the group and their respective actions on the spaces $X$, $X_{p}$ and $Z$. In each sentence it is clear which meaning we are using. So there should be no ambiguity.
\end{obs}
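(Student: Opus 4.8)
The final statement is a Remark that fixes notation for the proof of Theorem \ref{geoconv}; it is a convention rather than an assertion, so strictly speaking there is nothing to prove. The only content worth recording is \emph{why} the abuse of language it sanctions is harmless, and the plan is to make that explicit. When a subset $A \subseteq G$ (typically a wandering net $\{g_{n}\}$ of group elements) is invoked in the argument that follows, the same symbol may be read in several compatible ways: as a set of group elements; as the family of homeomorphisms $\varphi\ltimes\eta(g,\_)$ of $X$; as the family of maps $\psi_{q}(g,\_)$ between the factors $X_{q}$ of the inverse limit; and as the family of homeomorphisms $\varphi(g,\_)$ of $Z$. The Remark asserts only that each occurrence refers to a definite one of these, selected by the context of the sentence.

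To justify the convention I would simply recall the equivariance of the two structural maps built during the construction of $\varphi\ltimes\eta$. For every $g \in G$ and every $p \in P$ the diagrams
$$\xymatrix{ X \ar[d]_{\varpi_{p}} \ar[r]^{\varphi\ltimes\eta(g,\_)} & X \ar[d]^{\varpi_{\varphi(g,p)}} \\ X_{p} \ar[r]_{\psi_{p}(g,\_)} & X_{\varphi(g,p)} }$$
$$\xymatrix{ X \ar[d]_{\pi} \ar[r]^{\varphi\ltimes\eta(g,\_)} & X \ar[d]^{\pi} \\ Z \ar[r]_{\varphi(g,\_)} & Z }$$
commute. Consequently, for any subset $S \subseteq X$ one has $\varpi_{\varphi(g,p)}(gS) = g\,\varpi_{p}(S)$ and $\pi(gS) = g\,\pi(S)$, so reading $g$ as acting on $X$, on a factor $X_{p}$, or on the base $Z$ yields results that correspond to one another under the canonical projections. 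This is exactly the compatibility that legitimizes passing freely between the several incarnations of $g$.

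There is no genuine obstacle here: the single point that underlies the convention, namely the compatibility of the actions under $\varpi_{p}$ and $\pi$, was already established when these maps were shown to be equivariant. The Remark exists only to streamline the convergence argument of Theorem \ref{geoconv}, in which one repeatedly moves a sequence $\{g_{n}\} \subseteq G$ back and forth between its action on $X$, its actions on the factors $X_{p}$, and its action on $Z$ without renaming it at each step.
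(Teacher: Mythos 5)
Your reading is correct: the Remark is a notational convention with no proof in the paper, and the compatibility you cite — the commutativity of $\varphi\ltimes\eta(g,\_)$ with the projections $\varpi_{p}$ and $\pi$, established in the construction of the parabolic blowup — is exactly the implicit content that makes the convention harmless. This matches the paper's (tacit) justification, so there is nothing to add.
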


\begin{proof}Let $\pi: X \rightarrow Z$, $\varpi_{p}: X \rightarrow X_{p}$ and $\pi_{p}: X_{p} \rightarrow Z$ be the projection maps, where $X_{p} = Z-\{p\}+_{\partial_{p\Pi}}C_{p}$. Let $\Phi \subseteq G$ be a wandering sequence.

Let's suppose that there exists $p \in P$ such that $\Phi\cap Stab_{\varphi}p$ is infinite. Since the action of $Stab_{\varphi}p$ on $X_{p}$ has convergence property and $\varpi_{p}$ is $Stab_{\varphi}p$-equivariant and non ramified over the limit set of $X_{p}$, we have that $\varphi \ltimes \eta$ restricted to $Stab_{\varphi}p$ has the convergence property (Proposition 7.6.1 of \cite{Ge2}). So there exists a collapsing subsequence $\Phi'$ of $\Phi$.

Let's suppose now that $\forall p \in P$, $\Phi \cap Stab_{\varphi}p$ is finite.

Suppose that there exists $g \in G$ and $p \in P$ such that $\Phi \cap (Stab_{\varphi}p)g$ is infinite. Then $\Phi g^{-1} \cap (Stab_{\varphi}p)$ is infinite, where $\Phi g^{-1} = \{g_{n}g^{-1}\}_{n\in\N}$ and $\Phi = \{g_{n}\}_{n\in\N}$. We have that there exists $\Phi'$ a collapsing subsequence of $\Phi g^{-1}$ with attracting point $r \in X$ and repelling point $s \in X$. So $\Phi'g$ is a collapsing subsequence of $\Phi$ with attracting point $r$ and repelling point $\varphi \ltimes \eta(g^{-1},s)$.

Let's suppose now that $\forall g \in G$, $\forall p\in P$, $\Phi \cap (Stab_{\varphi}p)g$ is finite.

Since $\varphi$ has the convergence property, there exists $\Phi'$ a collapsing subsequence of $\Phi$ with attracting point $a \in Z$ and repelling point $b \in Z$.

Suppose that $a$ and $b$ are not bounded parabolic. Then $\#\pi^{-1}(a) = \#\pi^{-1}(b) = 1$. We have that $\Phi'|_{Z-\{b\}}$ converges uniformly to $a$, which implies that $\Phi'|_{X - \{\pi^{-1}(b)\}}$ converges uniformly to $\pi^{-1}(a)$ (\textbf{Proposition \ref{liftingnet}}).

Suppose that $a$ or $b$ is bounded parabolic.

If $a$ is not bounded parabolic, then $b$ is bounded parabolic, which implies that it is not a conical point. So $ \exists b' \neq b$ such that $Cl_{Z^{2}}(\{(\varphi(g,b), \varphi(g,b')): g \in \Phi'\}) \cap \Delta Z \neq \emptyset$. Since the sequence defined by $\Phi'|_{b'}$ converges to $a$, we have that there exists $\Phi''$ a subsequence of $\Phi'$ such that $\Phi''|_{b}$ converges to $a$. So $\Phi''|_{Z}$ converges uniformly to $a$, which implies that $\Phi''|_{X}$ converges uniformly to $\pi^{-1}(a)$ (since $\#\pi^{-1}(a) = 1$).

If $a$ is bounded parabolic, take $\Phi''$ a subsequence of $\Phi'$ such that $\forall g \in G$, $\#\Phi'' \cap (Stab_{\varphi}a)g \leqslant 1$. By the \textbf{Lemma \ref{noperpofhorospheres}}, we can write $\Phi''$ as $\{h_{n}g_{n}\}_{n\in\N}$ with $d(h_{n},h_{n}g_{n}) = d(Stab_{\varphi}a,h_{n}g_{n})$ (where $d$ is the word metric with respect to a finite set of generators of $G$ such that $L +\varphi$ is geometric) and $\{h_{n}\}_{n \in \N}$ is a wandering sequence in $Stab_{\varphi}a$. We also have that $\{g_{n}\}_{n\in\N}$ is a wandering sequence. Since  $d(1,g_{n}) = d(Stab_{\varphi}a,g_{n})$, we have by the \textbf{Lemma \ref{noperpofhorospheres}} that $a$ is not a cluster point of $\{g_{n}\}_{n\in\N}$. Let $\Phi''' = \{h_{n_{i}}g_{n_{i}}\}_{i\in\N}$ be a subsequence of $\Phi''$ such that $\{g_{n_{i}}\}_{i\in\N}$ is a collapsing subsequence of $\{g_{n}\}_{n\in\N}$ with attracting point $a' \in Z$ and repelling point $b' \in Z$ and $\{h_{n_{i}}\}_{i\in\N}$ is a collapsing subsequence of $\{h_{n}\}_{n\in\N}$ with attracting point $c \in C_{a}$ and repelling point $d \in C_{a}$. We have that $a'$ is a cluster point of $\{g_{n}\}_{n\in\N}$, which implies that $a \neq a'$.

We have that $\{g_{n_{i}}\}_{i \in \N}|_{Z-\{b'\}}$ converges uniformly to $a'$, which implies that the set of liftings of $\{g_{n_{i}}\}_{i \in \N}|_{Z-\{b'\}}$ to $X_{a}$ converges uniformly to $\pi_{a}^{-1}(a')$. We have also that $\{h_{n_{i}}\}_{i\in\N}|_{X_{a} - \{d\}}$ converges uniformly to $c$. Since $\pi_{a}^{-1}(a')\neq d$, the set of liftings of $\Phi'''|_{Z-\{b'\}}$ to $X_{a}$ converges uniformly to $c$ (\textbf{Proposition \ref{composicaouniforme}}). Then $\Phi'''|_{X-\{\pi^{-1}(b')\}}$, which is contained on the set of liftings of $\Phi'''|_{Z-\{b'\}}$ to $X$, converges uniformly to $\varpi_{a}^{-1}(c)$.

If $b'$ is a conical point, then $\#\pi^{-1}(b') = 1$ and we are done. If $b'$ is not a conical point, there exists $b'' \neq b'$ such that $Cl_{Z^{2}}\{(\varphi(g_{n_{i}},b'),\varphi(g_{n_{i}},b'')): i \in \N\} \cap \Delta Z \neq \emptyset$. Since $b'' \neq b'$, we have that $\{\varphi(g_{n_{i}},b'')\}_{i \in \N}$ converges to $a'$. So $a'$ is a cluster point of $\{\varphi(g_{n_{i}},b')\}_{i \in \N}$. Let $\Phi^{(4)} = \{h_{n_{i_{j}}}g_{n_{i_{j}}}\}_{j \in \N}$ be a subsequence of $\Phi'''$ such that $\{\varphi(g_{n_{i_{j}}},b')\}_{j \in \N}$ converges to $a'$. So the set of liftings of $\{g_{n_{i_{j}}}\}_{j\in\N}|_{\{b'\}}$ to $X_{a}$ converges uniformly to $\pi_{a}^{-1}(a')$, then the set of liftings of $\Phi^{(4)}|_{\{b'\}}$ to $X_{a}$ converges uniformly to $c$ and then the set of liftings of $\Phi^{(4)}|_{\{b'\}}$ to $X$ (that contains $\Phi^{(4)}|_{\{\pi^{-1}(b')\}}$) converges uniformly to $\varpi_{a}^{-1}(c)$. Since $\Phi^{(4)}|_{X-\{\pi^{-1}(b')\}}$ already converges uniformly to $\varpi_{a}^{-1}(c)$, then $\Phi^{(4)}$ converges uniformly on $X$ to $\varpi_{a}^{-1}(c)$.

Thus $\varphi \times \eta$ has the converges property.
\end{proof}

\begin{cor}Let $X,Z$ be Hausdorff compact spaces, $G$ a finitely generated group, $\psi: G \curvearrowright X$ and $\varphi: G \curvearrowright Z$ actions by homeomorphisms, $P$ the set of bounded parabolic points of $Z$ and a continuous surjective equivariant map $\alpha: X \rightarrow Z$ such that $\forall x \in Z-P$, $\#\alpha^{-1}(x) = 1$. If $\forall p \in P$, $\psi|_{Stab_{\varphi}p\times X}$ has the convergence property, $\varphi$ has the convergence property and $G+_{\partial_{c}}Z$ is geometric with respect to some finite set of generators of $G$, then $\psi$ has the convergence property.
\end{cor}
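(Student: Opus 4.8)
The plan is to chain two results already established in the paper: \textbf{Theorem \ref{semidirectforconv}}, which recognizes $X$ as a parabolic blowup once the stabilizer actions converge, and \textbf{Theorem \ref{geoconv}}, which produces the convergence property for a blowup sitting over a geometric base. The corollary is essentially the composite of these two statements, so the work lies in matching their data.

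First I would apply \textbf{Theorem \ref{semidirectforconv}}, whose hypotheses hold verbatim: $\alpha$ is continuous, surjective and equivariant with $\#\alpha^{-1}(x)=1$ for every $x\in Z-P$, and for each $p\in P$ the restriction $\psi|_{Stab_{\varphi}p\times X}$ has the convergence property. This furnishes an equivariant homeomorphism $T\colon X\to Z\ltimes\mathcal{C}$ commuting with the projections to $Z$, the blowup being assembled from the fibers $C_{p}=\alpha^{-1}(p)$ and the fiber actions $\eta_{p}$. Since $T$ is equivariant and respects the maps to $Z$, the action $\eta_{p}$ on $C_{p}$ is, up to $T$, exactly the restriction of $\psi$ to $Stab_{\varphi}p\times\alpha^{-1}(p)$. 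Moreover, as was already isolated inside the proof of \textbf{Theorem \ref{semidirectforconv}}, the fiber compactification $Stab_{\varphi}p+_{\partial_{p}}C_{p}$ is, by the uniqueness of the attractor-sum (Proposition 8.3.1 of \cite{Ge2}), the attractor-sum of the $Stab_{\varphi}p$-action on $C_{p}$.

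Next I would check the one hypothesis of \textbf{Theorem \ref{geoconv}} not yet recorded, namely that each $\eta_{p}$ is a convergence action. This is immediate: because $\alpha$ is equivariant and $p$ is fixed by $Stab_{\varphi}p$, the set $C_{p}=\alpha^{-1}(p)$ is a closed $Stab_{\varphi}p$-invariant subspace of $X$, so the convergence property of $\psi|_{Stab_{\varphi}p\times X}$ restricts to it; a wandering net in $C_{p}$ is wandering in $X$, and the collapsing subnet extracted in $X$ still collapses uniformly on $C_{p}$ minus at most its repelling point, the attracting point lying in the closed invariant set $C_{p}$. With the base action $L+\varphi\colon G\curvearrowright G+_{\partial_{c}}Z$ geometric by hypothesis (which presupposes, as stated, that $\varphi$ converges so that the attractor-sum exists) and the fibers now carrying convergence actions, \textbf{Theorem \ref{geoconv}} yields that $\varphi\ltimes\eta\colon G\curvearrowright Z\ltimes\mathcal{C}$ has the convergence property. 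Transporting along the equivariant homeomorphism $T$, which carries wandering nets to wandering nets and collapsing subnets to collapsing subnets, then shows that $\psi\colon G\curvearrowright X$ has the convergence property.

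The main obstacle is the compatibility bookkeeping of the second paragraph: I must ensure that the perspective compactifications manufactured by \textbf{Theorem \ref{semidirectforconv}} are precisely the attractor-sum compactifications of the fiber convergence actions demanded by \textbf{Theorem \ref{geoconv}}, rather than merely abstractly perspective ones. This identification is exactly what licenses feeding the output of the first theorem into the second, and it rests on the uniqueness of the convergence compactification together with the fact that the attractor-sum functor $\Pi$ relating $Stab_{\varphi}p+_{\partial_{p}}C_{p}$ to $X_{p}=(Z-\{p\})+_{\partial_{p\Pi}}C_{p}$ transports the convergence property, both of which were already used in the proofs of \textbf{Theorems \ref{semidirectforconv}} and \textbf{\ref{geoconv}}.
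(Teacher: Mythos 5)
Your proposal is correct and follows exactly the paper's route: the paper's proof is literally ``Immediate from the last theorem [Theorem \ref{geoconv}] and \textbf{Theorem \ref{semidirectforconv}}'', and you have simply spelled out the composition, including the (correct) verifications that the fiber actions inherit the convergence property on the closed invariant sets $\alpha^{-1}(p)$ and that the perspective compactifications produced by \textbf{Theorem \ref{semidirectforconv}} are the attractor-sums required by \textbf{Theorem \ref{geoconv}}.
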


\begin{proof}Immediate from the last theorem and \textbf{Theorem \ref{semidirectforconv}}.
\end{proof}

When the base of a parabolic blowup comes from a geometric compactification, we have a characterization of the blowup maps that is stronger than \textbf{Theorem \ref{surjectiveness}}:

\begin{teo}\label{geoblowup}Let $X,Z$ be metrizable compact spaces, $\psi: G \curvearrowright X$ an action by homeomorphisms, $L+\varphi: G \curvearrowright  G+_{\partial_{c}}Z$ a geometric action, $P$ the set of bounded parabolic points of $Z$ and a continuous surjective equivariant map $\pi: X \rightarrow Z$ such that $\forall x \in Z-P$, $\#\pi^{-1}(x) = 1$. For $p \in P$, take $X = W_{p}+_{\delta_{p}}\pi^{-1}(p)$, where $W_{p} = X - \pi^{-1}(p)$ and $\delta_{p}$ is the only admissible map that gives the original topology. Let also decompose $\psi|_{Stab_{\varphi} p \times X}$ as $\psi_{p}^{1}+\psi_{p}^{2}$, with respect to the decomposition of the space. Then $\pi$ is a blowup map if and only if $\forall p \in P$, $X \in EPers(\psi_{p}^{1})$.
\end{teo}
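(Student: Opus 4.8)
The plan is to read the statement against \textbf{Theorem \ref{surjectiveness}}: a blowup map is by definition a map satisfying conditions (1) and (2) there, so the direction ($\Rightarrow$) is immediate, since a blowup map in particular satisfies $\forall p\in P$, $X\in EPers(\psi_{p}^{1})$. The whole content is ($\Leftarrow$): assuming $\forall p\in P$, $X\in EPers(\psi_{p}^{1})$, I must recover the only missing hypothesis of \textbf{Theorem \ref{surjectiveness}}, namely that $\pi$ is topologically quasiconvex, at which point that theorem finishes the job. To get quasiconvexity I would invoke \textbf{Corollary \ref{equivalencetopconvexity}}: writing $\sim=\Delta^{2}X\cup\bigcup_{q\in P}\pi^{-1}(q)^{2}$ and $X_{p}=X/\!\sim_{p}$, where $\sim_{p}$ collapses every fiber except the one over $p$, it suffices to prove that $X_{p}$ is Hausdorff for each $p$. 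For $p\notin P$ this is trivial ($X_{p}\cong Z$), so fix $p\in P$. As in the proof of \textbf{Theorem \ref{semidirectforconv}}, write $X_{p}=V_{p}+_{\partial_{p}}\pi_{p}^{-1}(p)$ with $V_{p}\cong Z-\{p\}$, and note that $\pi$, being continuous between compacta, is proper, so by \textbf{Corollary \ref{liftingparabolic}} the group $Stab_{\varphi}p$ acts properly discontinuously and cocompactly both on $W_{p}=X-\pi^{-1}(p)$ and on $V_{p}\cong Z-\{p\}$.

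The heart of the argument is to manufacture a Hausdorff model of $X_{p}$ out of the perspectivity hypothesis, mimicking \textbf{Theorem \ref{semidirectforconv}} but with the attractor-sum functors replacing the uniqueness of the convergence compactification. Since $X=W_{p}+_{\delta_{p}}\pi^{-1}(p)\in EPers(\psi_{p}^{1})$ and $\psi_{p}^{1}$ is properly discontinuous cocompact, the functor $\Lambda$ sends it to a perspective compactification $Stab_{\varphi}p+_{(\delta_{p})_{\Lambda}}\pi^{-1}(p)\in EPers(Stab_{\varphi}p)$; transferring this through $\Pi$ along the cocompact action $Stab_{\varphi}p\curvearrowright V_{p}$ produces a perspective, hence Hausdorff, space $X_{p}'=V_{p}+_{\delta'}\pi^{-1}(p)$. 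The comparison map $\varpi_{p}|_{W_{p}}+id\colon X\to X_{p}'$ is continuous by \textbf{Proposition \ref{functor}} (whose hypothesis is the trivially continuous identity on $Stab_{\varphi}p+_{(\delta_{p})_{\Lambda}}\pi^{-1}(p)$), it is constant on $\sim_{p}$-classes, and so factors through a continuous bijection $f_{p}\colon X_{p}\to X_{p}'$; as $X_{p}$ is compact and $X_{p}'$ is Hausdorff, $f_{p}$ is a homeomorphism and $X_{p}$ is Hausdorff. Running this for every $p$ and feeding the result into \textbf{Corollary \ref{equivalencetopconvexity}} gives topological quasiconvexity of $\pi$, and then \textbf{Theorem \ref{surjectiveness}} identifies $X$ with a parabolic blowup.

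I expect the delicate point to be the Hausdorffness of $X_{p}$, equivalently the fact that the fibers $\pi^{-1}(q)$ over points $q$ accumulating at $p$ cannot stay ``large''. In the functorial route above this is hidden in the identification that transporting $\delta_{p}$ back through $\Lambda$ and then $\Pi$ genuinely reproduces the original topology on the $W_{p}$-side, so that the source of the comparison map really is $X$ and not a coarser space; this is exactly where I expect metrizability and the geometric hypothesis to be essential, since through \textbf{Proposition \ref{parabolicpoints}} they ensure that the bounded parabolic points of $\varphi$ coincide with those of the attractor-sum and hence that the proper cocompact actions feeding $\Lambda$ and $\Pi$ are the correct ones. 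As a more hands-on alternative for this same obstacle, one could argue directly: using that a geometric action has the convergence property, and applying \textbf{Lemma \ref{noperpofhorospheres}} together with the preceding cocompactness lemmas to the horospherical cosets at $p$, one shows that any net $q_{\gamma}\to p$ in $P-\{p\}$ forces $\pi^{-1}(q_{\gamma})$ to shrink in $X$, which is precisely the geometric incarnation of $X_{p}$ being Hausdorff.
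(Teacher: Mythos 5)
Your reduction of the problem to the topological quasiconvexity of $\pi$ and your appeal to \textbf{Theorem \ref{surjectiveness}} and \textbf{Corollary \ref{equivalencetopconvexity}} match the endgame of the paper's proof, but the way you produce the quasiconvexity is genuinely different. The paper argues dynamically, in a proof explicitly modelled on \textbf{Theorem \ref{geoconv}}: for a fixed entourage $u$ it assumes the set of cosets $\{gStab_{\varphi}p:\psi(g,\pi^{-1}(p))\notin Small(u)\}$ is infinite, extracts a collapsing subsequence using the convergence property of the geometric action, runs the conical/bounded-parabolic dichotomy on the attracting and repelling points, and uses \textbf{Lemma \ref{noperpofhorospheres}} to split the sequence as $h_{n}g_{n}$ with $h_{n}\in Stab_{\varphi}a$; the contradiction is then reached by applying the perspectivity of $EPers(\psi_{a}^{1})$ to a compact set of the form $\pi^{-1}(K)$ with $K=\{\varphi(g_{n_{i}},p)\}\cup\{a'\}$. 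It concludes that each fiber is $\psi$-quasiconvex and invokes \textbf{Proposition \ref{convdimtopsub}}. Your route instead mimics \textbf{Theorem \ref{semidirectforconv}}, replacing the uniqueness of the convergence compactification by the identity $\Pi\circ\Lambda=id$ on $EPers(\psi_{p}^{1})$, and gets Hausdorffness of each $X_{p}$ from a continuous bijection onto the perspective space $V_{p}+_{((\delta_{p})_{\Lambda})_{\Pi}}\pi^{-1}(p)$. That is a shorter and cleaner argument, and each individual step is covered by results quoted in the paper (Theorem 3.2 and Proposition 3.10 of the reference \cite{So}).

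The point you must confront, rather than hedge around, is that your argument as written uses neither metrizability nor the geometric hypothesis anywhere. The identification $W_{p}+_{((\delta_{p})_{\Lambda})_{\Pi}}\pi^{-1}(p)=X$ is exactly $\Pi\Lambda=id$ applied to the object $X\in EPers(\psi_{p}^{1})$, which is a hypothesis of the theorem, not a consequence of geometry; and your suggestion that \textbf{Proposition \ref{parabolicpoints}} is where geometry enters does not hold up, because $P$ is by definition the set of bounded parabolic points of $\varphi$ itself, so the proper cocompact actions of $Stab_{\varphi}p$ on $Z-\{p\}$ and (via \textbf{Corollary \ref{liftingparabolic}}, $\pi$ being automatically proper) on $W_{p}$ are available with no geometric input. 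So either you have proved a strictly stronger statement --- namely that the perspectivity part of condition (1) of \textbf{Theorem \ref{surjectiveness}} already forces topological quasiconvexity, which would contradict the way the paper presents the geometric case as special --- or there is a gap hiding in your appeal to the $\Pi/\Lambda$ machinery. Before the proof can be accepted you need to decide which. A direct check of the only delicate point (two points $x\neq y$ of $\pi^{-1}(p)$ cannot be limits of points $a_{\gamma},b_{\gamma}$ lying in a common fiber $\pi^{-1}(q_{\gamma})$ with $q_{\gamma}\to p$: write $q_{\gamma}=\varphi(h_{\gamma},q'_{\gamma})$ with $h_{\gamma}\in Stab_{\varphi}p$ and $q'_{\gamma}$ in a compact fundamental domain $K_{2}\subseteq Z-\{p\}$, note $\pi^{-1}(q_{\gamma})\subseteq\psi(h_{\gamma},\pi^{-1}(K_{2}))$, and apply perspectivity to the compact $\pi^{-1}(K_{2})$) suggests your route is sound; if so, you should say explicitly that the geometric hypothesis is not used and present the result accordingly, because a proof whose author cannot locate which hypotheses it consumes is not finished.
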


\begin{obs}This proof is surprisingly analogous to Theorem \ref{geoconv}.
\end{obs}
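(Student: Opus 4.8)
The plan is to prove the equivalence by handling the two implications separately; the forward implication is free and the whole point is the converse. If $\pi$ is a blowup map then by \textbf{Theorem \ref{surjectiveness}} it is topologically quasiconvex and satisfies $X\in EPers(\psi_{p}^{1})$ for every $p\in P$, so in particular the asserted perspectivity holds. Conversely, assume $X\in EPers(\psi_{p}^{1})$ for all $p\in P$. By \textbf{Theorem \ref{surjectiveness}} it suffices to produce the one missing ingredient, namely that $\pi$ is topologically quasiconvex. Since $\#\pi^{-1}(x)=1$ for $x\notin P$ and singletons are arbitrarily small, by \textbf{Proposition \ref{quaseconvexidadebase}} this amounts to the null-family statement: for every $u\in\U$ only finitely many fibres $\pi^{-1}(p)$, $p\in P$, are not $u$-small.

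I would argue this by contradiction. If it fails, fix $u\in\U$ and a sequence of pairwise distinct $p_{n}\in P$ with $\pi^{-1}(p_{n})\notin Small(u)$, and pass to a subnet with $p_{n}\to z$ for some $z\in Z$. Recall that $\pi$, being a continuous map of compact Hausdorff spaces, is closed, so its fibres vary upper semicontinuously: for every open $U\supseteq\pi^{-1}(z)$ there is a neighbourhood $V$ of $z$ with $\pi^{-1}(V)\subseteq U$. If $z\notin P$ then $\pi^{-1}(z)$ is a single point; choosing $U$ with $U\times U\subseteq u$ gives $\pi^{-1}(p_{n})\subseteq\pi^{-1}(V)\subseteq U\in Small(u)$ for all large $n$, a contradiction. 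Hence $z\in P$, and this is the decisive case, entirely analogous to the bounded-parabolic case in the proof of \textbf{Theorem \ref{geoconv}}.

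Since $z$ is bounded parabolic, $Stab_{\varphi}z$ acts cocompactly on $Z-\{z\}$; fix a compact $K\subseteq Z-\{z\}$ with $\varphi(Stab_{\varphi}z,K)=Z-\{z\}$ and choose $k_{n}\in Stab_{\varphi}z$ with $q_{n}:=\varphi(k_{n}^{-1},p_{n})\in K$. No value of $k_{n}$ can occur infinitely often, for otherwise along that subnet $q_{n}\to\varphi(k_{n}^{-1},z)=z\notin K$; thus the $k_{n}$ escape to infinity in $Stab_{\varphi}z$. Passing to a subnet with $q_{n}\to w\in K$ we have $w\neq z$, so $\pi^{-1}(w)$ is a compact subset of the open set $W_{z}=X-\pi^{-1}(z)$; by normality pick a compact neighbourhood $K'\subseteq W_{z}$ of $\pi^{-1}(w)$, so that upper semicontinuity gives $\pi^{-1}(q_{n})\subseteq K'$ eventually. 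Equivariance of $\pi$ yields $\pi^{-1}(p_{n})=\psi(k_{n},\pi^{-1}(q_{n}))\subseteq\psi(k_{n},K')$, while $X\in EPers(\psi_{z}^{1})$ (the action $\psi_{z}^{1}$ of $Stab_{\varphi}z$ on $W_{z}$ being properly discontinuous and cocompact by \textbf{Corollary \ref{liftingparabolic}}) forces $\psi(k_{n},K')\notin Small(u)$ for only finitely many values of $k_{n}$. Since each value occurs finitely often, $\psi(k_{n},K')\in Small(u)$ eventually, whence $\pi^{-1}(p_{n})\in Small(u)$ eventually, the desired contradiction.

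The main obstacle is exactly this last case $z\in P$: one must see that pushing the fibres $\pi^{-1}(p_{n})$ back into a fixed compact transversal of $Z-\{z\}$ by elements $k_{n}$ of $Stab_{\varphi}z$ that escape to infinity makes them $u$-small. Here the perspectivity $X\in EPers(\psi_{z}^{1})$ plays the role that the convergence property of the fibre actions $\eta_{p}$ played in \textbf{Theorem \ref{geoconv}}; and the geometric structure of $G+_{\partial_{c}}Z$ is what I would use, via the horosphere argument of \textbf{Lemma \ref{noperpofhorospheres}}, to guarantee that the transverse parts $q_{n}$ remain controlled while the $k_{n}$ run off along the horospherical direction, so that the decomposition above is legitimate. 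The remaining verifications (that the chosen $K'$ is indeed a compact neighbourhood inside $W_{z}$, and the bookkeeping of subnets) are routine.
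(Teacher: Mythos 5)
Your argument is correct as far as I can check, but it takes a genuinely different route from the paper's proof --- and, ironically, it is \emph{not} ``analogous to Theorem~\ref{geoconv}'', which is precisely how the paper's proof proceeds. The paper fixes a single fibre $\pi^{-1}(p)$ and proves it is $\psi$-quasiconvex by studying the set $\Phi$ of group elements $g$ with $\psi(g,\pi^{-1}(p))\notin Small(u)$: it extracts a collapsing subsequence using the convergence property of $\varphi$, and in the hard case where the attracting point $a$ is bounded parabolic it invokes Lemma~\ref{noperpofhorospheres} --- this is where the geometric hypothesis enters --- to factor the group elements as $h_{n}g_{n}$ with $h_{n}\in Stab_{\varphi}a$ escaping, and only then applies the perspectivity to the $h_{n}$; topological quasiconvexity is recovered afterwards via Proposition~\ref{convdimtopsub}. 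You instead attack the null condition for the whole family $\{\pi^{-1}(p)\}_{p\in P}$ at once and factor the parabolic \emph{points} rather than the group elements: $p_{n}=\varphi(k_{n},q_{n})$ with $q_{n}$ in a compact fundamental set $K\subseteq Z-\{z\}$, which needs only the cocompactness of $Stab_{\varphi}z$ on $Z-\{z\}$ that is built into the definition of a bounded parabolic point, after which upper semicontinuity of the fibres (closedness of $\pi$) and the perspectivity applied to a compact neighbourhood $K'\subseteq W_{z}$ of $\pi^{-1}(w)$ finish the job. Note that, contrary to your own closing paragraph, your argument as written nowhere uses the geometric structure of $G+_{\partial_{c}}Z$, the convergence property of $\varphi$, or Lemma~\ref{noperpofhorospheres}: the decomposition you worry about legitimizing requires no horosphere argument. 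If every step holds up --- and I cannot fault the two delicate ones, namely that each value of $k_{n}$ occurs only finitely often (because $K$ is closed and misses $z$) and that $\pi^{-1}(q_{n})\subseteq K'$ eventually (because $\pi$ is a closed map) --- then you have in fact shown that condition (1) of Theorem~\ref{surjectiveness} follows from condition (2) for an arbitrary compact base, a strictly stronger statement than Theorem~\ref{geoblowup}; since that would be a surprising strengthening of the paper's result, those two steps deserve one more careful look, but I see no gap.
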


\begin{proof} Let $\U$ be the uniform structure compatible with the topology of $X$ and $u \in \U$. Let $p \in P$, $\Phi = \{g\in G: \psi(g, \pi^{-1}(p)) \notin Small(u)\}$ and $\tilde{\Phi} = \{gStab_{\varphi} p: g \in \Phi\}$. Suppose that $\tilde{\Phi}$ is infinite. Let $\Phi' \subseteq \Phi$ be a set of representatives of the cosets in $\tilde{\Phi}$. We have that $\Phi'$ is also infinite. 

Let $q \in P - \{p\}$. Since the action of $Stab_{\varphi}q$ on $X$ is an element of $EPers(\psi_{q}^{1})$, we have that $\{g\in  Stab_{\varphi} q: \psi(g, \pi^{-1}(p)) \notin Small(u)\}$ is finite. So $\forall q \in P$, $\Phi' \cap Stab_{\varphi}q$ is finite.

Let  $q \in P$ and $g \in G$. Suppose that  $\psi(g,p) \neq q$. We have that  $\Phi' g^{-1} = \{hg^{-1}\in G: \psi(h, \pi^{-1}(p)) \notin Small(u)\}  = \{h\in G: \psi(h, \psi(g,\pi^{-1}(p))) \notin Small(u)\} = \{h\in G: \psi(h, \pi^{-1}(\varphi(g,p))) \notin Small(u)\}$. By the last paragraph we have that $\Phi' g^{-1} \cap Stab_{\varphi}q$ is finite. So $\Phi' \cap (Stab_{\varphi}q) g$ is finite. Suppose now that  $\psi(g,p) = q$. If there is another $g' \in G$ such that  $\psi(g',p) = q$, then $g'^{-1}g \in Stab_{\varphi}p$, which implies that $g Stab_{\varphi}p = g' Stab_{\varphi}p$. But every element $h \in (Stab_{\varphi}q) g$ satisfies $\psi(h,p) = q$.  Since $\Phi'$ is a set of representatives of the left cosets that are in $\tilde{\Phi}$, then $\# \Phi' \cap (Stab_{\varphi}q) g \leqslant 1$. So $\forall q \in P$, $\forall g \in G$, $\Phi' \cap (Stab_{\varphi}q) g$ is finite.

Since $\varphi$ has the convergence property, there exists $\Phi'' \subseteq \Phi'$ a collapsing sequence with attracting point $a \in Z$ and repelling point $b \in Z$.

Suppose that $a$ is not bounded parabolic and $b \neq p$. Then $\#\pi^{-1}(a) = 1$. We have that $\Phi''|_{Z-\{b\}}$ converges uniformly to $a$, which implies that $\Phi''|_{X - \{\pi^{-1}(b)\}}$ converges uniformly to $\pi^{-1}(a)$ (\textbf{Proposition \ref{liftingnet}}). Since $b\neq p$, $\pi^{-1}(p)\subseteq X - \{\pi^{-1}(b)\}$, which implies that there is $g \in \Phi''$ such that $\psi(g,\pi^{-1}(p))$ is contained on a $v$-neighbourhood of $\pi^{-1}(a)$, where $v \in \U$ is such that $v^{2} \subseteq u$, contradicting the fact that $\psi(g,\pi^{-1}(p))$ is not $u$-small (since $g\in \Phi$).

So $a$ is bounded parabolic or $b = p$.

If $a$ is not bounded parabolic, then $b = p$, which implies that $b$ is not a conic point. So $ \exists b' \neq b$ such that $Cl_{Z^{2}}(\{(\varphi(g,b), \varphi(g,b')): g \in \Phi'\}) \cap \Delta Z \neq \emptyset$. Since the sequence defined by $\Phi''|_{b'}$ converges to $a$, we have that there exists $\Phi'''$ a subsequence of $\Phi''$ such that $\Phi'''|_{b}$ converges to $a$. So $\Phi'''|_{Z}$ converges uniformly to $a$, which implies that $\Phi'''|_{X}$ converges uniformly to $\pi^{-1}(a)$ (since $\#\pi^{-1}(a) = 1$), a contradiction analogous to the previous one.

So $a$ is bounded parabolic. Take $\Phi'''$ a subsequence of $\Phi''$ such that $\forall g \in G$, $\#\Phi''' \cap (Stab_{\varphi}a)g \leqslant 1$. By the \textbf{Lemma \ref{noperpofhorospheres}}, we can write $\Phi'''$ as $\{h_{n}g_{n}\}_{n\in\N}$ with $d(h_{n},h_{n}g_{n}) = d(Stab_{\varphi}a,h_{n}g_{n})$ (where $d$ is the word metric with respect to a finite set of generators of $G$ such that $L +\varphi$ is geometric) and $\{h_{n}\}_{n \in \N}$ is a wandering sequence in $Stab_{\varphi}a$. We also have that $\{g_{n}\}_{n\in\N}$ is a wandering sequence. Since  $d(1,g_{n}) = d(Stab_{\varphi}a,g_{n})$, we have by the \textbf{Lemma \ref{noperpofhorospheres}} that $a$ is not a cluster point of $\{g_{n}\}_{n\in\N}$. Let $\Phi^{(4)}= \{h_{n_{i}}g_{n_{i}}\}_{i\in\N}$ be a subsequence of $\Phi'''$ such that $\{g_{n_{i}}\}_{i\in\N}$ is a collapsing subsequence of $\{g_{n}\}_{n\in\N}$ with attracting point $a' \in Z$ and repelling point $b' \in Z$. We have that $a'$ is a cluster point of $\{g_{n}\}_{n\in\N}$, which implies that $a \neq a'$.

We have that $\{g_{n_{i}}\}_{i \in \N}|_{Z-\{b'\}}$ converges uniformly to $a'$. Suppose that $b' \neq p$. Then the set $K = \{\varphi(g_{n_{i}},p): i\in \N\} \cup \{a'\}$ is compact and, by taking a subsequence, we can suppose that $a \notin  K$ (since $a \neq a'$). Then $\pi^{-1}(K)$ is a compact set that does not intersect $\pi^{-1}(a)$. Since the action of $Stab_{\varphi}a$ on $X$ is an element of $EPers(\psi_{a}^{1})$, the set  $\{h_{n_{i}}: i \in \N,$ $\psi(h_{n_{i}},\pi^{-1}(K)) \notin Small(u) \}$ is finite, which implies that there is $j \in \N$ such that $\psi(h_{n_{j}},\pi^{-1}(K)) \in Small(u)$ . But $\varphi(g_{n_{j}},p) \in K$, which implies that  $\psi(h_{n_{j}}g_{n_{j}},\pi^{-1}(p)) = \psi(h_{n_{j}},\pi^{-1}(\varphi(g_{n_{j}},p))) \subseteq \psi(h_{n_{j}},\pi^{-1}(K))$  and then $\psi(h_{n_{j}}g_{n_{j}},\pi^{-1}(p)) \in Small(u)$, contradicting the fact that $h_{n_{j}}g_{n_{j}} \in \Phi$. 

Then $b' = p$, which implies that $b'$ is not conical. Then there exists $b'' \neq b'$ such that $Cl_{Z^{2}}\{(\varphi(g_{n_{i}},b'),\varphi(g_{n_{i}},b'')): i \in \N\} \cap \Delta Z \neq \emptyset$. Since $b'' \neq b'$, we have that $\{\varphi(g_{n_{i}},b'')\}_{i \in \N}$ converges to $a'$. So $a'$ is a cluster point of $\{\varphi(g_{n_{i}},b')\}_{i \in \N}$. Let $\Phi^{(5)} = \{h_{n_{i_{j}}}g_{n_{i_{j}}}\}_{j \in \N}$ be a subsequence of $\Phi^{(4)}$ such that $\{\varphi(g_{n_{i_{j}}},b')\}_{j \in \N}$ converges to $a'$. So take the compact set $K = \{\varphi(g_{n_{i_{j}}},b'): j\in \N\} \cup \{a'\}$  and get a contradiction analogous to the previous one.

Thus $\tilde{\Phi}$ is finite. This means that $\forall p \in P$, $\pi^{-1}(p)$ is $\psi$-quasiconvex. Since $\forall x \in X-P$, $\#\pi^{-1}(x) = 1$, we have that $\pi$ is topologically quasiconvex (\textbf{Propositioon \ref{convdimtopsub}}). Then, by \textbf{Theorem \ref{surjectiveness}}, $\pi$ is a blowup map.
\end{proof}

\subsection{Relatively hyperbolic actions}

Let $G$ be a finitely generated group, $\varphi: G \curvearrowright Z$ an action by homeomorphisms, $P \subseteq Z$ be the set of bounded parabolic points, $P' \subseteq P$ a representative set of orbits, $\mathcal{C} = \{C_{p}\}_{p\in P'}$ a family of compact Hausdorff spaces, $H = \{H_{p}\}_{p\in P'}$, with $H_{p} \subseteq G$ minimal sets such that $1\in H_{p}$ and $Orb_{\varphi|_{H_{p}\times Z}} p = Orb_{\varphi} p$ and $\{Stab_{\varphi} p+_{\partial_{p}}C_{p}\}_{p\in P'}$ a family of spaces with convergence actions $\eta = \{\eta_{p}\}_{p\in P'}$, with $L_{p}+\eta_{p}: Stab_{\varphi} p \curvearrowright Stab_{\varphi} p +_{\partial_{p}}C_{p}$ (where $L_{p}$ is the left multiplication action).

Let $X = Z\ltimes \mathcal{C}$. Let $\pi: X \rightarrow Z$, and, for $p \in P$, $\varpi_{p}: X \rightarrow X_{p}$ and $\pi_{p}: X_{p} \rightarrow Z$ be the projection maps.

Let, for $p \in P$, $B_{p} \subseteq C_{p}$ be the set of bounded parabolic points with respect to the action $\eta_{p}$.

\begin{prop}\label{subindoparabolicos}$\bigcup\limits_{p \in P}\varpi_{p}^{-1}(B_{p})$ is the set of bounded parabolic points with respect to the action $\varphi\ltimes \eta$.
\end{prop}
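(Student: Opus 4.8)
The plan is to classify the bounded parabolic points of $\varphi\ltimes\eta$ by first localizing them over $P$ using $\pi$, then descending into each fibre via $\varpi_{p}$, so that everything reduces to a statement about the fibre compactification $Stab_{\varphi}p+_{\partial_{p}}C_{p}$ alone. First I would observe that, since $\pi$ is $G$-equivariant, $Stab_{\varphi\ltimes\eta}x\subseteq Stab_{\varphi}(\pi(x))$ for every $x\in X$. If $\pi(x)=z\notin P$ then $\#\pi^{-1}(z)=1$, and because $\pi$ is proper (continuous between compacta), continuous, surjective and equivariant, \textbf{Corollary \ref{liftingparabolic2}} shows $x$ is bounded parabolic if and only if $z$ is; as $z\notin P$ it is not. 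Hence every bounded parabolic point lies in $\bigcup_{p\in P}\pi^{-1}(p)$, which is exactly where $\bigcup_{p\in P}\varpi_{p}^{-1}(B_{p})$ sits, since $B_{p}\subseteq C_{p}=\pi_{p}^{-1}(p)$ forces $\varpi_{p}^{-1}(B_{p})\subseteq\pi^{-1}(p)$.

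Next I would pass to a single fibre. Fix $p\in P$, write $H=Stab_{\varphi}p$, take $x\in\pi^{-1}(p)$ and set $c=\varpi_{p}(x)\in C_{p}$. Since $\varpi_{p}$ restricts to a homeomorphism $\pi^{-1}(p)\to C_{p}$ we have $\#\varpi_{p}^{-1}(c)=1$, and $\varpi_{p}$ is a proper surjective $H$-equivariant map. Applying \textbf{Corollary \ref{liftingparabolic2}} to $\varpi_{p}$ (and using $Stab_{\varphi\ltimes\eta}x\subseteq H$, so that bounded parabolicity for $G$ and for $H$ coincide here), the whole proposition reduces to the \emph{fibre claim}: $c$ is bounded parabolic for $\psi_{p}\colon H\curvearrowright X_{p}$ if and only if $c\in B_{p}$, i.e. $c$ is bounded parabolic for $\eta_{p}\colon H\curvearrowright C_{p}$. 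One implication of the fibre claim is soft: $C_{p}=\pi_{p}^{-1}(p)$ is closed and $H$-invariant in $X_{p}$, and both proper discontinuity and cocompactness descend to closed invariant subspaces, so $c$ bounded parabolic for $\psi_{p}$ gives $c\in B_{p}$ (equivalently, feed the orbit map $H+_{\partial_{p}}C_{p}\to X_{p}$ into \textbf{Corollary \ref{liftingparabolic}} together with \textbf{Proposition \ref{parabolicpoints}}).

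The converse $c\in B_{p}\Rightarrow c$ bounded parabolic for $\psi_{p}$ is the crux and the main obstacle. Here $X_{p}=(Z-\{p\})+_{(\partial_{p})_{\Pi}}C_{p}$, so $X_{p}-\{c\}$ splits as the cocompact interior $Z-\{p\}$, on which $H$ acts properly discontinuously and cocompactly (as $p$ is bounded parabolic for $\varphi$), and the fibre part $C_{p}-\{c\}$, on which $Q:=Stab_{\eta_{p}}c$ acts properly discontinuously and cocompactly by hypothesis. The difficulty is that $Q$ usually has infinite index in $H$, so it cannot cover the interior by translates of a compact subset of the interior alone; the missing room must come from the fibre, into which the interior accumulates. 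I would handle this as in the cocompactness lemmas preceding \textbf{Proposition \ref{parabolicpoints}}: pick a compact fundamental domain $K_{0}\subseteq Z-\{p\}$ for $H$ and decompose $H=\bigsqcup_{i}Qg_{i}$ with shortest coset representatives $g_{i}$. Since $c$ is a \emph{bounded} parabolic point of the geometric fibre compactification $Stab_{\varphi}p+_{\partial_{p}}C_{p}$, \textbf{Lemma \ref{noperpofhorospheres}} prevents an infinite family of these shortest representatives from clustering to $c$; together with the perspectivity of $X_{p}$ with respect to $H\curvearrowright(Z-\{p\})$ (which makes all but finitely many translates $g_{i}K_{0}$ small and concentrated near the cluster points of the $g_{i}$), this shows that the closure of $\bigcup_{i}g_{i}K_{0}$ in $X_{p}$ is compact and misses $c$. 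Adjoining a compact fundamental domain for $Q\curvearrowright C_{p}-\{c\}$ produces a compact fundamental domain for $Q\curvearrowright X_{p}-\{c\}$, and proper discontinuity there follows from the limit set of $Q$ being $\{c\}$.

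Assembling the two implications settles the fibre claim, and combined with the reductions above identifies the bounded parabolic points of $\varphi\ltimes\eta$ as exactly $\bigcup_{p\in P}\varpi_{p}^{-1}(B_{p})$. I expect the converse in the third paragraph to be the hard part, precisely because it is where the \emph{boundedness} of the parabolic point (as opposed to mere parabolicity) is used, and where one must trade interior cocompactness for fibre cocompactness. When $\varphi\ltimes\eta$ is additionally known to be relatively hyperbolic (as in the setting of \textbf{Theorem \ref{geoconv}}), I would instead obtain this converse more cleanly from the conical/bounded-parabolic dichotomy and the fact that conical points have singleton fibres, so that a fibre point is bounded parabolic in $X$ exactly when it fails to be conical in $C_{p}$.
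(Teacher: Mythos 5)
Your two reductions --- pushing bounded parabolic points of $X$ down to $Z$ via $\pi$ to see they must lie over $P$, then descending into a single fibre via $\varpi_{p}$, both times through Corollary \ref{liftingparabolic2} --- are exactly the paper's proof; the paper's argument consists of nothing else, and in particular it never addresses what you call the fibre claim (it passes from ``$x\in B_{p}$'', i.e.\ bounded parabolic for $\eta_{p}$ on $C_{p}$, to ``bounded parabolic for $\psi_{p}$ on $X_{p}$'' without comment). Up to your second paragraph you are therefore reproducing, and in places sharpening, the paper's argument: the observation that $Stab_{\varphi\ltimes\eta}x\subseteq Stab_{\varphi}(\pi(x))$ is needed and left implicit in the paper, and your soft direction of the fibre claim (restriction to the closed invariant subspace $C_{p}$) is correct.

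The gap is in your third paragraph. Lemma \ref{noperpofhorospheres}, and the very notion of ``shortest coset representatives'', presuppose a word metric on $Stab_{\varphi}p$ coming from a finite generating set for which the compactification $Stab_{\varphi}p+_{\partial_{p}}C_{p}$ is \emph{geometric}. In the setting of Proposition \ref{subindoparabolicos} the stabilizers are not assumed finitely generated and the fibre compactifications are only assumed to carry convergence actions; the geometricity hypothesis of that subsection is imposed on the base $G+_{\partial_{c}}Z$, and only \emph{after} this proposition is stated. So the decisive step of your crux --- ruling out an infinite family of cosets of $Q=Stab_{\eta_{p}}c$ whose translates of $K_{0}$ accumulate only at $c$ --- is not delivered by the tools you invoke. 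This is not a formality: the Remark preceding Proposition \ref{inclusaogrupo} warns precisely that bounded parabolicity for the action on the boundary and for the action on the whole compactification can differ, and Proposition \ref{parabolicpoints} proves their equivalence only under geometricity. What is actually available here is the convergence property of $L_{p}+\eta_{p}$ on all of $Stab_{\varphi}p+_{\partial_{p}}C_{p}$; the hard direction of your fibre claim should be extracted from that (via the attractor-sum results of \cite{Ge2} used in the proof of Theorem \ref{semidirectforconv}) and then transported to $X_{p}$ by the functor $\Pi$, rather than from Lemma \ref{noperpofhorospheres}. Your closing alternative via the conical/bounded-parabolic dichotomy only covers the relatively hyperbolic special case, not the stated generality.
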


\begin{proof}Let $x \in B_{p}$. We have that $\#\varpi_{p}^{-1}(x) = 1$ and, by \textbf{Proposition \ref{liftingparabolic2}}, $\varpi_{p}^{-1}(x)$ is bounded parabolic. So the points of the set $\bigcup\limits_{p \in P}\pi_{p}^{-1}(B_{p})$ are bounded parabolic.

Let $x \in X$ be a bounded parabolic point. If $x \notin P$, then $x = \pi^{-1}(\pi(x))$, which implies that $\pi(x)$ is bounded parabolic, absurd. So $\pi(x) = p \in P$. We have that $\#\varpi_{p}^{-1}(\varpi_{p}(x)) = 1$, which implies that $\varpi_{p}(x)$ is bounded parabolic and then $x \in \varpi_{p}^{-1}(B_{p})$.

Thus, the set of bounded parabolic points of $\varphi\ltimes \eta$ is given by the set $\bigcup\limits_{p \in P}\varpi_{p}^{-1}(B_{p})$. \end{proof}

Suppose now that $\varphi$ has the convergence property and the attractor-sum space $G+_{\partial_{c}}Z$ is geometric. From \textbf{Theorem \ref{geoconv}}, we have that $\varphi \times \eta$ has the convergence property.

Let $A_{p} \subseteq C_{p}$ and $A \subseteq Z$ be the sets of conical points with respect to $\eta_{p}$ and $\varphi$, respectively.

\begin{prop}The points of $\pi^{-1}(A) \cup \bigcup\limits_{p \in P}\varpi_{p}^{-1}(A_{p})$ are conical.
\end{prop}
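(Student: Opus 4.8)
The plan is to isolate a single ``conicality-lifting'' principle and apply it to both kinds of points. The principle is: if a group $H$ acts with the convergence property on two Hausdorff compact spaces $X$ and $B$, $f\colon X\rightarrow B$ is a continuous $H$-equivariant surjection, and $a\in B$ is a conical point with $\#f^{-1}(a)=1$, then $f^{-1}(a)$ is conical in $X$. For $x\in\pi^{-1}(A)$ I would apply this with $H=G$, $B=Z$, $f=\pi$ and $a=\pi(x)\in A$; here $\#\pi^{-1}(a)=1$ because a conical point of a convergence action is never bounded parabolic, so $a\in Z-P$. For $x\in\varpi_{p}^{-1}(A_{p})$ I would apply it with $H=Stab_{\varphi}p$, $B=X_{p}$, $f=\varpi_{p}$ and $a=\varpi_{p}(x)\in C_{p}$; here $\#\varpi_{p}^{-1}(a)=1$ since $\varpi_{p}|_{\pi^{-1}(p)}$ is a homeomorphism onto $C_{p}$, while $\varphi\ltimes\eta$ restricted to $Stab_{\varphi}p$ and $\psi_{p}$ both have the convergence property (the latter is already used in the proof of \textbf{Theorem \ref{geoconv}}). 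Before applying the principle in the second case I would check that a point conical for $\eta_{p}$ on $C_{p}$ is also conical for $\psi_{p}$ on all of $X_{p}$: since $Stab_{\varphi}p$ acts properly discontinuously and cocompactly on $Z-\{p\}$, the limit set of $\psi_{p}$ is contained in $C_{p}$, so attracting and repelling points of collapsing sequences lie in $C_{p}$ and the conicality test only needs to be verified there.

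The proof of the principle rests on two elementary reformulations of conicality, which I would establish first. (Forward form, on the base.) If $a\in B$ is conical, then there is a wandering sequence $\{g_{n}\}$ in $H$ such that $\{g_{n}\}|_{B-\{a\}}$ converges uniformly to some $\alpha\in B$ and no cluster point of $\{g_{n}a\}$ equals $\alpha$. To get this I take the infinite set $K$ from the definition, extract an injective sequence, and use the convergence property on $B$ to pass to a collapsing subsequence with repelling point $\rho$ and attracting point $\alpha$; testing the definition against any fixed $y\neq a$ (and $y\neq\rho$) forces $\rho=a$ and forbids $g_{n}a\rightarrow\alpha$, since otherwise $(g_{n}a,g_{n}y)\rightarrow(\alpha,\alpha)\in\Delta B$. (Backward form, on $X$.) Conversely, if $\{g_{n}\}$ is a wandering sequence with $\{g_{n}\}|_{X-\{x\}}$ converging uniformly to $r$ and no cluster point of $\{g_{n}x\}$ equal to $r$, then $x$ is conical: taking $K=\{g_{n}\}$, for every $y\neq x$ the pairs $(g_{n}x,g_{n}y)$ lie off the diagonal and their only cluster points have the form $(r'',r)$ with $r''\neq r$, so the closure misses $\Delta X$.

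With these in hand, the lift proceeds as follows. Apply the forward form on $B$ to produce $\{g_{n}\}$ with $\{g_{n}\}|_{B-\{a\}}\rightarrow\alpha$ and $\{g_{n}a\}$ avoiding $\alpha$. Using the convergence property of $H$ on $X$, pass to a collapsing subsequence with attracting point $r$ and repelling point $s$. Equivariance of $f$ gives $f(g_{n}y)=g_{n}f(y)$; choosing a point $y\neq s$ with $f(y)\neq a$ (which exists since $f^{-1}(a)=\{x\}$ is a single point and $X$ is infinite) yields $f(r)=\alpha$. Next I show $s=x$ by contradiction: if $s\neq x$ then $g_{n}x\rightarrow r$, hence $g_{n}a=f(g_{n}x)\rightarrow f(r)=\alpha$, contradicting that $\{g_{n}a\}$ avoids $\alpha$. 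Thus $x=s$ is the repelling point, and any cluster point $r''$ of $\{g_{n}x\}$ satisfies $f(r'')\neq\alpha=f(r)$, so $r''\neq r$. The backward form then shows that $x$ is conical.

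The main obstacle is the lifting step, and specifically the two identifications $f(r)=\alpha$ and $s=x$: these force the lifted collapsing subsequence to have its repelling point exactly at the prescribed fiber, and they are precisely where the singleton-fiber hypothesis and the equivariance of $f$ combine with the base dynamics. A secondary technical point, needed only for $\varpi_{p}^{-1}(A_{p})$, is the upgrade from $\eta_{p}$-conicality on $C_{p}$ to $\psi_{p}$-conicality on $X_{p}$, which uses that the limit set of $\psi_{p}$ sits inside $C_{p}$; the choices of auxiliary test points $y$ also tacitly require the relevant limit sets to have more than two points, an easily arranged nondegeneracy.
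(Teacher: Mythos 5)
Your proposal is correct and follows the same route as the paper: the paper's own proof is a two-line assertion that conical points pull back along the continuous equivariant maps $\pi$ (for $\pi^{-1}(A)$) and $\varpi_{p}$ (for $\varpi_{p}^{-1}(A_{p})$, first with respect to $Stab_{\varphi}p$ and hence with respect to $G$), and your ``conicality-lifting'' principle, together with its forward/backward reformulations and the upgrade from $\eta_{p}$-conicality on $C_{p}$ to $\psi_{p}$-conicality on $X_{p}$, is precisely the detailed justification of that assertion. The only points worth keeping in mind are the ones you already flag (nondegeneracy of the limit sets used for auxiliary test points) plus the trivial final remark that an infinite witness set $K\subseteq Stab_{\varphi}p$ also witnesses conicality for the $G$-action.
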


\begin{proof}Since $\pi$ is a continuous $G$-equivariant map, we have that the points of $\pi^{-1}(A)$ are conical. We have also that $\forall p \in P$, $\varpi_{p}$ is $Stab_{\varphi}p$-equivariant, which implies that the points of $\pi_{p}^{-1}(A_{p})$ are conical with respect to $Stab_{\varphi}p$ and then conical with respect to $G$. Thus the points of $\pi^{-1}(A) \cup \bigcup\limits_{p \in P}\varpi_{p}^{-1}(A_{p})$ are conical.
\end{proof}

\begin{cor}If $\forall p \in P'$, $A_{p} = C_{p}$ and $A = Z - P$, then every point of $X$ is conical. \eod
\end{cor}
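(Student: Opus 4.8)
The plan is to show that, under the stated hypotheses, the two subsets of $X$ appearing in the preceding proposition already exhaust $X$, so that conicality of every point is immediate. Concretely, I would establish the set equality
$$\pi^{-1}(A) \cup \bigcup_{p \in P}\varpi_{p}^{-1}(A_{p}) = X,$$
and then simply invoke the previous proposition, which asserts that every point of the left-hand side is conical.

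First I would rewrite each piece. Using the hypothesis $A = Z - P$, the first set becomes $\pi^{-1}(Z-P) = X - \pi^{-1}(P)$. For the second set, I would upgrade the hypothesis $A_{p} = C_{p}$, stated only for $p \in P'$, to all of $P$: for $q \in Orb_{\varphi} p$ with $p \in P'$ we have $C_{q} = C_{p}$ by definition, and $\eta_{q}$ is equivalent to $\eta_{p}$ through the conjugation isomorphism $\tau_{h_{p,q}^{-1}}$ used in the construction, so conical points of $\eta_{q}$ correspond to those of $\eta_{p}$ and hence $A_{q} = C_{q}$. Therefore $A_{p} = C_{p}$ for every $p \in P$, and $\bigcup_{p\in P}\varpi_{p}^{-1}(A_{p}) = \bigcup_{p\in P}\varpi_{p}^{-1}(C_{p})$.

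Next I would identify $\bigcup_{p \in P}\varpi_{p}^{-1}(C_{p})$ with $\pi^{-1}(P)$. Since $\pi = \pi_{p} \circ \varpi_{p}$ and $\pi_{p}^{-1}(p) = C_{p}$ by the construction of $X_{p}$, we obtain $\varpi_{p}^{-1}(C_{p}) = \varpi_{p}^{-1}(\pi_{p}^{-1}(p)) = \pi^{-1}(p)$; taking the union over $p \in P$ gives $\pi^{-1}(P)$. Combining the two computations, the union equals $(X - \pi^{-1}(P)) \cup \pi^{-1}(P) = X$, as required, and the corollary follows.

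I do not anticipate any real obstacle: the statement is essentially a bookkeeping consequence of the preceding proposition together with the identities $\pi = \pi_{p} \circ \varpi_{p}$ and $\pi_{p}^{-1}(p) = C_{p}$ that were recorded during the construction of the blowup. The only point demanding a little care is the passage from $P'$ to $P$ for the equality $A_{p} = C_{p}$, which is guaranteed by the $G$-equivariance built into the family $\{\eta_{p}\}_{p\in P}$.
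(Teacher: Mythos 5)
Your argument is correct and is exactly the intended one: the paper leaves this corollary without proof precisely because the union $\pi^{-1}(A)\cup\bigcup_{p\in P}\varpi_{p}^{-1}(A_{p})$ visibly exhausts $X$ under the stated hypotheses, via the identities $\pi^{-1}(Z-P)=X-\pi^{-1}(P)$ and $\varpi_{p}^{-1}(C_{p})=\varpi_{p}^{-1}(\pi_{p}^{-1}(p))=\pi^{-1}(p)$. Your extra care in passing from $P'$ to $P$ through the conjugation isomorphisms $\tau_{h_{p,q}^{-1}}$ is a legitimate and welcome detail that the paper glosses over.
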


\begin{cor}\label{hiperb}If $G$ is finitely generated and $\varphi$ is minimal, then the functor $\varphi \ltimes$ sends a set of minimal hyperbolic actions to a minimal hyperbolic action. In this case, we have that $\partial_{\infty} G = Z \ltimes \{\partial_{\infty}Stab_{\varphi}p\}_{p\in P'}$. \eod
\end{cor}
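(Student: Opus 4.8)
The plan is to show that $\varphi\ltimes\eta\colon G\curvearrowright X$ is a minimal convergence action on a compact metrizable space all of whose points are conical, and then to upgrade this to hyperbolicity via Bowditch's criterion (Lemma 1.4 of \cite{Bo2}). The convergence property is already supplied by \textbf{Theorem \ref{geoconv}} (using that $\varphi$ has the convergence property, $G+_{\partial_{c}}Z$ is geometric, and each $\eta_{p}$ is a convergence action), so the work splits into three independent tasks: conicality of every point of $X$, minimality of $\varphi\ltimes\eta$, and metrizability of $X$ so that the cited criterion applies.

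For conicality I would feed the preceding corollary --- every point of $X$ is conical as soon as $A_{p}=C_{p}$ for all $p\in P'$ and $A=Z-P$ --- with its two hypotheses. Each $\eta_{p}$ is minimal and hyperbolic, so Lemma 1.4 of \cite{Bo2} makes every point of $C_{p}$ conical, giving $A_{p}=C_{p}$. Since $\varphi$ is a minimal geometric convergence action of a finitely generated group it is relatively hyperbolic, so the Tukia--Gerasimov dichotomy (1C of \cite{Tu}, Main Theorem of \cite{Ge1}) says every point of $Z$ is conical or bounded parabolic; as the bounded parabolic points are exactly $P$ and no point is both conical and parabolic, this yields $A=Z-P$. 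The preceding corollary then makes every point of $X$ conical.

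Minimality I would establish by a fibered argument. Let $F\subseteq X$ be a nonempty closed invariant set. Then $\pi(F)$ is a nonempty closed $\varphi$-invariant subset of $Z$, hence all of $Z$ by minimality of $\varphi$, so $F$ meets every fiber of $\pi$. Over $z\in Z-P$ the fiber is a singleton, so $\pi^{-1}(z)\subseteq F$. Over $p\in P$ the set $F\cap\pi^{-1}(p)$ is nonempty, closed, and $Stab_{\varphi}p$-invariant; transporting through the equivariant homeomorphism $\varpi_{p}\colon\pi^{-1}(p)\to C_{p}$ and using minimality of $\eta_{p}$ (which persists for every $p\in P$, a minimal action being conjugate to a minimal one) forces $F\cap\pi^{-1}(p)=\pi^{-1}(p)$. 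Hence $F\supseteq\pi^{-1}(Z)=X$, so the action is minimal. For metrizability, for finitely generated $G$ the set $P$ is a finite union of countable orbits, hence countable, and $Z,C_{p}$ are metrizable, so each $X_{p}$ has a countable basis and $X\subseteq\prod_{p\in P}X_{p}$ is metrizable (cf.\ \textbf{Proposition \ref{uniaometrizavel}} and the earlier restriction of $\varphi\ltimes$ to $MAct(G)$).

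With the three tasks complete, Lemma 1.4 of \cite{Bo2} shows $\varphi\ltimes\eta$ is hyperbolic, which is the first assertion. For the boundary identification, hyperbolicity means the induced action on distinct triples is cocompact, so $G$ is a uniform convergence group on the perfect metrizable compactum $X$; Bowditch's topological characterization of hyperbolic groups then forces $G$ to be word-hyperbolic with $\partial_{\infty}G\cong X$ equivariantly, and the same characterization applied to each $\eta_{p}$ identifies $C_{p}$ with $\partial_{\infty}Stab_{\varphi}p$, giving $\partial_{\infty}G=Z\ltimes\{\partial_{\infty}Stab_{\varphi}p\}_{p\in P'}$. I expect the delicate points to be justifying $A=Z-P$ --- i.e.\ extracting relative hyperbolicity of the base $\varphi$ from the geometric hypothesis so that the Tukia--Gerasimov dichotomy applies --- and the final appeal to Bowditch's characterization to pass from the uniform convergence action on $X$ to word-hyperbolicity of $G$ and the boundary equality.
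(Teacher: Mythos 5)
Your overall architecture is exactly the one the paper intends (the paper marks this corollary \eod, treating it as immediate from the preceding corollary ``every point of $X$ is conical'' together with Bowditch's Lemma 1.4 and \textbf{Theorem \ref{geoconv}}), and you go further than the paper by actually checking minimality of $\varphi\ltimes\eta$ (your fiberwise argument is correct, including the remark that minimality of $\eta_{p}$ transports to every $\eta_{q}$, $q\in Orb_{\varphi}p$, by conjugation) and metrizability of $X$. The final appeal to Bowditch's topological characterization to get $\partial_{\infty}G\cong X$ and $C_{p}\cong\partial_{\infty}Stab_{\varphi}p$ is also the intended reading of the displayed equality.

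There is, however, one genuine gap, precisely at the point you yourself flag as delicate: the assertion that ``a minimal geometric convergence action of a finitely generated group is relatively hyperbolic,'' which you use to obtain $A=Z-P$. The paper only establishes the reverse implication (``relatively hyperbolic actions of finitely generated groups are geometric''), and the forward implication is not a theorem you can invoke --- a geometric (Floyd-type) compactification need not be $2$-cocompact, and whether such convergence actions are relatively hyperbolic is essentially the open problem about Floyd boundaries. Without $A=Z-P$ the preceding corollary does not apply and conicality of the points of $\pi^{-1}(Z-P)$ is not available. The honest resolution is that the hypothesis of the corollary should be read as ``$\varphi$ is a minimal relatively hyperbolic action,'' exactly as in the immediately following \textbf{Corollary \ref{relhiperb}}; with that hypothesis the Tukia--Gerasimov dichotomy gives $A=Z-P$ directly and the rest of your argument goes through. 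So: same route as the paper, with two welcome additions (minimality, metrizability) and one step that should be replaced by strengthening the hypothesis rather than by the unproved implication ``geometric $\Rightarrow$ relatively hyperbolic.''
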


\begin{cor}If $\forall p \in P'$, $A_{p} \cup B_{p}= C_{p}$ and $A \cup P = Z$, then every point of $X$ is conical or bounded parabolic. \eod
\end{cor}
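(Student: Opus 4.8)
The plan is to prove the statement by a direct case analysis on the image $\pi(x)$ of an arbitrary point $x \in X$, combining the two propositions above, which already isolate large families of conical and bounded parabolic points of $X = Z \ltimes \mathcal{C}$ (recall that, by \textbf{Theorem \ref{geoconv}}, the action $\varphi \ltimes \eta$ has the convergence property, so that ``conical'' and ``bounded parabolic'' are meaningful here).

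First I would fix $x \in X$ and look at $\pi(x) \in Z$. Since $A \cup P = Z$ by hypothesis, either $\pi(x) \in A$ or $\pi(x) \in P$. If $\pi(x) \in A$, then $x \in \pi^{-1}(A)$, and the proposition asserting that every point of $\pi^{-1}(A) \cup \bigcup_{p \in P}\varpi_{p}^{-1}(A_{p})$ is conical immediately gives that $x$ is conical.

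It remains to treat $\pi(x) = p \in P$. Then $x$ lies in the fiber $\pi^{-1}(p)$, on which $\varpi_{p}$ restricts to a homeomorphism onto $\pi_{p}^{-1}(p)$, the canonical copy of $C_{p}$ inside $X_{p}$; so $\varpi_{p}(x)$ is identified with a point of $C_{p}$. Before applying the hypothesis I would note that, although $A_{p} \cup B_{p} = C_{p}$ is assumed only for $p \in P'$, the same equality holds for every $p \in P$: for $q \in Orb_{\varphi} p$ the action $\eta_{q}$ is conjugate to $\eta_{p}$ through the homeomorphism $\tau_{h_{p,q}^{-1}} + id$ of \textbf{Proposition \ref{changeofcoordinates}}, whence $A_{q}$ and $B_{q}$ are the images of $A_{p}$ and $B_{p}$ under this conjugacy and $A_{q} \cup B_{q} = C_{q} = C_{p}$. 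Thus $\varpi_{p}(x) \in A_{p} \cup B_{p}$. If $\varpi_{p}(x) \in A_{p}$, then $x \in \varpi_{p}^{-1}(A_{p})$ is conical by the proposition cited above; if $\varpi_{p}(x) \in B_{p}$, then $x \in \varpi_{p}^{-1}(B_{p})$ is bounded parabolic by \textbf{Proposition \ref{subindoparabolicos}}.

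These cases exhaust all possibilities for $x$, so every point of $X$ is conical or bounded parabolic. I expect no genuine obstacle: the real content lives in the two propositions above (and in \textbf{Theorem \ref{geoconv}}), and the only point demanding a word of care is the propagation of the hypothesis $A_{p} \cup B_{p} = C_{p}$ from the orbit representatives $P'$ to all of $P$, which follows from the equivariance built into the blowup construction.
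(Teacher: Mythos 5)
Your proof is correct and is exactly the argument the paper intends: the corollary is stated with no written proof because it follows immediately from the two preceding propositions by the case split $\pi(x)\in A$ versus $\pi(x)\in P$, and within a fiber by $\varpi_p(x)\in A_p\cup B_p$. Your extra remark on propagating the hypothesis from $P'$ to all of $P$ via the conjugating homeomorphisms is a sensible point of care that the paper leaves implicit.
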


\begin{cor}\label{relhiperb}If $G$ is finitely generated, and $\varphi$ is a minimal relatively hyperbolic action, then the functor $\varphi \ltimes$ sends sets of relatively hyperbolic actions $\eta = \{\eta_{p}\}_{p \in P'}$ to relatively hyperbolic actions $\varphi\ltimes \eta$. In this case, we have that $\partial_{B} (G,\bigcup_{p\in P'}\mathcal{P}_{p}) = Z \ltimes \{\partial_{B}(Stab_{\varphi}p,\mathcal{P}_{p})\}_{p\in P'}$, where $\mathcal{P}_{p}$ is a set of representatives of maximal parabolic subgroups of the action $\eta_{p}$. \eod
\end{cor}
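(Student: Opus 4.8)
The plan is to feed the action $\varphi\ltimes\eta$ on $X = Z\ltimes\mathcal{C}$ into the Tukia--Gerasimov characterization of relative hyperbolicity, and then to read off its peripheral structure from \textbf{Proposition \ref{subindoparabolicos}}. Throughout I take each $\eta_{p}$ to be the minimal relatively hyperbolic action on $C_{p} = \partial_{B}(Stab_{\varphi}p,\mathcal{P}_{p})$, consistently with the statement. First I would secure the two standing hypotheses of that characterization: metrizability and the convergence property. Since $G$ is finitely generated and $\varphi$ is relatively hyperbolic, the attractor-sum $G+_{\partial_{c}}Z$ is geometric, so \textbf{Theorem \ref{geoconv}} yields that $\varphi\ltimes\eta$ has the convergence property. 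For metrizability, note that $Z$ and every $C_{p}$ are metrizable and that $P$ is countable (finitely many orbits of parabolic points, each orbit countable as $G$ is); hence each $X_{p}$ is metrizable by \textbf{Proposition \ref{uniaometrizavel}} and $X\subseteq \prod_{p\in P}X_{p}$ is metrizable.

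Next I would verify that every point of $X$ is conical or bounded parabolic. Applying the Tukia--Gerasimov characterization to the minimal relatively hyperbolic action $\varphi$ gives $A\cup P = Z$, and applying it to each minimal relatively hyperbolic $\eta_{p}$ gives $A_{p}\cup B_{p} = C_{p}$; so the preceding corollary applies and every point of $X$ is conical or bounded parabolic. In particular every point of $X$ is a limit point, whence $X$ equals the limit set of $\varphi\ltimes\eta$, which has more than one point since it surjects onto $Z$; thus $\varphi\ltimes\eta$ is a minimal convergence action. Running the characterization in the reverse direction on the minimal metrizable action $\varphi\ltimes\eta$, all of whose points are conical or bounded parabolic, I conclude that $\varphi\ltimes\eta$ is relatively hyperbolic.

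Finally I would identify the peripheral structure. By \textbf{Proposition \ref{subindoparabolicos}} the bounded parabolic points of $\varphi\ltimes\eta$ are exactly $\bigcup_{p\in P}\varpi_{p}^{-1}(B_{p})$. For such a point $x = \varpi_{p}^{-1}(b)$ with $b\in B_{p}$, equivariance of $\pi$ forces $Stab_{\varphi\ltimes\eta}x\subseteq Stab_{\varphi}p$, and $Stab_{\varphi}p$-equivariance of $\varpi_{p}$ together with $\#\varpi_{p}^{-1}(b)=1$ identifies $Stab_{\varphi\ltimes\eta}x$ with $Stab_{\eta_{p}}b$, a maximal parabolic subgroup of $\eta_{p}$. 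Hence the maximal parabolic subgroups of $\varphi\ltimes\eta$ are precisely the $G$-conjugates of the members of $\bigcup_{p\in P'}\mathcal{P}_{p}$, so this union is a representative set of conjugacy classes of maximal parabolics and $X = Z\ltimes\{\partial_{B}(Stab_{\varphi}p,\mathcal{P}_{p})\}_{p\in P'}$ is the Bowditch boundary $\partial_{B}(G,\bigcup_{p\in P'}\mathcal{P}_{p})$.

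The step I expect to require the most care is this last bookkeeping with $\pi$ and $\varpi_{p}$: one must check that the identification $Stab_{\varphi\ltimes\eta}x = Stab_{\eta_{p}}b$ is exact and that distinct $G$-orbits of parabolic points yield the intended set of conjugacy representatives, so that $\bigcup_{p\in P'}\mathcal{P}_{p}$ genuinely serves as the peripheral system. Everything else is a direct application of \textbf{Theorem \ref{geoconv}}, the preceding corollary, and the Tukia--Gerasimov dichotomy in both directions.
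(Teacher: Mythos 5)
Your proposal is correct and follows essentially the same route as the paper, which marks this corollary as immediate from the preceding chain (Theorem \ref{geoconv} for convergence, the corollary that every point of $X$ is conical or bounded parabolic, the Tukia--Gerasimov dichotomy in both directions, and Proposition \ref{subindoparabolicos} for the peripheral structure). The only thing you add beyond what the paper leaves implicit is the explicit check of metrizability and minimality, which is a reasonable piece of bookkeeping rather than a divergence in method.
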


On another words, we have a proof of a result from Drutu and Sapir \cite{DS}:

\begin{cor}(Drutu and Sapir, Corollary 1.14 of \cite{DS}) If $G$ is finitely generated and relatively hyperbolic with respect to the set $\mathcal{P}$ of representatives of maximal parabolic subgroups and each $P \in \mathcal{P}$ is  relatively hyperbolic with respect to the set of representatives $\mathcal{P}_{P}$, then $G$ is relatively hyperbolic with respect to the set of representatives $\bigcup\mathcal{P}_{P}$.
\end{cor}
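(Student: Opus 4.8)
The plan is to obtain this statement as a direct specialization of \textbf{Corollary \ref{relhiperb}}, since all the real work has already been done there; the remaining task is merely to feed in the correct spaces and actions and to read off the parabolic subgroups.

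First I would set $Z = \partial_{B}(G,\mathcal{P})$ with its natural action $\varphi: G \curvearrowright Z$. Because $G$ is finitely generated and relatively hyperbolic with respect to $\mathcal{P}$, this is a minimal relatively hyperbolic action on a compact metrizable space, its bounded parabolic points $P$ have stabilizers that are precisely the conjugates of the members of $\mathcal{P}$, and the attractor-sum $G+_{\partial_{c}}Z$ is geometric (relatively hyperbolic actions of finitely generated groups are geometric, \cite{Ge2}). Choosing a representative set $P' \subseteq P$ of orbits, we may arrange that for each $p \in P'$ the stabilizer $Stab_{\varphi}p$ is exactly one of the groups $P \in \mathcal{P}$.

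Next, for each such $P = Stab_{\varphi}p$, the hypothesis that $P$ is relatively hyperbolic with respect to $\mathcal{P}_{P}$ gives a minimal relatively hyperbolic action $\eta_{p}: Stab_{\varphi}p \curvearrowright C_{p}$ on $C_{p} = \partial_{B}(Stab_{\varphi}p,\mathcal{P}_{P})$. Forming the attractor-sum $Stab_{\varphi}p +_{\partial_{p}} C_{p}$ produces an object of $EPers(Stab_{\varphi}p)$, so the family $\eta = \{\eta_{p}\}_{p\in P'}$ is exactly the kind of input that the functor $\varphi\ltimes$ accepts. Applying \textbf{Corollary \ref{relhiperb}} then yields that $\varphi\ltimes\eta: G \curvearrowright X$, with $X = Z\ltimes\mathcal{C}$, is a relatively hyperbolic action and that $X = \partial_{B}(G, \bigcup_{p\in P'}\mathcal{P}_{p})$; in particular $G$ is relatively hyperbolic with respect to $\bigcup_{p\in P'}\mathcal{P}_{p} = \bigcup\mathcal{P}_{P}$.

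The only point requiring a short argument is the identification of the maximal parabolic subgroups of $\varphi\ltimes\eta$ with $\bigcup\mathcal{P}_{P}$. For this I would invoke \textbf{Proposition \ref{subindoparabolicos}}: the bounded parabolic points of $X$ are exactly the points of $\bigcup_{p\in P}\varpi_{p}^{-1}(B_{p})$, where $B_{p}$ is the set of bounded parabolic points of $C_{p}$, and each such point has a single preimage under $\varpi_{p}$, so its $G$-stabilizer coincides with the $Stab_{\varphi}p$-stabilizer of the corresponding bounded parabolic point of $C_{p} = \partial_{B}(Stab_{\varphi}p,\mathcal{P}_{P})$. That stabilizer is a conjugate of a member of $\mathcal{P}_{P}$, so a representative set of conjugacy classes of maximal parabolic subgroups of $X$ is given by $\bigcup\mathcal{P}_{P}$, as claimed. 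The main obstacle is thus not any new estimate but simply the bookkeeping of matching the abstract blowup data with the Bowditch-boundary data and checking that the geometricity and minimality hypotheses of \textbf{Corollary \ref{relhiperb}} (inherited from \textbf{Theorem \ref{geoconv}}) are satisfied in this setting.
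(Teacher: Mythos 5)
Your proposal is correct and is exactly the derivation the paper intends: the corollary is stated immediately after \textbf{Corollary \ref{relhiperb}} as a direct specialization to $Z = \partial_{B}(G,\mathcal{P})$ and $C_{p} = \partial_{B}(Stab_{\varphi}p,\mathcal{P}_{P})$, with the parabolic subgroups of the blowup identified via \textbf{Proposition \ref{subindoparabolicos}}. No further comment is needed.
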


Furthermore, we have a topological description of $\partial_{B} (G,\bigcup\mathcal{P}_{P})$ as a limit: it is the space $\partial_{B}(G,\mathcal{P}) \ltimes \{\partial_{B}(P,\mathcal{P}_{P})\}_{P\in \mathcal{P}}$.

\begin{prop}Let $G$ be a finitely generated group and $Z$ a Hausdorff compact space. Suppose that there is a relatively hyperbolic action $\varphi: G \curvearrowright Z$ that is maximal over the relatively hyperbolic actions (i.e. if there exists another relatively hyperbolic action on $Z'$, then there is an equivariant continuous map from $Z$ to $Z'$). Then, for every bounded parabolic point $p \in Z$, $Stab_{\varphi}p$ has no non-trivial relatively hyperbolic action. \eod
\end{prop}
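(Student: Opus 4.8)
The plan is to argue by contradiction, using the blowup functor to manufacture a strictly larger relatively hyperbolic action that the maximality of $\varphi$ cannot accommodate. Suppose $p \in Z$ is a bounded parabolic point and that $H := Stab_{\varphi}p$ admits a non-trivial relatively hyperbolic action; restricting it to its limit set produces a \emph{minimal} relatively hyperbolic action $\eta_{p}: H \curvearrowright C_{p}$ with $\#C_{p} > 1$. Passing to the limit set of $\varphi$, I may assume $\varphi$ is minimal (the bounded parabolic points and all the dynamics relevant below live on the limit set); since $G$ is finitely generated, the attractor-sum $G+_{\partial_{c}}Z$ is then geometric. Now I form the parabolic blowup $X = Z\ltimes \mathcal{C}$, putting the fibre $C_{p}$ with the action $\eta_{p}$ over the orbit representative $p$, and a one-point fibre carrying the (vacuously convergence) action over every other orbit representative.

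First I verify that $\varphi\ltimes\eta: G\curvearrowright X$ is a minimal relatively hyperbolic action. The convergence property is exactly \textbf{Theorem \ref{geoconv}} (the one-point fibres only need to carry convergence actions, which they do trivially). By \textbf{Proposition \ref{subindoparabolicos}} together with the description of the conical points of a blowup, every point of $X$ is conical or bounded parabolic: the points over $Z-Orb_{\varphi}p$ are conical or bounded parabolic because $\varphi$ is relatively hyperbolic (the one-point fibres over parabolic points are bounded parabolic by \textbf{Corollary \ref{liftingparabolic2}}), and the points of $C_{p}$ are conical or bounded parabolic because $\eta_{p}$ is relatively hyperbolic. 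Minimality I check by hand: if $Y\subseteq X$ is closed, nonempty and $G$-invariant, then $\pi(Y)$ is all of $Z$ by minimality of $\varphi$; over any point outside $Orb_{\varphi}p$ the fibre is a single point and hence lies in $Y$, while $Y\cap C_{p}$ is a nonempty closed $H$-invariant subset of $C_{p}$, so equals $C_{p}$ by minimality of $\eta_{p}$, and spreading this over the orbit of $p$ gives $Y=X$. With minimality and the conical-or-parabolic dichotomy, the Tukia--Gerasimov characterisation (in the metrizable case, or directly from cocompactness on distinct pairs) shows $\varphi\ltimes\eta$ is relatively hyperbolic.

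Since $\varphi$ is maximal, there is an equivariant continuous map $f: Z \to X$. Composing with the projection gives an equivariant continuous self-map $h := \pi\circ f: Z \to Z$ of the minimal convergence action $Z$. The key step is the rigidity statement that \emph{every equivariant continuous self-map of a minimal convergence action is the identity}. I would prove it by showing each conical point is fixed: given a conical $z$, choose a sequence $g_{n}$ witnessing conicality and pass to a collapsing subnet $g_{n}|_{Z-\{b\}}\to a$; the conicality condition forces $z=b$, so the inverses collapse $Z-\{a\}$ uniformly onto $z$. Picking any $y\ne a$ with $h(y)\ne a$ (possible since a minimal convergence action has no global fixed point, so $h$ is not constant), equivariance gives $h(g_{n}^{-1}y)=g_{n}^{-1}h(y)$; letting $n\to\infty$, the left side tends to $h(z)$ and the right side to $z$, whence $h(z)=z$. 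As the conical points form a nonempty $G$-invariant set, they are dense by minimality, and continuity yields $h=id_{Z}$.

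Consequently $\pi\circ f = id_{Z}$, so $f$ is a continuous section of $\pi$ and $f(Z)$ is a closed, nonempty, $G$-invariant subset of $X$. By the minimality of $\varphi\ltimes\eta$ proved above, $f(Z)=X$, so $\pi$ is a continuous bijection from the compact space $X$ onto the Hausdorff space $Z$, hence a homeomorphism; this forces $\#\pi^{-1}(p)=1$, contradicting $\#C_{p}>1$. Therefore no such $\eta_{p}$ exists, which is the claim. I expect the main obstacle to be the rigidity lemma for self-maps of minimal convergence actions, coupled with the verification that the blowup is genuinely minimal and relatively hyperbolic; once these are in place, the construction of the competing action and the final collapse are formal.
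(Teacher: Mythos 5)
Your argument is correct and is essentially the intended one: the paper states this proposition with no written proof, leaving it as an immediate consequence of Corollary \ref{relhiperb} (the blowup of a minimal relatively hyperbolic action by a relatively hyperbolic fibre action is again relatively hyperbolic) combined with maximality, which is exactly the contradiction you set up. The only ingredient you had to supply beyond the paper's stated results is the rigidity of equivariant continuous self-maps of a minimal relatively hyperbolic space (fixing the dense set of conical points), and your proof of that step is sound.
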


\begin{obs}This happens trivially when $G$ is hyperbolic and when $G$ is a Kleinian group with abelian maximal parabolic subgroups with rank bigger than 1.
\end{obs}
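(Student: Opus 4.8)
The plan is to verify the two asserted cases separately, in each one identifying the maximal relatively hyperbolic action explicitly and then checking the conclusion of the Proposition by hand, rather than re-running its maximality argument. In both cases the word \emph{trivially} will mean that the hypothesis ``for every bounded parabolic point $p$, $Stab_{\varphi}p$ has no non-trivial relatively hyperbolic action'' is satisfied for an obvious reason: in the first case because there are no bounded parabolic points at all, and in the second because the parabolic stabilizers are groups that manifestly carry no such action.

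First I would treat the case where $G$ is word-hyperbolic. Here the maximal relatively hyperbolic action is the action $\varphi$ on the Gromov boundary $\partial_{\infty}G$: for any peripheral structure $(G,\mathcal{P})$ there is a continuous equivariant surjection $\partial_{\infty}G \to \partial_{B}(G,\mathcal{P})$ collapsing the limit sets of the peripheral subgroups, so $\partial_{\infty}G$ dominates every other relatively hyperbolic action and is therefore maximal in the sense of the Proposition. I would then recall that this boundary action is \emph{hyperbolic}, i.e.\ the induced action on distinct triples is cocompact, so by the cited result of Bowditch (\cite{Bo2}) every point of $\partial_{\infty}G$ is conical. Since a point of a convergence action cannot be simultaneously conical and bounded parabolic, the set $P$ of bounded parabolic points of $\varphi$ is empty, and the conclusion of the Proposition holds vacuously.

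Next I would treat the Kleinian case, where by geometric finiteness the maximal relatively hyperbolic action is the action on the limit set $\Lambda(G) \subseteq S^{n-1}$, whose bounded parabolic points are exactly the parabolic fixed points, and the stabilizer of each such point is a maximal parabolic subgroup, hence by hypothesis abelian of rank $\ge 2$. The task then reduces to showing that an abelian group $H$ of rank $\ge 2$ admits no non-trivial relatively hyperbolic action. I would argue via the convergence dichotomy: if $H$ acted with the convergence property and $\#\Lambda(H) \ge 3$, then $\Lambda(H)$ would be perfect, $H$ non-elementary, and $H$ would contain a non-abelian free subgroup (Tukia, \cite{Tu}); if $\#\Lambda(H) = 2$, then $H$ would be virtually cyclic. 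An abelian group of rank $\ge 2$ is neither virtually cyclic nor a container of $F_{2}$, and a relatively hyperbolic action has $\#\Lambda > 1$ by definition, so all three possibilities are excluded.

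The main obstacle is this last step, that an abelian group of rank $\ge 2$ carries no non-trivial relatively hyperbolic action; I expect to settle it via the convergence dichotomy just described, but an equally short alternative is the fact that in any relatively hyperbolic pair $(H,\mathcal{P})$ every $\Z^{2}$ subgroup is conjugate into a peripheral subgroup, so an abelian $H$ of rank $\ge 2$ would have to lie inside a conjugate of some $P \in \mathcal{P}$, forcing $P = H$ and degenerating the structure. Either route is brief; the only care needed is to align the argument with the paper's definition of a relatively hyperbolic action (convergence property, limit set of cardinality $>1$, cocompactness on distinct pairs) so that the degenerate cases are correctly recognized as excluded.
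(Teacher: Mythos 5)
The paper states this remark without proof, and your argument supplies exactly the evidently intended reasoning: for hyperbolic $G$ the maximal relatively hyperbolic action is the one on $\partial_{\infty}G$, where every point is conical and so the conclusion is vacuous, while for Kleinian $G$ the bounded parabolic stabilizers are abelian of rank $\geq 2$, which the convergence-group dichotomy (no non-abelian free subgroup by Tukia, not virtually cyclic, and $\#\Lambda > 1$ forced by the paper's definition of a relatively hyperbolic action) excludes from carrying any non-trivial such action. Your one silent addition --- geometric finiteness of the Kleinian group, needed so that the limit-set action is relatively hyperbolic in the first place --- is clearly what the author intends, and both of your routes for the rank-$\geq 2$ abelian case (the elementary/free dichotomy or conjugating $\Z^{2}$ into a peripheral subgroup and degenerating the structure) are sound.
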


\subsection{Blowing up the Cantor set}

On this subsection let's consider $K$ as the Cantor set again.

\begin{prop}\label{caracterizacaotopologicapbucantor}Let $G$ be a group that acts by homeomorphisms on a metrizable space $X$ and on $K$ and let $\pi: X \rightarrow K$ be a continuous equivariant map. If the action on $K$ has countably many bounded parabolic points, each bounded parabolic point has a dense orbit on $K$ and $\pi$ is a blowup map, then $X$ is a dense amalgam with $\mathcal{Y}$ the set of preimages of bounded parabolic points. \eod
\end{prop}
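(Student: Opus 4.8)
The plan is to deduce the statement directly from the characterisation of dense amalgams in \textbf{Proposition \ref{denseamalgamequivalence}}, so the whole proof amounts to checking that its hypotheses hold for $\pi: X \to K$. Since $\pi$ is a blowup map, $X$ is a Hausdorff compact space by \textbf{Theorem \ref{surjectiveness}}, hence (being metrizable) a metrizable compact space, and $\pi$ is topologically quasiconvex with $\#\pi^{-1}(x)=1$ for every $x \in K-P$, where $P$ is the set of bounded parabolic points of the action on $K$, which is countable by hypothesis. It then remains to produce the decomposition $P=P_{1}\dot{\cup}\dots\dot{\cup}P_{n}$, to check density of each $P_{i}$ in $K$, to check that fibres over a common $P_{i}$ are homeomorphic, and to check that $X-\pi^{-1}(p)$ is dense for each $p\in P$.

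I would take $P_{1},\dots,P_{n}$ to be the $G$-orbits of bounded parabolic points; there are finitely many of them in the situation considered. Each $P_{i}$ is dense in $K$ by the hypothesis that every bounded parabolic point has a dense orbit. If $p,q\in P_{i}$ then $q=\varphi(g,p)$ for some $g\in G$, and since $\pi$ is equivariant the homeomorphism $\psi(g,\_)$ of $X$ carries $\pi^{-1}(p)$ onto $\pi^{-1}(\varphi(g,p))=\pi^{-1}(q)$; hence $\pi^{-1}(p)$ and $\pi^{-1}(q)$ are homeomorphic, which gives the common-type condition along each $P_{i}$, and the resulting family $\mathcal{Y}_{i}=\{\pi^{-1}(p):p\in P_{i}\}$ is precisely the family of preimages of bounded parabolic points.

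The substantial point, which I expect to be the main obstacle, is the density of $W_{p}:=X-\pi^{-1}(p)$ for each $p\in P$. Here I would exploit the decomposition $X=W_{p}+_{\delta_{p}}\pi^{-1}(p)$ coming with the blowup map, in which $W_{p}$ is the open part of an Artin--Wraith glueing. The closure of the open part of such a glueing is $W_{p}\cup\delta_{p}(W_{p})$, so $W_{p}$ is dense if and only if $\delta_{p}(W_{p})=\pi^{-1}(p)$; transported through the isomorphism of categories given by the attractor-sum functors $\Pi$ and $\Lambda$, this is exactly the assertion that $Stab_{\varphi}p$ is dense in its perspective compactification $Stab_{\varphi}p+_{\partial_{p}}\pi^{-1}(p)$ (equivalently, that $\pi^{-1}(p)$ is the limit set of $Stab_{\varphi}p$). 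To handle $\delta_{p}(W_{p})=\pi^{-1}(p)$ I would first show that $A:=\pi^{-1}(p)\cap Cl_{X}(W_{p})=\delta_{p}(W_{p})$ is a nonempty closed $Stab_{\varphi}p$-invariant subset of $\pi^{-1}(p)$: nonemptiness follows by choosing $q_{n}=\varphi(g_{n},p)\in P_{i}$ with $q_{n}\to p$ and $q_{n}\neq p$ (possible since $P_{i}$ is dense in the perfect space $K$) and using upper semicontinuity of the fibres of the closed map $\pi$, which forces any accumulation point of $\bigcup_{n}\pi^{-1}(q_{n})\subseteq W_{p}$ to lie in $\pi^{-1}(p)$; and invariance follows because $W_{p}$ is $Stab_{\varphi}p$-invariant. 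The genuinely delicate step is then to upgrade $A$ to all of $\pi^{-1}(p)$: this is where one must use that $\pi^{-1}(p)$ is the limit set of $Stab_{\varphi}p$, so that the only nonempty closed invariant subset realising the full boundary behaviour is $\pi^{-1}(p)$ itself. It is the exact analogue here of the density hypothesis ``$Stab_{\varphi}p$ dense in $Stab_{\varphi}p+_{\partial_{p}}C_{p}$'' used in the earlier density proposition, and without it the conclusion can fail, since a perspective compactification whose remainder strictly contains the limit set produces a fibre of nonempty interior.

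With all five hypotheses verified, \textbf{Proposition \ref{denseamalgamequivalence}} applies and gives that $X$ is a dense amalgam with $\mathcal{Y}_{i}=\{\pi^{-1}(p):p\in P_{i}\}$, that is, $\mathcal{Y}$ is the set of preimages of bounded parabolic points. Every verification except the density of $W_{p}$ is a formal consequence of $\pi$ being a blowup map, of equivariance, and of the dense-orbit hypothesis; the empty-interior/density statement is the only place requiring real work, and it rests entirely on $\pi^{-1}(p)$ being the limit set of $Stab_{\varphi}p$.
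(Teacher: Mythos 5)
Your overall strategy is the one the paper intends: the proposition is stated with an end-of-proof mark and no argument, i.e.\ it is treated as a direct application of \textbf{Proposition \ref{denseamalgamequivalence}}, and your verifications of topological quasiconvexity, of the decomposition $P = P_{1}\dot{\cup}\dots\dot{\cup}P_{n}$ into orbits, of the density of each $P_{i}$ in $K$, and of the homeomorphy of fibres along an orbit are all correct. (Two small caveats: the finiteness of the number of orbits, which you assert is ``the situation considered'', does not follow from ``countably many bounded parabolic points'' and is silently needed for the conclusion to even be a dense amalgam; and your reduction of the density of $W_{p}=X-\pi^{-1}(p)$ to $\delta_{p}(W_{p})=\pi^{-1}(p)$, i.e.\ to the density of $Stab_{\varphi}p$ in the perspective compactification $Stab_{\varphi}p+_{\partial_{p\Lambda}}\pi^{-1}(p)$ produced by \textbf{Theorem \ref{surjectiveness}}, is accurate.)

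The genuine gap is exactly where you place it, but you do not close it: to finish you invoke the statement that $\pi^{-1}(p)$ is the limit set of $Stab_{\varphi}p$. That statement is not among the hypotheses, is not part of the definition of a blowup map (which only asks for topological quasiconvexity and the perspectivity of $X$ over each $W_{p}$), and does not follow from the dense-orbit assumption, which constrains only the base $K$. The paper itself treats this as an extra hypothesis elsewhere: the proposition on density of $\iota(G)$ in a blowup explicitly assumes $Stab_{\varphi}p$ dense in $Stab_{\varphi}p+_{\partial_{p}}C_{p}$. Concretely, let $G=\mathbb{Z}\ast\mathbb{Z}$ act on its Cantor-set Bowditch boundary relative to the two factors and blow up one parabolic orbit using the perspective compactification of $\mathbb{Z}$ obtained by adjoining an isolated fixed point to $\mathbb{Z}\cup\{\pm\infty\}$; then $\pi$ is a blowup map and every parabolic orbit is dense in $K$, yet $X$ acquires isolated points inside fibres, so $X-\pi^{-1}(p)$ is not dense and $X$ is not a dense amalgam. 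Hence the density step cannot be derived from the stated hypotheses: either the proposition must be read with the additional assumption that each fibre is the limit set of its stabiliser (equivalently that each $X-\pi^{-1}(p)$ is dense), or an argument must be supplied that neither you nor the paper gives. Your diagnosis of the difficulty is correct and worth keeping, but as a proof of the proposition as stated the key step is assumed rather than established.
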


\begin{obs}As a special case, the theorem holds if $G$ is finitely generated, the action on $K$ is a minimal convergence action and $\pi$ is a blowup map.
\end{obs}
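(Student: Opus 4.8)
The plan is to deduce the statement directly from the characterisation of dense amalgams in \textbf{Proposition \ref{denseamalgamequivalence}}. Let $P \subseteq K$ be the set of bounded parabolic points and let $P = P_{1} \dot{\cup} \dots \dot{\cup} P_{n}$ be its decomposition into $G$-orbits; by hypothesis $P$ is countable and each orbit $P_{i}$ is dense in $K$. Put $\mathcal{Y}_{i} = \{\pi^{-1}(q) : q \in P_{i}\}$. It then suffices to verify the five conditions of \textbf{Proposition \ref{denseamalgamequivalence}}: that $\pi$ is topologically quasiconvex; that each $P_{i}$ is a countable dense subset of $K$; that $\#\pi^{-1}(q)=1$ for $q \in K-P$; that $\pi^{-1}(q) \cong \pi^{-1}(q')$ whenever $q,q'$ lie in the same $P_{i}$; and that $X-\pi^{-1}(q)$ is dense in $X$ for every $q \in P$.

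Four of these are immediate. Topological quasiconvexity of $\pi$ and the fact that non-parabolic fibres are singletons are precisely part of the definition of a blowup map furnished by \textbf{Theorem \ref{surjectiveness}}. The density of each $P_{i}$ in $K$ is exactly the hypothesis that every bounded parabolic point has a dense orbit, and countability of $P$ is assumed. Finally, if $q' = gq$ with $q,q' \in P_{i}$, then the homeomorphism $\psi(g,\_) \colon X \to X$ carries $\pi^{-1}(q)$ onto $\pi^{-1}(gq)=\pi^{-1}(q')$ by equivariance of $\pi$, so the fibres over a single orbit are mutually homeomorphic.

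The one substantive point is the density of $W_{q} := X - \pi^{-1}(q)=\pi^{-1}(K-\{q\})$, equivalently that the fibre $\pi^{-1}(q)$ is nowhere dense in $X$. I would argue this through the inverse-limit picture underlying \textbf{Theorem \ref{surjectiveness}}: writing $X \cong \lim\limits_{\longleftarrow}\{X_{r},\pi_{r}\}_{r\in P}$ with $X_{q} = (K-\{q\})+_{(\partial_{q})_{\Pi}}\pi_{q}^{-1}(q)$, the projection $\varpi_{q}$ restricts to a homeomorphism $\pi^{-1}(q)\to \pi_{q}^{-1}(q)$. The key local fact is that $K-\{q\}$ is dense in $X_{q}$, so the remainder $\pi_{q}^{-1}(q)$ is nowhere dense there; this is the perspectivity (attractor-sum) structure of the fibre, under which $\pi^{-1}(q)$ is the remainder of a compactification of the space $K-\{q\}$ on which $Stab_{\varphi}q$ acts properly discontinuously and cocompactly (\textbf{Corollary \ref{liftingparabolic}}). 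Granting this, take any $x \in \pi^{-1}(q)$ and any basic neighbourhood $N$ of $x$ in the inverse limit, constraining finitely many coordinates $\varpi_{r_{1}},\dots,\varpi_{r_{k}}$. Since $P$ is countable and $K$ is perfect, $K-P$ is dense in $K$; combining this with the density of $K-\{q\}$ in $X_{q}$, one finds a non-parabolic point $s$ of $K$ near $q$ whose unique preimage $\pi^{-1}(s)$ satisfies every coordinate constraint defining $N$ (for $r_{j}\neq q$ because $s\to q$ forces the point ``$s$'' of $X_{r_{j}}$ towards ``$q$'', and for the coordinate $q$ because $\varpi_{q}(x)\in \pi_q^{-1}(q)$ is approached by points of $K-\{q\}$). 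Then $\pi^{-1}(s)\in N$ while $\pi(\pi^{-1}(s))=s\neq q$, so $N \not\subseteq \pi^{-1}(q)$; hence $\pi^{-1}(q)$ has empty interior and $W_{q}$ is dense.

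I expect this last density step to be the main obstacle, since it is exactly where the \emph{perspective} structure of the fibres — not merely topological quasiconvexity — must be invoked: topological quasiconvexity allows a fibre to shrink under the group without forbidding it from carrying interior, so the nowhere-density of $\pi^{-1}(q)$ genuinely relies on $K-\{q\}$ being dense in $X_{q}$ (which holds, for instance, in the special case where the action on $K$ is a minimal convergence action and $X_{q}$ is a genuine attractor-sum). Once all five conditions are in place, \textbf{Proposition \ref{denseamalgamequivalence}} identifies $X$ as the dense amalgam whose defining family is $\mathcal{Y}_{i}=\{\pi^{-1}(q):q\in P_{i}\}$, that is, the set of preimages of bounded parabolic points.
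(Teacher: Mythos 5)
Your skeleton --- verify the five conditions of Proposition \ref{denseamalgamequivalence} with $\mathcal{Y}_{i}$ the fibres over the orbits of bounded parabolic points --- is exactly the (implicit) route of the paper, and you are right that four of the conditions are formal and that the density of $X-\pi^{-1}(q)$ is the only substantive one. The gap is in the justification of that step. Your argument reduces correctly to the claim that $K-\{q\}$ is dense in $X_{q}$, equivalently that $Stab_{\varphi}q$ is dense in the perspective compactification $Stab_{\varphi}q+_{\partial_{q\Lambda}}\pi^{-1}(q)$ which Theorem \ref{surjectiveness} extracts from the blowup; but perspectivity does not give this. The perspectivity condition only constrains how translates of compacta shrink and is perfectly compatible with $\pi^{-1}(q)$ having a clopen piece disjoint from the closure of $Stab_{\varphi}q$. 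Nor do the hypotheses of the remark help, because the minimality/convergence assumption is on the action on the \emph{base} $K$, not on the fibre compactifications. Concretely: let $G=\Z\ast\Z$ act on its Bowditch boundary $K$ (a Cantor set, minimal convergence action), and for each peripheral $\Z$ take the perspective compactification $\Z\cup\{+\infty,-\infty\}\,\dot{\cup}\,\{\star\}$ with $\star$ an isolated fixed point. The resulting $\pi:K\ltimes\mathcal{C}\rightarrow K$ is a blowup map satisfying every hypothesis of the remark, yet each fibre contains an isolated point of $X$, so $X$ is not a dense amalgam. The paper itself treats the relevant density as an \emph{extra} hypothesis elsewhere: the proposition asserting that $\iota(G)$ is dense in $(G+_{\partial}Y)\ltimes\mathcal{C}$ assumes each $Stab_{\varphi}p$ dense in $Stab_{\varphi}p+_{\partial_{p}}C_{p}$, and the graph-of-groups application immediately after the remark assumes $\partial_{i}(G_{i})=C_{i}$. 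So your parenthetical escape clause (``when $X_{q}$ is a genuine attractor-sum'') is not a consequence of the stated hypotheses but an additional assumption that must be imposed, on the fibres, for the statement to hold.

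Two smaller points. First, for the remark (as opposed to the proposition it specializes) you take ``$P$ countable'' and the decomposition $P=P_{1}\dot{\cup}\dots\dot{\cup}P_{n}$ as given; density of each orbit does follow from minimality, but countability of the set of bounded parabolic points, and above all the \emph{finiteness} of the number of classes with mutually homeomorphic fibres (which the dense amalgam requires), need to be derived from ``minimal convergence action of a finitely generated group'' and are not automatic outside the relatively hyperbolic case. Second, your reduction of the density of $W_{q}$ to the single claim ``$C_{q}\subseteq Cl_{X_{q}}(K-\{q\})$'' via the inverse-limit coordinates is correct and is worth keeping: it shows precisely which hypothesis is missing and where it enters.
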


Let $\mathcal{G}$ be a finite graph of groups with finitely generated vertex groups $\{G_{i}\}_{i\in V}$, finite edge groups and fundamental group $G$. From Dahmani Combination Theorem \cite{Da2} $G$ is relatively hyperbolic relative to $\{G_{i}\}_{i\in\Gamma}$ and its Bowditch boundary $\partial_{B}(G,\{G_{i}\}_{i \in V})$ is homeomorphic to $K$. Let $\mathcal{C} = \{C_{i}\}_{i\in V}$ be a family of compact metrizable spaces and, for $i \in V$, $G_{i}+_{\partial_{i}}C_{i}$ a perspective compactification of $G_{i}$ (we suppose that $\partial_{i}(G_{i}) = C_{i}$). Then, by the last proposition, the parabolic blowup $\partial_{B}(G,\{G_{i}\}_{i \in V})\ltimes \mathcal{C}$ is a dense amalgam with $\mathcal{Y}$ the set of preimages of bounded parabolic points.

\begin{prop}\label{combinationconvergence}(Combination Theorem for Convergence Actions) Let $\mathcal{G}$ be a finite graph of groups with finitely generated vertex groups $\{G_{i}\}_{i\in V}$, finite edge groups and fundamental group $G$. Suppose that for each $i \in V$, $G_{i}$ acts minimally and with convergence property on a space $C_{i}$. Then $G$ acts with convergence property on a space $X$ such that $\forall i \in V$, the limit set $\Lambda \iota_{i}(G_{i})$ is equivariatly homeomorphic to $C_{i}$, where $\iota_{i}: G_{i} \rightarrow G$ is the canonical inclusion map and $\Lambda H$ is the set of limit points of $H$. Moreover, we can construct $X$ as a dense amalgam with $\mathcal{Y} = \{g\Lambda \iota_{i}(G_{i}): g \in G, i \in V\}$. \eod
\end{prop}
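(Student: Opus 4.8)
The plan is to realize $X$ as a parabolic blowup of the Bowditch boundary and to read off its properties from \textbf{Theorem \ref{geoconv}} and \textbf{Proposition \ref{caracterizacaotopologicapbucantor}}. First I would set $Z = \partial_{B}(G,\{G_{i}\}_{i\in V})$, which by Dahmani's Combination Theorem \cite{Da2} is homeomorphic to $K$ and carries a relatively hyperbolic action $\varphi: G \curvearrowright Z$ (note that $G$ is finitely generated, being the fundamental group of a finite graph of finitely generated groups). Since relatively hyperbolic actions of finitely generated groups are geometric, the attractor-sum $G+_{\partial_{c}}Z$ is geometric. The relative hyperbolicity with respect to $\{G_{i}\}_{i\in V}$ says that the maximal parabolic subgroups are exactly the conjugates of the $\iota_{i}(G_{i})$, so the bounded parabolic points of $\varphi$ fall into finitely many orbits, and I would choose the representative set $P' = \{p_{i}\}_{i\in V}$ with $Stab_{\varphi}p_{i} = \iota_{i}(G_{i})$.

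Next, for each $i \in V$ I would use the given minimal convergence action $\eta_{i}: G_{i}\curvearrowright C_{i}$ to form the attractor-sum compactification $G_{i}+_{\partial_{i}}C_{i}$; it exists, is unique, and is perspective by Gerasimov's results \cite{Ge2}, so it is a legitimate input to the blowup functor. Setting $\mathcal{C} = \{C_{i}\}_{i\in V}$ and $\eta = \{\eta_{i}\}_{i\in V}$, this produces the parabolic blowup $X = Z\ltimes\mathcal{C}$ with action $\varphi\ltimes\eta: G \curvearrowright X$. Now \textbf{Theorem \ref{geoconv}} applies verbatim: the base action is geometric and every fiber action has the convergence property, hence $\varphi\ltimes\eta$ has the convergence property.

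To identify the limit sets I would use the projection $\pi: X \to Z$. It is continuous and equivariant, so $\pi(\Lambda\iota_{i}(G_{i})) \subseteq \Lambda_{Z}\iota_{i}(G_{i}) = \{p_{i}\}$, giving $\Lambda\iota_{i}(G_{i}) \subseteq \pi^{-1}(p_{i})$. Conversely, $\iota_{i}(G_{i}) = Stab_{\varphi}p_{i}$ acts on the fiber $\pi^{-1}(p_{i})$, which is a copy of $C_{i}$, exactly as $\eta_{i}$; since $\eta_{i}$ is minimal, every point of this fiber is a limit point, and the fiber is closed and invariant, so $\pi^{-1}(p_{i}) \subseteq \Lambda\iota_{i}(G_{i})$. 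Thus $\Lambda\iota_{i}(G_{i}) = \pi^{-1}(p_{i})$, equivariantly homeomorphic to $C_{i}$ with action $\eta_{i}$, which is the assertion about limit sets.

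Finally, for the dense amalgam structure I would invoke \textbf{Proposition \ref{caracterizacaotopologicapbucantor}}. The action on $K$ has countably many bounded parabolic points, because $G$ is countable and there are only finitely many parabolic orbits; each such orbit is dense, because $K$ is the limit set of a non-elementary convergence action and is therefore minimal; and $\pi$ is a blowup map by construction. Hence $X$ is a dense amalgam whose family $\mathcal{Y}$ is the set of preimages of bounded parabolic points. Since every bounded parabolic point of $K$ has the form $\varphi(g,p_{i})$, its preimage is $g\pi^{-1}(p_{i}) = g\Lambda\iota_{i}(G_{i})$, so $\mathcal{Y} = \{g\Lambda\iota_{i}(G_{i}): g\in G, i \in V\}$, as claimed. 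I expect the main obstacle to be the two identifications that make the machinery applicable — that the chosen bounded parabolic points have stabilizers exactly $\iota_{i}(G_{i})$ (so that the given $\eta_{i}$ can be fed into the functor), and that the limit set of $\iota_{i}(G_{i})$ in $X$ is precisely the fiber — rather than the invocations of \textbf{Theorem \ref{geoconv}} and \textbf{Proposition \ref{caracterizacaotopologicapbucantor}} themselves.
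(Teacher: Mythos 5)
Your proposal is correct and follows essentially the same route as the paper, which derives this proposition from the discussion immediately preceding it: Dahmani's Combination Theorem identifies $\partial_{B}(G,\{G_{i}\}_{i\in V})$ with the Cantor set, the attractor-sum compactifications $G_{i}+_{\partial_{i}}C_{i}$ are fed into the blowup functor, \textbf{Theorem \ref{geoconv}} gives the convergence property, and \textbf{Proposition \ref{caracterizacaotopologicapbucantor}} gives the dense amalgam structure. Your explicit verification that the limit set of $\iota_{i}(G_{i})$ in $X$ equals the fiber $\pi^{-1}(p_{i})$ is a detail the paper leaves implicit, and it is argued correctly.
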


If, for each $i \in V$, $G_{i}$ is relatively hyperbolic with respect to $\mathcal{P}_{i}$ and $C_{i} = \partial_{B}(G_{i}, \mathcal{P}_{i})$, then $X = \partial_{B}(G, \bigcup_{i} \iota_{i}(\mathcal{P}_{i}))$. Then, in this special case, we have another description for the Bowditch boundary that was constructed by Dahmani on \cite{Da2}. In the special case where all vertex groups are hyperbolic and the spaces $C_{i}$ are their hyperbolic boundaries, then the space $X$ is the hyperbolic boundary of $G$. This case was proved in \cite{Sw} (Theorem 0.3 (2)).

\begin{cor}Let $G_{1},..,G_{n},G'_{1},...,G'_{n}$ be finitely generated groups, $G = G_{1}\ast...\ast G_{n}$, $G' = G'_{1}\ast...\ast G'_{n}$ and the inclusion maps $\iota_{i}: G_{i} \rightarrow G$ and $\iota'_{i}: G'_{i} \rightarrow G'$. If for every $i \in \{1,...,n\}$, $G_{i}$ is relatively hyperbolic with respect to $\mathcal{P}_{i}$, $G'_{i}$ is relatively hyperbolic with respect to $\mathcal{P}'_{i}$ and $\partial_{B}(G_{i},\mathcal{P}_{i})$ is homeomorphic to $\partial_{B}(G'_{i},\mathcal{P}'_{i})$, then $\partial_{B}(G,\bigcup_{i}\iota_{i}(\mathcal{P}_{i}))$ is homeomorphic to $\partial_{B}(G',\bigcup_{i}\iota'_{i}(\mathcal{P}'_{i}))$. \eod
\end{cor}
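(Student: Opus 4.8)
The plan is to recognize both Bowditch boundaries as dense amalgams built from homeomorphic lists of spaces, and then to conclude by the uniqueness of the dense amalgam.

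First I would present the free product $G = G_{1}\ast\cdots\ast G_{n}$ as the fundamental group of a finite tree of groups with vertex groups $G_{1},\dots,G_{n}$ and trivial edge groups (for instance the path $G_{1}-G_{2}-\cdots-G_{n}$), using that the fundamental group of a tree of groups with trivial edge groups is the free product of its vertex groups. All vertex groups are finitely generated and all edge groups are finite (indeed trivial), so this graph of groups meets the hypotheses of the \textbf{Combination Theorem for Convergence Actions} (\textbf{Proposition \ref{combinationconvergence}}). Since each $G_{i}$ is relatively hyperbolic with respect to $\mathcal{P}_{i}$, its action on the Bowditch boundary $C_{i} = \partial_{B}(G_{i},\mathcal{P}_{i})$ is a minimal convergence action on a compact metrizable space. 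Hence, by \textbf{Proposition \ref{combinationconvergence}} and the paragraph immediately following it, $G$ acts with the convergence property on a space $X$ which is the dense amalgam of $C_{1},\dots,C_{n}$, and moreover $X = \partial_{B}(G,\bigcup_{i}\iota_{i}(\mathcal{P}_{i}))$. Running the identical argument for $G'$ with $C'_{i} = \partial_{B}(G'_{i},\mathcal{P}'_{i})$ exhibits $\partial_{B}(G',\bigcup_{i}\iota'_{i}(\mathcal{P}'_{i}))$ as the dense amalgam of $C'_{1},\dots,C'_{n}$.

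It then remains to compare the two dense amalgams. By hypothesis $C_{i}$ is homeomorphic to $C'_{i}$ for every $i$, so the defining conditions of $\acute{S}$wiatkowski's theorem (the first Proposition of the Dense amalgam subsection) hold for the list $C'_{1},\dots,C'_{n}$ with exactly the same witnessing family $\mathcal{Y}$: a space is a dense amalgam of $C_{1},\dots,C_{n}$ precisely when it is a dense amalgam of $C'_{1},\dots,C'_{n}$, since those conditions refer to the constituent spaces only through the requirement that each member of $\mathcal{Y}_{i}$ be homeomorphic to $C_{i}$. The uniqueness clause of that theorem then forces $\partial_{B}(G,\bigcup_{i}\iota_{i}(\mathcal{P}_{i}))\cong \partial_{B}(G',\bigcup_{i}\iota'_{i}(\mathcal{P}'_{i}))$.

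The argument is essentially an assembly of earlier results, so the only points requiring care are the verifications that the hypotheses of \textbf{Proposition \ref{combinationconvergence}} genuinely hold---in particular that the action of a relatively hyperbolic group on its Bowditch boundary is a minimal convergence action, and that the peripheral structure produced by the Combination Theorem is exactly $\bigcup_{i}\iota_{i}(\mathcal{P}_{i})$---together with the observation that the dense amalgam is a homeomorphism invariant of the unordered list of homeomorphism types of its pieces, which is precisely the content of the uniqueness statement quoted above.
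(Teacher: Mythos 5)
Your proposal is correct and is exactly the argument the paper intends: the corollary is stated with no proof precisely because it follows immediately from Proposition \ref{combinationconvergence} (realizing each Bowditch boundary as the dense amalgam of the vertex-group boundaries, with peripheral structure $\bigcup_{i}\iota_{i}(\mathcal{P}_{i})$ as noted in the paragraph following that proposition) together with the uniqueness clause of $\acute{S}$wiatkowski's theorem. Your additional care in noting that the defining conditions of the dense amalgam depend only on the homeomorphism types of the constituent spaces is exactly the right point to check.
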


The Theorem 4.1 of \cite{MS} proves this corollary for the case of hyperbolic groups and hyperbolic boundaries.

\subsection{Space of ends}

Let $G$ be a group. We consider $G+_{\partial_{E}}Ends(G)$ the Freudenthal compactification of $G$ (i.e. the maximal equivariant perspective compactification with totally disconnected boundary).

\begin{prop}\label{ends}Let $G$ be a group and $p \in Ends(G)$ a bounded parabolic point of $G+_{\partial_{E}}Ends(G)$. Then $\#Ends(Stab \ p) = 1$.
\end{prop}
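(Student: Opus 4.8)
The plan is to blow up the point $p$ using the Freudenthal compactification of its own stabilizer, and then to contradict the maximality of $Ends(G)$ unless the resulting fibre is a single point. Write $H = Stab\ p$. I would form the parabolic blowup $X = (G+_{\partial_{E}}Ends(G))\ltimes \mathcal{C}$ in which the fibre over the orbit of $p$ is the Freudenthal compactification $H+_{\partial_{E}}Ends(H)$ (so $C_{p} = Ends(H)$), while over every other orbit representative of a bounded parabolic point the fibre is a single point, i.e.\ the one-point compactification of the corresponding stabilizer (which is perspective). By \textbf{Proposition \ref{inclusaogrupo}} the group $G$ embeds as an open subset of $X$, and since $G+_{\partial_{E}}Ends(G)\in EPers(G)$, \textbf{Proposition \ref{persppullback}} gives $X\in EPers(G)$; moreover $G$ is dense in $X$ because it is dense in the base and each stabilizer is dense in its own Freudenthal compactification. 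Thus $X = G\,\dot{\cup}\,W$ is a perspective compactification of $G$ with remainder $W = \pi^{-1}(Ends(G))$, where $\pi\colon X\to G+_{\partial_{E}}Ends(G)$ is the blowup map.

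Next I would check that $W$ is totally disconnected. Since $\pi$ is a blowup map it is topologically quasiconvex, and $Ends(G)$ is totally disconnected; hence a connected subset $A\subseteq W$ is carried by $\pi$ to a single point $q\in Ends(G)$, so $A\subseteq \pi^{-1}(q)$. Each such fibre is either a singleton or homeomorphic to $Ends(Stab\ q)\cong Ends(H)$, and in both cases it is totally disconnected; therefore $A$ is a point and $W$ is totally disconnected. Consequently $X$ is a perspective compactification of $G$ with totally disconnected remainder, so the maximality of the Freudenthal compactification among such compactifications supplies a continuous equivariant map $\rho\colon G+_{\partial_{E}}Ends(G)\to X$ that restricts to the identity on $G$.

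Finally, both $\pi\circ\rho$ and $\rho\circ\pi$ restrict to the identity on the dense copy of $G$ and take values in Hausdorff spaces, so each equals the relevant identity map; hence $\pi$ is a homeomorphism and $\#\pi^{-1}(p)=1$. As $\pi^{-1}(p)$ is a copy of $C_{p} = Ends(H)$, this yields $\#Ends(H)\le 1$. To upgrade this to $\#Ends(H)=1$ it remains only to rule out $Ends(H)=\emptyset$: by the Freudenthal--Hopf trichotomy the cases $\#Ends(G)\in\{0,1,2\}$ are either vacuous or immediate (a two-ended group has no bounded parabolic end, and if $p$ is the unique end then $H=G$), while if $\#Ends(G)=\infty$ then $Ends(G)$ has no isolated point, so $Ends(G)-\{p\}$ is non-compact and the cocompactness of the $H$-action on it forces $H$ to be infinite, i.e.\ $Ends(H)\ne\emptyset$.

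The main obstacle is the verification that the blown-up space still lies in the scope of the universal property of the Freudenthal compactification: one must confirm simultaneously that $X$ is perspective (provided by \textbf{Proposition \ref{persppullback}}) and that its boundary is totally disconnected, so that maximality can be invoked to produce the inverse $\rho$ of $\pi$. The degenerate empty-fibre case is a secondary point, handled by the end-counting dichotomy above; everything else is then formal consequence of density of $G$ and the uniqueness of limits in Hausdorff spaces.
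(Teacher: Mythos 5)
Your argument is correct and follows essentially the same route as the paper: form the ends blowup, observe it is a perspective compactification of $G$ with totally disconnected remainder, and invoke the maximality of the Freudenthal compactification to conclude the blowup map is a homeomorphism, so the fibre $Ends(Stab\ p)$ is a single point. The only deviations are cosmetic (you use one-point compactifications over the other orbits where the paper uses all the Freudenthal compactifications at once, and you explicitly rule out the empty-fibre case, which the paper leaves implicit).
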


\begin{proof}Let $P$ be the set of bounded parabolic points, $P' \subseteq P$ be a representative set of orbits of $P$, $\mathcal{C} = \{Ends(Stab \ p)\}_{p\in P'}$, $\{Stab \ p+_{\partial_{p,E}}Ends(Stab \ p)\}_{p\in P'}$ the set of spaces with perspective property and obvious actions. Let $X = (G+_{\partial_{E}}Ends(G))\ltimes \mathcal{C}$. Since $X$ is a limit of totally disconnected spaces, it follows that it is totally disconnected as well. The blowup map is the unique continuous equivariant map $\alpha: X \rightarrow G+_{\partial_{E}}Ends(G)$, since $X$ is a compactification of $G$. Since $G+_{\partial_{E}}Ends(G)$ is the maximal perspective compactification of $G$ with totally disconnected boundary, it follows that $\alpha$ must be a homeomorphism. So, $\forall p \in P$, $\#\alpha^{-1}(p) = 1$. But $\alpha^{-1}(p)$ is homeomorphic to $Ends(Stab \ p)$. Thus, $\#Ends(Stab \ p) = 1$.
\end{proof}

\begin{cor}Let $\varphi: G \curvearrowright Z$ be an action by homeomorphisms on a Hausdorff compact space $Z$. Then there is a Hausdorff compact space $X$ together with a blowup map $\pi: X \rightarrow Z$ such that the inverse images of bounded parabolic points are totally disconnected and if there is another blowup map $\pi': X' \rightarrow Z$ with those properties, then there is a unique continuous equivariant map $\alpha: X \rightarrow X'$ that commutes with the blowup maps.
\end{cor}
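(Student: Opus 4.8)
The plan is to realize the desired $X$ as the parabolic blowup whose fibers are the Freudenthal compactifications of the stabilizers, and then to extract both the existence and the uniqueness of $\alpha$ from the functoriality of $\varphi\ltimes$ together with the maximality property of the ends compactification. Let $P$ be the set of bounded parabolic points of $\varphi$, let $P' \subseteq P$ be a representative set of orbits, and for each $p \in P'$ take the Freudenthal compactification $Stab_{\varphi} p +_{\partial_{p,E}} Ends(Stab_{\varphi} p)$, which is an object of $EPers(Stab_{\varphi} p)$. Setting $\mathcal{C} = \{Ends(Stab_{\varphi} p)\}_{p\in P'}$, I would let $X = Z\ltimes \mathcal{C}$ with its projection $\pi: X \rightarrow Z$. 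By the construction of the blowup this $\pi$ is a blowup map, and each fiber $\pi^{-1}(p)$ over a bounded parabolic point is equivariantly homeomorphic to $Ends(Stab_{\varphi} p)$, hence totally disconnected. This produces the required object.

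For the universal property, let $\pi': X' \rightarrow Z$ be any blowup map whose fibers over bounded parabolic points are totally disconnected. By \textbf{Theorem \ref{surjectiveness}}, $X'$ is equivariantly homeomorphic over $Z$ to a parabolic blowup $Z\ltimes\mathcal{C}'$ with $\mathcal{C}' = \{C'_{p}\}_{p\in P'}$, where each $Stab_{\varphi} p +_{\partial'_{p}} C'_{p}$ is a perspective compactification and $C'_{p} \cong \pi'^{-1}(p)$ is totally disconnected. Since $Stab_{\varphi} p$ is dense in its ends compactification and the latter is the maximal perspective compactification with totally disconnected boundary, there is a unique morphism $id+\phi_{p}: Stab_{\varphi} p +_{\partial_{p,E}}Ends(Stab_{\varphi} p) \rightarrow Stab_{\varphi} p +_{\partial'_{p}} C'_{p}$ in $EPers(Stab_{\varphi} p)$. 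The family $\phi = \{\phi_{p}\}_{p\in P'}$ then induces, by \textbf{Proposition \ref{pullbackfunctor}}, a continuous equivariant map $\alpha := \varphi\ltimes\phi: X \rightarrow X'$, and because each $id+\phi_{p}$ commutes with the projections to $Z$, it commutes with the blowup maps, i.e. $\pi'\circ\alpha = \pi$.

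It remains to prove uniqueness. Given any continuous equivariant $\alpha: X \rightarrow X'$ with $\pi'\circ\alpha = \pi$, I would first check that it descends through the projections $\varpi_{p}, \varpi'_{p}$ to a continuous map $\bar\alpha_{p}: X_{p} \rightarrow X'_{p}$: if $\varpi_{p}(x)=\varpi_{p}(y)$ with $x\neq y$ then $\pi(x)=\pi(y)=q\in P$ with $q\neq p$, so $\alpha(x),\alpha(y)\in\pi'^{-1}(q)$ are collapsed by $\varpi'_{p}$, which makes $\bar\alpha_{p}$ well defined (and continuous, as a map out of a quotient). Because $\bar\alpha_{p}$ commutes with the quotient maps $\pi_{p},\pi'_{p}$ to $Z$ and $\pi_{p}$ is injective off $\pi_{p}^{-1}(p)$, $\bar\alpha_{p}$ is the identity on the copy of $Z-\{p\}$ and equals some $Stab_{\varphi} p$-equivariant $\phi_{p}$ on $C_{p}$; that is, $\bar\alpha_{p} = id + \phi_{p}$. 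Transported through the category isomorphism $\Lambda$ of the attractor-sum functors, $id+\phi_{p}$ becomes a morphism in $EPers(Stab_{\varphi} p)$ out of the ends compactification, and by density of $Stab_{\varphi} p$ and maximality of the ends compactification this morphism, hence $\phi_{p}$, is uniquely determined. Finally the uniqueness clause of \textbf{Proposition \ref{pullbackfunctor}} forces $\alpha = \varphi\ltimes\phi$, yielding uniqueness.

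The step I expect to be the main obstacle is precisely this uniqueness argument: verifying that an arbitrary $\alpha$ commuting with the blowup maps genuinely descends to the morphisms $id+\phi_{p}$ between the one-point blowup compactifications $X_{p},X'_{p}$, and that each $\phi_{p}$ is then pinned down by the maximality of the Freudenthal compactification together with the density of $Stab_{\varphi} p$. Once the descent and the forcing of $\phi_{p}$ are in place, everything else is a direct application of the blowup functor and \textbf{Theorem \ref{surjectiveness}}.
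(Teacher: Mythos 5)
Your proposal is correct and follows essentially the same route as the paper: both take $X = Z\ltimes\{Ends(Stab_{\varphi}p)\}_{p\in P'}$, obtain $\phi_p$ from the maximality of the Freudenthal compactification among perspective compactifications with totally disconnected boundary, and derive existence and uniqueness of $\alpha$ from the functoriality of $\varphi\ltimes$ together with the descent of $\alpha$ to the one-fiber quotients $X_p\to X'_p$. The only difference is that you spell out the well-definedness of the descended maps $id+\phi_p$, which the paper asserts without detail.
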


\begin{proof}Let $P$ be the set of bounded parabolic points of $Z$, $P' \subseteq P$ be a representative set of orbits of $P$, $\mathcal{C} = \{Ends(Stab \ p)\}_{p\in P'}$ a set of compact spaces where the stabilizers act, $\{Stab \ p+_{\partial_{p,E}}Ends(Stab \ p)\}_{p\in P'}$ the set of spaces with perspective property and obvious actions. Let $X = Z\ltimes \mathcal{C}$. Let $p \in P$. Since $\pi^{-1}(p)$ is homeomorphic to $Ends (Stab \ p)$, then it is totally disconnected.

Let $\pi': X' \rightarrow Z$ be a blowup map with $\pi'^{-1}(p)$ totally disconnected for every $p \in P'$. Then, for $p \in P'$, there is a continuous map $id+\phi_{p}: Stab \ p+_{\partial_{p,E}}Ends(Stab \ p) \rightarrow Stab \ p +_{\delta'_{p}} \pi'(p)$, where $\{Stab \ p +_{\delta'_{p}} \pi'(p)\}_{p\in P'}$ is the family of perspective compactifications used on the construction of $\pi'$. The family $\phi = \{id+\phi_{p}\}_{p \in P'}$ induces the continuous map $\varphi \ltimes \phi: X \rightarrow X'$.

Let $\alpha: X \rightarrow X'$ be another equivariant continuous map that commutes with the blowups. Let $p \in P'$ and take the quotients $X_{p}$ and $X'_{p}$ that collapses all the fibers except $\pi^{-1}(p)$ and $\pi'^{-1}(p)$. This map $\alpha$ induces a map on the quotients $\alpha_{p}: X_{p} \rightarrow X'_{p}$. This map $\alpha_{p}$ is also induced by a map $id+\alpha_{p}|_{\pi^{-1}(p)}: Stab \ p+_{\partial_{p,E}}Ends(Stab \ p) \rightarrow Stab \ p +_{\delta'_{p}} \pi'(p)$. By the uniqueness of the map from the Freudenthal compactification, we have that $id+\alpha_{p}|_{\pi^{-1}(p)} = id+\phi_{p}$. Since $\alpha$ is induced by the maps $\{\alpha_{p}\}_{p\in P'}$, we have that $\alpha =  \varphi \ltimes \phi$.

Thus there is a unique continuous equivariant map that commutes with the blowups.
\end{proof}

We call such construction the ends blowup of $Z$. On the special case that  $G$ is finitely generated,  $\varphi$ has the convergence property and the attractor-sum compactification  $G+_{\partial_{c}}Z$ is geometric, then the action on $X$ has the convergence property.

\begin{obs}Let $q$ be a bounded parabolic point of $X$. We have, by \textbf{Proposition \ref{subindoparabolicos}}, that $\pi(q) = p \in P$. By the blowup construction $Stab \ q = Stab \ \pi_{p}(q)$, where $\pi_{p}: X \rightarrow X_{p}$ is the projection map. Since $\pi_{p}(q)$ is a bounded parabolic point of $Stab \ p +_{\partial_{p,E}}Ends(Stab \ p)$, the group $Stab \ \pi_{p}(q)$ is one-ended. Thus, all stabilizers of bounded parabolic points of $X$ are one-ended.
\end{obs}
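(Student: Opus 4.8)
The plan is to reduce the statement, via \textbf{Proposition \ref{subindoparabolicos}}, to an application of \textbf{Proposition \ref{ends}} for the group $Stab\ p$ rather than for $G$: a bounded parabolic point $q$ of the ends blowup $X=Z\ltimes\mathcal{C}$ lies over a single bounded parabolic point $p$ of the base, its $G$-stabilizer is an \emph{internal} stabilizer of the fibre action of $Stab\ p$ on $C_p=Ends(Stab\ p)$, and that fibre is precisely the boundary of the Freudenthal compactification of $Stab\ p$. Feeding this into \textbf{Proposition \ref{ends}} gives that the stabilizer of $q$ is one-ended.

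I would carry this out in three moves. First, apply \textbf{Proposition \ref{subindoparabolicos}}: the bounded parabolic points of $\varphi\ltimes\eta$ form $\bigcup_{p\in P}\varpi_p^{-1}(B_p)$, so there is a unique $p\in P$ with $\pi(q)=p$ and with $\varpi_p(q)\in B_p$, i.e. $\varpi_p(q)$ is a bounded parabolic point of the fibre action $\eta_p\colon Stab\ p\curvearrowright C_p=Ends(Stab\ p)$. Second, identify the stabilizer: equivariance of $\pi$ forces any $g$ fixing $q$ to fix $\pi(q)=p$, so $Stab_{\varphi\ltimes\eta}q\subseteq Stab\ p$; and since $\varpi_p$ is $Stab\ p$-equivariant and restricts on the fibre $\pi^{-1}(p)$ to a homeomorphism onto $C_p$ intertwining $\varphi\ltimes\eta|_{Stab\ p}$ with $\eta_p$, one gets $Stab_{\varphi\ltimes\eta}q=Stab_{\eta_p}\varpi_p(q)$ (both inclusions are immediate from the injectivity of $\varpi_p|_{\pi^{-1}(p)}$). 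Third, since the compactification chosen in the ends blowup is exactly the Freudenthal compactification $Stab\ p+_{\partial_{p,E}}Ends(Stab\ p)$, the point $\varpi_p(q)$ is a bounded parabolic point of it, and \textbf{Proposition \ref{ends}} applied to the group $Stab\ p$ yields $\#Ends(Stab_{\eta_p}\varpi_p(q))=1$. As $q$ is arbitrary, all stabilizers of bounded parabolic points of $X$ are one-ended.

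The delicate point---and the main obstacle---is the passage in the third move. \textbf{Proposition \ref{subindoparabolicos}} only certifies $\varpi_p(q)$ as bounded parabolic for the remainder action $\eta_p$ on $Ends(Stab\ p)$, whereas \textbf{Proposition \ref{ends}} requires it to be bounded parabolic for the \emph{full} compactification $Stab\ p+_{\partial_{p,E}}Ends(Stab\ p)$, in which the group $Stab\ p$ itself is adjoined. These two notions of bounded parabolicity are not formally identical, and reconciling them is the crux: I would supply the equivalence through \textbf{Proposition \ref{parabolicpoints}}, applied with $Stab\ p$ in the role of $G$, using that the end compactification of a finitely generated group is geometric. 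This forces the hypothesis that $Stab\ p$ be finitely generated (guaranteed in the special case where $\varphi$ has the convergence property and $G+_{\partial_c}Z$ is geometric, so that bounded parabolic subgroups are finitely generated); granting it, the two parabolicities coincide, the stabilizer is transported unchanged since $\#\varpi_p^{-1}(\varpi_p(q))=1$, and \textbf{Proposition \ref{ends}} applies verbatim. Everything else---equivariance of $\pi$ and $\varpi_p$, the fibre homeomorphism, and the fact that the fibre action is $\eta_p$---is recorded in the construction of $X$ and amounts to bookkeeping.
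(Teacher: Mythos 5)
Your three moves are precisely the paper's own argument: the remark consists exactly of (i) locating $q$ over a unique $p\in P$ via \textbf{Proposition \ref{subindoparabolicos}}, (ii) the identification $Stab \ q = Stab \ \varpi_{p}(q)$ through the fibre homeomorphism and equivariance of the projections, and (iii) an appeal to \textbf{Proposition \ref{ends}} applied to the group $Stab \ p$ and its Freudenthal compactification. Your bookkeeping in the first two moves is correct and is all the paper intends there.

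Where you go beyond the paper is the third move, and you have put your finger on a genuine soft spot: \textbf{Proposition \ref{subindoparabolicos}} only certifies $\varpi_{p}(q)\in B_{p}$, i.e.\ bounded parabolicity for the boundary action $\eta_{p}$ on $Ends(Stab \ p)$, whereas \textbf{Proposition \ref{ends}} concerns bounded parabolic points of the full compactification $Stab \ p +_{\partial_{p,E}}Ends(Stab \ p)$; the paper asserts the stronger property without comment, even though its own unnumbered remark before \textbf{Proposition \ref{inclusaogrupo}} concedes that the two notions differ in general and are reconciled only for geometric actions in \textbf{Proposition \ref{parabolicpoints}}. Your repair via \textbf{Proposition \ref{parabolicpoints}} is the natural one inside the paper's toolkit, but be clear about its cost: that proposition requires the action on $Stab \ p+_{\partial_{p,E}}Ends(Stab \ p)$ to be geometric, hence $Stab \ p$ finitely generated, a hypothesis appearing nowhere in the remark. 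Moreover, your claimed guarantee --- that bounded parabolic stabilizers are automatically finitely generated when $\varphi$ has the convergence property and $G+_{\partial_{c}}Z$ is geometric --- is neither proved in the paper nor a known fact at this generality: it is a theorem for relatively hyperbolic actions (where parabolics are even finitely presented under additional hypotheses, cf.\ \cite{GP2}), but for merely geometric convergence actions no such result is invoked or available. So, as written, your argument establishes the remark only under the additional assumption that each $Stab \ p$ is finitely generated (for instance when $\varphi$ is relatively hyperbolic); under that assumption it is correct, and it is in fact a more careful version of the paper's own proof, which passes over this point silently.
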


We have a corollary that slightly generalizes two of Bowditch's results (Theorem 0.1 of \cite{Bo3} and Theorem 0.2 of \cite{Bo5}), when restricted to one-ended groups:

\begin{cor}\label{Bowditch}Let $G$ be a one-ended relatively hyperbolic group with respect to $\mathcal{P}$ such that every group in $\mathcal{P}$ is finitely presented and has no infinite torsion subgroup. Then we have:

\begin{enumerate}
    \item $\partial_{B}(G,\mathcal{P})$ is locally connected.
    \item Every global cut point of $\partial_{B}(G,\mathcal{P})$ is bounded parabolic.
\end{enumerate}
\end{cor}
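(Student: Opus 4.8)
The plan is to deduce both conclusions from Bowditch's theorems (Theorem 0.1 of \cite{Bo3} and Theorem 0.2 of \cite{Bo5}), which already give local connectedness and the parabolicity of cut points under the extra hypothesis that the peripheral subgroups are one-ended. The strategy is therefore to replace $(G,\mathcal P)$ by a relatively hyperbolic pair $(G,\mathcal Q)$ whose peripheral subgroups are one-ended and whose boundary maps onto $Z=\partial_{B}(G,\mathcal P)$ by a blowup map, and then to transport the two properties back across that map. This is exactly what the ends blowup provides: writing $P'\subseteq P$ for a set of orbit representatives, I would let $X=Z\ltimes\{Ends(Stab_{\varphi}p)\}_{p\in P'}$ be the ends blowup of $Z$, with blowup map $\pi\colon X\to Z$. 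Note that $G$ is finitely generated and $G+_{\partial_{c}}Z$ is geometric, so the ends blowup is available.

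The key technical step is to identify $X$ as a Bowditch boundary with one-ended peripheral subgroups. Since each $P=Stab_{\varphi}p$ is finitely presented, it is accessible (Dunwoody), hence the fundamental group of a finite graph of groups with finite edge groups and finite or one-ended vertex groups; let $\mathcal P_{p}$ be the family of one-ended vertex groups. These are finitely presented, being the vertex groups of an accessibility decomposition of a finitely presented group, and, as subgroups of $P$, they contain no infinite torsion subgroup. The action $\eta_{p}\colon P\curvearrowright Ends(P)$ is then the relatively hyperbolic action with $Ends(P)=\partial_{B}(P,\mathcal P_{p})$, so every point of $Ends(P)$ is conical or bounded parabolic and the bounded parabolic stabilizers are the conjugates of the one-ended factors, in accordance with \textbf{Proposition \ref{ends}}. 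Feeding the $\eta_{p}$ into \textbf{Corollary \ref{relhiperb}} (using that $\varphi$ is minimal and relatively hyperbolic) shows that $\varphi\ltimes\eta\colon G\curvearrowright X$ is relatively hyperbolic and that $X=\partial_{B}(G,\mathcal Q)$ with $\mathcal Q=\bigcup_{p\in P'}\mathcal P_{p}$, a family of one-ended, finitely presented groups without infinite torsion subgroups.

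Applying Bowditch's theorems to $(G,\mathcal Q)$ then yields that $X$ is locally connected and that its global cut points are bounded parabolic. Moreover $X$ is connected, since $G$ is one-ended and so $\partial_{B}(G,\mathcal Q)$ is connected. As $X$ is also compact and metrizable (because $Z$ is metrizable, each fiber $Ends(P)$ is metrizable, and $P$ is countable so there are countably many bounded parabolic points), $X$ is a Peano continuum. By the Hahn--Mazurkiewicz theorem $X$ is a continuous image of $[0,1]$, hence so is its continuous image $Z=\pi(X)$; therefore $Z$ is a Peano continuum, in particular locally connected, which proves (1).

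For (2), let $c\in Z$ be a global cut point and suppose, for contradiction, that $c\notin P$. Then $\pi^{-1}(c)=\{c'\}$ is a single point, and writing $Z\setminus\{c\}=U\sqcup V$ as a separation gives $X\setminus\{c'\}=\pi^{-1}(U)\sqcup\pi^{-1}(V)$, a separation into nonempty open sets, so $c'$ is a global cut point of $X$. By Bowditch's theorem for $(G,\mathcal Q)$ the point $c'$ is bounded parabolic in $X$, so by \textbf{Proposition \ref{subindoparabolicos}} it lies in some $\varpi_{p}^{-1}(B_{p})$, whence $c=\pi(c')=\pi_{p}(C_{p})=p\in P$, a contradiction. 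Thus every global cut point of $Z$ is bounded parabolic. The main obstacle in this argument is the second paragraph: one must know that the ends action of a finitely presented, infinite-torsion-free group is relatively hyperbolic with one-ended, finitely presented peripheral subgroups, which rests on Dunwoody's accessibility and the identification $Ends(P)=\partial_{B}(P,\mathcal P_{p})$; once this is in place, the transfer of local connectedness and of cut points across $\pi$ is purely formal.
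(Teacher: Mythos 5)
Your proposal is correct and follows essentially the same route as the paper: form the ends blowup $X$ of $\partial_{B}(G,\mathcal{P})$, use accessibility of the finitely presented peripheral subgroups to see that $G \curvearrowright X$ is relatively hyperbolic with one-ended, finitely presented peripheral subgroups without infinite torsion, apply Bowditch's two theorems to $X$, and push both conclusions down through the blowup map (a non-parabolic cut point of the base would lift to a conical cut point of $X$). The only cosmetic difference is that the paper transfers local connectedness by observing that $\partial_{B}(G,\mathcal{P})$ is the image of the locally connected compact space $X$ under a closed quotient map, whereas you route through the Hahn--Mazurkiewicz theorem; both work.
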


\begin{obs}We show its proof just as an example of use of blowups, since there are stronger results that remove the hypothesis of $G$ being one-ended and the conditions in the subgroups in $\mathcal{P}$ (Theorems 1.2.1 and 1.2.2 of \cite{Das}).
\end{obs}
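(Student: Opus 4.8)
The plan is to deduce both statements from Bowditch's theorems by passing to the ends blowup of $Z = \partial_{B}(G,\mathcal{P})$, in which the peripheral subgroups are replaced by one-ended ones, and then transferring the conclusions back along the blowup map. First I would form the ends blowup $X = Z \ltimes \{Ends(Stab \ p)\}_{p\in P'}$ of the minimal relatively hyperbolic action $\varphi : G \curvearrowright Z$. Since each $Stab \ p = P \in \mathcal{P}$ is finitely presented with no infinite torsion subgroup, accessibility exhibits $P$ as a finite graph of groups with finite edge groups whose infinite vertex groups form a family $\mathcal{Q}_{P}$ of one-ended finitely presented groups, and identifies the action $P \curvearrowright Ends(P)$ with the relatively hyperbolic action of $P$ on $\partial_{B}(P,\mathcal{Q}_{P}) = Ends(P)$. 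By \textbf{Corollary \ref{relhiperb}} the space $X$ is then the Bowditch boundary $\partial_{B}(G,\mathcal{P}')$ for the refined peripheral system $\mathcal{P}' = \bigcup_{p} \mathcal{Q}_{P}$, every member of which is one-ended, finitely presented and without infinite torsion subgroup (being a vertex group of such a splitting of $P$). As noted in the Remark following the ends blowup, all stabilizers of bounded parabolic points of $X$ are one-ended, and since $G$ is one-ended both $Z$ and $X$ are connected.

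Next I would apply Bowditch's theorems (Theorem 0.1 of \cite{Bo3} and Theorem 0.2 of \cite{Bo5}) to $X = \partial_{B}(G,\mathcal{P}')$: the boundary is connected and all its peripheral subgroups are one-ended, finitely presented and without infinite torsion subgroup, so $X$ is locally connected and every global cut point of $X$ is bounded parabolic. To obtain part (1), I use that the blowup map $\pi: X \to Z$ is a continuous surjection of compact Hausdorff spaces, hence a quotient map, and that local connectedness is preserved under quotient maps (if $q$ is a quotient map and the domain is locally connected, then so is the image, since the $q$-saturated preimage of a component of an open set is a union of components of an open preimage and is therefore open). Thus $Z$ is locally connected.

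For part (2), suppose towards a contradiction that $z \in Z$ is a global cut point that is not bounded parabolic. Then $\#\pi^{-1}(z) = 1$, say $\pi^{-1}(z) = \{x\}$. Writing $Z - \{z\} = A \sqcup B$ as a disjoint union of nonempty open sets yields $X - \{x\} = \pi^{-1}(A) \sqcup \pi^{-1}(B)$, a disconnection of $X - \{x\}$ into nonempty open sets, so $x$ is a global cut point of $X$. By the previous paragraph $x$ is bounded parabolic in $X$, and then \textbf{Proposition \ref{subindoparabolicos}} forces $\pi(x) = z$ to lie in $P$, i.e. $z$ is bounded parabolic in $Z$; this contradiction proves part (2).

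The step I expect to be the main obstacle is the identification in the first paragraph: verifying that the fiber action $P \curvearrowright Ends(P)$ is exactly the relatively hyperbolic action of $P$ on $\partial_{B}(P,\mathcal{Q}_{P})$ with one-ended bounded parabolic stabilizers, and that the members of $\mathcal{P}'$ inherit the finiteness hypotheses required to invoke Bowditch. This is precisely where finite presentability and the absence of infinite torsion are used, through accessibility, to guarantee that the one-ended vertex groups are finitely presented and that $Ends(P)$ carries the convergence structure compatible with \textbf{Corollary \ref{relhiperb}}; once this is in place, the two transfers along $\pi$ are the elementary topological arguments above.
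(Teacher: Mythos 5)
Your argument follows the paper's proof of Corollary \ref{Bowditch} essentially verbatim: pass to the ends blowup, use accessibility to see that the blown-up action is relatively hyperbolic with one-ended, finitely presented peripheral subgroups without infinite torsion, apply Bowditch's theorems to the blowup, and transfer both conclusions back along the blowup map (local connectedness via the quotient, cut points via the singleton fibers over non-parabolic points). The only cosmetic difference is in part (2), where you conclude via Proposition \ref{subindoparabolicos} that a bounded parabolic cut point of the blowup must project into $P$, while the paper observes directly that the preimage of a non-parabolic point is conical and hence cannot be a global cut point by Theorem 0.2 of \cite{Bo5}; both are correct.
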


\begin{proof}Let $X$ be the ends blowup of $\partial_{B}(G,\mathcal{P})$. We have that $X$ is connected, since $G$ is one-ended. Let $p$ be a bounded parabolic point of $\partial_{B}(G,\mathcal{P})$. Since $Stab \ p$ is finitely presented, it is accessible (Theorem VI.6.3 of \cite{DD}), which implies that the action on $Ends(Stab \ p)$ is relatively hyperbolic \cite{Ge2}. So the action of $G$ on $X$ is relatively hyperbolic. Then the stabilizers of bounded parabolic points of $X$ are finitely presented (Proposition 9.10 of \cite{GP2}) and have no infinite torsion subgroups.

By Theorem 0.1 of \cite{Bo3} $X$ is locally connected. Since $\partial_{B}(G,\mathcal{P})$  is a quotient of a locally connected space and the quotient is a closed map, then $\partial_{B}(G,\mathcal{P})$ is also locally connected.

Let $x$ be a global cut point of $\partial_{B}(G,\mathcal{P})$ that is not bounded parabolic. Then $\#\pi^{-1}(p) = 1$, where $\pi: X \rightarrow \partial_{B}(G,\mathcal{P})$ is the blowup map. Then $\pi^{-1}(p)$ is also a cut point. But $\pi^{-1}(p)$ is a conical point, contradicting the Theorem 0.2 of \cite{Bo5}. Thus every global cut point of $\partial_{B}(G,\mathcal{P})$  is bounded parabolic. 
\end{proof}

\begin{prop}\label{blowupconect}Let $\pi: X \rightarrow Z$ be a blowup map and $P \subseteq Z$ the set of bounded parabolic points. If $Z$ is connected and $\forall p \in P$, $\pi^{-1}(p)$ is connected, then $X$ is connected.
\end{prop}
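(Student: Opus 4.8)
The plan is to argue by contradiction, exploiting the fact that a blowup map $\pi : X \rightarrow Z$ is a closed surjection all of whose fibers are connected, and to push a hypothetical disconnection of $X$ down to a disconnection of $Z$.

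First I would record two elementary facts. Since $X$ is a parabolic blowup it is compact, $Z$ is Hausdorff, and a continuous map from a compact space to a Hausdorff space is closed; hence $\pi$ is a closed map. Next, every fiber of $\pi$ is connected: for $z \in Z - P$ the definition of a blowup map gives $\#\pi^{-1}(z) = 1$, so the fiber is a single point, while for $p \in P$ the fiber $\pi^{-1}(p)$ is connected by hypothesis. Thus all fibers of $\pi$ are connected.

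Now suppose, for contradiction, that $X$ is not connected, and write $X = A \cup B$ with $A$ and $B$ disjoint, nonempty, and clopen. Because each fiber $F = \pi^{-1}(z)$ is connected and $F = (F \cap A) \cup (F \cap B)$ is a partition of $F$ into relatively open subsets, each fiber is entirely contained in $A$ or entirely in $B$. Consequently $A$ and $B$ are unions of fibers, i.e.\ they are $\pi$-saturated: $A = \pi^{-1}(\pi(A))$ and $B = \pi^{-1}(\pi(B))$. This saturation together with the surjectivity of $\pi$ yields $\pi(A) \cup \pi(B) = Z$, and moreover $\pi(A) \cap \pi(B) = \emptyset$, since a point $z$ lying in both images would have its connected fiber meeting both $A$ and $B$, contradicting the previous step.

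Finally, because $\pi$ is closed and $A, B$ are closed, the images $\pi(A)$ and $\pi(B)$ are closed in $Z$; being complementary they are therefore also open, hence clopen, and both are nonempty since $A$ and $B$ are. This exhibits $Z$ as a disjoint union of two nonempty clopen sets, contradicting the connectedness of $Z$. Hence $X$ is connected. I expect no serious obstacle here: this is the standard ``connected base with connected fibers forces connected total space'' argument, and the only point requiring care is the bookkeeping that a clopen partition of $X$ must respect fibers. The essential inputs are precisely the two facts isolated at the start, namely that the non-parabolic fibers are singletons (so that \emph{all} fibers are connected) and that $\pi$ is a closed surjection; once these are in place the remainder is formal.
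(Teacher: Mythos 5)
Your proof is correct and follows essentially the same route as the paper: both arguments use that every fiber (singleton over $Z-P$, connected by hypothesis over $P$) must lie entirely inside any clopen set it meets, so clopen sets are $\pi$-saturated, and then push the resulting clopen decomposition down to $Z$ via the closedness of $\pi$. The paper phrases this directly (any clopen set maps to a clopen set of $Z$, hence is empty or everything) while you argue by contradiction, but the content is identical.
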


\begin{proof}Let $A$ be a clopen set of $X$ and $p \in P$ such that $A\cap \pi^{-1}(p) \neq \emptyset$. Since $\pi^{-1}(p)$ is connected, $\pi^{-1}(p) \subseteq A$. So $A$ is $\pi$-saturated. Then $\pi(A)$ is a clopen set of $Z$. Since $Z$ is connected, $\pi(A)$ is empty or equal to $Z$, which implies that $A$ is empty or equal to $X$. Thus $X$ is connected.
\end{proof}

The next two corollaries are also easy consequences of a Bowditch's Theorem (Theorem 10.1 of \cite{Bo4}).

\begin{cor}\label{endsofsubgroups}Let $G$ be a finitely generated group that is relatively hyperbolic with respect to a set of parabolic subgroups $\mathcal{P}$. If $\#Ends(G) > 1$ and $\partial_{B}(G,\mathcal{P})$ is connected, then there exists $H \in \mathcal{P}$ such that $\#Ends(H) > 1$.
\end{cor}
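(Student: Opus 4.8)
The plan is to argue by contraposition: assuming that $\partial_{B}(G,\mathcal{P})$ is connected and that every $H\in\mathcal{P}$ satisfies $\#Ends(H)\leqslant 1$, I will show that $\#Ends(G)\leqslant 1$. First I would record that, since $\partial_{B}(G,\mathcal{P})$ is connected with more than one point, it is infinite and has no isolated points, so $G$ is infinite and, for each bounded parabolic point $p$, the complement $\partial_{B}(G,\mathcal{P})-\{p\}$ is noncompact; as $Stab\ p$ acts cocompactly on it, $Stab\ p$ must be infinite. Hence for each $H\in\mathcal{P}$ the hypothesis $\#Ends(H)\leqslant 1$ means $H$ is one-ended, and by Stallings' theorem a one-ended group does not split over any finite subgroup. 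Consequently every $H\in\mathcal{P}$ is elliptic in every action of $G$ on a tree with finite edge stabilizers.

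Now suppose, for contradiction, that $\#Ends(G)>1$. Since $G$ is finitely generated and infinite, Stallings' theorem gives a nontrivial minimal action of $G$ on a tree $T$ with finite edge stabilizers and no global fixed point. By the previous paragraph each $H\in\mathcal{P}$ fixes a vertex of $T$, so this is a nontrivial splitting of $G$ over a finite subgroup relative to $\mathcal{P}$, i.e.\ a nontrivial peripheral splitting over a finite group. The crux is then that such a relative finite splitting forces $\partial_{B}(G,\mathcal{P})$ to be disconnected, which contradicts the hypothesis; this is precisely the finite-edge-group case of Bowditch's connectedness theorem (Theorem 10.1 of \cite{Bo4}), which characterises connectedness of the boundary by the absence of nontrivial peripheral splittings over finite subgroups. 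This contradiction gives $\#Ends(G)\leqslant 1$ and proves the corollary.

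I would also note a reformulation of the crux in the language of the present paper. Forming the ends blowup $X$ of $\partial_{B}(G,\mathcal{P})$, whose fibre over a bounded parabolic point $p$ is $Ends(Stab\ p)$, the standing assumption makes every fibre a single point, so $X$ is homeomorphic to $\partial_{B}(G,\mathcal{P})$ and hence connected by \textbf{Proposition \ref{blowupconect}}, while \textbf{Corollary \ref{relhiperb}} (together with accessibility of the one-ended peripheral pieces) exhibits $X$ as a Bowditch boundary of $G$ relative to one-ended subgroups on which $G$ still acts as a convergence group. The main obstacle, in either presentation, is exactly the link between connectedness of this (blown-up) boundary and one-endedness of $G$: one must show that a relatively hyperbolic group all of whose peripheral subgroups are one-ended has connected boundary precisely when it is itself one-ended. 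I expect this to be the genuinely hard step, and the cleanest route is to invoke Bowditch's theorem as above; the remaining ingredients (infiniteness of $G$ and of each $Stab\ p$, ellipticity of one-ended subgroups in finite-edge-group trees, and the reduction via Stallings) are routine.
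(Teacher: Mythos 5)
Your argument is correct, but it is not the paper's argument. The paper deliberately avoids the route you take: it forms a common geometric compactification $G+_{\partial}X$ dominating both $G+_{\partial_{E}}Ends(G)$ and the attractor-sum $G+_{\partial_{c}}\partial_{B}(G,\mathcal{P})$ (both being geometric by \cite{Ge2}), notes that $X$ is disconnected because it surjects equivariantly onto $Ends(G)$, observes that $\pi':X\rightarrow\partial_{B}(G,\mathcal{P})$ is a blowup map by \textbf{Theorem \ref{semidirectforconv}}, and then applies \textbf{Proposition \ref{blowupconect}} in contrapositive form: a disconnected blowup space over a connected base must have a disconnected fibre $\pi'^{-1}(\partial_{c}(H))=\partial(H)$, whence $\#Ends(H)>1$. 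Your proof instead runs through Stallings' ends theorem (to produce a nontrivial $G$-tree with finite edge stabilizers), ellipticity of the one-ended peripherals, and Bowditch's Theorem 10.1 of \cite{Bo4} as the crux linking connectedness of the boundary to the absence of relative finite splittings. This is exactly the alternative the paper flags in the sentence preceding the corollary (``also easy consequences of a Bowditch's Theorem'') and in the acknowledgements, and it is a legitimate, complete proof; your preliminary reductions (infiniteness of each $Stab\,p$ from cocompactness on the noncompact complement of a non-isolated point, hence one-endedness rather than finiteness, hence ellipticity) are all sound. The trade-off is clear: your route is shorter but imports two substantial external theorems, while the paper's route is internal to its blowup formalism, needs neither Stallings nor Bowditch's splitting theory, and is there precisely to demonstrate that the machinery of \textbf{Proposition \ref{blowupconect}} and \textbf{Theorem \ref{semidirectforconv}} yields such statements directly. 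Note also that your closing reformulation via the ends blowup still defers the essential step to Bowditch's theorem, so it does not recover the paper's self-contained argument.
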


\begin{proof}Since the Freudenthal compactification $G+_{\partial_{E}}Ends(G)$ and the attractor-sum $G+_{\partial_{c}}\partial_{B}(G,\mathcal{P})$ are geometric \cite{Ge2}, there exists a geometric compactification $G+_{\partial}X$ and two continuous equivariant maps $\pi: X \rightarrow Ends(G)$, $\pi': X \rightarrow \partial_{B}(G,\mathcal{P})$. Since $\#Ends(G) > 1$, we have that $X$ is not connected. We have that the map $\pi'$ is a blowup map (\textbf{Theorem \ref{semidirectforconv}}) and $\partial_{B}(G,\mathcal{P})$ is connected. So, by the last proposition, there exists $H \in \mathcal{P}$ such that $\pi'^{-1}(\partial_{c}(H))$ is not connected. But $\pi'^{-1}(\partial_{c}(H)) = \partial(H)$. Thus $\#Ends(H) > 1$.
\end{proof}

\begin{cor}\label{withoutlocalcut}Let $G$ be a finitely generated group that is relatively hyperbolic with respect to a set of parabolic subgroups $\mathcal{P}$. If $\partial_{B}(G,\mathcal{P})$ is non trivial, connected and without local cut points, then $\#Ends(G) = 1$.
\end{cor}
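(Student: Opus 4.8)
The plan is to argue by contradiction: assuming $\#Ends(G) \neq 1$ I will produce a local cut point of $Z = \partial_{B}(G,\mathcal{P})$, contradicting the hypothesis. First I would dispose of the degenerate cases. Since $\varphi$ is relatively hyperbolic its limit set $Z$ has more than one point, so $G$ is infinite; as $G$ is finitely generated this forces $\#Ends(G) \in \{1,2,\infty\}$, and hence the assumption $\#Ends(G) \neq 1$ gives $\#Ends(G) > 1$. Because $Z$ is connected, \textbf{Corollary \ref{endsofsubgroups}} then furnishes a subgroup $H \in \mathcal{P}$ with $\#Ends(H) > 1$. Fix a bounded parabolic point $p \in Z$ with $Stab_{\varphi}p = H$; the whole problem now reduces to showing that $p$ is a local cut point of $Z$.

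Since $p$ is bounded parabolic, $H$ acts properly discontinuously and cocompactly on $Y = Z - \{p\}$. I would feed the Freudenthal compactification $H +_{\partial_{E}} Ends(H)$ of $H$ through the attractor-sum isomorphism $\Pi : EPers(H) \to EPers(H \curvearrowright Y)$ to obtain the perspective compactification $X_{p} = Y +_{(\partial_{E})_{\Pi}} Ends(H)$ of $Y$, whose remainder is exactly $Ends(H)$; this is precisely the space $X_{p}$ appearing in the ends blowup of $Z$, and $\pi_{p} : X_{p} \to Z$ collapses $Ends(H)$ to $p$. The point of this step is that, because $\Pi$ is an isomorphism of categories, it carries the maximal perspective compactification with totally disconnected remainder to the maximal one for $Y$ and preserves the remainder space; hence $Ends(H)$ records the ends of $Y$, and in particular $Y$ has at least two ends.

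It remains to convert ``$Y$ has at least two ends'' into ``$p$ is a local cut point''. Here I use compactness of $Z$: a net in $Y$ leaving every compact subset of $Y$ can accumulate in $Z$ only at $p$ (any other limit would lie in the locally compact open set $Y$ and so could not be escaped), so every end of $Y$ converges to $p$. Concretely, I would write $Ends(H) = C_{1} \dot{\cup} C_{2}$ with $C_{1},C_{2}$ nonempty clopen, separate them by disjoint open sets $O_{1} \supseteq C_{1}$, $O_{2} \supseteq C_{2}$ of $X_{p}$ with $O_{1}\cup O_{2} \supseteq Ends(H)$, and set $K = X_{p} - (O_{1}\cup O_{2})$, a compact subset of $Y$. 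Then $U = Z - K$ is an open neighbourhood of $p$ with $U - \{p\} = Y - K = (O_{1}\cap Y) \dot{\cup} (O_{2}\cap Y)$, a disjoint union of two nonempty open sets (nonempty since $Y$ is dense), each having $p$ in its closure (nets in $O_{i}\cap Y$ converging to points of $C_{i}$ map under $\pi_{p}$ to nets converging to $p$). Thus $U - \{p\}$ is disconnected with $p$ adherent to at least two components, so $p$ is a local cut point of $Z$, the desired contradiction; therefore $\#Ends(G) = 1$.

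The main obstacle is the middle step: rigorously identifying the remainder $Ends(H)$ of the perspective compactification $X_{p}$ with the genuine topological ends of $Y = Z - \{p\}$, and matching a clopen decomposition of that remainder with separated components of a punctured neighbourhood of $p$. This is exactly where the hypotheses that $p$ is bounded parabolic (yielding the proper cocompact $H$-action, hence the perspectivity transported by $\Pi$) and that the relatively hyperbolic action is geometric are used. Alternatively, once \textbf{Corollary \ref{endsofsubgroups}} produces $H \in \mathcal{P}$ with $\#Ends(H) > 1$, the conclusion follows directly from Bowditch's Theorem 10.1 of \cite{Bo4}, which ties local cut points of $\partial_{B}(G,\mathcal{P})$ to the end structure of its peripheral subgroups.
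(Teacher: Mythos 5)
Your proof is correct in substance, but it takes a genuinely different route from the paper's in its core step. The paper argues in the forward direction: it invokes Bowditch's local connectedness theorem (Corollary 0.2 of \cite{Bo3}) and then Dahmani's Proposition 3.3 of \cite{Da3} (connected, locally connected, no local cut points $\Rightarrow$ every peripheral subgroup is one-ended), so that all fibers of the blowup $\pi': X \rightarrow \partial_{B}(G,\mathcal{P})$ are connected; Proposition \ref{blowupconect} then makes $X$ connected, hence $Ends(G)$, being a continuous image of $X$, is connected and therefore a point. You instead argue by contradiction, take Corollary \ref{endsofsubgroups} as a black box to extract a multi-ended $H \in \mathcal{P}$, and then reprove by hand the relevant case of Dahmani's proposition: a bounded parabolic point whose stabilizer has at least two ends is a local cut point. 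Your hands-on step is sound: $Y = Z - \{p\}$ is dense in $X_{p}$ (because $H$ is dense in its Freudenthal compactification and $\Pi_{K}(Y) = H$), the set $K = X_{p} - (O_{1}\cup O_{2})$ is compact and contained in $Y$, and since both $O_{i}\cap Y$ accumulate at $p$, \emph{every} open neighbourhood of $p$ inside $Z - K$ has disconnected punctured part; equivalently the two-piece decomposition yields a two-point compactification of $Y$ with totally disconnected remainder, so $Z-\{p\}$ has at least two ends under the end-count definition of local cut point. Two remarks: you do not actually need the full identification of the remainder of $X_{p}$ with the topological ends of $Y$ — only the density of $Y$ and the disconnectedness of the remainder — and, notably, your route avoids both Bowditch's local connectedness theorem and Dahmani's proposition at the price of this explicit separation argument, whereas the paper's route is shorter given those citations. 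Your closing alternative via Bowditch's Theorem 10.1 of \cite{Bo4} is exactly the shortcut the paper itself acknowledges just before stating these corollaries.
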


\begin{proof}As above, let $G+_{\partial}X$ be a geometric compactification such that there exists two continuous equivariant applications $\pi: X \rightarrow Ends(G)$ and $\pi': X \rightarrow \partial_{B}(G,\mathcal{P})$. Let $P$ be the set of bounded parabolic points of $\partial_{B}(G,\mathcal{P})$. We have that $\pi'$ is a blowup map. By Corollary 0.2 of \cite{Bo3} the space $\partial_{B}(G,\mathcal{P})$ is locally connected. Since $\partial_{B}(G,\mathcal{P})$ is connected, locally connected and without local cut points, every group in $\mathcal{P}$ is one-ended (Proposition 3.3 of \cite{Da3}), which implies that $\forall p \in P$, $\pi'^{-1}(p)$ is connected. Then $X$ is connected, which implies that $Ends(G)$ is connected. Thus $\#Ends(G) = 1$.
\end{proof}

\begin{obs}Proposition 3.3 of \cite{Da3} says that if $\partial_{B}(G,\mathcal{P})$ is non trivial, connected, locally connected and without local cut points, then every group in $\mathcal{P}$ is one-ended. It is stated for the Merger curve but Dahmani's proof works in general.
\end{obs}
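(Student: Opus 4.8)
I would prove the contrapositive one peripheral group at a time. Fix $H\in\mathcal{P}$ and realise it as $H=Stab\ p$ for a bounded parabolic point $p$ of the metrizable space $Z:=\partial_{B}(G,\mathcal{P})$; since $G$ is finitely generated and relatively hyperbolic, $H$ is finitely generated and infinite, so $\#Ends(H)\in\{1,2,\infty\}$. Assuming $H$ is \emph{not} one-ended means $Ends(H)$ is a totally disconnected space with at least two points, hence disconnected, and the goal is to convert this into a local cut point of $Z$, contradicting the hypothesis. The blowup enters only to record $Ends(H)$ as a genuine fibre: forming the parabolic blowup $\pi\colon X\to Z$ with fibre data $C_{q}=Ends(Stab\ q)$ over the orbit of $p$ and one-point fibres over the remaining parabolic orbits (a variant of the ends blowup) produces a blowup map with $\pi^{-1}(p)\cong Ends(H)$. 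Thus one-endedness of $H$ is exactly connectedness of this fibre, and the statement to be proved is that every such fibre is connected.

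Before the main step I would record two preliminary reductions. First, \textbf{no local cut points} forces $Z\setminus\{p\}$ to be connected: $Z$ is itself a connected open neighbourhood of $p$, so a global cut point would already be a local one. Second, $p$ being bounded parabolic gives a properly discontinuous cocompact action of $H$ on the locally compact, locally connected, $\sigma$-compact metrizable space $Y:=Z\setminus\{p\}$, and by \textbf{Corollary \ref{liftingparabolic}} the analogous proper cocompact action holds on $X\setminus\pi^{-1}(p)$.

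The key step is the identification of $Ends(H)$ with the space of ends of $Y=Z\setminus\{p\}$, which comes from the proper cocompact action of $H$ on the connected, locally connected, locally compact space $Y$. Now $Z$ is the one-point compactification of $Y$ obtained by adjoining $p$, so \emph{every} end of $Y$ accumulates at the single point $p$. If $H$ is not one-ended there are at least two such ends, and local connectedness of $Z$ lets me choose arbitrarily small \emph{connected} open neighbourhoods $U\ni p$ for which the components of $U\setminus\{p\}$ stabilise to the ends of $Y$. Hence $U\setminus\{p\}$ is disconnected and $p$ is a local cut point of $Z$, contradicting the hypothesis. Therefore $H$ is one-ended, and since $H\in\mathcal{P}$ was arbitrary, every peripheral subgroup is one-ended.

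The main obstacle is precisely this correspondence between the ends of $H$ and the local components of $Z$ at $p$. Two points need genuine care: that the proper cocompact $H$-action identifies $Ends(H)$ with $Ends(Y)$ (a Hopf--Freudenthal / \v{S}varc--Milnor type statement, using that $Y$ is connected, locally connected and locally compact), and that \emph{local connectedness} of $Z$ is exactly what guarantees the components of $U\setminus\{p\}$ track the ends of $Y$ rather than fragmenting further — without it the implication from ``at least two ends'' to ``local cut point'' can fail. Once this correspondence is in hand the contradiction with the no-local-cut-point hypothesis is immediate, and the blowup $X$ plays only the bookkeeping role of exhibiting $Ends(H)$ as the fibre $\pi^{-1}(p)$, so that ``every peripheral group is one-ended'' becomes synonymous with ``every fibre of the ends blowup is connected''.
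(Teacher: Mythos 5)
Your argument is correct and is essentially the same as the proof the remark appeals to (Dahmani's Proposition 3.3 argument, transported from the Menger curve to a general Peano continuum): identify $Ends(Stab \ p)$ with the ends of $Z - \{p\}$ via the proper cocompact action of the stabilizer, then use local connectedness of $Z$ (so that components of open sets are open) to promote a second end of $Z - \{p\}$ to a connected open neighbourhood $U$ of $p$ with $U - \{p\}$ disconnected, contradicting the absence of local cut points. The parabolic blowup you set up is, as you yourself note, pure bookkeeping and could be deleted without affecting the argument.
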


\subsection{Maximal perspective compactifications}

\begin{teo}\label{maximalpers}Let $G$ be a group and $G+_{\partial}Y$ its maximal perspective compactification. If $p \in Y$ is a bounded parabolic point and $Stab \ p+_{\partial_{p}} C_{p}$ is a compactification of $Stab \ p$ with the perspective property (i.e. $Stab \ p$ is dense on it), then $\#C_{p} = 1$.
\end{teo}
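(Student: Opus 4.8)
The plan is to realize $Stab\ p +_{\partial_{p}} C_{p}$ as the distinguished fiber of a perspective parabolic blowup of $G+_{\partial}Y$ and then to exploit maximality to force the blowup map to be a homeomorphism, exactly in the spirit of the proof of \textbf{Proposition \ref{ends}}.

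First I would fix a representative set $P'$ of the orbits of bounded parabolic points of $L+\varphi$ with $p \in P'$, and assemble a family $\mathcal{C} = \{C_{q}\}_{q\in P'}$ by keeping the given compactification $Stab\ p +_{\partial_{p}} C_{p}$ over the orbit of $p$ and choosing the one-point (Alexandroff) compactification $Stab\ q +_{\partial_{q}}\{\ast\}$ for every other $q \in P'$. The one-point compactification is perspective: for a fixed finite $K$ and a basic entourage $u_{F}$ associated with a cofinite neighbourhood of the point at infinity, the set $\{g : gK \notin Small(u_{F})\}$ is contained in $FK^{-1}$, hence finite; it is also equivariant. Thus $\{Stab\ q +_{\partial_{q}} C_{q}\}_{q\in P'}$ is a family with the equivariant perspectivity property, and I can form the parabolic blowup $X = (G+_{\partial}Y)\ltimes \mathcal{C}$ together with its blowup (projection) map $\pi : X \to G+_{\partial}Y$.

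Next I would check that $X$ is a bona fide object of $EPers(G)$. By \textbf{Proposition \ref{persppullback}} the space $X$ is perspective; by \textbf{Proposition \ref{inclusaogrupo}} the canonical map $\iota : G \to X$ is an equivariant embedding with open image satisfying $\pi\circ\iota = \iota'$, so that $\pi$ is the identity on the copy of $G$; and by the density proposition above, using that $G$ is dense in $G+_{\partial}Y$, that $Stab\ p$ is dense in $Stab\ p +_{\partial_{p}} C_{p}$ by hypothesis, and that each (infinite) $Stab\ q$ is dense in its one-point compactification, $\iota(G)$ is dense in $X$. Hence $X \in EPers(G)$ and $\pi$ is a morphism there restricting to the identity on $G$. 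Now maximality of $G+_{\partial}Y$ provides a morphism $\beta : G+_{\partial}Y \to X$ in $EPers(G)$, i.e. a continuous equivariant map that is the identity on $G$. Then $\pi\circ\beta$ and $\beta\circ\pi$ are continuous equivariant self-maps restricting to the identity on a dense copy of $G$; since all the spaces are Hausdorff, both composites are the identity, so $\pi$ is a homeomorphism. In particular $\pi$ is injective, whence $\#\pi^{-1}(p) = 1$, and since the fiber $\pi^{-1}(p)$ is by construction a copy of $C_{p}$, we conclude $\#C_{p} = 1$.

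The difficulty here is bookkeeping rather than anything deep. I must make sure the auxiliary one-point compactifications used off the orbit of $p$ are genuine objects of $EPers(Stab\ q)$, and that $\iota(G)$ really is dense in $X$ (equivalently, that every bounded parabolic stabilizer $Stab\ q$ is infinite, so that it is dense in its one-point compactification); the only truly load-bearing point is getting the direction of the maximality morphism right, so that composing it with the blowup map returns a self-map of $G+_{\partial}Y$ that is the identity on the dense subgroup $G$ and therefore the identity everywhere.
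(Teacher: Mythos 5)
Your proposal is correct and follows essentially the same route as the paper: form the parabolic blowup $(G+_{\partial}Y)\ltimes\mathcal{C}$ with the given compactification over the orbit of $p$, observe it lies in $EPers(G)$, and use maximality to force the blowup map to be a homeomorphism, so the fiber $C_{p}$ is a point. The only (harmless) differences are that you make the auxiliary choice of one-point compactifications off the orbit of $p$ explicit and spell out the two-sided composition argument that the paper compresses into ``$\alpha$ must be a homeomorphism.''
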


\begin{proof}Let $P$ be the set of bounded parabolic points, $P' \subseteq P$ be a representative set of orbits of $P$, $\mathcal{C} = \{C_{p}\}_{p\in P'}$ be a set of Hausdorff compact spaces, $\{Stab \ p+_{\partial_{p}} C_{p}\}_{p\in P'}$ a set of compact spaces with perspective property such that $\forall p \in P'$, $Stab \ p$ is dense. Let $X = (G+_{\partial}Y)\ltimes \mathcal{C}$. The blowup map is the unique continuous equivariant map $\alpha: X \rightarrow G+_{\partial}Y$, since $X$ is a compactification of $G$. Since $G+_{\partial}Y$ is the maximal perspective compactification of $G$, it follows that $\alpha$ must be an homeomorphism. So, $\forall p \in P$, $\#\alpha^{-1}(p) = 1$. But $\alpha^{-1}(p)$ is homeomorphic to $C_{p}$. Thus, $\#C_{p} = 1$.
\end{proof}

\subsection{CAT(0) spaces with isolated flats}

\begin{defi}A flat on a CAT(0) space $X$ is a subspace that is  isometric to the Euclidean space with dimension at least 2.

Let $G$ be a group acting properly discontinuously, cocompactly and by isometries on a proper CAT(0) space $X$. We say that $X$ has isolated flats with respect to a equivariant set of flats $\mathcal{F}$ if the following happens:

\begin{enumerate}
    \item There exists $D > 0$ such that every flat on $X$ is contained on the $D$-neighbourhood of an element in $\mathcal{F}$.
    \item For every $r > 0$, there exists $s > 0$ such that for every $F,F' \in \mathcal{F}$, $diam \ (\mathcal{B}(F,r)\cap \mathcal{B}(F',r)) < s$ (where $\mathcal{B}(A,\epsilon)$ is the $\epsilon$-neighbourhood of $A$). 
\end{enumerate}

\end{defi}

Let $\psi: G \curvearrowright X$ be a properly discontinuous cocompact action by isometries on a proper CAT(0) space $X$ with isolated flats with respect to $\mathcal{F}$. Let $\mathcal{P}$ be a set of representatives (up to conjugacy class) of its stabilizers. Hruska and Kleiner (Theorem 1.2.1 of \cite{HK}) proved  that $G$ is relatively hyperbolic with respect to $\mathcal{P}$. Tran (Theorem 1.1 of \cite{Tr}) showed that there exists a continuous equivariant map $\pi: \partial_{v}(X) \rightarrow \partial_{B}(G,\mathcal{P})$, where $\partial_{v}(X)$ is the visual boundary of $X$ . This map is injective on the inverse image of the set of conical points and for every bounded parabolic point $p \in \partial_{B}(G,\mathcal{P})$, there is a flat $F_{p} \in \mathcal{F}$ such that $\partial_{v}(F_{p}) = \pi^{-1}(p)$  and $Stab \ p = Stab \ F_{p}$. Let $p \in \partial_{B}(G,\mathcal{P})$ be a bounded parabolic point and $\psi_{p} = \psi|_{Stab \ p\times (\partial_{v}(X)-\partial_{v}(F_{p}))}$. Hruska and Ruane (Proposition 9.4 of \cite{HR}) proved that $\partial_{v}(X)\in EPers(\psi_{p})$. Then, by \textbf{Theorem \ref{geoblowup}}, we get:

\begin{prop}\label{catblowup}$\pi: \partial_{v}(X) \rightarrow \partial_{B}(G,\mathcal{P})$ is a blowup map \eod

\end{prop}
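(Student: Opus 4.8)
The plan is to check that the data $(\partial_{v}(X), \partial_{B}(G,\mathcal{P}), \pi)$ satisfies every hypothesis of \textbf{Theorem \ref{geoblowup}}, after which the conclusion is immediate; in the notation of that theorem the blowup space is $\partial_{v}(X)$ and the base is $\partial_{B}(G,\mathcal{P})$. First I would record the standing geometric hypothesis: since $\psi$ is properly discontinuous and cocompact on the proper CAT(0) space $X$, the group $G$ is finitely generated, and by Hruska and Kleiner \cite{HK} it is relatively hyperbolic with respect to $\mathcal{P}$; because relatively hyperbolic actions of finitely generated groups are geometric \cite{Ge2}, the attractor-sum $G+_{\partial_{c}}\partial_{B}(G,\mathcal{P})$ is geometric, as required. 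Both spaces are compact metrizable: $\partial_{v}(X)$ is the visual boundary of a proper CAT(0) space, and $\partial_{B}(G,\mathcal{P})$ is metrizable because $G$ is countable.

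Next I would verify the properties of $\pi$. Tran's theorem \cite{Tr} supplies a continuous $G$-equivariant map $\pi: \partial_{v}(X) \rightarrow \partial_{B}(G,\mathcal{P})$; its image is a nonempty closed $G$-invariant set, so by minimality of the relatively hyperbolic action on $\partial_{B}(G,\mathcal{P})$ it is surjective. Since the action is relatively hyperbolic, every point of the base is conical or bounded parabolic, so the non bounded parabolic points are exactly the conical ones; as $\pi$ is injective over the conical locus and surjective, we get $\#\pi^{-1}(x)=1$ for every non bounded parabolic $x$. This is precisely the fiber hypothesis over non parabolic points demanded by \textbf{Theorem \ref{geoblowup}}.

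The substantive step is the perspectivity condition, and here the work is to identify the relevant action correctly. For a bounded parabolic point $p$, Tran gives a flat $F_{p}\in\mathcal{F}$ with $\pi^{-1}(p)=\partial_{v}(F_{p})$ and $Stab\ p = Stab\ F_{p}$, so the complement decomposition reads $W_{p} = \partial_{v}(X)-\pi^{-1}(p) = \partial_{v}(X)-\partial_{v}(F_{p})$ and the restricted action $\psi_{p}^{1}$ of $Stab\ p$ on $W_{p}$ is exactly $\psi_{p}=\psi|_{Stab\ p\times(\partial_{v}(X)-\partial_{v}(F_{p}))}$. The Hruska--Ruane result (Proposition 9.4 of \cite{HR}) then says precisely that $\partial_{v}(X)\in EPers(\psi_{p})$, which is the remaining hypothesis of \textbf{Theorem \ref{geoblowup}}. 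With all hypotheses in hand the theorem yields that $\pi$ is a blowup map. I expect the only genuine obstacle to be the perspectivity input itself, but that is exactly what is imported from \cite{HR}; within this argument the delicate point is bookkeeping --- confirming that $\psi_{p}^{1}$ coincides with the Hruska--Ruane action and that the one-point-fiber condition over conical points holds.
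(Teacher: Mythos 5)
Your proposal is correct and follows essentially the same route as the paper: the paper likewise assembles Hruska--Kleiner (relative hyperbolicity, hence a geometric attractor-sum), Tran (the continuous equivariant map with singleton fibers over conical points and fibers $\partial_{v}(F_{p})$ over parabolic points), and Hruska--Ruane Proposition 9.4 (the perspectivity $\partial_{v}(X)\in EPers(\psi_{p})$), and then applies \textbf{Theorem \ref{geoblowup}}. Your additional bookkeeping (surjectivity via minimality, and identifying non-parabolic points with conical ones) only makes explicit what the paper leaves implicit.
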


\begin{obs}Without the necessity of using \textbf{Theorem \ref{geoblowup}}, Hruska and Ruane (Proposition 8.12 of \cite{HR}) also proved that the map $\pi$ is topologically quasiconvex.

\end{obs}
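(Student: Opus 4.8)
The plan is to unwind the definition of topological quasiconvexity and reduce the finiteness it demands to the purely geometric estimate contained in Hruska and Ruane's Proposition 8.12, thereby avoiding any appeal to perspectivity or to \textbf{Theorem \ref{geoblowup}}. By definition $\pi$ is topologically quasiconvex exactly when the relation $\sim = \Delta^2\partial_v(X)\cup\bigcup_{y}\pi^{-1}(y)^2$ is topologically quasiconvex, i.e. every fiber is closed and, for each entourage $u$ of the unique uniform structure $\U$ compatible with $\partial_v(X)$, only finitely many fibers fail to be $u$-small. Closedness of the fibers is automatic, since $\pi$ is continuous and $\partial_B(G,\mathcal{P})$ is Hausdorff, so all the content lies in the finiteness assertion.

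First I would pin down the non-trivial fibers using Tran's description (Theorem 1.1 of \cite{Tr}): every point of $\partial_B(G,\mathcal{P})$ that is not bounded parabolic is conical and has a singleton preimage, whereas for each bounded parabolic $p$ one has $\pi^{-1}(p)=\partial_v(F_p)$ for a flat $F_p\in\mathcal{F}$. Since singletons are $u$-small for every entourage $u$, the fibers that fail to be $u$-small are among the flat boundaries $\partial_v(F)$, $F\in\mathcal{F}$. Hence topological quasiconvexity of $\pi$ will follow once we know that for every $u\in\U$ only finitely many $\partial_v(F)$, $F\in\mathcal{F}$, fail to be $u$-small.

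Next, because $\partial_v(X)$ is compact metrizable I would fix a compatible metric $d$; then $\U$ has the base of entourages $u_\epsilon=\{(a,b):d(a,b)<\epsilon\}$, and $\partial_v(F)$ is $u_\epsilon$-small iff $\mathrm{diam}_d\,\partial_v(F)<\epsilon$. The equivalent statement becomes: for every $\epsilon>0$ only finitely many flats $F\in\mathcal{F}$ satisfy $\mathrm{diam}_d\,\partial_v(F)\geq\epsilon$. This is precisely the geometric content of Hruska and Ruane's Proposition 8.12: the flats of $\mathcal{F}$ are isolated and $G$ acts properly and cocompactly, so only finitely many flats lie within a bounded distance of a fixed basepoint, and the visual size of $\partial_v(F)$ shrinks to $0$ as $F$ recedes to infinity; together these give the required finiteness. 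Feeding this into \textbf{Proposition \ref{quaseconvexidadebase}}, applied to the base $\{u_\epsilon\}_{\epsilon>0}$, shows that $\sim$ is topologically quasiconvex, i.e. $\pi$ is topologically quasiconvex.

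The main obstacle, and the only non-routine step, is exactly this shrinking estimate, which I would cite verbatim from Hruska and Ruane rather than reprove; it is where the isolated-flats geometry enters and where the argument genuinely departs from the perspectivity route of \textbf{Theorem \ref{geoblowup}}. Everything else is bookkeeping: translating the metric-diameter formulation into the entourage language of topological quasiconvexity (immediate once $d$ is fixed) and matching the indexing of fibers by bounded parabolic points with the indexing of flats via Tran's correspondence $p\mapsto F_p$.
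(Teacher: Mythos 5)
Your proposal is correct and follows essentially the same route as the paper: the remark is proved by citing Hruska--Ruane's Proposition 8.12 (the flat boundaries $\partial_v(F)$, $F\in\mathcal{F}$, form a null family in $\partial_v(X)$), and your additional steps --- identifying the non-singleton fibers with flat boundaries via Tran's theorem, fixing a metric to get the base $\{u_\epsilon\}$, and invoking \textbf{Proposition \ref{quaseconvexidadebase}} --- are exactly the routine bookkeeping the paper leaves implicit. No gap here.
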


\begin{cor}$\partial_{v}(X)$ is connected if and only if $\partial_{B}(G,\mathcal{P})$ is connected,

\end{cor}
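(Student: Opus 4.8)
The plan is to exploit the fact that $\pi\colon \partial_{v}(X) \rightarrow \partial_{B}(G,\mathcal{P})$ is a blowup map (\textbf{Proposition \ref{catblowup}}) and to combine \textbf{Proposition \ref{blowupconect}} with the explicit description of the fibers of $\pi$ recalled just above the proposition. One implication is immediate and requires no structure beyond continuity and surjectivity: since a blowup map is by definition a continuous surjection, if $\partial_{v}(X)$ is connected then its image $\partial_{B}(G,\mathcal{P}) = \pi(\partial_{v}(X))$ is connected as well, because the continuous image of a connected space is connected.

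For the converse, I would assume that $\partial_{B}(G,\mathcal{P})$ is connected and apply \textbf{Proposition \ref{blowupconect}}, which reduces the problem to checking that for every bounded parabolic point $p \in \partial_{B}(G,\mathcal{P})$ the fiber $\pi^{-1}(p)$ is connected. Here I would invoke Tran's description stated above: for each such $p$ there is a flat $F_{p} \in \mathcal{F}$ with $\partial_{v}(F_{p}) = \pi^{-1}(p)$. Since a flat is, by the definition given in this subsection, isometric to a Euclidean space of dimension $n_{p} \geqslant 2$, its visual boundary $\partial_{v}(F_{p})$ is a sphere $S^{n_{p}-1}$ of dimension $n_{p}-1 \geqslant 1$, which is connected. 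Thus every fiber of $\pi$ over a bounded parabolic point is connected, and \textbf{Proposition \ref{blowupconect}} yields that $\partial_{v}(X)$ is connected.

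The only substantive input is the identification of the fibers over bounded parabolic points with visual boundaries of flats, together with the observation that these are positive-dimensional spheres; the remainder is purely formal. Consequently the main (and essentially only) point is that flats have dimension at least $2$, so that their visual boundaries are connected rather than a pair of points; this is exactly what prevents the blowup from disconnecting a connected base and is what makes the connectedness equivalence go through in both directions.
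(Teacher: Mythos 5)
Your proof is correct and follows exactly the route the paper intends: the paper's proof is simply ``Immediate from \textbf{Proposition \ref{blowupconect}},'' and you have supplied the implicit details --- the forward direction from continuity and surjectivity of the blowup map, and the converse from the fact that each fiber over a bounded parabolic point is the visual boundary of a flat of dimension at least $2$, hence a sphere of dimension at least $1$ and therefore connected.
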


\begin{proof}Immediate from \textbf{Proposition \ref{blowupconect}}.
\end{proof}

\end{document}